\newtheorem{theorem}{Theorem}
\newtheorem{lemma}{Lemma}
\newtheorem{corollary}{Corollary}
\newtheorem{proposition}{Proposition}
\theoremstyle{definition}
\newtheorem{remark}{Remark}
\newcommand{\bA}{\mbox{\boldmath {$A$}}}
\newcommand{\bb}{\mbox{\boldmath {$b$}}}
\newcommand{\bB}{\mbox{\boldmath {$B$}}}
\newcommand{\bC}{\mbox{\boldmath {$C$}}}
\newcommand{\be}{\mbox{\boldmath {$e$}}}
\newcommand{\bh}{\mbox{\boldmath {$h$}}}
\newcommand{\bH}{\mbox{\boldmath {$H$}}}
\newcommand{\bI}{\mbox{\boldmath {$I$}}}
\newcommand{\bM}{\mbox{\boldmath {$M$}}}
\newcommand{\bsA}{\mbox{\scriptsize{\bf $A$}}}
\newcommand{\bO}{\mbox{\boldmath {$O$}}}
\newcommand{\bP}{\mbox{\boldmath {$P$}}}
\newcommand{\bS}{\mbox{\boldmath {$S$}}}
\newcommand{\bu}{\mbox{\boldmath {$u$}}}
\newcommand{\bU}{\mbox{\boldmath {$U$}}}
\newcommand{\bv}{\mbox{\boldmath {$v$}}}
\newcommand{\bV}{\mbox{\boldmath {$V$}}}
\newcommand{\bw}{\mbox{\boldmath {$w$}}}
\newcommand{\bx}{\mbox{\boldmath {$x$}}}
\newcommand{\bX}{\mbox{\boldmath {$X$}}}
\newcommand{\by}{\mbox{\boldmath {$y$}}}
\newcommand{\bz}{\mbox{\boldmath {$z$}}}
\newcommand{\bze}{\mbox{\boldmath {$0$}}}
\newcommand{\bbbeta}{\mbox{\boldmath $ \eta $}}
\newcommand{\bone}{\mbox{\boldmath {$1$}}}
\newcommand{\bmu}{\mbox{\boldmath $ \mu $}}
\newcommand{\bSig}{\mbox{\boldmath $ \Sigma $}}
\newcommand{\bSigma}{\mbox{\boldmath $ \Sigma $}}
\newcommand{\bLam}{\mbox{\boldmath $ \Lambda $}}
\newcommand{\bal}{\mbox{\boldmath $ \alpha $}}
\newcommand{\bome}{\mbox{\boldmath $ \omega $}}
\newcommand{\bOme}{\mbox{\boldmath $ \Omega $}}
\newcommand{\bGamma}{\mbox{\boldmath $\Gamma$}}
\newcommand{\bTheta}{\mbox{\normalsize{\bf $\Theta$}}}
\newcommand{\bep}{\mbox{\boldmath $ \varepsilon $}}
\newcommand{\bxi}{\mbox{\boldmath $ \xi $}}
\newcommand{\bzeta}{\mbox{\boldmath $ \zeta $}}
\newcommand{\tr}{\mbox{tr}}
\newcommand{\argmax}{\mathop{\rm argmax}\limits}
\newcommand{\Var}{\mbox{Var}}
\begin{document}

\renewcommand{\baselinestretch}{1.2}
\markright{ 
}

\markboth{\hfill{\footnotesize\rm Makoto Aoshima and Kazuyoshi Yata} \hfill}
{\hfill {\footnotesize\rm Two-sample tests for strongly spiked models } \hfill}

\renewcommand{\thefootnote}{}
$\ $\par

\fontsize{10.95}{14pt plus.8pt minus .6pt}\selectfont
\vspace{0.8pc}
\centerline{\large\bf 
Two-sample tests for high-dimension,}
\vspace{2pt}
\centerline{\large\bf strongly spiked eigenvalue models}
\vspace{.4cm}
\centerline{Makoto Aoshima and Kazuyoshi Yata}
\vspace{.4cm}
\centerline{\it University of Tsukuba}
\vspace{.55cm}
\fontsize{9}{11.5pt plus.8pt minus .6pt}\selectfont
\begin{quotation}

\noindent {\it Abstract:} 
We consider two-sample tests for high-dimensional data under two disjoint models: the strongly spiked eigenvalue (SSE) model and the non-SSE (NSSE) model. 
We provide a general test statistic as a function of a positive-semidefinite matrix.  
We give sufficient conditions for the test statistic to satisfy a consistency property and to be asymptotically normal. 
We discuss an optimality of the test statistic under the NSSE model. 
We also investigate the test statistic under the SSE model by considering strongly spiked eigenstructures and create a new effective test procedure for the SSE model. 
Finally, we discuss the performance of the classifiers numerically. 
\par

\vspace{9pt}
\noindent {\it Key words and phrases:}
Asymptotic normality, eigenstructure estimation, large $p$ small $n$, noise reduction methodology, spiked model. 
\par
\end{quotation}\par

\def\thefigure{\arabic{figure}}
\def\thetable{\arabic{table}}
\fontsize{10.95}{14pt plus.8pt minus .6pt}\selectfont
\setcounter{chapter}{1}
\setcounter{equation}{0} 
\noindent {\bf 1. Introduction}
\par
\vspace{9pt}
A common feature of high-dimensional data is that the data dimension is high, however, the sample size is relatively low. 
This is the so-called ``HDLSS" or ``large $p$, small $n$" data, where $p$ is the data dimension, $n$ is the sample size and $p/n\to\infty$.
Statistical inference on this type of data is becoming increasingly relevant, especially in the areas of medical diagnostics, engineering and other big data.
Suppose we have independent samples of $p$-variate random variables from two populations, $\pi_i,\ i=1,2$, having an unknown mean vector $\bmu_i$ and unknown positive-definite covariance matrix $\bSig_i$ for each $\pi_i$. 
We do not assume the normality of the population distributions.
The eigen-decomposition of $\bSig_{i}$ $(i=1,2)$ is given by $\bSig_{i}=\bH_{i}\bLam_{i}\bH_{i}^T=\sum_{j=1}^p\lambda_{ij}\bh_{ij} \bh_{ij}^T$, 
where $\bLam_{i}=\mbox{diag}(\lambda_{i1},...,\lambda_{ip})$ is a diagonal matrix of eigenvalues, $\lambda_{i1}\ge \cdots \ge \lambda_{ip}> 0$, and $\bH_{i}=[\bh_{i1},...,\bh_{ip}]$ is an orthogonal matrix of the corresponding eigenvectors. 
Note that $\lambda_{i1}$ is the largest eigenvalue of $\bSig_i$ for $i=1,2$.
For the eigenvalues, we consider two disjoint models: the strongly spiked eigenvalue (SSE) model, which will be defined by (\ref{1.7}), and the non-SSE (NSSE) model, which will be defined by (\ref{1.5}).  

In this paper, we consider the two-sample test:
\begin{equation}
H_0:\bmu_1=\bmu_2 \quad \mbox{vs.}\quad H_1:\bmu_1\neq \bmu_2. \label{1.2}
\end{equation}
Having recorded i.i.d. samples, $\bx_{ij},\ j=1,...,n_i$, of size $n_i$ from each $\pi_i$, we define 
$\overline{\bx}_{i n_i}=\sum_{j=1}^{n_i} \bx_{ij}/n_i$ and $\bS_{in_i}=\sum_{j=1}^{n_i}(\bx_{ij}-\overline{\bx}_{i n_i})(\bx_{ij}-\overline{\bx}_{i n_i})^T/(n_i-1)$ for $i=1,2$. 
We assume $n_i \ge 4$ for $i=1,2$. 
Hotelling's $T^2$-statistic is defined by 
$$
T^2=(n_1+n_2)^{-1}n_1n_2(\overline{\bx}_{1n_1}-\overline{\bx}_{2n_2})^T\bS^{-1}(\overline{\bx}_{1n_1}-\overline{\bx}_{2n_2}),
$$
where $\bS=\{(n_1-1)\bS_{1n_1}+(n_2-1)\bS_{2n_2}\}/(n_1+n_2-2)$. 
However, $\bS^{-1}$ does not exist in the HDLSS context such as $p/n_i\to \infty,\ i=1,2$. 
In such situations, \citet{Dempster:1958,Dempster:1960} and \citet{Srivastava:2007} considered the test when $\pi_1$ and $\pi_2$ are Gaussian. 
When $\pi_1$ and $\pi_2$ are non-Gaussian, \citet{Bai:1996} and \citet{Cai:2014} considered the test under homoscedasticity, $\bSig_1=\bSig_2$.
On the other hand, \citet{Chen:2010} and \citet{Aoshima:2011,Aoshima:2015} 
considered the test under heteroscedasticity, $\bSig_1\neq \bSig_2$. 
\lhead[\footnotesize\thepage\fancyplain{}\leftmark]{}\rhead[]{\fancyplain{}\rightmark\footnotesize\thepage}

In this paper, we first consider the following test statistic with a positive-semidefinite matrix $\bA$ of dimension $p$: 
\begin{align}
T(\bA)&=(\overline{\bx}_{1n_1}-\overline{\bx}_{2n_2})^T\bA(\overline{\bx}_{1n_1}-\overline{\bx}_{2n_2})-\sum_{i=1}^2\tr(\bS_{in_i}\bA)/n_i
\notag \\
&=2\sum_{i=1}^2\frac{\sum_{ j<j'  }^{n_i}\bx_{ij}^T\bA\bx_{ij'} }{n_i(n_i-1)}-
2\overline{\bx}_{1n_1}^T\bA \overline{\bx}_{2n_2} .
\label{1.3}
\end{align}
Note that $E\{T(\bA)\}=(\bmu_1-\bmu_2)^T\bA(\bmu_1-\bmu_2)$.
Let $\bI_{p}$ denote the identity matrix of dimension $p$. 
We note that $T(\bI_p)$ is equivalent to the statistics given by 
\citet{Chen:2010} and \citet{Aoshima:2011}. 
We call the test with $T(\bI_p)$ the ``distance-based two-sample test".
In Section 3, we discuss a choice of $\bA$. 
In this paper, we consider the divergence condition such as $p\to \infty$, $n_1\to \infty$ and $n_2\to \infty$, which is equivalent to
$$
m\to \infty,\quad \mbox{where \ $m=\min\{p,n_{\min}\}$ \ with \ $n_{\min}=\min\{n_1,n_2\}$}.
$$
By using Theorem 1 in \citet{Chen:2010} or Theorem 4 in \citet{Aoshima:2015}, we can claim that under $H_0$ in (\ref{1.2}) 
\begin{equation}
T(\bI_p)/\{K_{1}(\bI_p)\}^{1/2} \Rightarrow N(0,1) \ \ \mbox{as $m\to \infty$}
\label{1.4}
\end{equation}
if we assume (A-i) that is given in Section 2 and the condition that
\begin{equation}
\frac{\lambda_{i1}^2}{ \tr(\bSigma_{i}^2)}\to 0 \ \ \mbox{as $p\to \infty$ for $i=1,2$.} 
\label{1.5}
\end{equation}
Here, $K_{1}(\bA)$ is defined in Section 2.1, ``$\Rightarrow$" denotes the convergence in distribution and $N(0,1)$ denotes a random variable distributed as the standard normal distribution. 
Thus, by using $T(\bI_p)$ and an estimate of $K_{1}(\bI_p)$, one can construct a test procedure of (\ref{1.2}) for high-dimensional data. 
As discussed in Section 2 of \citet{Aoshima:2015}, the distance-based two-sample test is quite flexible for high-dimension, non-Gaussian data. 
In Section 3, we shall investigate an optimality of the test statistic in (\ref{1.3}) and discuss a choice of $\bA$.  
\begin{remark}
If all $\lambda_{ij}$s are bounded as $\limsup_{p\to \infty}\lambda_{ij}<\infty$ and $\liminf_{p\to \infty}$
$\lambda_{ij}>0$, (\ref{1.5}) trivially holds.
On the other hand, they often have a spiked model such as 
\begin{equation}
\lambda_{ij}=a_{ij}p^{\alpha_{ij}}\ (j=1,...,t_i)\ \ 
\mbox{and}\ \ \lambda_{ij}=c_{ij}\ (j=t_i+1,...,p), 
\label{1.6}
\end{equation}
where $a_{ij}$s, $c_{ij}$s and $\alpha_{ij}$s are positive fixed constants and $t_i$s are positive fixed integers.
If they have (\ref{1.6}), (\ref{1.5}) holds when $\alpha_{i1}<1/2$ for $i=1,2$. 
See \citet{Yata:2012} for the details. 
\end{remark}
For eigenvalues of high-dimensional data, \citet{Jung:2009}, \citet{Yata:2012,Yata:2013b}, \citet{Onatski:2012} and \citet{Fan:2013} considered spiked models such as $\lambda_{ij}\to \infty$ as $p\to \infty$ for $j=1,...,k_i$, with some positive integer $k_i$. 
The above references all show that spiked models are quite natural because the first several eigenvalues should be spiked for high-dimensional data. 
Hence, we consider the following situation as well: 
\begin{equation}
\liminf_{p\to \infty}\Big\{ \frac{\lambda_{i1}^2}{\tr(\bSigma_{i}^2)}\Big\}>0\ \ 
\label{1.7}
\mbox{for $i=1$ or 2.}
\end{equation}
In (\ref{1.7}), the first eigenvalue is more spiked than in (\ref{1.5}). 
For example, (\ref{1.7}) holds for the spiked model in (\ref{1.6}) with $\alpha_{i1}\ge 1/2$. 
We call (\ref{1.7}) the ``strongly spiked eigenvalue (SSE) model". 
We emphasize that the asymptotic normality in (\ref{1.4}) is not satisfied under the SSE model. 
See Section 4.1. 
See also \citet{Katayama:2013} and \citet{Ma:2015}. 
Recall that (\ref{1.4}) holds under (\ref{1.5}).
We call (\ref{1.5}) the ``non-strongly spiked eigenvalue (NSSE) model". 

The organization of this paper is as follows. 
In Section 2, we give sufficient conditions for $T(\bA)$ to satisfy a consistency property and the asymptotic normality. 
In Section 3, under the NSSE model, we give a test procedure with $T(\bA)$ and discuss the choice of $\bA$.
In Section 4, under the SSE model, we investigate test procedures by considering strongly spiked eigenstructures. 
In Section 5, we create a new test procedure by estimating the eigenstructures for the SSE model. 
We show that the power of the new test procedure is much higher than the distance-based two-sample test for the SSE model. 
In Section 6, we discuss the performance of the test procedures for the SSE model by simulation studies. 
In Section 7, we highlight the benefits of the new models. 
In the supplementary material, 
we give additional simulations, actual data analyses and proofs of the theoretical results.
We also provide a method to distinguish between the NSSE model and the SSE model, and estimate the required parameters.
\\

\setcounter{chapter}{2}
\setcounter{equation}{0} 
\noindent {\bf 2. Asymptotic Properties of $T(\bA)$}
\par
\vspace{9pt}
In this section, we give sufficient conditions for $T(\bA)$ to satisfy a consistency property and to be asymptotically normal. 
As for any positive-semidefinite matrix $\bA$, we write the square root of $\bA$ as $\bA^{1/2}$.
Let $\bx_{ij}=\bH_i \bLam_i^{1/2}\bz_{ij}+\bmu_i$, where $\bz_{ij}=(z_{i1j},...,z_{ipj})^T$ is considered as a sphered data vector having the zero mean vector and identity covariance matrix. 
We assume that the fourth moments of each variable in $\bz_{ij}$ are uniformly bounded. 
More specifically, we assume that
\begin{equation}
\bx_{ij}=\bGamma_i\bw_{ij}+\bmu_i \ \mbox{ for } i=1,2;\ j=1,...,n_i,
\label{2.0}
\end{equation}
where $\bGamma_i$ is a $p\times r_i$ matrix for some $r_i\ge p$ such that $\bGamma_i\bGamma_i^T=\bSigma_i$, 
and $\bw_{ij},\ j=1,...,n_i$, are i.i.d. random vectors having $E(\bw_{ij})=\bze$ and Var$(\bw_{ij})=\bI_{r_i}$. 
Note that (\ref{2.0}) includes the case that $\bGamma_i=\bH_i \bLam_i^{1/2}$ and $\bw_{ij}=\bz_{ij}$. 
Refer to \citet{Bai:1996}, \citet{Chen:2010} and \citet{Aoshima:2015} for the details of the model. 
As for $\bw_{ij}=(w_{i1j},...,w_{ir_ij})^T$, we assume the following assumption for $\pi_i,\ i=1,2$, as necessary:
\begin{description}
\item[(A-i)] \ 
The fourth moments of each variable in $\bw_{ij}$ are uniformly bounded, 
$E( w_{isj}^2w_{itj}^2)=E( w_{isj}^2)E(w_{itj}^2)$ and $E( w_{isj} w_{itj}w_{iuj}w_{ivj})=0$ for all $s \neq t,u,v$. 
\end{description} 
When the $\pi_i$s are Gaussian, (A-i) naturally holds.
\\

\noindent {\bf 2.1. Consistency and Asymptotic Normality of $T(\bA)$}
\par
\vspace{9pt}
Let $\bmu_{\bsA}=\bA^{1/2}(\bmu_1-\bmu_2)$, $\bSigma_{i,\bsA}=\bA^{1/2}\bSig_i\bA^{1/2}, \ i=1,2$, 
and $\Delta({\bA})=||\bmu_{\bsA}||^2$,
where $||\cdot||$ denotes the Euclidean norm.
Let $K({\bA})=K_{1}(\bA)+K_{2}(\bA)$, where
$$
K_{1}(\bA)=2 \sum_{i=1}^2 \frac{\tr(\bSig_{i,\bsA}^2)}{n_i(n_i-1)}
+4\frac{\tr(\bSig_{1,\bsA}\bSig_{2,\bsA})}{n_1n_2}
\ \ 
\mbox{and} \ \ K_{2}(\bA)=4\sum_{i=1}^2 \frac{\bmu_{\bsA}^T\bSig_{i,\bsA}\bmu_{\bsA}}{n_i}.
$$
Note that $E\{T(\bA)\}=\Delta(\bA)$ and $\Var\{T(\bA)\}=K(\bA)$. 
Also, note that $\Delta(\bA)=0$ under $H_0$ in (\ref{1.2}). 
Let $\lambda_{\max}(\bB)$ denote the largest eigenvalue of any positive-semidefinite matrix, $\bB$. 
We assume the following condition for $\bSigma_{i,\bsA}$s as necessary: 
\begin{description}
  \item[(A-ii)]\ $\displaystyle \frac{\{\lambda_{\max}(\bSigma_{i,\bsA})\}^2}{\tr(\bSigma_{i,\bsA}^2)}\to 0$ \ as $p\to \infty$ for $i=1,2$. 
\end{description}
When $\bA=\bI_p$, (A-ii) becomes (\ref{1.5}). 
We assume one of the following three conditions as necessary:
\begin{description}
  \item[(A-iii)]\  $\displaystyle \frac{K_{1}(\bA) }{\{\Delta(\bA)\}^2}\to 0$ \ as $m\to \infty$;\quad \ {\bf (A-iv)}  \ $\displaystyle \limsup_{m\to \infty} \frac{\{\Delta(\bA)\}^2}{K_{1}(\bA)}<\infty$;
  \item[(A-v)]\  $\displaystyle \frac{K_{1}(\bA)}{K_{2}(\bA)}\to 0$ \ as $m\to \infty$.
\end{description}
Note that (A-iv) holds under $H_0$ in (\ref{1.2}). 
If $\bSig_1=\bSig_2\ (=\bSig$, say), (A-iii) holds when $\tr\{(\bSig \bA)^2\}/\{n_{\min} \Delta(\bA)\}^2\to 0$ as $m\to \infty$.
On the other hand, (A-iv) holds when $\liminf_{m\to \infty}$
$\tr\{(\bSig \bA)^2\}/\{n_{\min} \Delta(\bA)\}^2> 0$. 
See Section 3.2 for the details of (A-v). 
For (A-iii) and (A-v), we have the following proposition. 
\begin{proposition}
(A-v) implies (A-iii).
\end{proposition}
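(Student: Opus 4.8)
The plan is to produce an explicit chain of inequalities comparing $K_2(\bA)$ with $\Delta(\bA)$ and $\{K_1(\bA)\}^{1/2}$; once $K_2(\bA)\le 4\Delta(\bA)\{K_1(\bA)\}^{1/2}$ is in hand, (A-v) immediately forces $\{K_1(\bA)\}^{1/2}/\Delta(\bA)\to 0$, which is (A-iii). Throughout one may assume $\Delta(\bA)>0$ (equivalently $K_2(\bA)>0$), since otherwise the ratio in (A-v) is undefined and the implication is vacuous.

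First I would bound each summand of $K_2(\bA)$. For $i=1,2$, the Cauchy--Schwarz inequality gives $\bmu_{\bsA}^T\bSig_{i,\bsA}\bmu_{\bsA}\le \|\bmu_{\bsA}\|\,\|\bSig_{i,\bsA}\bmu_{\bsA}\|$, and since $\|\bSig_{i,\bsA}\bmu_{\bsA}\|^2=\bmu_{\bsA}^T\bSig_{i,\bsA}^2\bmu_{\bsA}\le \lambda_{\max}(\bSig_{i,\bsA}^2)\,\|\bmu_{\bsA}\|^2\le \tr(\bSig_{i,\bsA}^2)\,\|\bmu_{\bsA}\|^2$ (the largest eigenvalue of the positive-semidefinite matrix $\bSig_{i,\bsA}^2$ is at most its trace), we obtain $\bmu_{\bsA}^T\bSig_{i,\bsA}\bmu_{\bsA}\le \Delta(\bA)\,\{\tr(\bSig_{i,\bsA}^2)\}^{1/2}$. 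Summing over $i$,
$$
K_2(\bA)=4\sum_{i=1}^2\frac{\bmu_{\bsA}^T\bSig_{i,\bsA}\bmu_{\bsA}}{n_i}\le 4\Delta(\bA)\sum_{i=1}^2\frac{\{\tr(\bSig_{i,\bsA}^2)\}^{1/2}}{n_i}.
$$

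Next I would absorb the remaining sum into $K_1(\bA)$. Applying Cauchy--Schwarz to the two-term sum, then using $n_i(n_i-1)\le n_i^2$ and $\tr(\bSig_{1,\bsA}\bSig_{2,\bsA})\ge 0$ (both factors are positive semidefinite),
$$
\Big(\sum_{i=1}^2\frac{\{\tr(\bSig_{i,\bsA}^2)\}^{1/2}}{n_i}\Big)^2\le 2\sum_{i=1}^2\frac{\tr(\bSig_{i,\bsA}^2)}{n_i^2}\le 2\sum_{i=1}^2\frac{\tr(\bSig_{i,\bsA}^2)}{n_i(n_i-1)}\le K_1(\bA).
$$
Combining the two displays yields $K_2(\bA)\le 4\Delta(\bA)\{K_1(\bA)\}^{1/2}$, hence $K_1(\bA)/K_2(\bA)\ge \{K_1(\bA)\}^{1/2}/\{4\Delta(\bA)\}$. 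Under (A-v) the left side tends to $0$, so $\{K_1(\bA)\}^{1/2}/\Delta(\bA)\to 0$, i.e. $K_1(\bA)/\{\Delta(\bA)\}^2\to 0$, which is exactly (A-iii).

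There is no serious obstacle here: the argument is a double application of Cauchy--Schwarz together with elementary eigenvalue/trace bounds. The only point requiring a little care is the choice of the intermediate quantity $\sum_{i=1}^2\{\tr(\bSig_{i,\bsA}^2)\}^{1/2}/n_i$, which is precisely what lets $K_2(\bA)$ be controlled by $\Delta(\bA)\{K_1(\bA)\}^{1/2}$ (rather than by the too-weak bounds $\Delta(\bA)^2$ or $K_1(\bA)$ alone), so that one gets a lower bound on $K_1(\bA)/K_2(\bA)$ of the exact order $\{K_1(\bA)\}^{1/2}/\Delta(\bA)$.
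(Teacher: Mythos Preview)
Your proof is correct and follows essentially the same route as the paper: bound $\bmu_{\bsA}^T\bSig_{i,\bsA}\bmu_{\bsA}\le \Delta(\bA)\{\tr(\bSig_{i,\bsA}^2)\}^{1/2}$, then control $\sum_i \{\tr(\bSig_{i,\bsA}^2)\}^{1/2}/n_i$ by a constant multiple of $\{K_1(\bA)\}^{1/2}$, obtaining $K_1(\bA)/K_2(\bA)\ge c\,\{K_1(\bA)\}^{1/2}/\Delta(\bA)$ and concluding. The only cosmetic difference is that the paper bounds each term $\{\tr(\bSig_{i,\bsA}^2)\}^{1/2}/n_i$ separately by $K_1(\bA)^{1/2}$ (yielding constant $8$), whereas you apply Cauchy--Schwarz to the two-term sum first (yielding constant $4$); this does not affect the argument.
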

When (A-iii) is met, we have the following result. 
\begin{theorem} 
Assume (A-iii). 
It holds that $T(\bA)/\Delta(\bA)=1+o_P(1)$ as $m\to \infty$.
\end{theorem}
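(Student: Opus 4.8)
The plan is to prove the statement by a second-moment (Chebyshev) argument, exploiting the fact that the mean and variance of $T(\bA)$ are already available: $E\{T(\bA)\}=\Delta(\bA)$ and $\Var\{T(\bA)\}=K(\bA)=K_{1}(\bA)+K_{2}(\bA)$. Hence $T(\bA)/\Delta(\bA)$ has mean $1$ and variance $K(\bA)/\{\Delta(\bA)\}^{2}$ (note that (A-iii) forces $\Delta(\bA)\ne 0$ for all large $m$, so this ratio is well defined), and it will suffice to show $K(\bA)/\{\Delta(\bA)\}^{2}\to 0$ as $m\to\infty$. Chebyshev's inequality then gives $P(|T(\bA)/\Delta(\bA)-1|>\varepsilon)\le K(\bA)/[\varepsilon^{2}\{\Delta(\bA)\}^{2}]\to 0$ for each fixed $\varepsilon>0$, which is exactly $T(\bA)/\Delta(\bA)=1+o_{P}(1)$.

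Writing $K(\bA)=K_{1}(\bA)+K_{2}(\bA)$, the first term is disposed of immediately, since $K_{1}(\bA)/\{\Delta(\bA)\}^{2}\to 0$ is precisely assumption (A-iii). For the second term I would bound each summand of $K_{2}(\bA)$ by the elementary eigenvalue inequality $\bmu_{\bsA}^{T}\bSigma_{i,\bsA}\bmu_{\bsA}\le\lambda_{\max}(\bSigma_{i,\bsA})\,\|\bmu_{\bsA}\|^{2}=\lambda_{\max}(\bSigma_{i,\bsA})\,\Delta(\bA)$ combined with $\lambda_{\max}(\bSigma_{i,\bsA})\le\{\tr(\bSigma_{i,\bsA}^{2})\}^{1/2}$, which yields
$$
\frac{K_{2}(\bA)}{\{\Delta(\bA)\}^{2}}\le 4\sum_{i=1}^{2}\frac{\{\tr(\bSigma_{i,\bsA}^{2})\}^{1/2}}{n_{i}\,\Delta(\bA)}.
$$
Because $n_{i}\ge 4$ gives $1/n_{i}^{2}\le 1/\{n_{i}(n_{i}-1)\}$, and because the cross term $\tr(\bSigma_{1,\bsA}\bSigma_{2,\bsA})$ appearing in $K_{1}(\bA)$ is nonnegative, the square of the $i$-th term on the right is at most $\tr(\bSigma_{i,\bsA}^{2})/[n_{i}(n_{i}-1)\{\Delta(\bA)\}^{2}]\le K_{1}(\bA)/[2\{\Delta(\bA)\}^{2}]\to 0$ by (A-iii). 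Hence $K_{2}(\bA)/\{\Delta(\bA)\}^{2}\to 0$, so $K(\bA)/\{\Delta(\bA)\}^{2}\to 0$, and the Chebyshev step finishes the proof.

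There is no genuine obstacle here: the whole argument is elementary once the exact mean and variance of $T(\bA)$ are in hand, and it uses neither (A-i) nor (A-ii). The only point that takes a moment's thought is that assumption (A-iii), which on its face controls only $K_{1}(\bA)$, must also be shown to tame the heteroscedastic signal-interaction term $K_{2}(\bA)$; the displayed bound does exactly this, making $K_{2}(\bA)/\{\Delta(\bA)\}^{2}$ no larger than a constant multiple of $[K_{1}(\bA)/\{\Delta(\bA)\}^{2}]^{1/2}\to 0$.
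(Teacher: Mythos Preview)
Your proof is correct and essentially identical to the paper's: both use the Rayleigh-quotient bound $\bmu_{\bsA}^{T}\bSigma_{i,\bsA}\bmu_{\bsA}\le\Delta(\bA)\,\lambda_{\max}(\bSigma_{i,\bsA})\le\Delta(\bA)\{\tr(\bSigma_{i,\bsA}^{2})\}^{1/2}$ together with $\tr(\bSigma_{i,\bsA}^{2})/n_{i}^{2}\le K_{1}(\bA)$ to obtain $K_{2}(\bA)=O\{\Delta(\bA)K_{1}(\bA)^{1/2}\}$, and then conclude via Chebyshev's inequality. (Minor remark: the inequality $1/n_{i}^{2}\le 1/\{n_{i}(n_{i}-1)\}$ holds for all $n_{i}\ge 1$, so the appeal to $n_{i}\ge 4$ is unnecessary.)
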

When (A-iv) or (A-v) is met, we have the following results. 
\begin{theorem} 
Assume (A-i). 
Assume either (A-ii) and (A-iv) or (A-v). 
Then, it holds that $\{ T(\bA)-\Delta(\bA)\}/\{K(\bA)\}^{1/2}\Rightarrow N(0,1)$ as $m\to \infty$. 
\end{theorem}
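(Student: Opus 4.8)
\medskip
\noindent\textbf{Proof plan.}
The plan is to split $T(\bA)-\Delta(\bA)$ into a quadratic piece and a linear mean-interaction piece, to show that under the hypotheses exactly one of the two pieces carries the asymptotic variance $K(\bA)$, and then to obtain the normal limit for the dominant piece --- by a martingale central limit theorem for the quadratic one and by Lyapunov's theorem for the linear one --- while treating the other piece as a Chebyshev-negligible remainder. Concretely, writing $\by_{ij}=\bx_{ij}-\bmu_i=\bGamma_i\bw_{ij}$ and substituting $\bx_{ij}=\by_{ij}+\bmu_i$ into (\ref{1.3}),
\[
T(\bA)-\Delta(\bA)=T_{*}+T_{\mu},\qquad
T_{\mu}=\frac{2}{n_1}\sum_{j=1}^{n_1}(\bmu_1-\bmu_2)^T\bA\by_{1j}-\frac{2}{n_2}\sum_{j=1}^{n_2}(\bmu_1-\bmu_2)^T\bA\by_{2j},
\]
where $T_{*}$ is the right-hand side of (\ref{1.3}) with every $\bx_{ij}$ replaced by $\by_{ij}$. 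Both have mean $0$, and the covariance of $T_{*}$ and $T_{\mu}$ vanishes ($T_{*}$ is a sum of inner products of two distinct, independent centred vectors, while $T_{\mu}$ is linear in those vectors), so $\Var(T_{*})=K_{1}(\bA)$ and $\Var(T_{\mu})=K_{2}(\bA)$ by the identity $\Var\{T(\bA)\}=K(\bA)$.

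Next I would identify the dominant piece. Under (A-v), $K_{1}(\bA)=o(K_{2}(\bA))$, hence $K(\bA)\sim K_{2}(\bA)$ and $T_{*}/\{K(\bA)\}^{1/2}=o_P(1)$ by Chebyshev. Under (A-ii) and (A-iv), $K_{2}(\bA)=o(K_{1}(\bA))$: by (A-iv), $\Delta(\bA)\le C\{K_{1}(\bA)\}^{1/2}$ eventually, while $\bmu_{\bsA}^T\bSigma_{i,\bsA}\bmu_{\bsA}\le\lambda_{\max}(\bSigma_{i,\bsA})\Delta(\bA)$ and $K_{1}(\bA)\ge 2\max_i\tr(\bSigma_{i,\bsA}^2)/n_i^2$ give $K_{2}(\bA)/K_{1}(\bA)\le C'\sum_{i=1}^2\lambda_{\max}(\bSigma_{i,\bsA})/\{\tr(\bSigma_{i,\bsA}^2)\}^{1/2}\to0$ by (A-ii), so $K(\bA)\sim K_{1}(\bA)$ and $T_{\mu}/\{K(\bA)\}^{1/2}=o_P(1)$. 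In either case it then remains to show the dominant piece, standardised by its own standard deviation, is asymptotically $N(0,1)$, and to apply Slutsky's theorem.

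For $T_{\mu}$ (case (A-v)), this is the Lyapunov CLT for a sum of $n_1+n_2$ independent mean-zero scalars, the $\pi_i$-summands being $\pm2n_i^{-1}\bc_i^T\bw_{ij}$ with $\bc_i=\bGamma_i^T\bA(\bmu_1-\bmu_2)$, $\|\bc_i\|^2=\bmu_{\bsA}^T\bSigma_{i,\bsA}\bmu_{\bsA}$: by the moment structure in (A-i), $E\{(\bc_i^T\bw_{ij})^4\}\le C\|\bc_i\|^4$, so the Lyapunov ratio is at most $C''\sum_i n_i^{-1}\to0$ because $K_{2}(\bA)\ge4\|\bc_i\|^2/n_i$. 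For $T_{*}$ (case (A-ii) and (A-iv)), $T_{*}$ is the distance-based statistic applied to the centred, transformed data $\bA^{1/2}\bx_{ij}$, whose $\pi_i$-covariance is $\bSigma_{i,\bsA}$ and for which (A-ii) is exactly condition (\ref{1.5}); one may quote the asymptotic normality from \citet{Chen:2010} or \citet{Aoshima:2015}, or re-derive it by a martingale CLT. For the latter, relabel the $N=n_1+n_2$ vectors $\bA^{1/2}\by_{ij}$ as $\bV_1,\dots,\bV_N$, the first $n_1$ coming from $\pi_1$, with $\mathcal F_k=\sigma(\bV_1,\dots,\bV_k)$ and $g(k)$ the population index of $\bV_k$; setting $\phi(k,l)=\{n_{g(k)}(n_{g(k)}-1)\}^{-1}$ when $g(k)=g(l)$ and $\phi(k,l)=-(n_1n_2)^{-1}$ otherwise, $T_{*}=\sum_{k\ne l}\phi(k,l)\bV_k^T\bV_l=\sum_{k=1}^N D_k$ with $D_k=2\bV_k^T\bU_{k-1}$, $\bU_{k-1}=\sum_{l<k}\phi(k,l)\bV_l$, a martingale-difference sequence ($E(D_k\mid\mathcal F_{k-1})=0$). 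It then remains to check (i) $\{K_{1}(\bA)\}^{-1}\sum_k E(D_k^2\mid\mathcal F_{k-1})\to_P1$ and (ii) $\{K_{1}(\bA)\}^{-2}\sum_k E(D_k^4)\to0$, after which the martingale CLT (as used in the cited papers) applies.

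The main obstacle is step (i). Since $E(D_k^2\mid\mathcal F_{k-1})=4\bU_{k-1}^T\bSigma_{g(k),\bsA}\bU_{k-1}$ and $\sum_k E(D_k^2)=K_{1}(\bA)$ by a direct count of pairs, the task is to show $\Var\{\sum_k\bU_{k-1}^T\bSigma_{g(k),\bsA}\bU_{k-1}\}=o(\{K_{1}(\bA)\}^2)$. Expanding this variance produces a long list of cross terms that mix the two samples and, since normality is not assumed, fourth-order terms in the $\bw_{ij}$; assumption (A-i) annihilates the genuinely non-Gaussian contributions except diagonal ones, reducing everything to traces such as $\tr(\bSigma_{i,\bsA}^3)/n_i^{c}$, $\tr(\bSigma_{i,\bsA}^4)/n_i^{c}$ and $\tr(\bSigma_{1,\bsA}^a\bSigma_{2,\bsA}^b)/n_i^{c}$, and assumption (A-ii) --- via $\tr(\bSigma_{i,\bsA}^3)\le\lambda_{\max}(\bSigma_{i,\bsA})\tr(\bSigma_{i,\bsA}^2)=o(\{\tr(\bSigma_{i,\bsA}^2)\}^{3/2})$ and $\tr(\bSigma_{i,\bsA}^4)\le\{\lambda_{\max}(\bSigma_{i,\bsA})\}^2\tr(\bSigma_{i,\bsA}^2)=o(\{\tr(\bSigma_{i,\bsA}^2)\}^{2})$ --- forces every surviving term to be of smaller order than $\{K_{1}(\bA)\}^2$. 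The same $(\text{A-i})$--$(\text{A-ii})$ bookkeeping, together with $E(D_k^4)\le 16C\,E\{(\bU_{k-1}^T\bSigma_{g(k),\bsA}\bU_{k-1})^2\}$ coming from (A-i), yields (ii). Chebyshev then gives (i), the martingale CLT gives $T_{*}/\{K_{1}(\bA)\}^{1/2}\Rightarrow N(0,1)$, and combining this with $T_{\mu}/\{K(\bA)\}^{1/2}=o_P(1)$ and $K(\bA)\sim K_{1}(\bA)$ via Slutsky's theorem would complete the proof.
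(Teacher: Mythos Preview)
Your proposal is correct and follows essentially the same route as the paper. Both arguments split $T(\bA)-\Delta(\bA)$ into the purely quadratic piece $T_{*}$ (centred data) with variance $K_{1}(\bA)$ and the linear mean-interaction piece $T_{\mu}$ with variance $K_{2}(\bA)$; both show $K_{2}(\bA)/K_{1}(\bA)\to 0$ under (A-ii)+(A-iv) via the bound $\bmu_{\bsA}^T\bSigma_{i,\bsA}\bmu_{\bsA}\le \lambda_{\max}(\bSigma_{i,\bsA})\Delta(\bA)$, and under (A-v) both handle $T_{\mu}$ by Lyapunov's CLT with the same fourth-moment bound coming from (A-i). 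For the quadratic piece under (A-ii)+(A-iv) the paper simply rewrites the data as $\bx_{ij,\bsA}=\bGamma_{i,\bsA}\bw_{ij}+\bmu_{i,\bsA}$ and invokes Theorem~5 of \citet{Aoshima:2015}, which is exactly the first option you offer; your additional martingale-CLT sketch is precisely the machinery underlying that cited result, so it is extra detail rather than a different method.
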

\begin{lemma} 
Assume (A-ii) and (A-iv).
It holds that $K(\bA)/K_{1}(\bA)=1+o(1)$ as $m\to \infty$.
\end{lemma}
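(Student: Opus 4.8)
The plan is to reduce the assertion to showing that $K_2(\bA)/K_1(\bA)\to 0$ as $m\to\infty$. Since $K(\bA)=K_1(\bA)+K_2(\bA)$ by definition, and $K_1(\bA)>0$ (indeed $\tr(\bSig_{i,\bsA}^2)>0$ is implicit in (A-ii)), we have $K(\bA)/K_1(\bA)=1+K_2(\bA)/K_1(\bA)$, so the lemma follows once the second summand is shown to vanish.

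First I would estimate each quadratic form appearing in $K_2(\bA)$ by its Rayleigh quotient: for $i=1,2$,
$$
\bmu_{\bsA}^T\bSig_{i,\bsA}\bmu_{\bsA}\le \lambda_{\max}(\bSig_{i,\bsA})\,||\bmu_{\bsA}||^2=\lambda_{\max}(\bSig_{i,\bsA})\,\Delta(\bA),
$$
so that $K_2(\bA)\le 4\Delta(\bA)\sum_{i=1}^2\lambda_{\max}(\bSig_{i,\bsA})/n_i$. Next I would use (A-iv), which supplies a constant $C$ with $\{\Delta(\bA)\}^2\le C\,K_1(\bA)$ for all sufficiently large $m$, hence $\Delta(\bA)\le \sqrt{C}\,\{K_1(\bA)\}^{1/2}$. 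Combining these,
$$
\frac{K_2(\bA)}{K_1(\bA)}\le \frac{4\sqrt{C}}{\{K_1(\bA)\}^{1/2}}\sum_{i=1}^2\frac{\lambda_{\max}(\bSig_{i,\bsA})}{n_i}.
$$
Since all terms defining $K_1(\bA)$ are nonnegative and $n_i\ge 4$, we have $K_1(\bA)\ge 2\tr(\bSig_{i,\bsA}^2)/\{n_i(n_i-1)\}\ge 2\tr(\bSig_{i,\bsA}^2)/n_i^2$ for each $i$, i.e. $\{K_1(\bA)\}^{1/2}\ge \sqrt{2}\,\{\tr(\bSig_{i,\bsA}^2)\}^{1/2}/n_i$. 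Splitting the sum over $i$ and bounding the $i$-th term by the corresponding instance of this inequality yields
$$
\frac{K_2(\bA)}{K_1(\bA)}\le 2\sqrt{2C}\sum_{i=1}^2\frac{\lambda_{\max}(\bSig_{i,\bsA})}{\{\tr(\bSig_{i,\bsA}^2)\}^{1/2}}.
$$
By (A-ii) each summand on the right tends to $0$ as $p\to\infty$, so $K_2(\bA)/K_1(\bA)\to 0$, completing the argument.

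The computation is essentially routine; the one point needing care is that the two summands of $K_2(\bA)$ (one per population) must each be matched against the corresponding population's own contribution $2\tr(\bSig_{i,\bsA}^2)/\{n_i(n_i-1)\}$ to $K_1(\bA)$, rather than bounding a ratio of sums crudely. Conceptually, (A-iv) is exactly what converts $K_2(\bA)$, which is linear in $\Delta(\bA)$, into a quantity of order $\{K_1(\bA)\}^{1/2}$, at which point (A-ii) closes the gap.
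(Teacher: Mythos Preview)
Your proof is correct and follows essentially the same approach as the paper: both use the Rayleigh-quotient bound $\bmu_{\bsA}^T\bSig_{i,\bsA}\bmu_{\bsA}\le \lambda_{\max}(\bSig_{i,\bsA})\Delta(\bA)$, invoke (A-iv) to replace $\Delta(\bA)$ by a multiple of $\{K_1(\bA)\}^{1/2}$, and then apply (A-ii) to conclude $K_2(\bA)/K_1(\bA)\to 0$. Your version is somewhat more explicit about matching each population's term in $K_2(\bA)$ with its own contribution to $K_1(\bA)$, but the substance is identical.
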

Since $\bSig_i$s are unknown, it is necessary to estimate $K_{1}(\bA)$. 
Let us consider an estimator of $K_{1}(\bA)$ by 
$$
\widehat{K}_{1}(\bA)=2 \sum_{i=1}^2 \frac{W_{in_i}(\bA)}{n_i(n_i-1)}
+4\frac{ \tr(\bS_{1n_1}\bA\bS_{2n_2}\bA)}{n_1n_2},
$$
where $W_{in_i}(\bA)$ is defined by (\ref{2.2}) in Section 2.2.
\begin{lemma} 
Assume (A-i).
It holds that $\widehat{K}_{1}(\bA)/K_{1}(\bA)=1+o_P(1)$ as $m\to \infty$.
\end{lemma}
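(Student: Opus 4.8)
The plan is to exploit the fact that $\widehat{K}_{1}(\bA)$ and $K_{1}(\bA)$ are nonnegative linear combinations, with the \emph{same} weights $2/\{n_i(n_i-1)\}$ $(i=1,2)$ and $4/(n_1n_2)$, of the three quantities $\tr(\bSig_{1,\bsA}^2)$, $\tr(\bSig_{2,\bsA}^2)$, $\tr(\bSig_{1,\bsA}\bSig_{2,\bsA})$ and of their respective estimators $W_{1n_1}(\bA)$, $W_{2n_2}(\bA)$, $\tr(\bS_{1n_1}\bA\bS_{2n_2}\bA)$. Each target is nonnegative (the last because $\bSig_{1,\bsA}^{1/2}\bSig_{2,\bsA}\bSig_{1,\bsA}^{1/2}$ is positive semidefinite), so each weighted target is at most $K_{1}(\bA)$. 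Hence, if I show
$$
\frac{W_{in_i}(\bA)}{\tr(\bSig_{i,\bsA}^2)}=1+o_P(1)\ \ (i=1,2), \qquad \frac{\tr(\bS_{1n_1}\bA\bS_{2n_2}\bA)}{\tr(\bSig_{1,\bsA}\bSig_{2,\bsA})}=1+o_P(1)
$$
as $m\to\infty$, then writing each estimator as $(1+o_P(1))$ times its target and applying the triangle inequality yields $|\widehat{K}_{1}(\bA)-K_{1}(\bA)|\le K_{1}(\bA)\,o_P(1)$, which is the assertion. Degenerate cases are harmless: for example $\tr(\bSig_{i,\bsA}^2)=0$ forces $\bSig_{i,\bsA}=\bze$, hence $\bGamma_i^T\bA^{1/2}=\bze$, so $W_{in_i}(\bA)$ and $\tr(\bS_{1n_1}\bA\bS_{2n_2}\bA)$ vanish identically and the corresponding term drops from both sides. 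For each of the two displayed ratio-consistency statements I would use Chebyshev's inequality after checking (asymptotic) unbiasedness and a variance bound of smaller order than the squared target.

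For the cross term, set $\bU_i=\bA^{1/2}\bS_{in_i}\bA^{1/2}$, the sample covariance matrix of the $\bA^{1/2}\bx_{ij}$, so that $E(\bU_i)=\bSig_{i,\bsA}$ and $\tr(\bS_{1n_1}\bA\bS_{2n_2}\bA)=\tr(\bU_1\bU_2)$; by independence of the two samples, $E\{\tr(\bU_1\bU_2)\}=\tr(\bSig_{1,\bsA}\bSig_{2,\bsA})$. For the variance I would expand $E\{\tr(\bU_1\bU_2)\}^2=\sum_{a,b,c,d}E\{(\bU_1)_{ab}(\bU_1)_{cd}\}\,E\{(\bU_2)_{ba}(\bU_2)_{dc}\}$, insert the usual formula for the covariance of entries of a sample covariance matrix (whose ``Gaussian'' part carries a factor $O(n_i^{-1})$ and whose remaining fourth-cumulant part is controlled by (A-i)), and collect terms: every summand reduces to $\tr((\bSig_{1,\bsA}\bSig_{2,\bsA})^2)$, $\{\tr(\bSig_{1,\bsA}\bSig_{2,\bsA})\}^2$, or a diagonal fourth-moment expression, each carrying a factor $O(n_{\min}^{-1})$. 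Since $\tr((\bSig_{1,\bsA}\bSig_{2,\bsA})^2)=\tr\{(\bSig_{1,\bsA}^{1/2}\bSig_{2,\bsA}\bSig_{1,\bsA}^{1/2})^2\}\le\{\tr(\bSig_{1,\bsA}\bSig_{2,\bsA})\}^2$ and the diagonal terms are likewise $O(\{\tr(\bSig_{1,\bsA}\bSig_{2,\bsA})\}^2)$, this gives $\Var\{\tr(\bU_1\bU_2)\}=O(n_{\min}^{-1})\{\tr(\bSig_{1,\bsA}\bSig_{2,\bsA})\}^2=o(\{\tr(\bSig_{1,\bsA}\bSig_{2,\bsA})\}^2)$, so Chebyshev's inequality yields ratio-consistency of the cross term.

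For the within-sample terms, the estimator $W_{in_i}(\bA)$ in (\ref{2.2}) is a $U$-statistic in $\bx_{i1},\dots,\bx_{in_i}$ built so that $E\{W_{in_i}(\bA)\}=\tr(\bSig_{i,\bsA}^2)$, so only the variance is at issue. Writing $\bx_{ij}=\bGamma_i\bw_{ij}+\bmu_i$ and expanding in the $\bw_{ij}$, a direct moment computation of the type carried out in \citet{Chen:2010} and \citet{Aoshima:2015}, using (A-i) to reduce the non-Gaussian contributions to diagonal form, shows that $\Var\{W_{in_i}(\bA)\}$ is a sum of terms of the schematic orders $n_i^{-1}\tr(\bSig_{i,\bsA}^4)$, $n_i^{-1}\lambda_{\max}(\bSig_{i,\bsA})^2\tr(\bSig_{i,\bsA}^2)$, $n_i^{-2}\{\tr(\bSig_{i,\bsA}^2)\}^2$, $n_i^{-1}\sum_s\{(\bGamma_i^T\bA\bGamma_i)_{ss}\}^2$ and lower-order analogues. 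Because $\lambda_{\max}(\bSig_{i,\bsA})^2\le\tr(\bSig_{i,\bsA}^2)$, hence $\tr(\bSig_{i,\bsA}^4)\le\{\tr(\bSig_{i,\bsA}^2)\}^2$, and $\sum_s\{(\bGamma_i^T\bA\bGamma_i)_{ss}\}^2\le\tr\{(\bGamma_i^T\bA\bGamma_i)^2\}=\tr(\bSig_{i,\bsA}^2)$, dividing by $\{\tr(\bSig_{i,\bsA}^2)\}^2$ makes every term $O(n_i^{-1})$ or smaller; this uses (A-i) but \emph{not} (A-ii), which is why the lemma holds on the SSE model as well. Thus $\Var\{W_{in_i}(\bA)\}=o(\{\tr(\bSig_{i,\bsA}^2)\}^2)$, Chebyshev's inequality gives ratio-consistency of $W_{in_i}(\bA)$, and assembling the three estimates via the first paragraph finishes the proof.

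The main obstacle is precisely this variance bound for $W_{in_i}(\bA)$: the fourth- and higher-order moment expansion must be organized so that every term is visibly $O(n_i^{-1}\{\tr(\bSig_{i,\bsA}^2)\}^2)$, which is where (A-i) is indispensable and where the bookkeeping is heaviest; the cross term and the assembly step are comparatively routine.
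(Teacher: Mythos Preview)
Your proposal is correct and follows essentially the same route as the paper's proof: decompose $\widehat K_1(\bA)$ and $K_1(\bA)$ into the three matching pieces, establish ratio-consistency of $W_{in_i}(\bA)$ for $\tr(\bSig_{i,\bsA}^2)$ and of $\tr(\bS_{1n_1}\bA\bS_{2n_2}\bA)$ for $\tr(\bSig_{1,\bsA}\bSig_{2,\bsA})$ via moment bounds under (A-i), and combine. The paper's own proof is simply a citation of exactly these two ingredients---(\ref{2.3}) in Section~2.2 (from \citet{Chen:2010b}) for the within-sample pieces and equation~(23) of \citet{Aoshima:2015} for the cross term, applied after the substitution $\bx_{ij,\bsA}=\bGamma_{i,\bsA}\bw_{ij}+\bmu_{i,\bsA}$---so your sketch just unpacks what those references contain.
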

By combining Theorem 2 with Lemmas 1 and 2, we have the following result. 
\begin{corollary} 
Assume (A-i), (A-ii) and (A-iv). Then, 
it holds that $\{T(\bA)-\Delta(\bA)\}/\{\widehat{K}_{1}(\bA)\}^{1/2}\Rightarrow N(0,1)$ as $m\to \infty$.
\end{corollary}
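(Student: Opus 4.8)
The plan is to derive the corollary from Theorem 2 together with Lemmas 1 and 2 by a Slutsky-type argument, so that essentially no new estimates are needed. First I would check that the hypotheses (A-i), (A-ii) and (A-iv) of the corollary are exactly what is required to invoke each ingredient: (A-i) combined with (A-ii) and (A-iv) puts us in the first branch of Theorem 2, which yields $\{T(\bA)-\Delta(\bA)\}/\{K(\bA)\}^{1/2}\Rightarrow N(0,1)$ as $m\to\infty$; the pair (A-ii) and (A-iv) is precisely the hypothesis of Lemma 1, which gives $K(\bA)/K_{1}(\bA)=1+o(1)$; and (A-i) alone is the hypothesis of Lemma 2, which gives $\widehat{K}_{1}(\bA)/K_{1}(\bA)=1+o_P(1)$.

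Next I would write the target ratio as a product of three factors:
\begin{equation*}
\frac{T(\bA)-\Delta(\bA)}{\{\widehat{K}_{1}(\bA)\}^{1/2}}
=\frac{T(\bA)-\Delta(\bA)}{\{K(\bA)\}^{1/2}}\,
\left(\frac{K(\bA)}{K_{1}(\bA)}\right)^{1/2}
\left(\frac{K_{1}(\bA)}{\widehat{K}_{1}(\bA)}\right)^{1/2}.
\end{equation*}
The first factor converges in distribution to $N(0,1)$ by Theorem 2. For the second factor, Lemma 1 gives $K(\bA)/K_{1}(\bA)\to 1$, hence its square root tends to $1$. For the third factor, Lemma 2 gives $\widehat{K}_{1}(\bA)/K_{1}(\bA)=1+o_P(1)$; since the reciprocal of a sequence converging in probability to the nonzero constant $1$ again converges in probability to $1$, we get $K_{1}(\bA)/\widehat{K}_{1}(\bA)=1+o_P(1)$, and therefore its square root is also $1+o_P(1)$. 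Multiplying the last two factors produces a quantity that is $1+o_P(1)$.

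Finally I would invoke Slutsky's theorem: the product of a sequence converging in distribution to $N(0,1)$ and a sequence converging in probability to $1$ converges in distribution to $N(0,1)$, which is the asserted conclusion. The argument is essentially bookkeeping; the only mild point of care is that $\widehat{K}_{1}(\bA)$ must eventually stay bounded away from $0$ so that its reciprocal and square root are well behaved, but this is immediate from $\widehat{K}_{1}(\bA)/K_{1}(\bA)=1+o_P(1)$ together with $K_{1}(\bA)>0$. I do not expect any genuine obstacle here, since all the substantive work has already been done in Theorem 2 and Lemmas 1 and 2.
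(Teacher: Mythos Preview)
Your proposal is correct and follows essentially the same approach as the paper: combine Theorem 2 with Lemmas 1 and 2 via a Slutsky argument. The paper compresses your three-factor decomposition into the single observation that $\widehat{K}_1(\bA)/K(\bA)=1+o_P(1)$ by first combining Lemmas 1 and 2, but the logical content is identical.
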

\vspace{9pt}
\noindent {\bf 2.2. Estimation of $\tr(\bSig_{\bsA}^2)$}
\par
\vspace{9pt}
Throughout this section, we omit the subscript with regard to the population. 
\citet{Chen:2010b} considered an unbiased estimator of $\tr(\bSig^2)$ by 
$W_{n}=\sum_{i\neq j}^n(\bx_i^T\bx_j)^2/_{n}P_{2}-2\sum_{i\neq j \neq s}^n\bx_i^T\bx_j\bx_j^T\bx_s/_{n}P_{3}+\sum_{i\neq j \neq s \neq t}^n\bx_i^T\bx_j\bx_s^T\bx_t/_{n}P_{4}$, 
where $_{n}P_{r}=n!/(n-r)!$.
\citet{Aoshima:2011} and \citet{Yata:2013a} gave a different unbiased estimator of $\tr(\bSig^2)$. 
From these backgrounds, we construct an unbiased estimator of $\tr(\bSigma_{\bsA}^2)$ as follows:
\begin{equation}
W_{n}(\bA)=
\sum_{i\neq j}^n\frac{ (\bx_i^T\bA \bx_j)^2}{_{n}P_{2}}-2\sum_{i\neq j \neq s}^n\frac{ \bx_i^T\bA \bx_j\bx_j^T\bA \bx_s}{_{n}P_{3}}+\sum_{i\neq j \neq s \neq t}^n\frac{\bx_i^T\bA \bx_j\bx_s^T\bA\bx_t}{_{n}P_{4}}.
\label{2.2}
\end{equation}
Note that $E\{W_{n}(\bA)\}=\tr(\bSigma_{\bsA}^2)$ and $W_n(\bI_p)=W_n$. 
In view of \citet{Chen:2010b}, one can claim that 
\begin{equation}
\Var\{W_{n}(\bA)/\tr(\bSigma_{\bsA}^2)\}\to  0
\label{2.3}\end{equation}
as $p\to \infty$ and $n\to \infty$ under (A-i), so that $W_{n}(\bA)=\tr(\bSigma_{\bsA}^2)\{1+o_P(1)\}$.
\\

\setcounter{chapter}{3}
\setcounter{equation}{0} 
\noindent {\bf 3. Test Procedures for Non-Strongly Spiked Eigenvalue Model}
\par
\vspace{9pt}
In this section, we consider test procedures given by $T(\bA)$ when (A-ii) is met as in the NSSE model. 
With the help of the asymptotic normality, we discuss an optimality of $T(\bA)$ for high-dimensional data.
\\

\noindent {\bf 3.1. Test Procedure by $T(\bA)$}
\par
\vspace{9pt}
Let $z_{c}$ be a constant such that $P\{N(0,1)>z_{c}\}=c$ for $c \in (0,1)$. 
For given $\alpha \in(0,1/2)$, from Corollary 1, we consider testing the hypothesis in (\ref{1.2}) by 
\begin{equation}
\mbox{rejecting $H_0$}\Longleftrightarrow T(\bA)/\{\widehat{K}_1(\bA)\}^{1/2}>z_{\alpha}.
\label{3.1}
\end{equation}
Note that the power of the test (\ref{3.1}) depends on $\Delta(\bA)$. 
We denote it by power($\Delta(\bA)$). 
Then, we have the following result. 
\begin{theorem}
Assume (A-i) and (A-ii). 
Then, the test (\ref{3.1}) has as $m\to \infty$
$$
\mbox{size}=\alpha+o(1) \ \ \mbox{and} \ \ \mbox{power$(\Delta(\bA))$}-\Phi \bigg(\frac{\Delta(\bA)}{\{K(\bA)\}^{1/2}}
-z_{\alpha}\Big( \frac{ K_{1}(\bA) }{  K(\bA) }\Big)^{1/2}  \bigg)=o(1),
$$
where $\Phi(\cdot)$ denotes the cumulative distribution function (c.d.f.) of $N(0,1)$. 
\end{theorem}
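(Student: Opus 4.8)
The plan is to derive both the size and the power from the asymptotic normality already established in Corollary~1, which under (A-i) and (A-ii) gives $\{T(\bA)-\Delta(\bA)\}/\{\widehat{K}_1(\bA)\}^{1/2}\Rightarrow N(0,1)$ as $m\to\infty$; I will also use Lemma~1, $K(\bA)/K_1(\bA)=1+o(1)$, and Lemma~2, $\widehat{K}_1(\bA)/K_1(\bA)=1+o_P(1)$. For the size, note that under $H_0$ we have $\Delta(\bA)=0$, so the rejection event $T(\bA)/\{\widehat{K}_1(\bA)\}^{1/2}>z_\alpha$ becomes $\{T(\bA)-\Delta(\bA)\}/\{\widehat{K}_1(\bA)\}^{1/2}>z_\alpha$, whose probability tends to $P\{N(0,1)>z_\alpha\}=\alpha$ by Corollary~1 (observe that (A-iv) holds automatically under $H_0$, so Corollary~1 applies). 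Hence $\mathrm{size}=\alpha+o(1)$.

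For the power, fix the alternative and work on the event that the test rejects. Writing $T(\bA)/\{\widehat{K}_1(\bA)\}^{1/2}>z_\alpha$ as
$$
\frac{T(\bA)-\Delta(\bA)}{\{\widehat{K}_1(\bA)\}^{1/2}}>z_\alpha-\frac{\Delta(\bA)}{\{\widehat{K}_1(\bA)\}^{1/2}},
$$
I would replace $\widehat{K}_1(\bA)$ by $K_1(\bA)$ using Lemma~2 (Slutsky), and then note that $\{T(\bA)-\Delta(\bA)\}/\{K_1(\bA)\}^{1/2}=[\{T(\bA)-\Delta(\bA)\}/\{K(\bA)\}^{1/2}]\cdot\{K(\bA)/K_1(\bA)\}^{1/2}$, where the first factor is asymptotically $N(0,1)$ by Theorem~2 and the second factor is $1+o(1)$ by Lemma~1. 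Taking probabilities and passing to the limit, the power equals, up to $o(1)$,
$$
P\Bigl\{N(0,1)>z_\alpha\Bigl(\tfrac{K_1(\bA)}{K(\bA)}\Bigr)^{1/2}-\tfrac{\Delta(\bA)}{\{K(\bA)\}^{1/2}}\Bigr\}
=\Phi\Bigl(\tfrac{\Delta(\bA)}{\{K(\bA)\}^{1/2}}-z_\alpha\Bigl(\tfrac{K_1(\bA)}{K(\bA)}\Bigr)^{1/2}\Bigr),
$$
which is the claimed expression.

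The one delicate point is that $\Delta(\bA)$, $K(\bA)$, $K_1(\bA)$ all vary with $m$, so the convergence in Corollary~1 and Theorem~2 must be applied uniformly enough that replacing $\widehat{K}_1(\bA)$ by $K_1(\bA)$ and $K_1(\bA)$ by $K(\bA)$ inside the threshold does not destroy the limit. The clean way to handle this is to pass to an arbitrary subsequence along which the (bounded-in-$[0,1]$) ratio $K_1(\bA)/K(\bA)$ converges and along which $\Delta(\bA)/\{K(\bA)\}^{1/2}$ either converges in $[0,\infty]$ or not; in every case the standardized statistic $\{T(\bA)-\Delta(\bA)\}/\{K(\bA)\}^{1/2}$ is asymptotically standard normal by Theorem~2, and a direct Slutsky argument on that subsequence yields the stated $\Phi(\cdot)$ formula (interpreting $\Phi(\infty)=1$ when $\Delta(\bA)/\{K(\bA)\}^{1/2}\to\infty$). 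Since the limiting expression is continuous in the relevant ratios, the subsequence principle gives the full limit. I expect this uniformity bookkeeping — rather than any new probabilistic estimate — to be the main obstacle; everything else is a mechanical combination of Corollary~1, Theorem~2, and Lemmas~1--2.
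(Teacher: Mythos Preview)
Your approach is the same route as the paper: rewrite the rejection event, replace $\widehat K_1(\bA)$ by $K_1(\bA)$ via Lemma~2, invoke the asymptotic normality, and then run a subsequence argument on $\Delta/K^{1/2}$ (the paper uses $\Delta/K_1^{1/2}$, which is equivalent under (A-ii)). The size part is correct: under $H_0$ one has $\Delta(\bA)=0$, so (A-iv) holds and Corollary~1 applies directly.

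There is, however, a real gap in the power argument. You assert that ``in every case the standardized statistic $\{T(\bA)-\Delta(\bA)\}/\{K(\bA)\}^{1/2}$ is asymptotically standard normal by Theorem~2'', but Theorem~2 requires, in addition to (A-i) and (A-ii), either (A-iv) or (A-v). On a subsequence where $\Delta(\bA)/\{K(\bA)\}^{1/2}\to\infty$ one gets $\Delta^2/K_1\to\infty$ (since $K_1\le K$), so (A-iv) fails, and nothing forces (A-v); hence Theorem~2 is not available there. The same problem undermines your earlier appeal to Lemma~1 (``the second factor is $1+o(1)$''), which also requires (A-iv). The paper handles the unbounded case not through the CLT but through the consistency result Theorem~1: under (A-iii) one has $T/\Delta=1+o_P(1)$ and $\widehat K_1^{1/2}/\Delta=o_P(1)$, giving $P(T/\widehat K_1^{1/2}>z_\alpha)\to 1$ directly, and one separately checks that $K/\Delta^2\to 0$ so the target expression $\Phi(\Delta/K^{1/2}-z_\alpha(K_1/K)^{1/2})\to 1$ as well. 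On the bounded subsequence (A-iv) does hold, so Theorem~2 and Lemma~1 apply as you intended. Making this dichotomy explicit---Theorem~2 on the bounded piece, Theorem~1 on the unbounded piece---closes the gap and is exactly what the paper does.
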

When (A-iii), (A-iv) or (A-v) is met under $H_1$, we have the following result from Theorem 3.
\begin{corollary}
Assume (A-i). 
Then, under $H_1$, the test (\ref{3.1}) has as $m\to \infty$
\begin{align*}
&\mbox{power$(\Delta(\bA))$}=1+o(1)\quad \mbox{under (A-iii)}; \\
&\mbox{power$(\Delta(\bA))$}-\Phi \Big(\frac{\Delta(\bA)}{\{K_{1}(\bA)\}^{1/2}}-z_{\alpha} \Big)=o(1)\quad \mbox{under (A-ii) and (A-iv)};\\
\mbox{and}\quad 
&\mbox{power$(\Delta(\bA))$}-\Phi \Big(\frac{\Delta(\bA)}{\{K_{2}(\bA)\}^{1/2}} \Big)=o(1)\quad \mbox{under (A-v)}.
\end{align*}
\end{corollary}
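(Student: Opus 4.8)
The plan is to obtain all three statements from Theorem 3. Since $H_1$ is assumed, Theorem 3 already gives
$$
\mbox{power}(\Delta(\bA))-\Phi(g_m)=o(1),\qquad g_m:=\frac{\Delta(\bA)}{\{K(\bA)\}^{1/2}}-z_{\alpha}\Big(\frac{K_{1}(\bA)}{K(\bA)}\Big)^{1/2},
$$
where $K(\bA)=K_{1}(\bA)+K_{2}(\bA)$. So it remains only to determine the limiting behaviour of $\Phi(g_m)$ in the three regimes, which I would do by reading off from each set of assumptions whether $g_m$ stays bounded or diverges, and then invoking the uniform continuity of $\Phi$ on $\mathbb{R}$ (so that $|a_m-b_m|\to 0$ forces $\Phi(a_m)-\Phi(b_m)\to 0$) together with $\Phi(+\infty)=1$.

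\emph{Case (A-iii).} First I would record the auxiliary bound $K_{2}(\bA)=O(\Delta(\bA)\{K_{1}(\bA)\}^{1/2})$. For each $i$, since $\bSigma_{i,\bsA}$ is positive-semidefinite, $\bmu_{\bsA}^T\bSigma_{i,\bsA}\bmu_{\bsA}\le\lambda_{\max}(\bSigma_{i,\bsA})\,\Delta(\bA)\le\{\tr(\bSigma_{i,\bsA}^2)\}^{1/2}\Delta(\bA)$, while $n_i\ge 4$ gives $\{\tr(\bSigma_{i,\bsA}^2)\}^{1/2}/n_i\le\{K_{1}(\bA)\}^{1/2}$; multiplying and summing over $i$ proves the bound. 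Consequently $K(\bA)/\{\Delta(\bA)\}^2\le K_{1}(\bA)/\{\Delta(\bA)\}^2+O((K_{1}(\bA)/\{\Delta(\bA)\}^2)^{1/2})\to 0$ under (A-iii), so $\Delta(\bA)/\{K(\bA)\}^{1/2}\to\infty$; since the subtracted term lies in $[0,z_{\alpha}]$, this forces $g_m\to\infty$, hence $\Phi(g_m)\to 1$ and power$(\Delta(\bA))=1+o(1)$. (One may also avoid Theorem 3 here: Theorem 1 gives $T(\bA)=\Delta(\bA)\{1+o_P(1)\}$, Lemma 2 gives $\widehat{K}_{1}(\bA)=K_{1}(\bA)\{1+o_P(1)\}$, and $\Delta(\bA)/\{K_{1}(\bA)\}^{1/2}\to\infty$ under (A-iii), so $T(\bA)/\{\widehat{K}_{1}(\bA)\}^{1/2}\to\infty$ in probability.)

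\emph{Case (A-ii) and (A-iv).} Lemma 1 gives $K(\bA)/K_{1}(\bA)=1+o(1)$, while (A-iv) keeps $\Delta(\bA)/\{K_{1}(\bA)\}^{1/2}$ bounded; substituting into $g_m$ yields $g_m=\Delta(\bA)/\{K_{1}(\bA)\}^{1/2}-z_{\alpha}+o(1)$, so uniform continuity of $\Phi$ gives $\Phi(g_m)=\Phi(\Delta(\bA)/\{K_{1}(\bA)\}^{1/2}-z_{\alpha})+o(1)$, which is the second assertion.

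\emph{Case (A-v).} Now $K_{1}(\bA)/K_{2}(\bA)\to 0$ gives $K(\bA)=K_{2}(\bA)\{1+o(1)\}$ and $K_{1}(\bA)/K(\bA)\to 0$, hence $g_m=\Delta(\bA)/\{K_{2}(\bA)\}^{1/2}\cdot\{1+o(1)\}-o(1)$. Since $\Delta(\bA)/\{K_{2}(\bA)\}^{1/2}$ need not be bounded, I would pass to subsequences on which it converges in $[0,\infty]$: on a subsequence with finite limit, $g_m-\Delta(\bA)/\{K_{2}(\bA)\}^{1/2}\to 0$ and uniform continuity applies; on a subsequence with limit $\infty$, both $g_m$ and $\Delta(\bA)/\{K_{2}(\bA)\}^{1/2}$ diverge, so both $\Phi$-values tend to $1$. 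Either way $\Phi(g_m)-\Phi(\Delta(\bA)/\{K_{2}(\bA)\}^{1/2})\to 0$ along the subsequence, and the subsequence principle upgrades this to $o(1)$, giving the third assertion. The only non-routine ingredients are the bound $K_{2}(\bA)=O(\Delta(\bA)\{K_{1}(\bA)\}^{1/2})$, which prevents $K_{2}(\bA)$ from swamping $K_{1}(\bA)$ relative to $\{\Delta(\bA)\}^2$ in the (A-iii) regime, and the subsequence bookkeeping needed in the (A-v) regime because the target argument $\Delta(\bA)/\{K_{2}(\bA)\}^{1/2}$ may diverge; the rest is substitution into the conclusion of Theorem 3.
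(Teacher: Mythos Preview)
Your plan has a hypothesis gap: Theorem 3 is stated under (A-i) \emph{and} (A-ii), whereas the (A-iii) and (A-v) clauses of Corollary 2 assume only (A-i). So the opening step ``Theorem 3 already gives power$(\Delta(\bA))-\Phi(g_m)=o(1)$'' is not available in those two cases. There is also a logical-ordering issue: in the appendix the paper proves Corollary 2 \emph{before} Theorem 3, and then invokes the (A-iii) clause of Corollary 2 inside the proof of Theorem 3; so deriving Corollary 2 from Theorem 3 would be circular as the paper is organized.

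Your parenthetical alternative for (A-iii), via Theorem 1 and Lemma 2, is precisely the paper's argument and repairs that case. For (A-ii) with (A-iv), your route and the paper's coincide in substance (the paper manipulates $P(T/\widehat K_1^{1/2}>z_\alpha)$ directly with Theorem 2 and Lemmas 1--2, which is what underlies Theorem 3 anyway). For (A-v) the paper bypasses Theorem 3 and argues straight from Theorem 2---whose (A-v) branch does \emph{not} require (A-ii)---together with Lemma 2 and $K/K_2\to 1$. An even shorter fix is already available from your own ingredients: Proposition 1 gives (A-v)$\Rightarrow$(A-iii), and your bound $K_2(\bA)=O(\Delta(\bA)\{K_1(\bA)\}^{1/2})$ then forces $\Delta(\bA)^2/K_2(\bA)\ge \Delta(\bA)/(8\{K_1(\bA)\}^{1/2})\to\infty$, so $\Phi(\Delta(\bA)/\{K_2(\bA)\}^{1/2})\to 1$ while power$\to 1$ by the (A-iii) result you already established. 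In particular your subsequence bookkeeping in the (A-v) case is unnecessary---only the divergent branch ever occurs.
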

\vspace{9pt}
\noindent {\bf 3.2. Choice of $\bA$ in (\ref{3.1})}
\par
\vspace{9pt}
First, we consider the case when (A-v) is met under $H_1$. 
From Corollary 2, we simply write that 
$$
\mbox{power$(\Delta(\bA))$}\approx \Phi \big({\Delta(\bA)}/{\{K_{2}(\bA)\}^{1/2}} \big).
$$
Let $\bA_{\star}=c_{\star}(\bSig_1/n_1+\bSig_2/n_2)^{-1}$ with $c_{\star}=1/n_1+1/n_2$. 
Note that $\bA_{\star}=\bSig^{-1}$ when $\bSig_1=\bSig_2\ (=\bSig)$. 
Also, note that $\Delta(\bA_{\star})=(\bmu_1-\bmu_2)^T\bSig^{-1}(\bmu_1-\bmu_2)\ (=\Delta_{MD},\ \mbox{say})$ when $\bSig_1=\bSig_2$, 
where $\Delta_{MD}^{1/2}$ is the Mahalanobis distance. 
Then, from Proposition S1.1 of the supplementary material, 
$\bA_{\star}$ maximizes $\Delta(\bA)/\{K_{2}(\bA)\}^{1/2}$ over the set of positive-definite matrices of dimension $p$.
Here, let us consider (A-v). 
Note that 
$
c_{\star}^2p
=c_{\star}^2\tr\{(\bA_{\star} \bA_{\star}^{-1})^2\}=
\sum_{i=1}^2
\tr\{(\bSig_i\bA_{\star})^2\}/n_i^2
+2
\tr(\bSig_1\bA_{\star}\bSig_2\bA_{\star} )/(n_1 n_2),
$
so that $K_{1}(\bA_{\star})=2c_{\star}^2p\{1+o(1)\}$ as $m\to \infty$. 
Also, note that $K_{2}(\bA_{\star})=4c_{\star} \Delta(\bA_{\star})$. 
Thus, if (A-v) is met, it holds that as $m\to \infty$ 
$$
{K_{1}(\bA_{\star})}/{K_{2}(\bA_{\star})}
=O\big({pc_{\star}}/{\Delta(\bA_{\star})} \big)
=O\big( {p}/\{n_{\min}\Delta(\bA_{\star})\}\big)\to 0
\quad \mbox{as $m\to \infty$}.
$$
However, such a situation is severe for high-dimensional data.
For example, when $\bSig_1=\bSig_2$ and the Mahalanobis distance is bounded as $\limsup_{p\to \infty}\Delta_{MD}<\infty$, the sample size should be large enough to claim $n_{\min}/p\to \infty$ because $\Delta(\bA_{\star})=\Delta_{MD}$. 
Hence, we have to say that (A-v) is quite strict for high-dimensional data. 
To begin with, from Proposition 1 and Corollary 2, for any choice of $\bA$ in (\ref{3.1}), it holds that $\mbox{power$(\Delta(\bA))$}=1+o(1)$ under (A-v). 
Hence, the optimal choice of $\bA$ does not make much improvement in the power when (A-v) is met.  
On the other hand, when (A-v) is not met (i.e., (A-iv) is met), the test (\ref{3.1}) has 
$$
\mbox{power$(\Delta(\bA))$}\approx \Phi \big( {\Delta(\bA)}/\{K_{1}(\bA)\}^{1/2}-z_{\alpha} \big)
$$
from Corollary 2.
In this case, $\bA_{\star}$ is not the optimal choice any longer. 
Because of the above reasons, we do not recommend to use a test procedure based on the Mahalanobis distance such as (\ref{3.1}) with $\bA=\bA_{\star}$.
In addition, it is difficult to estimate $\bA_{\star}$ for high-dimensional data unless the $\bSig_i$s are sparse. 
When the $\bSig_i$s are sparse, see \citet{Bickel:2008}.
 
\citet{Srivastava:2013} considered a two-sample test by using $\bA_{\star(d)}=c_{\star}(\bSig_{1(d)}/n_1+\bSig_{2(d)}/n_2)^{-1}$ for $\bA$, where $\bSig_{i(d)}=\mbox{diag}(\sigma_{i(1)},...,\sigma_{i(p)})$ with $\sigma_{i(j)}\ (>0)$ the $j$-th diagonal element of $\bSig_i$ for $i=1,2;\ j=1,...,p$. 
However, we do not recommend to choose $\bA_{\star(d)}$ unless (A-v) is met and the $\bSig_i$s are diagonal matrices. 
If (A-ii) is met as in the NSSE model, we rather recommend to choose $\bA=\bI_p$ in (\ref{3.1}) that yields the distance-based two-sample test. 
When $\bA=\bI_p$, it is not necessary to estimate $\bA$ and it is quite flexible for high-dimension, non-Gaussian data.
See Section 2 of \citet{Aoshima:2015} for the details.  
\\

\noindent {\bf 3.3. Simulations}
\par
\vspace{9pt}
We used computer simulations to study the performance of the test procedure given by (\ref{3.1}) when $\bA=\bI_p$, $\bA=\bA_{\star}$, $\bA=\bA_{\star(d)}$ and $\bA=\widehat{\bA}_{\star(d)}$.
Here, $\widehat{\bA}_{\star(d)}=c_{\star}(\bS_{1n_1(d)}/n_1+\bS_{2n_2(d)}/n_2)^{-1}$, where $\bS_{in_i(d)}=\mbox{diag}(s_{in_i1},...,s_{in_ip})$, $i=1,2,$ with $s_{in_ij}$ the $j$-th diagonal element of $\bS_{in_i}$. 
\citet{Srivastava:2013} considered a test procedure given by $T(\widehat{\bA}_{\star(d)})$.
We set $\alpha=0.05$. 
Independent pseudo-random observations were generated from $\pi_i:N_p(\bmu_i,\bSig_i)$,\ $i=1,2$. 
We set $p=2^{s},\ s=4,...,10$ and $n_1=n_2=\lceil p^{1/2} \rceil$, where $\lceil x \rceil $ denotes the smallest integer $\ge x$.
We set $\bmu_1=\bze$ and $\bSig_1=\bSig_2=\bC( 0.3^{|i-j|^{1/2}})\bC$, where 
$
\bC=\mbox{diag}[\{0.5+1/(p+1)\}^{1/2},...,\{0.5+p/(p+1)\}^{1/2} ].
$
We considered three cases: (a) $\bmu_2=\bze$, (b) $\bmu_{2}=(1,...,1,0,...,0)^T$ whose first $10$ elements are $1$,  
and (c) $\bmu_{2}=(0,...,0,1,...,1)^T$ whose last $10$ elements are $1$. 
When $\bA=\bI_p$, $\bA=\bA_{\star}$ and $\bA=\bA_{\star(d)}$, 
we note that (A-ii) and (A-iv) are met for (a), (b) and (c). 

We checked the performance of the test procedures given by (\ref{3.1}) with (I) $\bA=\bI_p$, (II) $\bA=\bA_{\star}$, (III) $\bA=\bA_{\star(d)}$ and (IV) $\bA=\widehat{\bA}_{\star(d)}$.
The findings were obtained by averaging the outcomes from 2000 ($=R$, say) replications in each situation. 
We defined $P_{r}=1\ (\mbox{or}\ 0)$ when $H_0$ was falsely rejected (or not) for $r=1,...,2000$ for (a) and defined $\overline{\alpha}=\sum_{r=1}^{R}P_{r}/R$ to estimate the size. 
We also defined $P_{r}=1\ (\mbox{or}\ 0)$ when $H_1$ was falsely rejected (or not) for $r=1,...,2000$ for (b) and (c) and defined $1-\overline{\beta}=1-\sum_{r=1}^{R}P_{r}/R$ to estimate the power. 
Note that their standard deviations are less than $0.011$. 
In Fig. 1, we plotted $\overline{\alpha}$ for (a) and $1-\overline{\beta}$ for (b) and (c). 
We also plotted the asymptotic power, $\Phi(\Delta(\bA)/\{K(\bA)\}^{1/2}-z_{\alpha}\{K_1(\bA)/K(\bA)\}^{1/2})$, for (I) to (III) by using Theorem 3. 
\begin{figure}[h!]
\includegraphics[scale=0.46]{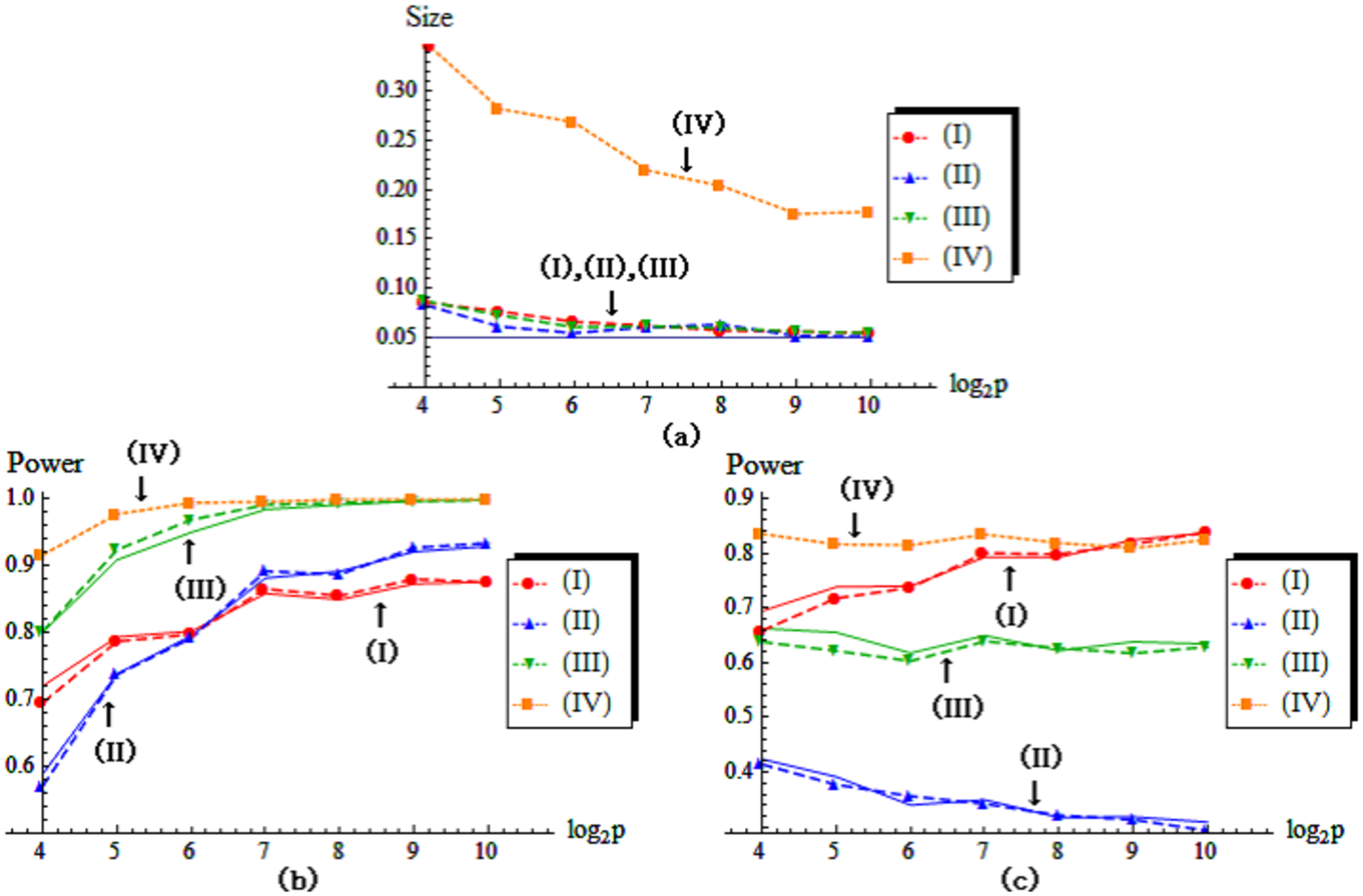} 
\caption{Tests by (\ref{3.1}) when (I) $\bA=\bI_p$, (II) $\bA=\bA_{\star}$, (III) $\bA=\bA_{\star(d)}$ and (IV) $\bA=\widehat{\bA}_{\star(d)}$. 
The values of $\overline{\alpha}$ are denoted by the dashed lines in the top panel.
The values of $1-\overline{\beta}$ are denoted by the dashed lines in the left panel for (b) and in the right panel for (c). 
The asymptotic powers were given by $\Phi(\Delta(\bA)/\{K(\bA)\}^{1/2}-z_{\alpha}\{K_1(\bA)/K(\bA)\}^{1/2})$ for (I) to (III) which are denoted by the solid lines both in the panels. 
}
\end{figure}
As expected theoretically, we observed that the plots become close to the theoretical values. 
The test with (II) gave a better performance compared to (I) for (b); however, it gave quite a bad performance for (c). 
We note that the test procedure based on the Mahalanobis distance does not always give a preferable performance for high-dimensional data even when the population distributions are Gaussian having a known and common covariance matrix.
See Section 3.2 for the details. 
On the other hand, we observed that the test with (III) gives a good performance compared to (I) for (b); however, they trade places for (c). 
This is because $\Delta(\bI_p)<\Delta(\bA_{\star(d)})$ for (b) and $\Delta(\bI_p)>\Delta(\bA_{\star(d)})$ for (c) when $p$ is sufficiently large. 
The test with (IV) gave quite a bad performance because the size for (IV) was much higher than $\alpha$ even when $p$ and $n_{i}$s are large. 
Hence, we do not recommend to use the test procedures based on the Mahalanobis distance or the diagonal matrices unless 
$n_{i}$s are quite large enough to claim (A-v). 

We also checked the performance of the test procedures by (\ref{3.1}) for the multivariate skew normal (MSN) distribution. 
See \cite{Azzalini:1996} for the details of the MSN distribution.
We observed the performance similar to that in Fig 1. 
We gave the results in Section S4.1 of the supplementary material. 
\\

\setcounter{chapter}{4}
\setcounter{equation}{0} 
\noindent {\bf 4. Test Procedures for Strongly Spiked Eigenvalue Model}
\par
\vspace{9pt}
In this section, we consider test procedures when (A-ii) is not met as in the SSE model. 
We emphasize that high-dimensional data often have the SSE model. 
See Fig. 1 in \citet{Yata:2013b} or Section S3 of the supplementary material as well. 
In case of (A-iv), $T(\bA)$ does not satisfy the asymptotic normality in Theorem 2, so that one cannot use the test (\ref{3.1}). 
For example, as for $T(\bI_p)$, we cannot claim either (\ref{1.4}) or ``size$=\alpha+o(1)$" under the SSE model. 
In such situations, we consider alternative test procedures.
\\

\noindent {\bf 4.1. Distance-Based Two-Sample Test}
\par
\vspace{9pt}
We simply write $T_I=T(\bI_p)$, $K_{1(I)}=K_1(\bI_p)$ and $\widehat{K}_{1(I)}=\widehat{K}_1(\bI_p)$ when $\bA=\bI_p$. 
For the SSE model, \citet{Katayama:2013} considered a one sample test. 
\citet{Ma:2015} considered a two sample test for a factor model which is a special case of the SSE model.
\citet{Katayama:2013} showed that a test statistic is asymptotically distributed as a $\chi^2$ distribution under the Gaussian assumption. 
For the two sample test in (\ref{1.2}), we have the following result. 
\begin{theorem}
Assume
\begin{equation}
|\bh_{11}^T\bh_{21}|=1+o(1) \quad \mbox{and} \quad \Psi_{i(2)}/\lambda_{i1}^{2}\to 0, \  i=1,2, \ \mbox{as $p\to \infty$},
\label{4.1}
\end{equation}
where
$$
\Psi_{i(s)}
= \sum_{j=s}^p\lambda_{ij}^2\quad \mbox{for $i=1,2$; $s=1,...,p$.}
$$
Then, it holds that $(2/K_{1(I)})^{1/2}T_I+1 \Rightarrow  \chi_1^2$ as $m\to \infty$ under $H_0$,
where $\chi_\nu^2$ denotes a random variable having a $\chi^2$ distribution with $\nu$ degrees of freedom. 
\end{theorem}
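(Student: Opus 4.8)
The plan is to decompose $T_I$ under $H_0$ into a dominant rank-one term coming from the leading eigenstructure and a remainder term that is asymptotically negligible, show the remainder vanishes in probability after the correct normalization, and then identify the limit of the dominant term as a shifted $\chi^2_1$. Write $\bx_{ij}-\bmu_i = \sum_{k=1}^p \lambda_{ik}^{1/2}\bh_{ik} z_{ikj}$ using the eigen-decomposition model. Under $H_0$, $T_I = \sum_{i=1}^2 \frac{2}{n_i(n_i-1)}\sum_{j<j'} (\bx_{ij}-\bmu_i)^T(\bx_{ij'}-\bmu_i) - 2(\overline{\bx}_{1n_1}-\bmu)^T(\overline{\bx}_{2n_2}-\bmu)$; substituting the expansion, the term with $k=k'=1$ in each quadratic form dominates because $\lambda_{i1}^2$ dominates $\Psi_{i(2)}=\sum_{k\ge 2}\lambda_{ik}^2$ by the second part of (\ref{4.1}), and the first eigenvectors are essentially aligned ($|\bh_{11}^T\bh_{21}|=1+o(1)$) by the first part, so the cross term also concentrates on the leading direction.

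Concretely, I would define the scalar quantities $u_{ij}=\bh_{i1}^T(\bx_{ij}-\bmu_i)/\lambda_{i1}^{1/2}=z_{i1j}$ (mean zero, variance one, i.i.d. in $j$) and set $\overline{u}_{i}=\sum_j u_{ij}/n_i$. The leading part of $T_I$ is
$$
\lambda_{11}\!\left(\tfrac{n_1}{n_1-1}\overline{u}_1^2 - \tfrac{1}{n_1-1}\tfrac{\sum_j u_{1j}^2}{n_1}\right) + \lambda_{21}\!\left(\tfrac{n_2}{n_2-1}\overline{u}_2^2 - \tfrac{1}{n_2-1}\tfrac{\sum_j u_{2j}^2}{n_2}\right) - 2(\bh_{11}^T\bh_{21})\lambda_{11}^{1/2}\lambda_{21}^{1/2}\,\overline{u}_1\overline{u}_2 .
$$
Since $n_i\to\infty$, the $\frac{1}{n_i-1}$-weighted sample-second-moment terms are $o_P$ of the rest, and $\frac{n_i}{n_i-1}\to 1$, so this is $\lambda_{11}\overline{u}_1^2 + \lambda_{21}\overline{u}_2^2 - 2\lambda_{11}^{1/2}\lambda_{21}^{1/2}\overline{u}_1\overline{u}_2 + o_P(\cdot) = (\lambda_{11}^{1/2}\overline{u}_1 - \lambda_{21}^{1/2}\overline{u}_2)^2 + o_P(\cdot)$, using $\bh_{11}^T\bh_{21}=\pm(1+o(1))$ — here I would absorb the sign of $\bh_{11}^T\bh_{21}$ by replacing $\bh_{21}$ with $\pm\bh_{21}$ without loss. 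The quantity $V:=\lambda_{11}^{1/2}\overline{u}_1 - \lambda_{21}^{1/2}\overline{u}_2$ is a centered sum of independent scalars with $\Var(V)=\lambda_{11}/n_1+\lambda_{21}/n_2$; its fourth moment is controlled by (A-i), so by the Lyapunov CLT $V/\{\Var(V)\}^{1/2}\Rightarrow N(0,1)$, hence $V^2/\Var(V)\Rightarrow\chi^2_1$. Finally one checks $\Var(V) = \frac12 K_{1(I)}(1+o(1))$ under (\ref{4.1}), because $K_{1(I)}=2\sum_i \tr(\bSig_i^2)/\{n_i(n_i-1)\} + 4\tr(\bSig_1\bSig_2)/(n_1n_2)$ and each $\tr(\bSig_i^2)=\lambda_{i1}^2(1+o(1))$ while $\tr(\bSig_1\bSig_2)=\lambda_{11}\lambda_{21}|\bh_{11}^T\bh_{21}|^2(1+o(1))=\lambda_{11}\lambda_{21}(1+o(1))$, all by (\ref{4.1}); combined with $n_i(n_i-1)=n_i^2(1+o(1))$ this gives $K_{1(I)}=2(\lambda_{11}/n_1+\lambda_{21}/n_2)^2(1+o(1))$, i.e. $(2/K_{1(I)})^{1/2}V^2 = V^2/\Var(V)(1+o(1))$. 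Therefore $(2/K_{1(I)})^{1/2}T_I + 1 \Rightarrow \chi^2_1$.

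For the negligibility of the remainder, I would split $\bx_{ij}-\bmu_i$ into the leading component $\lambda_{i1}^{1/2}\bh_{i1}z_{i1j}$ and the residual $\br_{ij}=\sum_{k\ge2}\lambda_{ik}^{1/2}\bh_{ik}z_{ikj}$, so $T_I$ expands into the pure-leading term handled above, pure-residual bilinear forms, and leading–residual cross bilinear forms. The pure-residual piece has variance of order $\sum_i \tr(\bSig_i^{(\ge2)2})/n_i^2 + \tr(\bSig_1^{(\ge2)}\bSig_2^{(\ge2)})/(n_1n_2)$ where $\bSig_i^{(\ge2)}=\sum_{k\ge2}\lambda_{ik}\bh_{ik}\bh_{ik}^T$, which is $O(\max_i \Psi_{i(2)}/n_i^2 + (\Psi_{1(2)}\Psi_{2(2)})^{1/2}/(n_1n_2))$ by Cauchy–Schwarz, and this is $o(K_{1(I)}) = o((\lambda_{11}/n_1+\lambda_{21}/n_2)^2)$ precisely by $\Psi_{i(2)}/\lambda_{i1}^2\to0$. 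The leading–residual cross terms have mean zero and, by independence of the $z_{ikj}$ across $k$ together with (A-i), variance of order $(\lambda_{11}+\lambda_{21})(\Psi_{1(2)}^{1/2}+\Psi_{2(2)}^{1/2})/n_{\min}^2$ or smaller, again $o(K_{1(I)})$. Thus $(2/K_{1(I)})^{1/2}(\text{remainder}) = o_P(1)$, and Slutsky's theorem completes the argument.

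The main obstacle I anticipate is the careful bookkeeping of the leading–residual cross terms and of the discrepancy $1-|\bh_{11}^T\bh_{21}|$ in the cross population term: one must verify that the approximation $\bh_{11}^T\bh_{21}=\pm(1+o(1))$ feeds into $\tr(\bSig_1\bSig_2)$ and into the $2\,\overline{u}_1\overline{u}_2$ coefficient with an error that is genuinely $o_P$ relative to $K_{1(I)}^{1/2}$ — this requires that $(1-|\bh_{11}^T\bh_{21}|)\lambda_{11}^{1/2}\lambda_{21}^{1/2}/n_{\min}$ be negligible against $(\lambda_{11}/n_1+\lambda_{21}/n_2)$, which follows from the first condition in (\ref{4.1}) but must be spelled out. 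The moment/CLT step itself is routine given (A-i), and the reduction to the scalar $V^2$ is where all the structural hypotheses of (\ref{4.1}) are consumed.
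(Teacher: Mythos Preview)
Your overall strategy---split $T_I$ into the rank-one leading piece and a residual, identify the leading piece with $V^2$ where $V=\lambda_{11}^{1/2}\overline{u}_1-\lambda_{21}^{1/2}\overline{u}_2$, apply Lyapunov's CLT to $V$, and bound the residual variance via $\Psi_{i(2)}/\lambda_{i1}^2\to 0$---is exactly the paper's approach, and your remainder bounds and the identification $K_{1(I)}=2\psi^2\{1+o(1)\}$ with $\psi=\lambda_{11}/n_1+\lambda_{21}/n_2$ are correct.

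There is, however, one concrete error that breaks the conclusion. You write that ``the $\tfrac{1}{n_i-1}$-weighted sample-second-moment terms are $o_P$ of the rest.'' They are not. Since $n_i^{-1}\sum_j u_{ij}^2=1+o_P(1)$ by the law of large numbers, the term $\lambda_{i1}\cdot\tfrac{1}{n_i-1}\cdot n_i^{-1}\sum_j u_{ij}^2$ equals $\lambda_{i1}/n_i+o_P(\lambda_{i1}/n_i)$, which is of the \emph{same} order $O_P(\psi)$ as $V^2$ itself (recall $V=O_P(\psi^{1/2})$). Keeping these terms, the leading part of $T_I$ is
\[
V^2-\psi+o_P(\psi),
\]
not $V^2+o_P(\psi)$. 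After multiplying by $(2/K_{1(I)})^{1/2}=\psi^{-1}\{1+o(1)\}$, the $-\psi$ contributes exactly the $-1$ that is cancelled by the ``$+1$'' in the statement; dropping it would give $(2/K_{1(I)})^{1/2}T_I+1\Rightarrow\chi_1^2+1$, which is wrong. The paper handles this by writing $T_I=\sum_i\lambda_{i1}(\bar z_{i1}^2-v_{i(1)}/n_i)-2(\lambda_{11}\lambda_{21})^{1/2}(\bh_{11}^T\bh_{21})\bar z_{11}\bar z_{21}+o_P(\psi)$ and using $v_{i(1)}=1+o_P(1)$ to extract $-\psi$ explicitly. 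Once you restore that $-\psi$, your argument goes through and matches the paper's.
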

We test (\ref{1.2}) by
\begin{equation}
\mbox{rejecting $H_0$}\Longleftrightarrow (2/\widehat{K}_{1(I)})^{1/2}T_I+1>\chi_1^2(\alpha), 
\label{4.2}
\end{equation}
where $\chi_1^2(\alpha)$ denotes the $(1-\alpha)$th quantile of $\chi_1^2$. 
Note that $\widehat{K}_{1(I)}/K_{1(I)}=1+o_P(1)$ as $m\to \infty$ under (A-i).
Then, from Theorem 4, 
the test (\ref{4.2}) ensures that size$=\alpha+o(1)$ as $m\to \infty$ under (A-i). 

We note that ``$|\bh_{11}^T\bh_{21}|=1+o(1)$ as $p\to \infty$" in (\ref{4.1}) is not a general condition for high-dimensional data, so that it is necessary to check the condition in actual data analyses. 
See Lemma 4.1 in \citet{Ishii:2016} for checking the condition. 
When (\ref{4.1}) is not met, the test (\ref{4.2}) cannot ensure the accuracy. 

\vspace{9pt}
\noindent {\bf 4.2. Test Statistics Using Eigenstructures}
\par
\vspace{9pt}
We consider the following model: 
\begin{description}
\item[(A-vi)]\quad For $i=1,2$, there exists a positive fixed integer $k_i$ such that $\lambda_{i1},...,\lambda_{ik_i}$ are distinct in the sense that $\liminf_{p\to \infty}(\lambda_{ij}/\lambda_{ij'}-1)>0$ when $1\le j<j'\le k_i$, and $\lambda_{ik_i}$ and $\lambda_{ik_i+1}$ satisfy 
$$
\liminf_{p\to \infty}\frac{\lambda_{i k_i }^2}{\Psi_{i(k_i)} }>0
\ \ \mbox{and} \ \ \frac{\lambda_{ik_i+1}^2}{ \Psi_{i(k_i+1)} }\to 0 \ \ \mbox{as $p\to \infty$}.
$$ 
\end{description}
Note that (A-vi) implies (\ref{1.7}), that is (A-vi) is one of the SSE models.
(A-vi) is also a power spiked model given by \citet{Yata:2013b}. 
For the spiked model in (\ref{1.6}), (A-vi) holds under the conditions that $\alpha_{ik_i}\ge 1/2$, $a_{ij} \neq a_{ij'}$ for $1\le j <j'\le k_i\ (< t_i)$ and $\alpha_{ik_i+1}< 1/2$ for $i=1,2$. 
We consider the following test statistic with positive-semidefinite matrices, $\bA_i,\ i=1,2,$ of dimension $p$: 
$$
T(\bA_{1},\bA_{2})=2\sum_{i=1}^2\frac{\sum_{ j<j'  }^{n_i}\bx_{ij}^T\bA_i\bx_{ij'} }{n_i(n_i-1)}-
2\overline{\bx}_{1n_1}^T\bA_1^{1/2}\bA_2^{1/2} \overline{\bx}_{2n_2}.
$$
We do not recommend to choose $\bA_i=\bSig_i^{-1},\ i=1,2$. 
See Section S1.2 in the supplementary material for the details. 
In addition, it is difficult to estimate $\bSig_i^{-1}$s for high-dimension, non-sparse data. 
Here, we consider $\bA_i$s as 
$$
\bA_{i(k_i)}=\bI_p-\sum_{j=1}^{k_i}\bh_{ij}\bh_{ij}^T=\sum_{j=k_i+1}^{p}\bh_{ij}\bh_{ij}^T \quad \mbox{for $i=1,2$}. 
$$
Note that $\bA_{i(k_i)}=\bA_{i(k_i)}^{1/2}$. 
Let us write that $\bmu_{*}=\bA_{1(k_1)}\bmu_1-\bA_{2(k_2)}\bmu_2$ and $\bSig_{i*}={\bA}_{i(k_i)}\bSig_i{\bA}_{i(k_i)}=\sum_{j=k_i+1}^p\lambda_{ij}\bh_{ij}\bh_{ij}^T$ for $i=1,2$. 
Let $T_*=T(\bA_{1(k_1)},\bA_{2(k_2)})$, $\Delta_*=||\bmu_{*}||^2$ and 
$K_*=K_{1*}+K_{2*}$, 
where 
$$
K_{1*}=2 \sum_{i=1}^2 \frac{\tr(\bSig_{i*}^2)}{n_i(n_i-1)}+4 \frac{\tr(\bSig_{1*}\bSig_{2*})}{n_1n_2}
\quad \mbox{and} \quad 
 K_{2*}=4\sum_{i=1}^2 \frac{\bmu_{*}^T\bSig_{i*}\bmu_{*}}{n_i}.
$$ 
Note that $E(T_*)=\Delta_*$ and $\Var(T_*)=K_*$. 
Also, we note that $\tr(\bSig_{i*}^2)=\Psi_{i(k_i+1)}$ and $\lambda_{\max}(\bSig_{i*})=\lambda_{k_i+1}$ for $i=1,2,$ so that
$$
\lambda_{\max}^2(\bSig_{i*})/\tr(\bSig_{i*}^2)\to 0 \ \ \mbox{as $p\to \infty$ for $i=1,2$, under (A-vi)}. 
$$
From Theorem 2, we have the following result.
\begin{corollary} 
Assume (A-i) and $\limsup_{m\to \infty}\Delta_*^2/K_{1*}<\infty$. 
Then, under (A-vi), it holds that 
$(T_*-\Delta_* )/K_*^{1/2}\Rightarrow N(0,1)$ as $m\to \infty$. 
\end{corollary}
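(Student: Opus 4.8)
The plan is to reduce the assertion to Theorem 2 by the linear transformation that ``projects out'' the leading $k_i$ eigendirections of each population. First I would introduce the transformed observations $\by_{ij}=\bA_{i(k_i)}\bx_{ij}$ for $i=1,2$ and $j=1,\dots,n_i$, and set $\overline{\by}_{in_i}=\sum_{j=1}^{n_i}\by_{ij}/n_i=\bA_{i(k_i)}\overline{\bx}_{in_i}$. Since each $\bA_{i(k_i)}$ is a symmetric idempotent matrix, we have $\bA_{i(k_i)}^{1/2}=\bA_{i(k_i)}=\bA_{i(k_i)}^2$; hence $\bx_{ij}^T\bA_{i(k_i)}\bx_{ij'}=\by_{ij}^T\by_{ij'}$ and $\overline{\bx}_{1n_1}^T\bA_{1(k_1)}^{1/2}\bA_{2(k_2)}^{1/2}\overline{\bx}_{2n_2}=\overline{\by}_{1n_1}^T\overline{\by}_{2n_2}$. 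Substituting these identities into the definition of $T_*=T(\bA_{1(k_1)},\bA_{2(k_2)})$ shows that $T_*$ coincides with the statistic $T(\bI_p)$ of (\ref{1.3}) computed from the data set $\{\by_{ij}\}$.

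Next I would check that $\{\by_{ij}\}$ fits the sampling model (\ref{2.0}). Writing $\bx_{ij}=\bGamma_i\bw_{ij}+\bmu_i$ as in (\ref{2.0}), we obtain $\by_{ij}=\bGamma_{i*}\bw_{ij}+\bmu_{i*}$ with $\bGamma_{i*}=\bA_{i(k_i)}\bGamma_i$ a $p\times r_i$ matrix ($r_i\ge p$) and $\bmu_{i*}=\bA_{i(k_i)}\bmu_i$. Then $\bGamma_{i*}\bGamma_{i*}^T=\bA_{i(k_i)}\bSig_i\bA_{i(k_i)}=\bSig_{i*}$, while the $\bw_{ij}$ are left unchanged, so (A-i) continues to hold for the transformed data. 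Consequently the mean difference of the $\by$-data is $\bmu_{1*}-\bmu_{2*}=\bmu_{*}$, the covariance matrices are $\bSig_{1*}$ and $\bSig_{2*}$, and one recovers $E(T_*)=||\bmu_{*}||^2=\Delta_*$ and $\Var(T_*)=K_*$, consistent with what is stated above. Note that $\bA_{i(k_i)}$ is deterministic, being built from the fixed eigenvectors $\bh_{ij}$ of $\bSig_i$, so no extra randomness is introduced.

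It then remains to verify the hypotheses of Theorem 2 for the $\by$-problem with $\bA=\bI_p$. Assumption (A-i) holds by the previous paragraph. Since $\bSig_{i*}=\sum_{j=k_i+1}^p\lambda_{ij}\bh_{ij}\bh_{ij}^T$, we have $\lambda_{\max}(\bSig_{i*})=\lambda_{i,k_i+1}$ and $\tr(\bSig_{i*}^2)=\Psi_{i(k_i+1)}$, so (A-ii) for the $\by$-data reads $\lambda_{i,k_i+1}^2/\Psi_{i(k_i+1)}\to 0$, which is part of (A-vi); equivalently one may invoke the display $\lambda_{\max}^2(\bSig_{i*})/\tr(\bSig_{i*}^2)\to 0$ already recorded before the statement. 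Finally, (A-iv) for the $\by$-data is precisely $\limsup_{m\to\infty}\Delta_*^2/K_{1*}<\infty$, the standing assumption. Applying Theorem 2 to $\{\by_{ij}\}$ therefore yields $(T_*-\Delta_*)/K_*^{1/2}\Rightarrow N(0,1)$ as $m\to\infty$.

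I expect the only delicate point to be that $\bSig_{i*}$ is merely positive semidefinite (it has rank $p-k_i$) rather than positive definite. This is harmless: the conclusion of Theorem 2 is derived solely from the representation (\ref{2.0}), the moment condition (A-i), and the magnitude conditions (A-ii) and (A-iv), none of which requires invertibility of the covariance, and the positive-definiteness of $\bSig_i$ assumed in the original setup is never used in that argument. The remaining work is routine bookkeeping: expanding $T_*$, matching terms with (\ref{1.3}), and confirming the variance decomposition $K_*=K_{1*}+K_{2*}$.
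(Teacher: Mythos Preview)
Your proposal is correct and follows essentially the same route as the paper. The paper's proof is the one-line remark ``From Theorem 2, the result is obtained straightforwardly,'' which rests on the observation (recorded immediately before the corollary) that $\lambda_{\max}^2(\bSig_{i*})/\tr(\bSig_{i*}^2)\to 0$ under (A-vi); your write-up simply makes explicit the underlying data transformation $\by_{ij}=\bA_{i(k_i)}\bx_{ij}$ that reduces $T_*$ to $T(\bI_p)$ on the transformed samples and then checks (A-i), (A-ii) and (A-iv) for that problem.
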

It does not always hold that $\Delta_*=0$ under $H_0$ when $\bA_{1(k_1)} \neq \bA_{2(k_2)}$. 
We assume the following condition:
\begin{description}
  \item[(A-vii)]\quad $\displaystyle  \frac{\Delta_*^2}{K_{1*}}\to 0$ \ as $m\to \infty$ under $H_0$. 
\end{description}
Note that (A-vii) is a mild condition because $\bA_{1(k_1)}-\bA_{2(k_2)}=\sum_{j=1}^{k_2}\bh_{2j}\bh_{2j}^T-\sum_{j=1}^{k_1}\bh_{1j}\bh_{1j}^T$ is a low-rank matrix with rank $ k_1+k_2$ at most and under $H_0$, $\Delta_*=||(\bA_{1(k_1)}-\bA_{2(k_2)})\bmu_1||^2$ is small. 
From Corollary 3, under $H_0$, 
it follows that  
$
P(T_*/K_{1*}^{1/2} >z_{\alpha})=\alpha+o(1)$.
Similar to (\ref{3.1}), one can construct a test procedure by using $T_*$. 
Let 
$$
x_{ijl}={\bh}_{ij}^T\bx_{il}=\lambda_{ij}^{1/2}z_{ijl}+\mu_{i(j)}\quad \mbox{for all $i,j,l$, where $\mu_{i(j)}=\bh_{ij}^T\bmu_i$}.
$$
Then, we write that 
\begin{align*}
T_*=&2\sum_{i=1}^2\frac{\sum_{l<l'}^{n_i}(\bx_{il}^T\bx_{il'}-\sum_{j=1}^{k_i}
{x}_{ijl}{x}_{ijl'})}{n_i(n_i-1)}\\
&-2\frac{\sum_{l=1}^{n_1}\sum_{l'=1}^{n_2}
(\bx_{1l}-\sum_{j=1}^{k_1}{x}_{1jl}{\bh}_{1j})^T
(\bx_{2l'}-\sum_{j=1}^{k_2}{x}_{2jl'}{\bh}_{2j})}{n_1n_2}.
\end{align*}
In order to use $T_*$, it is necessary to estimate ${x}_{ijl}$s and $\bh_{ij}$s. 
\\

\setcounter{chapter}{5}
\setcounter{equation}{0} 
\noindent {\bf 5. Test Procedure Using Eigenstructures for Strongly Spiked Eigenvalue Model}
\par
\vspace{9pt}
In this section, we assume (A-vi) and the following assumption for $\pi_i$s:
\begin{description}
\item[(A-viii)] \ 
$E( z_{isj}^2z_{itj}^2)=E( z_{isj}^2)E(z_{itj}^2)$, \ $E( z_{isj}z_{itj}z_{iuj})=0$ \ and \\
$E( z_{isj} z_{itj}z_{iuj}z_{ivj})=0$ for all $s \neq t,u,v$, with $z_{ijl}$s defined in Section 2.
\end{description} 
Note that (A-viii) implies (A-i) because $E(z_{ijl}^4)$'s are bounded and (\ref{2.0}) includes the case that $\bGamma_i=\bH_i \bLam_i^{1/2}$ and $\bw_{ij}=\bz_{ij}$. 
When the $\pi_i$s are Gaussian, (A-viii) naturally holds. 
First, we discuss estimation of the eigenvalues and eigenvectors in the SSE model. 
\\

\noindent {\bf 5.1. Estimation of Eigenvalues and Eigenvectors}
\par
\vspace{9pt}
Throughout this section, we omit the subscript with regard to the population for the sake of simplicity.
Let $\hat{\lambda}_{1}\ge\cdots\ge\hat{\lambda}_{p}\ge 0$ be the eigenvalues of $\bS_{n}$. 
Let us write the eigen-decomposition of $\bS_{n}$ as $\bS_{n}=\sum_{j=1}^{p}\hat{\lambda}_{j}\hat{\bh}_{j}\hat{\bh}_{j}^T$, 
where $\hat{\bh}_{j}$ denotes a unit eigenvector corresponding to $\hat{\lambda}_{j}$. 
We assume $\bh_{j}^T\hat{\bh}_{j} \ge 0$ w.p.1 for all $j$ without loss of generality. 
Let $\bX=[\bx_{1},...,\bx_{n}]$ and $\overline{\bX}=[\overline{\bx}_{n},...,\overline{\bx}_{n}]$. 
Then, we define the $n \times n$ dual sample covariance matrix by 
$
\bS_{D}=(n-1)^{-1}(\bX-\overline{\bX})^T(\bX-\overline{\bX}).
$
Note that $\bS_n$ and $\bS_{D}$ share non-zero eigenvalues.
Let us write the eigen-decomposition of $\bS_{D}$ as $\bS_{D}=\sum_{j=1}^{n-1}\hat{\lambda}_{j}\hat{\bu}_{j}\hat{\bu}_{j}^T $, where $\hat{\bu}_{j}=(\hat{u}_{j1},...,\hat{u}_{jn})^T$ denotes a unit eigenvector corresponding to $\hat{\lambda}_{j}$. 
Note that $\hat{\bh}_{j}$ can be calculated by 
$\hat{\bh}_{j}=\{(n-1)\hat{\lambda}_{j}\}^{-1/2}(\bX-\overline{\bX}) \hat{\bu}_{j}$.
Let $\delta_j=\lambda_j^{-1}\sum_{s=k+1}^{p}\lambda_{s}/(n-1)$ for $j=1,...,k$. 
Let $m_0=\min\{p,n\}$. 
First, we have the following result. 
\begin{proposition}
Assume (A-vi) and (A-viii). 
It holds for $j=1,...,k$, that 
$\hat{\lambda}_{j}/\lambda_{j}=1+\delta_j+O_P(n^{-1/2})$ and 
$(\hat{\bh}_{j}^T\bh_{j})^2=(1+\delta_j)^{-1}+O_P(n^{-1/2})$ as $m_0 \to \infty$.
\end{proposition}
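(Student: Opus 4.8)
The plan is to pass to the $n\times n$ dual sample covariance matrix $\bS_D$, which shares its nonzero eigenvalues $\hat{\lambda}_1\ge\cdots\ge\hat{\lambda}_{n-1}$ (and, through $\hat{\bh}_j=\{(n-1)\hat{\lambda}_j\}^{-1/2}(\bX-\overline{\bX})\hat{\bu}_j$, its eigenvectors) with $\bS_n$, and to split it into a ``signal'' part carrying the first $k$ eigenvalues and a ``noise'' part carrying the rest, in the spirit of the noise-reduction methodology of \citet{Yata:2012,Yata:2013b}. Omitting the population subscript and taking $\bGamma=\bH\bLam^{1/2}$, $\bw_l=\bz_l$ in (\ref{2.0}), one has $\bX-\overline{\bX}=\bH\bLam^{1/2}(\bZ-\bar{\bz}\bone^T)$ with $\bZ=[\bz_1,\dots,\bz_n]$, $\bar{\bz}=n^{-1}\sum_l\bz_l$, so that
$$
\bS_D=\sum_{s=1}^p\lambda_s\bq_s\bq_s^T=\bS_{D1}+\bS_{D2},\qquad \bS_{D1}=\sum_{s=1}^k\lambda_s\bq_s\bq_s^T,\quad \bS_{D2}=\sum_{s=k+1}^p\lambda_s\bq_s\bq_s^T,
$$
where $\bq_s\in\mathbb{R}^n$ has $l$-th entry $(n-1)^{-1/2}(z_{sl}-\bar{z}_s)$, $\bar{z}_s=n^{-1}\sum_lz_{sl}$; note $\bq_s^T\bone=0$ and $\kappa:=(n-1)^{-1}\sum_{s=k+1}^p\lambda_s=\delta_j\lambda_j$ for every $j\le k$.

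First I would analyse the signal part. Under (A-viii) the coordinate profiles are i.i.d.\ across $l$ and pseudo-independent across $s$ up to fourth moments, so elementary second-moment bounds give $\bq_s^T\bq_{s'}=1+O_P(n^{-1/2})$ if $s=s'$ and $=O_P(n^{-1/2})$ otherwise, for $s,s'\le k$. With $\bW_1=[\bq_1,\dots,\bq_k]$, the nonzero eigenvalues of $\bS_{D1}=\bW_1\bLam_k\bW_1^T$, $\bLam_k=\mathrm{diag}(\lambda_1,\dots,\lambda_k)$, coincide with those of $\bLam_k^{1/2}(\bW_1^T\bW_1)\bLam_k^{1/2}=\bLam_k+\bE$, where $(\bE)_{ss'}=O_P(n^{-1/2})\lambda_s^{1/2}\lambda_{s'}^{1/2}$. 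Since $\lambda_1,\dots,\lambda_k$ are distinct with adjacent gaps of exact order $\lambda_j$ by (A-vi), matrix perturbation theory (first order for the leading term, second order for the $O_P(n^{-1}\lambda_j)$ correction) yields that the $j$-th nonzero eigenvalue of $\bS_{D1}$ is $\lambda_j\|\bq_j\|^2+O_P(n^{-1}\lambda_j)=\lambda_j\{1+O_P(n^{-1/2})\}$, with unit eigenvector within $O_P(n^{-1/2})$ of $\bq_j/\|\bq_j\|$.

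Next I would add back the noise part. A direct computation gives $E(\bS_{D2})=\kappa(\bI_n-n^{-1}\bone\bone^T)$; conditioning on $\bq_1,\dots,\bq_k$ and using (A-viii) one gets, for $j,j'\le k$,
$$
\bq_j^T\bS_{D2}\bq_{j'}=\kappa\|\bq_j\|^2\,[j=j']+O_P\bigl\{(n-1)^{-1}\Psi_{(k+1)}^{1/2}\bigr\},\qquad \|\bS_{D2}\bq_j\|^2-(\bq_j^T\bS_{D2}\bq_j)^2=O_P\bigl\{(n-1)^{-1}\Psi_{(k+1)}\bigr\},
$$
and, because the tail obeys $\lambda_{k+1}^2/\Psi_{(k+1)}\to0$ (an NSSE-type tail), also $\lambda_{\max}(\bS_{D2})=\kappa+o_P(\lambda_k)$. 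The crucial simplification is that (A-vi) forces $\Psi_{(k)}=\lambda_k^2+\Psi_{(k+1)}=O(\lambda_k^2)$, hence $\Psi_{(k+1)}^{1/2}=O(\lambda_k)=O(\lambda_j)$ for $j\le k$, so all the error terms above are $O_P(n^{-1}\lambda_j)$. Feeding this into first- and second-order perturbation theory for $\bS_D=\bS_{D1}+\bS_{D2}$ on the $k$-dimensional signal subspace — the denominators $\hat{\lambda}_j-(\text{the other eigenvalues of }\bS_D)$ staying of exact order $\lambda_j$ thanks to the gaps of Step 1 and $\lambda_{\max}(\bS_{D2})=\kappa+o_P(\lambda_k)$ — and renormalising the eigenvector to unit length (which cancels the common factor $\|\bq_j\|^2$ from the $\kappa$ contribution but leaves it on the $\lambda_j$ contribution), one obtains $\hat{\lambda}_j=\lambda_j\|\bq_j\|^2+\kappa+O_P(n^{-1}\lambda_j)=\lambda_j(1+\delta_j)+O_P(n^{-1/2}\lambda_j)$ together with $\hat{\bu}_j=\bq_j/\|\bq_j\|+O_P(n^{-1/2})$, $\hat{\bu}_j$ the $j$-th unit eigenvector of $\bS_D$; dividing by $\lambda_j$ gives the first assertion. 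For the second assertion I would use $\bh_j^T\bH\bLam^{1/2}=\lambda_j^{1/2}$ times the $j$-th coordinate row and $\hat{\bu}_j^T\bone=0$ to get the exact identity $\bh_j^T\hat{\bh}_j=(\lambda_j/\hat{\lambda}_j)^{1/2}\,\bq_j^T\hat{\bu}_j$; then $\bq_j^T\hat{\bu}_j=\|\bq_j\|+O_P(n^{-1/2})=1+O_P(n^{-1/2})$ (nonnegative by the convention $\bh_j^T\hat{\bh}_j\ge0$) and $\lambda_j/\hat{\lambda}_j=(1+\delta_j)^{-1}+O_P(n^{-1/2})$, so squaring yields $(\hat{\bh}_j^T\bh_j)^2=(1+\delta_j)^{-1}+O_P(n^{-1/2})$.

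The hard part will be the noise step: showing that $\bS_{D2}$ acts on the signal subspace essentially as the scalar $\kappa$, with errors small \emph{relative to} $\lambda_j$ and not merely relative to $\kappa$. Concretely one must bound the fluctuations of the quadratic forms $\bq_j^T\bS_{D2}\bq_{j'}$, the signal--noise coupling $\|(\bI_n-\bP_{\mathrm{signal}})\bS_{D2}\bq_j\|$, and $\lambda_{\max}(\bS_{D2})$, and verify they are $o_P(\lambda_j)$ using only the pseudo-independence in (A-viii) and the bound $\Psi_{(k+1)}^{1/2}=O(\lambda_k)$ extracted from (A-vi). Once these estimates are in place, the eigenvalue and eigenvector expansions above follow from routine perturbation theory, and the proposition follows.
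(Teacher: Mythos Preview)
Your proposal is correct and follows essentially the same route as the paper: split $\bS_D$ into a rank-$k$ signal part and a noise part, use the key observation from (A-vi) that $\Psi_{(k+1)}^{1/2}=O(\lambda_k)$ so the noise fluctuations are $O_P(n^{-1}\lambda_j)$ and $O_P(n^{-1/2}\lambda_j)$ on and off the signal subspace respectively, conclude $\hat{\lambda}_j=\lambda_j\|\bq_j\|^2+\kappa+O_P(n^{-1}\lambda_j)$ and $\hat{\bu}_j=\bq_j/\|\bq_j\|+O_P(n^{-1/2})$, and finish via the identity $(\hat{\bh}_j^T\bh_j)^2=(\lambda_j/\hat{\lambda}_j)(\bq_j^T\hat{\bu}_j)^2$. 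The only presentational difference is that the paper delegates the noise-fluctuation bounds and the eigen-perturbation step to lemmas in \citet{Yata:2012,Yata:2013b}, whereas you spell out the perturbation argument directly; the underlying estimates and the final identity are identical.
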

If $\delta_j\to \infty$ as $m_0 \to \infty$, $\hat{\lambda}_{j}$ and $\hat{\bh}_{j}$ are strongly inconsistent in the sense that $\lambda_{j}/\hat{\lambda}_{j}=o_P(1)$ and $(\hat{\bh}_{j}^T\bh_{j})^2=o_P(1)$.  
See \citet{Jung:2009} for the concept of the strong inconsistency. 
Also, from Proposition 2, under (A-vi) and (A-viii), it holds that as $m_0\to \infty$ 
\begin{equation}
||\hat{\bh}_{j}-\bh_{j}||^2=2\{1-(1+\delta_j)^{-1/2}\}+O_P(n^{-1/2})\quad \mbox{for $j=1,...,k$}.
\label{5.1}
\end{equation}
In order to overcome the curse of dimensionality, \citet{Yata:2012} proposed an eigenvalue estimation called the noise-reduction (NR) methodology, which was brought about by a geometric representation of $\bS_{D}$.
If one applies the NR methodology, the $\lambda_{j}$s are estimated by
\begin{equation}
\tilde{\lambda}_{j}=\hat{\lambda}_{j}-\frac{\tr(\bS_{D})-\sum_{l=1}^j\hat{\lambda}_{l} }{n-1-j}\quad (j=1,...,n-2).
\label{5.2}
\end{equation}
Note that $\tilde{\lambda}_j \ge 0$ w.p.1 for $j=1,...,n-2$, and the second term in (\ref{5.2}) is an estimator of $\lambda_j\delta_j$. 
When applying the NR methodology to the PC direction vector, one obtains 
\begin{equation}
\tilde{\bh}_{j}=\{(n-1)\tilde{\lambda}_{j}\}^{-1/2}(\bX-\overline{\bX})\hat{\bu}_{j}
\label{5.3}
\end{equation}
for $j=1,...,n-2$. 
Then, we have the following result. 
\begin{proposition}
Assume (A-vi) and (A-viii). 
It holds for $j=1,...,k$, that 
$\tilde{\lambda}_{j}/\lambda_{j}=1+O_P(n^{-1/2})$ and 
$(\tilde{\bh}_{j}^T\bh_{j})^2=1+O_P(n^{-1})$ 
as $m_0\to \infty$.
\end{proposition}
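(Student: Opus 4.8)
The plan is to work throughout in the coordinate system of the population eigenvectors and to lean on the dual representation of $\bS_D$ together with Proposition 2. Since we are in the case $\bx_{l}=\bH\bLam^{1/2}\bz_{l}+\bmu$, writing $\tilde{\bz}_s=(z_{s1}-\bar{z}_s,\dots,z_{sn}-\bar{z}_s)^T$ with $\bar{z}_s=n^{-1}\sum_l z_{sl}$, one has $\bh_s^T(\bX-\overline{\bX})=\lambda_s^{1/2}\tilde{\bz}_s^T$, hence
$$
\bS_D=\frac{1}{n-1}\sum_{s=1}^p\lambda_s\,\tilde{\bz}_s\tilde{\bz}_s^T=\bS_{D1}+\bS_{D2},\quad \bS_{D1}=\frac{1}{n-1}\sum_{s=1}^{k}\lambda_s\,\tilde{\bz}_s\tilde{\bz}_s^T,\ \ \bS_{D2}=\frac{1}{n-1}\sum_{s>k}\lambda_s\,\tilde{\bz}_s\tilde{\bz}_s^T .
$$
As $(\bX-\overline{\bX})\bone_n=\bze$, every $\hat{\bu}_j$ with $j\le n-1$ is orthogonal to $\bone_n$, so $\bP_n\hat{\bu}_j=\hat{\bu}_j$ with $\bP_n=\bI_n-n^{-1}\bone_n\bone_n^T$, and $E(\bS_{D2})=\lambda_j\delta_j\,\bP_n$ (the quantity $\lambda_j\delta_j=\sum_{s>k}\lambda_s/(n-1)$ being independent of $j$). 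From $\hat{\bh}_j=\{(n-1)\hat{\lambda}_j\}^{-1/2}(\bX-\overline{\bX})\hat{\bu}_j$ and $\tilde{\bh}_j=\{(n-1)\tilde{\lambda}_j\}^{-1/2}(\bX-\overline{\bX})\hat{\bu}_j$ one obtains the exact identities
$$
(\hat{\bh}_j^T\bh_j)^2=\frac{\lambda_j(\tilde{\bz}_j^T\hat{\bu}_j)^2}{(n-1)\hat{\lambda}_j},\qquad
(\tilde{\bh}_j^T\bh_j)^2=\frac{\lambda_j(\tilde{\bz}_j^T\hat{\bu}_j)^2}{(n-1)\tilde{\lambda}_j}=\frac{\hat{\lambda}_j}{\tilde{\lambda}_j}(\hat{\bh}_j^T\bh_j)^2 .
$$
So both assertions reduce to controlling the ratio $\hat{\lambda}_j/\tilde{\lambda}_j$ and, for the sharper rate, to showing that $Q_j:=\lambda_j(\tilde{\bz}_j^T\hat{\bu}_j)^2/(n-1)$ agrees with $\tilde{\lambda}_j$ up to order $n^{-1}$; note $Q_j\asymp\lambda_j$.

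For the eigenvalue claim I would argue as follows. Writing $\tr(\bS_D)-\sum_{l\le j}\hat{\lambda}_l=\tr(\bS_{D2})+\{\tr(\bS_{D1})-\sum_{l\le j}\hat{\lambda}_l\}$, the first term $\tr(\bS_{D2})=\sum_{s>k}\lambda_s\|\tilde{\bz}_s\|^2/(n-1)$ has mean $\sum_{s>k}\lambda_s$ and, because the cross-moment conditions in (A-viii) annihilate the covariances between distinct noise coordinates, variance $O(\Psi_{(k+1)}/n)$; since $\Psi_{(k+1)}^{1/2}=O(\lambda_k)=O(\lambda_j)$ by (A-vi), $\tr(\bS_{D2})=\sum_{s>k}\lambda_s+O_P(\lambda_j n^{-1/2})$. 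For the second term, $\tr(\bS_{D1})=\sum_{l\le k}\lambda_l\|\tilde{\bz}_l\|^2/(n-1)$ and $\hat{\lambda}_l=\lambda_l\|\tilde{\bz}_l\|^2/(n-1)+\{\text{leakage}\}$ for $l\le j$, so the large $O_P(\lambda_1 n^{-1/2})$ fluctuations carried by $\|\tilde{\bz}_l\|^2$, $l\le j$, cancel, leaving $\sum_{l=j+1}^k\lambda_l\|\tilde{\bz}_l\|^2/(n-1)+O_P(\lambda_j n^{-1/2})=\sum_{l=j+1}^k\lambda_l+O_P(\lambda_j n^{-1/2})$ (the leakage being $O_P(\lambda_j/n)$ thanks to the distinctness of $\lambda_1,\dots,\lambda_k$ in (A-vi)). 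Dividing by $n-1-j$ gives $\{\tr(\bS_D)-\sum_{l\le j}\hat{\lambda}_l\}/(n-1-j)=\lambda_j\delta_j+O_P(\lambda_j n^{-1/2})$, and combining with $\hat{\lambda}_j=\lambda_j(1+\delta_j)+O_P(\lambda_j n^{-1/2})$ from Proposition 2 yields $\tilde{\lambda}_j/\lambda_j=1+O_P(n^{-1/2})$.

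For the eigenvector claim I would expand $\hat{\lambda}_j=\hat{\bu}_j^T\bS_D\hat{\bu}_j=Q_j+(n-1)^{-1}\sum_{s\ne j}\lambda_s(\tilde{\bz}_s^T\hat{\bu}_j)^2$. The noise part is $(n-1)^{-1}\sum_{s>k}\lambda_s(\tilde{\bz}_s^T\hat{\bu}_j)^2=\hat{\bu}_j^T\bS_{D2}\hat{\bu}_j=\lambda_j\delta_j+\hat{\bu}_j^T(\bS_{D2}-\lambda_j\delta_j\bP_n)\hat{\bu}_j$; since $\hat{\bu}_j$ is, to leading order, a function of the signal block alone and hence nearly independent of the centred noise matrix, the residual quadratic form is $O_P((n-1)^{-1}\Psi_{(k+1)}^{1/2})=O_P(\lambda_j n^{-1})$ (again using the vanishing cross moments of (A-viii)). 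The terms with $s\le k,\ s\ne j$ are controlled through the eigen-equation, which gives $(\hat{\lambda}_j-\lambda_s)(\tilde{\bz}_s^T\hat{\bu}_j)=(n-1)^{-1}\sum_{t\ne s}\lambda_t(\tilde{\bz}_s^T\tilde{\bz}_t)(\tilde{\bz}_t^T\hat{\bu}_j)+O_P(\cdot)$; combined with the separation of $\lambda_1,\dots,\lambda_k$ in (A-vi) and $|\tilde{\bz}_s^T\tilde{\bz}_t|=O_P(n^{1/2})$ for $s\ne t$, these terms contribute $O_P(\lambda_j n^{-1})$. Hence $\hat{\lambda}_j-\lambda_j\delta_j=Q_j+O_P(\lambda_j n^{-1})$. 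Running the same bookkeeping for the subtracted quantity $\{\tr(\bS_D)-\sum_{l\le j}\hat{\lambda}_l\}/(n-1-j)$, but now tracking the error to order $n^{-1}$, shows that it reproduces the $O_P(\lambda_j n^{-1})$ excess over $\lambda_j\delta_j$ present in $\hat{\lambda}_j$, so that $\tilde{\lambda}_j=\hat{\lambda}_j-\{\tr(\bS_D)-\sum_{l\le j}\hat{\lambda}_l\}/(n-1-j)=Q_j+O_P(\lambda_j n^{-1})$, i.e. $Q_j=\tilde{\lambda}_j\{1+O_P(n^{-1})\}$; substituting into the identity above gives $(\tilde{\bh}_j^T\bh_j)^2=Q_j/\tilde{\lambda}_j=1+O_P(n^{-1})$.

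The main obstacle is exactly this last refinement: showing that the $O_P(n^{-1/2})$ fluctuations appearing separately in $\hat{\lambda}_j$, in the noise-level estimator, and in $(\tilde{\bz}_j^T\hat{\bu}_j)^2$ all trace back to the same random quadratic forms and cancel when one forms $(\tilde{\bh}_j^T\bh_j)^2$, leaving only $O_P(n^{-1})$. This requires a second-order perturbation analysis of the $j$-th eigenpair of $\bS_D$ in which the coupling between the fixed $k$-dimensional signal block and the high-dimensional noise part $\bS_{D2}$ is resolved carefully, and it is precisely here that the extra third- and fourth-order cross-moment conditions in (A-viii) (beyond (A-i)) are indispensable, since they annihilate the mixed-moment contributions that would otherwise be of order $n^{-1/2}$.
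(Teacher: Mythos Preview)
Your exact identity $(\tilde{\bh}_j^T\bh_j)^2=Q_j/\tilde{\lambda}_j$ with $Q_j=\lambda_j(\tilde{\bz}_j^T\hat{\bu}_j)^2/(n-1)$ is the right starting point, but there is a genuine gap in the eigenvalue argument and you are working much harder than necessary for the eigenvector claim.

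The gap: the ``leakage'' $\hat{\lambda}_l-\lambda_l\|\tilde{\bz}_l\|^2/(n-1)$ is \emph{not} $O_P(\lambda_j/n)$. It contains the full noise quadratic form $\hat{\bu}_l^T\bS_{D2}\hat{\bu}_l$, whose leading part is the bias $\lambda_l\delta_l=\sum_{s>k}\lambda_s/(n-1)$; under (A-vi) nothing prevents $\delta_l$ from diverging (think of $\lambda_s\equiv c$ for $s>k$, so $\lambda_l\delta_l\asymp p/n$). This bias does eventually cancel against a matching piece of $\tr(\bS_{D2})$ after dividing by $n-1-j$, but you must track it explicitly; the argument as written drops it.

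More importantly, the paper avoids your ``second-order perturbation analysis'' altogether. Inside the proof of Proposition~2 one already has the \emph{refined} expansions
\[
\hat{\lambda}_j/\lambda_j=\|\bu_j\|^2+\delta_j+O_P(n^{-1}),\qquad \hat{\bu}_j^T\dot{\bu}_j=1+O_P(n^{-1}),
\]
and separately (in the style of Lemma~11 of Yata--Aoshima 2013b) the correction term satisfies $\{\lambda_j(n-1-j)\}^{-1}\{\tr(\bS_D)-\sum_{l\le j}\hat{\lambda}_l\}=\delta_j+O_P(n^{-1})$. Subtracting gives directly
\[
\tilde{\lambda}_j/\lambda_j=\|\bu_j\|^2+O_P(n^{-1})=1+O_P(n^{-1/2}),
\]
which is the sharp form of the eigenvalue claim. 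For the eigenvector, note that $\tilde{\bz}_j^T\hat{\bu}_j=(n-1)^{1/2}\bu_j^T\hat{\bu}_j$ (since $\bP_n\hat{\bu}_j=\hat{\bu}_j$), so your $Q_j$ is exactly $\lambda_j\|\bu_j\|^2(\hat{\bu}_j^T\dot{\bu}_j)^2$. Hence
\[
(\tilde{\bh}_j^T\bh_j)^2=\|\bu_j\|^2(\hat{\bu}_j^T\dot{\bu}_j)^2\,\frac{\lambda_j}{\tilde{\lambda}_j}
=\frac{\|\bu_j\|^2\{1+O_P(n^{-1})\}}{\|\bu_j\|^2+O_P(n^{-1})}=1+O_P(n^{-1}).
\]
The cancellation you flag as the ``main obstacle'' happens for free: the common $\|\bu_j\|^2$ is precisely the $O_P(n^{-1/2})$ fluctuation in both numerator and denominator, and it drops out of the ratio without any further bookkeeping or coupling analysis between signal and noise blocks.
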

We note that $\tilde{\bh}_{j}$ is not a unit vector because $||\tilde{\bh}_{j}||^2=\hat{\lambda}_{j}/\tilde{\lambda}_{j}$.
From Propositions 2 and 3, under (A-vi) and (A-viii), it holds that $||\tilde{\bh}_{j}-\bh_{j}||^2=\delta_j\{1+o_P(1)\}+O_P(n^{-1/2})$ as $m_0\to \infty$ for $j=1,...,k$. We note that $2\{1-(1+\delta_j)^{-1/2}\}<\delta_j$. 
Thus, in view of (\ref{5.1}), the norm loss of $\tilde{\bh}_{j}$ is larger than that of $\hat{\bh}_{j}$. 
However, $\tilde{\bh}_{j}$ is a consistent estimator of $\bh_{j}$ in terms of the inner product even when $\delta_j\to \infty$ as $m_0\to \infty$. 

On the other hand, we note that $\bh_{j}^T(\bx_{l}-\bmu)=\lambda_{j}^{1/2}z_{jl}$ for all $j,l$. 
For $\hat{\bh}_{j}$ and $\tilde{\bh}_{j}$, we have the following result. 
\begin{proposition}
Assume (A-vi) and (A-viii). 
It holds for $j=1,...,k\ (l=1,...,n)$ that
$
\lambda_{j}^{-1/2}
\hat{\bh}_{j}^T(\bx_{l}-\bmu)=(1+\delta_j)^{-1/2}[z_{jl}+(n-1)^{1/2}\hat{u}_{jl} \delta_j\{1+o_P(1)\}]
+O_P(n^{-1/2})$ 
and 
$
\lambda_{j}^{-1/2}
\tilde{\bh}_{j}^T(\bx_{l}-\bmu)=z_{jl}+(n-1)^{1/2}\hat{u}_{jl} \delta_j\{1+o_P(1)\}
+O_P(n^{-1/2})$ 
as $m_0\to \infty$.
\end{proposition}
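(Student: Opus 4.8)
The plan is to build on Proposition 3 by expanding the estimated projections in terms of the dual eigenvector coordinates. The starting point is the identity $\hat{\bh}_j^T(\bx_l-\bmu)=\{(n-1)\hat\lambda_j\}^{-1/2}\hat{\bu}_j^T(\bX-\overline{\bX})^T(\bx_l-\bmu)$, which follows directly from the formula $\hat{\bh}_j=\{(n-1)\hat\lambda_j\}^{-1/2}(\bX-\overline{\bX})\hat{\bu}_j$ given in Section 5.1; the analogous identity with $\tilde\lambda_j$ in place of $\hat\lambda_j$ holds for $\tilde{\bh}_j$ by (\ref{5.3}). So everything reduces to controlling the scalar $\hat{\bu}_j^T(\bX-\overline{\bX})^T(\bx_l-\bmu)$, together with the ratio $\hat\lambda_j/\tilde\lambda_j$ (which is $1+O_P(n^{-1/2})$ by Proposition 3) and the known expansion $\hat\lambda_j/\lambda_j=1+\delta_j+O_P(n^{-1/2})$ from Proposition 2.

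First I would write $(\bX-\overline{\bX})^T(\bx_l-\bmu)$ componentwise. Its $m$-th entry is $(\bx_m-\overline{\bx}_n)^T(\bx_l-\bmu)$, and decomposing $\bx_m-\bmu=\bH\bLam^{1/2}\bz_m$ this splits into the "signal'' part coming from the first $k$ PC-directions, $\sum_{j'=1}^k \lambda_{j'} z_{j'm} z_{j'l}$, and a "noise'' remainder $\sum_{j'>k}\lambda_{j'} z_{j'm}z_{j'l}$, plus the centering corrections involving $\overline{\bx}_n$. Under (A-vi) and (A-viii) the noise part is $O_P(\Psi_{(k+1)}^{1/2})=o_P(\lambda_j)$ uniformly, and the centering terms are lower order as in the proofs of Propositions 2–3. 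Contracting with $(n-1)^{-1/2}\hat{\bu}_j=(n-1)^{-1/2}(\hat u_{j1},\dots,\hat u_{jn})^T$, the dominant contribution is $(n-1)^{-1/2}\lambda_j\, z_{jl}\sum_{m}\hat u_{jm}z_{jm}$ plus cross terms with $j'\neq j$, $j'\le k$. Now I would use the known first-order behavior of $\hat{\bu}_j$: since $\bS_D$ is dominated by its rank-$k$ "signal'' piece $(n-1)^{-1}\sum_{j'\le k}\lambda_{j'}\bz_{(j')}\bz_{(j')}^T$ (where $\bz_{(j')}=(z_{j'1},\dots,z_{j'n})^T$), one has $\hat{\bu}_j=\bz_{(j)}/\|\bz_{(j)}\|+$ (lower order), and $\|\bz_{(j)}\|^2=n\{1+O_P(n^{-1/2})\}$ by (A-viii). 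Hence $\sum_m \hat u_{jm}z_{jm}=\|\bz_{(j)}\|\{1+o_P(1)\}=n^{1/2}\{1+o_P(1)\}$, while the cross sums $\sum_m \hat u_{jm}z_{j'm}$ for $j'\neq j$ are $O_P(1)$ and get multiplied by $\lambda_{j'}/\lambda_j=O(1)$, contributing only to the $O_P(n^{-1/2})$ error after the $(n-1)^{-1/2}$ normalization.

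Collecting terms, $\lambda_j^{-1/2}\{(n-1)\hat\lambda_j\}^{-1/2}\hat{\bu}_j^T(\bX-\overline{\bX})^T(\bx_l-\bmu)$ becomes $\{(n-1)\hat\lambda_j/\lambda_j\}^{-1/2}\big[(n-1)^{-1/2}\lambda_j^{1/2} z_{jl}\|\bz_{(j)}\| + (\text{terms involving }\hat u_{jl})\big]/\lambda_j^{1/2}$. The term producing the $\hat u_{jl}\delta_j$ contribution is the "diagonal in $m=l$'' part of the signal sum, which contributes $\lambda_j z_{jl}^2$-type pieces that, after averaging, align with $(n-1)\hat u_{jl}$ times the noise-scale $\delta_j\lambda_j$; this is exactly where the $(n-1)^{1/2}\hat u_{jl}\delta_j$ term in the statement comes from, and tracking it carefully requires the same second-order expansion of $\hat\lambda_j$ used in Proposition 2. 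Using $\hat\lambda_j/\lambda_j=(1+\delta_j)\{1+O_P(n^{-1/2})\}$ gives the prefactor $(1+\delta_j)^{-1/2}$ for $\hat{\bh}_j$; replacing $\hat\lambda_j$ by $\tilde\lambda_j$ and invoking $\tilde\lambda_j/\lambda_j=1+O_P(n^{-1/2})$ from Proposition 3 removes this prefactor, yielding the $\tilde{\bh}_j$ statement.

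The main obstacle will be isolating and correctly normalizing the $\hat u_{jl}$-dependent term: unlike in Propositions 2–3, where one only needs $\|\tilde{\bh}_j\|$ and inner products $\tilde{\bh}_j^T\bh_j$ (quantities that are averages over all $n$ coordinates and hence concentrate), here a single fixed coordinate $l$ survives, so $\hat u_{jl}$ cannot be averaged away and must be carried as an explicit $O_P(n^{-1/2})$-scale random variable multiplied by $(n-1)^{1/2}\delta_j$. Keeping this term separate from the genuine $O_P(n^{-1/2})$ error — and verifying that all the cross terms with $j'\neq j$ and all the noise and centering contributions really are $O_P(n^{-1/2})$ after normalization — is the delicate bookkeeping; the rest is a re-run of the perturbation arguments already established for the NR methodology in \citet{Yata:2012} and Propositions 2–3.
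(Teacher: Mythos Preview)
Your identification of the source of the bias term $(n-1)^{1/2}\hat u_{jl}\delta_j$ is wrong, and this is the heart of the proposition. You claim the noise remainder $\sum_{s>k}\lambda_s z_{sm}z_{sl}$ is $O_P(\Psi_{(k+1)}^{1/2})$ ``uniformly'' and then look for the bias in the signal diagonal ``$\lambda_j z_{jl}^2$-type pieces''. But the uniformity fails precisely at $m=l$: there the noise remainder becomes $\sum_{s>k}\lambda_s z_{sl}^2$, whose mean is $\sum_{s>k}\lambda_s=(n-1)\delta_j\lambda_j$, not $O(\Psi_{(k+1)}^{1/2})$. After contracting with $\hat{\bu}_j$ this single diagonal entry contributes $\hat u_{jl}\sum_{s>k}\lambda_s z_{sl}^2\approx \hat u_{jl}(n-1)\delta_j\lambda_j$, and after dividing by $\{(n-1)\tilde\lambda_j\lambda_j\}^{1/2}\approx (n-1)^{1/2}\lambda_j$ you get exactly $(n-1)^{1/2}\hat u_{jl}\delta_j$. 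The signal diagonal $\sum_{s\le k}\lambda_s z_{sl}^2$ contributes only $\hat u_{jl}\times O_P(n^{-1/2}\lambda_1/\lambda_j)$ after normalization, which is not the $\delta_j$ term; your sentence linking ``$\lambda_j z_{jl}^2$-type pieces'' to ``the noise-scale $\delta_j\lambda_j$'' has no mechanism behind it.

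The paper makes this separation clean by introducing the leave-one-out vector $\hat{\bu}_{j(l)}$ (whose $l$-th entry is zeroed) and the corresponding $\tilde{\bh}_{jl}$ from (5.4). It first shows $\lambda_j^{-1/2}\tilde{\bh}_j^T\by_l=z_{jl}+O_P(n^{-1/2})$ for the signal part and $\lambda_j^{-1/2}\tilde{\bh}_{jl}^T\bv_l=O_P(n^{-1/2})$ for the noise part with the diagonal removed; then it computes the difference $\lambda_j^{-1/2}(d_n\tilde{\bh}_j-\tilde{\bh}_{jl})^T\bv_l$, which isolates exactly the term $\hat u_{jl}\sum_{s>k}\lambda_s z_{sl}^2/\{(n-1)\tilde\lambda_j\lambda_j\}^{1/2}$ and yields $(n-1)^{1/2}\hat u_{jl}\delta_j\{1+o_P(1)\}$. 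Your overall strategy (expand via $\hat{\bu}_j^T(\bX-\bM)^T(\bx_l-\bmu)$, use Propositions~2--3 for the eigenvalue ratios) is fine, but you must correct the bookkeeping so that the $m=l$ noise entry is kept, not discarded.
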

Let us consider the standard deviation of the above quantities.
Note that $[\sum_{l=1}^{n}\{(n-1)^{1/2}\hat{u}_{jl}\delta_j\}^2/n]^{1/2}=O(\delta_j)$ and 
$ \delta_j=O\{p/(n\lambda_{j})\}$ for $\lambda_{k+1}=O(1)$. 
Hence, in Proposition 4, the inner products are very biased when $p$ is large. 
Now, we explain the main reason why the inner products involve the large bias terms. 
Let $\bP_{n}=\bI_{n}-\bone_{n}\bone_{n}^T/n$, where $\bone_{n}=(1,...,1)^T$. 
Note that $\bone_{n}^T\hat{\bu}_{j}=0$ and $\bP_{n}\hat{\bu}_{j}=\hat{\bu}_{j}$ when $\hat{\lambda}_{j}>0$ since $\bone_{n}^T\bS_{D}\bone_{n}=0$. 
Also, when $\hat{\lambda}_{j}>0$, note that
$$
\{(n-1)\tilde{\lambda}_{j}\}^{1/2}\tilde{\bh}_{j}=(\bX-\overline{\bX})\hat{\bu}_{j}=(\bX-\bM)\bP_n\hat{\bu}_{j}=(\bX-\bM)\hat{\bu}_{j},
$$
where $\bM=[\bmu,...,\bmu]$. 
Thus it holds that $\{(n-1)\tilde{\lambda}_{j}\}^{1/2}\tilde{\bh}_{j}^T(\bx_{l}-\bmu)=\hat{\bu}_{j}^T(\bX-\bM)^T(\bx_{l}-\bmu)=\hat{u}_{jl}||\bx_{l}-\bmu ||^2+\sum_{s=1 (\neq l)}^n\hat{u}_{js}(\bx_{s}-\bmu)^T(\bx_{l}-\bmu)$, so that $\hat{u}_{jl}||\bx_{l}-\bmu ||^2$ is very biased since $E(||\bx_{l}-\bmu ||^2)/\{(n-1)^{1/2}\lambda_j\}\ge (n-1)^{1/2} \delta_j$. 
Hence, one should not apply the $\hat{\bh}_{j}$s or the $\tilde{\bh}_{j}$s to the estimation of the inner product. 

Here, we consider a bias-reduced estimation of the inner product. 
Let us write that 
$$
\hat{\bu}_{jl}
=(\hat{u}_{j1},...,\hat{u}_{jl-1},-\hat{u}_{jl}/(n-1),\hat{u}_{jl+1},...,\hat{u}_{jn})^T
$$ 
whose $l$-th element is $-\hat{u}_{jl}/(n-1)$ for all $j,l$. 
Note that $\hat{\bu}_{jl}=\hat{\bu}_{j}-(0,...,0,\{n/(n-1)\}\hat{u}_{jl},0,...,0)^T$ and $\sum_{l=1}^n\hat{\bu}_{jl}/n=\{(n-2)/(n-1)\}\hat{\bu}_{j}$. 
Let 
\begin{equation}
c_n=(n-1)^{1/2}/(n-2) \ \ \mbox{ and } \ \ 
\tilde{\bh}_{jl}=c_n\tilde{\lambda}_{j}^{-1/2}(\bX-\overline{\bX})\hat{\bu}_{jl}
\label{5.4}
\end{equation}
for all $j,l$. 
Note that $\sum_{l=1}^n\tilde{\bh}_{jl}/n=\tilde{\bh}_{j}$.
When $\hat{\lambda}_{j}>0$, 
we note that $c_n^{-1} \tilde{\lambda}_{j}^{1/2}\tilde{\bh}_{jl}=(\bX-\bM)\bP_n\hat{\bu}_{jl}=(\bX-\bM)\hat{\bu}_{j(l)}$ since $\bone_{n}^T\hat{\bu}_{j}=\sum_{l=1}^n\hat{u}_{jl}=0$, where
$$
\hat{\bu}_{j(l)}=(\hat{u}_{j1},...,\hat{u}_{jl-1},0,\hat{u}_{jl+1},...,\hat{u}_{jn})^T+(n-1)^{-1}\hat{u}_{jl}\bone_{n(l)}
\ \ \mbox{for $l=1,...,n$}.
$$ 
Here, $\bone_{n(l)}=(1,...,1,0,1,...,1)^T$ whose $l$-th element is $0$. 
Thus it holds that 
\begin{align*}
c_n^{-1}\tilde{\lambda}_{j}^{1/2}\tilde{\bh}_{jl}^T(\bx_{l}-\bmu)&=\hat{\bu}_{j(l)}^T(\bX-\bM)^T(\bx_{l}-\bmu)\\
&=\sum_{s=1 (\neq l)}^n \{\hat{u}_{js}+(n-1)^{-1}\hat{u}_{jl}\}(\bx_{s}-\bmu)^T(\bx_{l}-\bmu),
\end{align*}
so that the large biased term, $||\bx_{l}-\bmu ||^2$, has vanished. 
Then, we have the following result. 
\begin{proposition}
Assume (A-vi) and (A-viii). 
It holds for $j=1,...,k\ (l=1,...,n)$ that 
$
\lambda_{j}^{-1/2}\tilde{\bh}_{jl}^T(\bx_{l}-\bmu)=z_{jl}+\hat{u}_{jl}\times O_P\{(n^{1/2}\lambda_{j})^{-1} \lambda_{1}\}+O_P(n^{-1/2})
$
as $m_0\to \infty$.
\end{proposition}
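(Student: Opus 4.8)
The plan is to reduce the statement to a short algebraic identity, peel off an easy remainder, and recover $z_{jl}$ from the leading term via a principal--component--score expansion controlled by Propositions 2 and 3. Starting from the display for $c_n^{-1}\tilde{\lambda}_j^{1/2}\tilde{\bh}_{jl}^T(\bx_l-\bmu)$ established just above the proposition (with $c_n=(n-1)^{1/2}/(n-2)$), I would divide by $\lambda_j^{1/2}$ to write
\begin{align*}
\lambda_j^{-1/2}\tilde{\bh}_{jl}^T(\bx_l-\bmu)
&=\frac{c_n}{(\tilde{\lambda}_j\lambda_j)^{1/2}}\sum_{s\neq l}\hat{u}_{js}(\bx_s-\bmu)^T(\bx_l-\bmu)\\
&\quad+\frac{c_n\,\hat{u}_{jl}}{(n-1)(\tilde{\lambda}_j\lambda_j)^{1/2}}\sum_{s\neq l}(\bx_s-\bmu)^T(\bx_l-\bmu)
=:V_1+V_2 .
\end{align*}
Because both sums run over $s\neq l$, the term $\hat{u}_{jl}||\bx_l-\bmu||^2$ --- which is the dominant bias of $\tilde{\bh}_j$ in Proposition 4 --- never appears; this is precisely the reason $\tilde{\bh}_{jl}$ is used in place of $\tilde{\bh}_j$.

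For $V_2$, note first that under (A-vi) one has $\tr(\bSig^2)=\sum_{j'=1}^p\lambda_{j'}^2=O(\lambda_1^2)$, since $\sum_{j'\geq k}\lambda_{j'}^2=O(\lambda_k^2)=O(\lambda_1^2)$ by $\liminf_p\lambda_k^2/\sum_{j'\geq k}\lambda_{j'}^2>0$. Conditioning on $\bx_l$, the quantity $\sum_{s\neq l}(\bx_s-\bmu)^T(\bx_l-\bmu)=(\bx_l-\bmu)^T\sum_{s\neq l}(\bx_s-\bmu)$ has conditional mean $0$ and conditional variance $(n-1)(\bx_l-\bmu)^T\bSig(\bx_l-\bmu)$ of expected size $(n-1)\tr(\bSig^2)=O(n\lambda_1^2)$, hence it is $O_P(n^{1/2}\lambda_1)$; with $(\tilde{\lambda}_j\lambda_j)^{-1/2}=\lambda_j^{-1}\{1+O_P(n^{-1/2})\}$ from Proposition 3 this yields $V_2=\hat{u}_{jl}\times O_P\{\lambda_1/(n\lambda_j)\}=\hat{u}_{jl}\times O_P\{(n^{1/2}\lambda_j)^{-1}\lambda_1\}$.

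For $V_1$, expand $(\bx_s-\bmu)^T(\bx_l-\bmu)=\sum_{j'}\lambda_{j'}z_{j's}z_{j'l}$, so that $\sum_{s\neq l}\hat{u}_{js}(\bx_s-\bmu)^T(\bx_l-\bmu)=\sum_{j'}\lambda_{j'}z_{j'l}\sum_{s\neq l}\hat{u}_{js}z_{j's}$, and treat the $j'=j$ and $j'\neq j$ parts separately. For $j'=j$ I would use $\sum_{s\neq l}\hat{u}_{js}z_{js}=\hat{\bu}_j^T\bz_{(j)}-\hat{u}_{jl}z_{jl}$ with $\bz_{(j)}=(z_{j1},\dots,z_{jn})^T$ and the exact identity $\hat{\bu}_j^T\bz_{(j)}=\{(n-1)\hat{\lambda}_j/\lambda_j\}^{1/2}\bh_j^T\hat{\bh}_j$ (which follows from $\hat{\bh}_j=\{(n-1)\hat{\lambda}_j\}^{-1/2}(\bX-\overline{\bX})\hat{\bu}_j$); Propositions 2 and 3 then reduce this part of $V_1$ to $z_{jl}+O_P(n^{-1/2})+\hat{u}_{jl}\times O_P(n^{-1/2})$, which is of the asserted form since $\lambda_j\leq\lambda_1$. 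For the remaining directions, write $\sum_{j'\neq j}\lambda_{j'}z_{j'l}\sum_{s\neq l}\hat{u}_{js}z_{j's}=\sum_{s\neq l}\hat{u}_{js}a_s$ with $a_s=(\bx_s-\bmu)^T(\bx_l-\bmu)-\lambda_jz_{js}z_{jl}$; conditioning on $\bx_l$ gives $E\{\sum_{s\neq l}a_s^2\mid\bx_l\}=(n-1)\sum_{j'\neq j}\lambda_{j'}^2z_{j'l}^2$, of expectation $O(n\lambda_1^2)$, so $(\sum_{s\neq l}a_s^2)^{1/2}=O_P(n^{1/2}\lambda_1)$. The main obstacle is here: the plain Cauchy--Schwarz bound $|\sum_{s\neq l}\hat{u}_{js}a_s|\leq(\sum_{s\neq l}\hat{u}_{js}^2)^{1/2}(\sum_{s\neq l}a_s^2)^{1/2}$ is a factor $n^{1/2}$ too crude, so one must use the fine description of $\hat{\bu}_j$ from Proposition 2 --- namely that $\hat{\bu}_j$ concentrates along $\bP_n\bz_{(j)}/||\bP_n\bz_{(j)}||$ with $\bP_n=\bI_n-\bone_n\bone_n^T/n$ --- together with the near--orthogonality of that direction to $(a_s)_{s\neq l}$ (the $j$-th direction having been subtracted out of the $a_s$), which recovers the missing factor and makes this part of $V_1$ of order $O_P\{(n^{1/2}\lambda_j)^{-1}\lambda_1\}$. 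Collecting the two contributions to $V_1$ with $V_2$ proves the statement; everything besides this off--diagonal estimate is a routine consequence of Propositions 2 and 3.
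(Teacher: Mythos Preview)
Your decomposition $V_1+V_2$ and the treatment of $V_2$ and of the $j'=j$ piece of $V_1$ are fine. The gap is in the off--diagonal piece of $V_1$. You assert that this piece is $O_P\{\lambda_1/(n^{1/2}\lambda_j)\}$ and regard that as the target, but the proposition does \emph{not} allow a plain $O_P\{\lambda_1/(n^{1/2}\lambda_j)\}$ remainder: it requires the remainder after $z_{jl}$ to be of the form $\hat u_{jl}\cdot O_P\{\lambda_1/(n^{1/2}\lambda_j)\}+O_P(n^{-1/2})$. The prefactor $\hat u_{jl}$ is essential (it is what makes $[\sum_l\{\hat u_{jl}\lambda_1/(n^{1/2}\lambda_j)\}^2/n]^{1/2}=\lambda_1/(n\lambda_j)$ in the discussion following the proposition), and under (A-vi) the ratio $\lambda_1/\lambda_j$ may diverge (e.g.\ $\lambda_1=p$, $\lambda_j=p^{1/2}$), so a bare $O_P\{\lambda_1/(n^{1/2}\lambda_j)\}$ is not absorbed by $O_P(n^{-1/2})$.

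Concretely, the problematic terms are the spiked directions $j'<j$ in your sum $\sum_{j'\neq j}\lambda_{j'}z_{j'l}\sum_{s\neq l}\hat u_{js}z_{j's}$. The ``concentration'' you quote from Proposition~2 only yields $\hat{\bu}_j=\dot{\bu}_j\{1+O_P(n^{-1})\}+\bep_jO_P(n^{-1/2})$, whence $\sum_{s\neq l}\hat u_{js}z_{j's}=O_P(1)$ and the $j'$-th contribution is $O_P(n^{-1/2}\lambda_{j'}/\lambda_j)$, which for $j'<j$ can blow up. The paper closes this by a strictly finer cross--eigenvector rate that does \emph{not} follow from Proposition~2 as stated: $\hat{\bu}_j^{\,T}\bu_{j'}=O_P(n^{-1/2}\lambda_j/\lambda_{j'})$ for $j'<j$ (equation (S6.2) in the supplement, obtained by an eigenvector perturbation argument). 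With this, $\sum_{s}\hat u_{js}z_{j's}=(n-1)^{1/2}\hat{\bu}_j^{\,T}\bu_{j'}=O_P(\lambda_j/\lambda_{j'})$, and the $j'<j$ contribution becomes $O_P(n^{-1/2})$ as required. Equivalently, the paper splits $\bx_l-\bmu=\by_l+\bv_l$ into the spiked and non--spiked parts, uses the fine rates (S6.1)--(S6.5) to get $\lambda_j^{-1/2}\tilde{\bh}_j^{\,T}\by_l=z_{jl}+O_P(n^{-1/2})$, handles $\bv_l$ separately via (S6.7)--(S6.9), and isolates the $\hat u_{jl}$--carrying piece through $(d_n\tilde{\bh}_j-\tilde{\bh}_{jl})^T\by_l$. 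Your route can be repaired, but only by importing the estimate $\hat{\bu}_j^{\,T}\bu_{j'}=O_P(n^{-1/2}\lambda_j/\lambda_{j'})$ for $j'<j$; the ``near--orthogonality of $\dot{\bu}_j$ to $(a_s)_{s\neq l}$'' alone does not recover the missing factor, because the $\bep_j$--remainder from the concentration statement is also only $O_P(\lambda_1)$ after Cauchy--Schwarz.
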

Note that $[\sum_{l=1}^{n}\{\hat{u}_{jl}\lambda_{1}/(n^{1/2}\lambda_{j})\}^2/n]^{1/2}=\lambda_{1}/(\lambda_{j}n)$. 
The bias term is small when $\lambda_{1}/\lambda_{j}$ is not large.
\\

\noindent {\bf 5.2. Test Procedure Using Eigenstructures}
\par
\vspace{9pt}
Let $\tilde{x}_{ijl}=\tilde{\bh}_{ijl}^T\bx_{il}$ for all $i,j,l$, where $\tilde{\bh}_{ijl}$s are defined by (\ref{5.4}). 
From Propositions 3 and 5, we consider the following test statistic for (\ref{1.2}): 
\begin{align*}
\widehat{T}_*=&2\sum_{i=1}^2\frac{\sum_{l<l'}^{n_i}(\bx_{il}^T\bx_{il'}-\sum_{j=1}^{k_i}
\tilde{x}_{ijl}\tilde{x}_{ijl'})}{n_i(n_i-1)}\\
&-2\frac{\sum_{l=1}^{n_1}\sum_{l'=1}^{n_2}
(\bx_{1l}-\sum_{j=1}^{k_1}\tilde{x}_{1jl}\tilde{\bh}_{1j})^T
(\bx_{2l'}-\sum_{j=1}^{k_2}\tilde{x}_{2jl'}\tilde{\bh}_{2j})}{n_1n_2},
\end{align*}
where $\tilde{\bh}_{ij}$s are defined by (\ref{5.3}). 
We assume the following conditions when (A-vi) is met.
\begin{description}
  \item[(A-ix)]\ 
  $\displaystyle  \frac{\lambda_{i1}^2}{n_{i}{\Psi}_{i(k_i+1)} }\to 0$ \ as $m\to \infty$ for $i=1,2$;
  \item[(A-x)]\ $\displaystyle  \frac{\bmu_{1*}^T\bSig_{i*}\bmu_{1*}+\bmu_{2*}^T\bSig_{i*}\bmu_{2*} }{{\Psi}_{i(k_i+1)} }\to 0$ \ as $p\to \infty$ \ and \\
  $\displaystyle  \limsup_{m\to \infty} \frac{n_i\{\mu_{i(j)}^2+(\bh_{ij}^T\bmu_{i'*})^2\}}{\lambda_{ij}}<\infty$ $(i'\neq i)$ \ 
 for $i=1,2;\ j=1,...,k_i$.   
\end{description}
Then, we have the following result. 
\begin{theorem}
Assume (A-vi) and (A-viii) to (A-x). 
It holds that $\widehat{T}_*-T_*=o_P(K_{1*}^{1/2})$ as $m\to \infty$. 
Furthermore, assume also $\limsup_{m\to \infty}{\Delta_*^2}/{K_{1*}}<\infty$. 
Then, it holds that $(\widehat{T}_*-\Delta_*)/K_*^{1/2}\Rightarrow N(0,1)$ as $m\to \infty$.
\end{theorem}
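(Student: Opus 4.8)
The plan is to prove $\widehat{T}_*-T_*=o_P(K_{1*}^{1/2})$ first, after which the asymptotic normality is immediate: writing $\widehat{T}_*-\Delta_*=(\widehat{T}_*-T_*)+(T_*-\Delta_*)$ and dividing by $K_*^{1/2}$, the first term is $o_P(K_{1*}^{1/2}/K_*^{1/2})=o_P(1)$ since $K_{1*}\le K_*$, while $(T_*-\Delta_*)/K_*^{1/2}\Rightarrow N(0,1)$ by Corollary~3, which applies because (A-viii) implies (A-i), (A-vi) holds, and $\limsup_{m\to\infty}\Delta_*^2/K_{1*}<\infty$ is assumed; Slutsky's theorem then finishes the argument. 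For the first assertion, observe that $\widehat{T}_*$ arises from $T_*$ by replacing $x_{ijl}=\bh_{ij}^T\bx_{il}$ with $\tilde{x}_{ijl}=\tilde{\bh}_{ijl}^T\bx_{il}$ in the within-population sum and $x_{ijl}\bh_{ij}$ with $\tilde{x}_{ijl}\tilde{\bh}_{ij}$ in the cross-population sum, so that $\widehat{T}_*-T_*=U_1+U_2$ with $U_1=-2\sum_{i=1}^2\{n_i(n_i-1)\}^{-1}\sum_{j=1}^{k_i}\sum_{l<l'}^{n_i}(\tilde{x}_{ijl}\tilde{x}_{ijl'}-x_{ijl}x_{ijl'})$, and, on putting $\by_{il}=\bA_{i(k_i)}\bx_{il}$, $\hat{\by}_{il}=\bx_{il}-\sum_{j=1}^{k_i}\tilde{x}_{ijl}\tilde{\bh}_{ij}$ and denoting sample averages over $l$ by overlines, $U_2=-2(n_1n_2)^{-1}\sum_{l,l'}(\hat{\by}_{1l}^T\hat{\by}_{2l'}-\by_{1l}^T\by_{2l'})=-2(\overline{\hat{\by}}_1^T\overline{\hat{\by}}_2-\overline{\by}_1^T\overline{\by}_2)$. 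The workhorses are Proposition~5, which with $x_{ijl}=\lambda_{ij}^{1/2}z_{ijl}+\mu_{i(j)}$ writes $d_{ijl}:=\tilde{x}_{ijl}-x_{ijl}$ as $\hat{u}_{ijl}\,O_P\{\lambda_{i1}\lambda_{ij}^{-1/2}n_i^{-1/2}\}+O_P(\lambda_{ij}^{1/2}n_i^{-1/2})$ plus a term of the same type carrying $\bmu_i$; Propositions~2 and~3, which give $\tilde{\lambda}_{ij}/\lambda_{ij}=1+O_P(n_i^{-1/2})$ and $\|\tilde{\bh}_{ij}-\bh_{ij}\|^2=\delta_{ij}\{1+o_P(1)\}+O_P(n_i^{-1/2})$ with $\delta_{ij}=\lambda_{ij}^{-1}\tr(\bSig_{i*})/(n_i-1)$; the identities $\sum_l\hat{u}_{ijl}=0$ and $\sum_l\hat{u}_{ijl}^2=1$ coming from $\bS_{iD}\bone_{n_i}=\bze$; and the moment structure (A-viii).

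For $U_1$ I would expand $\tilde{x}_{ijl}\tilde{x}_{ijl'}-x_{ijl}x_{ijl'}=x_{ijl}d_{ijl'}+d_{ijl}x_{ijl'}+d_{ijl}d_{ijl'}$ and use $\sum_{l\neq l'}a_lb_{l'}=(\sum_la_l)(\sum_lb_l)-\sum_la_lb_l$ to reduce everything to the sums $\sum_lx_{ijl}$, $\sum_ld_{ijl}$, $\sum_lx_{ijl}d_{ijl}$ and $\sum_ld_{ijl}^2$. Bounding these with Proposition~5, the identities $\sum_l\hat{u}_{ijl}=0$ and $\sum_l\hat{u}_{ijl}^2=1$, and $\mu_{i(j)}^2=O(\lambda_{ij}/n_i)$ (the $j\le k_i$ part of the second condition of (A-x)), then dividing by $n_i(n_i-1)$ and summing over $j\le k_i$, shows $U_1=o_P(K_{1*}^{1/2})$; this is exactly where (A-ix) enters, forcing the residual post-bias-reduction error (of order $\lambda_{i1}\lambda_{ij}^{-1/2}n_i^{-1/2}$ per observation) and the mean-induced pieces to be negligible against $K_{1*}^{1/2}\asymp\max_i\Psi_{i(k_i+1)}^{1/2}/n_i$.

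For $U_2$, since $|U_2|\le 2\|\overline{\br}_1\|\,\|\overline{\hat{\by}}_2\|+2\|\overline{\by}_1\|\,\|\overline{\br}_2\|$ with $\overline{\br}_i:=\overline{\hat{\by}}_i-\overline{\by}_i=\sum_{j=1}^{k_i}\{(n_i^{-1}\sum_lx_{ijl})(\bh_{ij}-\tilde{\bh}_{ij})-(n_i^{-1}\sum_ld_{ijl})\tilde{\bh}_{ij}\}$, and since $E\|\overline{\by}_i\|^2=\|\bmu_{i*}\|^2+\tr(\bSig_{i*})/n_i$ keeps $\|\overline{\by}_i\|$ and $\|\overline{\hat{\by}}_i\|$ under control, the matter comes down to bounding $\overline{\br}_1^T\overline{\by}_2$, $\overline{\by}_1^T\overline{\br}_2$ and $\overline{\br}_1^T\overline{\br}_2$. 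The $d_{ijl}$-pieces of $\overline{\br}_i$ go as in $U_1$; for the $(\bh_{ij}-\tilde{\bh}_{ij})$-pieces I would expand $\tilde{\bh}_{ij}-\bh_{ij}$ along the $\bh_{ir}$ (with coefficients of order $\{n_i\lambda_{ij}\}^{-1/2}\lambda_{ir}^{1/2}$ times standardized variables), split $\overline{\by}_{i'}=\bmu_{i'*}+(\overline{\by}_{i'}-\bmu_{i'*})$, and use that $\overline{\by}_{i'}-\bmu_{i'*}$ is independent of population~$i$ while $\tilde{\bh}_{ij}$ is, to leading order, a function of the spiked coordinates of population~$i$ alone. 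Conditioning accordingly and invoking (A-viii) collapses the conditional covariances of the relevant $n_i$-vectors to scalar multiples of $\bI_{n_i}$, so the bounds reduce to the quadratic forms $\bmu_{i'*}^T\bSig_{i*}\bmu_{i'*}$, $(\bh_{ij}^T\bmu_{i'*})^2$, $\mu_{i(j)}^2$ and to $\lambda_{i1}^2/n_i$, which are $o(\Psi_{i(k_i+1)})$, $O(\lambda_{ij}/n_i)$ and again $o(\Psi_{i(k_i+1)})$ by (A-x) and (A-ix); hence $U_2=o_P(K_{1*}^{1/2})$, completing the proof.

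The hard part is the $(\bh_{ij}-\tilde{\bh}_{ij})$-pieces of $U_2$: the direction error has a genuinely non-negligible norm, $\|\tilde{\bh}_{ij}-\bh_{ij}\|\asymp\delta_{ij}^{1/2}$, which may diverge, so a crude Cauchy--Schwarz bound on its interaction with $\bmu_{i'*}$ or with the noise-subspace data of the opposite population is far too lossy and must be replaced by a second-moment argument that conditions on the spiked coordinates (equivalently on $\hat{\bu}_{ij}$) and exploits (A-viii) to reduce conditional covariances to multiples of $\bI_{n_i}$, turning squared-norm bounds into the quadratic-form bounds that (A-ix) and (A-x) are calibrated to control; carrying this out uniformly over $j\le k_i$ and $i=1,2$ while respecting these cancellations is where the real work lies.
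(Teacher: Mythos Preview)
Your closing reduction to Corollary~3 is fine and matches the paper. The decomposition $\widehat{T}_*-T_*=U_1+U_2$ is also the paper's, but your plan for $U_1$ has a real gap. Proposition~5 is a \emph{pointwise} statement: the term written $\hat{u}_{jl}\times O_P\{(n^{1/2}\lambda_j)^{-1}\lambda_1\}$ has an $l$-dependent random multiplier (it involves $z_{sl}^2$, $z_{sl}\bu_{s(l)}$, cf.\ the proof of Proposition~5), and the residual $O_P(n^{-1/2})$ is likewise $l$-specific. Hence the identities $\sum_l\hat{u}_{ijl}=0$ and $\sum_l\hat{u}_{ijl}^2=1$ do not give the cancellation you need: summing Proposition~5 over $l$ yields only $n_i^{-1}\sum_l d_{ijl}=O_P(\lambda_{ij}^{1/2}n_i^{-1/2})$, and in the expansion of $U_1$ this produces a contribution of order $\lambda_{ij}/n_i$. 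That has to be $o(K_{1*}^{1/2})\asymp o(\Psi_{i(k_i+1)}^{1/2}/n_i)$, i.e., $\lambda_{ij}=o(\Psi_{i(k_i+1)}^{1/2})$, which does \emph{not} follow from (A-vi)/(A-ix) (these only give $\lambda_{i1}=o(n_i^{1/2}\Psi_{i(k_i+1)}^{1/2})$). The paper closes this gap with a separate lemma (Lemma~B.1 in the supplement) that proves $n_i^{-1}\sum_l d_{ijl}=O_P(\psi_{ij}^{1/2}/n_i)$ and $n_i^{-1}\sum_l d_{ijl}^2=O_P(\psi_{ij}/n_i)$, where $\psi_{ij}=\tr(\bSig_i^2)/\lambda_{ij}+n_i\bmu_i^T\bSig_i\bmu_i/\lambda_{ij}$; this is obtained not from Proposition~5 but from a structured expansion of $\hat{\bu}_{ij(l)}$ (equation~(B.13)) followed by direct second-moment calculations under (A-viii). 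That sharper aggregate rate is exactly what makes (B.28) go through.

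For $U_2$ your instinct is closer to the mark, but the execution in the paper is again via auxiliary lemmas rather than the pointwise Propositions~2--5. The key extra ingredient you do not mention is control of cross-population eigenvector products: the paper proves (Lemma~B.2) that $\tilde{\bh}_{1j}^T\bh_{2j'}-h_{jj'}=O_P(n_1^{-1/2})$, $\bh_{1j}^T\tilde{\bh}_{2j'}-h_{jj'}=O_P(n_2^{-1/2})$, and the second-order relation $\tilde{\bh}_{1j}^T\tilde{\bh}_{2j'}-h_{jj'}=(\tilde{\bh}_{1j}^T\bh_{2j'}-h_{jj'})+(\bh_{1j}^T\tilde{\bh}_{2j'}-h_{jj'})+O_P\{(n_1n_2)^{-1/2}\}$, which together with Lemma~B.1 and a direct expansion of $\tilde{\bh}_{1j}^T(\overline{\bv}_2+\bmu_{2*})$ through $\hat{\bu}_{1j}$ yields (B.30)--(B.35). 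Your conditioning idea is in the right spirit, but note that the initial Cauchy--Schwarz step $|U_2|\le 2\|\overline{\br}_1\|\,\|\overline{\hat{\by}}_2\|+\cdots$ is already too lossy (since $E\|\overline{\by}_i\|^2$ contains $\tr(\bSig_{i*})/n_i$, which need not be $O(K_{1*}^{1/2})$); you should go straight to the inner products $\overline{\br}_1^T\overline{\by}_2$ etc., as you in fact do two sentences later. In short: the architecture of your argument is right, but both $U_1$ and $U_2$ require dedicated aggregate moment bounds (the paper's Lemmas~B.1 and~B.2), not the pointwise Propositions~2--5 alone.
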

By using Lemma 1, it holds that $K_{1*}/K_{*}=1+o(1)$ as $m\to \infty$ under (A-vi) and $\limsup_{m\to \infty}\Delta_*^2/K_{1*}<\infty$. 
Thus, we consider estimating $K_{1*}$.  
Let $\widehat{\bA}_{i(k_i)}=\bI_p-\sum_{j=1}^{k_i}\hat{\bh}_{ij}\hat{\bh}_{ij}^T$ for $i=1,2$. 
We estimate $K_{1*}$ by 
$$
\widehat{K}_{1*}=2\sum_{i=1}^2\frac{\widehat{\Psi}_{i(k_i+1)}  }{n_i(n_i-1)}
+4\frac{ \tr(\bS_{1n_1}\widehat{\bA}_{1(k_1)} \bS_{2n_2}\widehat{\bA}_{2(k_2)})}{n_1n_2}, 
$$
where $\widehat{\Psi}_{i(k_i+1)}$ is defined by (S2.1) of the supplementary material.  
Then, we have the following result.
\begin{lemma}
Assume (A-vi), (A-viii) and (A-ix). 
It holds that $\widehat{K}_{1*}/K_{1*}=1+o_P(1)$ as $m\to \infty$. 
\end{lemma}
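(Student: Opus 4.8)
The plan is to establish the ratio convergence $\widehat{K}_{1*}/K_{1*} = 1 + o_P(1)$ term by term, matching the three pieces of $\widehat{K}_{1*}$ against the corresponding pieces of $K_{1*}$. Recall $K_{1*} = 2\sum_i \tr(\bSig_{i*}^2)/\{n_i(n_i-1)\} + 4\tr(\bSig_{1*}\bSig_{2*})/(n_1n_2)$ with $\tr(\bSig_{i*}^2) = \Psi_{i(k_i+1)}$. First I would show that the dominant balance is governed by the first two summands: under (A-vi) one has $\lambda_{\max}(\bSig_{i*})^2/\tr(\bSig_{i*}^2) \to 0$, and a Cauchy--Schwarz bound gives $\tr(\bSig_{1*}\bSig_{2*}) \le \{\tr(\bSig_{1*}^2)\tr(\bSig_{2*}^2)\}^{1/2}$, so the cross term is of the same or smaller order; hence it suffices to control each of the three estimators relative to its target, i.e. $\widehat{\Psi}_{i(k_i+1)}/\Psi_{i(k_i+1)} = 1 + o_P(1)$ for $i=1,2$, and $\tr(\bS_{1n_1}\widehat{\bA}_{1(k_1)}\bS_{2n_2}\widehat{\bA}_{2(k_2)})/\tr(\bSig_{1*}\bSig_{2*}) = 1 + o_P(1)$ (the latter only mattering when the cross term is non-negligible).

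For the diagonal terms, the key is that $\widehat{\Psi}_{i(k_i+1)}$, defined in (S2.1), is built from the NR-type estimators; the natural route is to write $\widehat{\Psi}_{i(k_i+1)} = W_{in_i}(\bI_p) - (\text{estimated contribution of the first }k_i\text{ eigenvalues})$, where $W_{in_i}(\bI_p)$ is the unbiased estimator of $\tr(\bSig_i^2)$ from Section 2.2. By (\ref{2.3}), $W_{in_i}(\bI_p) = \tr(\bSig_i^2)\{1 + o_P(1)\}$, and $\tr(\bSig_i^2) = \Psi_{i(1)} = \sum_{j=1}^{k_i}\lambda_{ij}^2 + \Psi_{i(k_i+1)}$. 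The subtracted piece involves $\tilde\lambda_{ij}$, $j \le k_i$, which by Proposition 3 satisfy $\tilde\lambda_{ij}/\lambda_{ij} = 1 + O_P(n_i^{-1/2})$, so the estimated first-$k_i$ contribution equals $\sum_{j=1}^{k_i}\lambda_{ij}^2\{1 + O_P(n_i^{-1/2})\}$. Subtracting, $\widehat{\Psi}_{i(k_i+1)} = \Psi_{i(k_i+1)} + o_P(\tr(\bSig_i^2)) + O_P(n_i^{-1/2})\sum_{j=1}^{k_i}\lambda_{ij}^2$. Dividing by $\Psi_{i(k_i+1)}$ and invoking (A-ix), which says $\lambda_{i1}^2/\{n_i\Psi_{i(k_i+1)}\} \to 0$ (hence a fortiori $\lambda_{ij}^2/\{n_i^{1/2}\Psi_{i(k_i+1)}\} \to 0$ and $\tr(\bSig_i^2)/\Psi_{i(k_i+1)}$ is controlled — here one also uses that $k_i$ is a fixed finite integer), gives the desired ratio convergence for each diagonal term.

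For the cross term, $\tr(\bS_{1n_1}\widehat{\bA}_{1(k_1)}\bS_{2n_2}\widehat{\bA}_{2(k_2)})$, I would first replace $\widehat{\bA}_{i(k_i)} = \bI_p - \sum_{j=1}^{k_i}\hat\bh_{ij}\hat\bh_{ij}^T$ by $\bA_{i(k_i)} = \bI_p - \sum_{j=1}^{k_i}\bh_{ij}\bh_{ij}^T$, using Proposition 2 (so $(\hat\bh_{ij}^T\bh_{ij})^2 = (1+\delta_{ij})^{-1} + O_P(n_i^{-1/2})$) together with the perturbation bound (\ref{5.1}) for $\|\hat\bh_{ij} - \bh_{ij}\|^2$; the resulting errors, times $\bS_{in_i} = \bSig_i\{1+o_P(1)\}$ in the relevant operator-norm/trace sense, are controlled by (A-ix) and the smallness of $\delta_{ij}$ relative to $n_i^{1/2}$. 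Then $\tr(\bS_{1n_1}\bA_{1(k_1)}\bS_{2n_2}\bA_{2(k_2)})$ is a standard $U$-statistic-type estimator of $\tr(\bSig_{1*}\bSig_{2*})$, whose consistency follows as in \citet{Chen:2010b} / \citet{Aoshima:2015} under (A-viii) (which implies (A-i)). Combining the three term-wise ratio convergences and the ordering argument from the first paragraph yields $\widehat{K}_{1*}/K_{1*} = 1 + o_P(1)$.

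The main obstacle I anticipate is the cross-term analysis: controlling the replacement of $\widehat{\bA}_{i(k_i)}$ by $\bA_{i(k_i)}$ requires care because the PC direction estimators $\hat\bh_{ij}$ can be inconsistent when $\delta_{ij} \not\to 0$, so one cannot simply say $\hat\bh_{ij} \approx \bh_{ij}$; instead one must exploit that only the projections onto a fixed finite-dimensional span enter, that those projections appear quadratically (so the leading inconsistency is a harmless scalar factor $(1+\delta_{ij})^{-1}$), and that (A-ix) forces these contributions to be negligible relative to $\Psi_{i(k_i+1)}$. The diagonal terms, by contrast, are more routine once one leans on Proposition 3 and the unbiased-estimator machinery of Section 2.2.
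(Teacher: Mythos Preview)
Your treatment of the diagonal pieces $\widehat{\Psi}_{i(k_i+1)}$ contains two concrete errors. First, you misidentify the estimator: by (S2.1), $\widehat{\Psi}_{i(k_i+1)}$ is built from the \emph{cross-data-matrix} quantities $\tr(\bS_{iD(1)}\bS_{iD(1)}^T)$ and the CDM singular values $\acute{\lambda}_{ij}$, not from $W_{in_i}(\bI_p)$ and the NR estimates $\tilde{\lambda}_{ij}$. In the paper the consistency $\widehat{\Psi}_{i(k_i+1)}/\Psi_{i(k_i+1)}=1+o_P(1)$ is Lemma~S2.1, proved directly via the CDM structure under only (A-i) and (A-vi). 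Second, and more seriously, even if one tried the route you sketch, the error control fails. You assert that (A-ix), namely $\lambda_{i1}^2/\{n_i\Psi_{i(k_i+1)}\}\to 0$, implies ``a fortiori'' $\lambda_{ij}^2/\{n_i^{1/2}\Psi_{i(k_i+1)}\}\to 0$; the implication is in the wrong direction. Condition (A-ix) is compatible with, say, $\lambda_{i1}^2/\Psi_{i(k_i+1)}\sim n_i^{3/4}$, in which case your term $O_P(n_i^{-1/2})\sum_{j\le k_i}\lambda_{ij}^2/\Psi_{i(k_i+1)}$ diverges. Likewise, the $o_P(\tr(\bSig_i^2))$ error you carry from $W_{in_i}$ is small only relative to $\tr(\bSig_i^2)$, not relative to $\Psi_{i(k_i+1)}$; since $\tr(\bSig_i^2)/\Psi_{i(k_i+1)}$ can be as large as $o(n_i)$ under (A-ix), you cannot conclude $o_P(1)$ after dividing. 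The CDM construction avoids exactly this cancellation-of-large-terms problem.

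For the cross term your instinct is right that this is where the real work lies, but the paper does not replace $\widehat{\bA}_{i(k_i)}$ by $\bA_{i(k_i)}$. Instead it writes $\bS_{in_i}\widehat{\bA}_{i(k_i)}=\bS_{in_i}-\widehat{\bS}_{i(yy)}$ with $\widehat{\bS}_{i(yy)}=\sum_{j\le k_i}\hat{\lambda}_{ij}\hat{\bh}_{ij}\hat{\bh}_{ij}^T$, decomposes $\bS_{in_i}=\bS_{i(yy)}+\bS_{i(yv)}+\bS_{i(vy)}+\bS_{i(vv)}$ according to $\bx_{ij}=\by_{ij}+\bv_{ij}+\bmu_i$, controls $\bS_{i(yy)}-\widehat{\bS}_{i(yy)}$ via (\ref{B.36}) and Lemma~B.2, and then handles each of the $yy/yv/vy/vv$ cross pieces by direct moment bounds (equations (\ref{B.37})--(\ref{B.49})), finally reducing to $\tr(\bS_{1(vv)}\bS_{2(vv)})/\tr(\bSig_{1*}\bSig_{2*})=1+o_P(1)$. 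Your proposed resolution via a ``harmless scalar factor $(1+\delta_{ij})^{-1}$'' is not how the inconsistency of $\hat{\bh}_{ij}$ is actually neutralized; the point is rather that the residual $\bS_{i(yy)}-\widehat{\bS}_{i(yy)}$, though not small in operator norm, interacts with the independent population-$i'$ pieces in a way that (A-ix) makes negligible after tracing.
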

Now, we test (\ref{1.2}) by 
\begin{equation}
\mbox{rejecting $H_0$}\Longleftrightarrow \widehat{T}_*/\widehat{K}_{1*}^{1/2}>z_{\alpha}.
\label{5.5}
\end{equation}
Let power($\Delta_*$) denote the power of the test (\ref{5.5}). 
Then, from Theorem 5 and Lemma 3, we have the following result.  
\begin{theorem}
Assume (A-vi) and (A-vii) to (A-x).
The test (\ref{5.5}) has as $m\to \infty$
$$
\mbox{size}=\alpha+o(1) \quad \mbox{and}\quad \mbox{power$(\Delta_*)$}-\Phi \bigg(\frac{\Delta_*}{K_{*}^{1/2} }
-z_{\alpha}\Big( \frac{K_{1*}}{K_{*}}\Big)^{1/2}  \bigg)=o(1).
$$
\end{theorem}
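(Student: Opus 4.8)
The plan is to derive both assertions from three facts already in hand: the asymptotic normality $(\widehat{T}_*-\Delta_*)/K_*^{1/2}\Rightarrow N(0,1)$ (the ``furthermore'' part of Theorem 5, which holds whenever $\limsup_{m\to\infty}\Delta_*^2/K_{1*}<\infty$), the ratio consistency $\widehat{K}_{1*}/K_{1*}=1+o_P(1)$ (Lemma 3), and $K_{1*}/K_*=1+o(1)$ under (A-vi) together with $\limsup_{m\to\infty}\Delta_*^2/K_{1*}<\infty$ (the consequence of Lemma 1 recorded just after Lemma 3). All standing hypotheses required by Theorem 5 and Lemma 3 are among (A-vi) and (A-viii)--(A-x), which are assumed. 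I would treat the size and the power separately, and for the power split into the cases in which $\Delta_*^2/K_{1*}$ does or does not stay bounded.

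For the size, work under $H_0$, where (A-vii) gives $\Delta_*^2/K_{1*}\to 0$; in particular $\limsup_{m\to\infty}\Delta_*^2/K_{1*}<\infty$, so Theorem 5 gives $(\widehat{T}_*-\Delta_*)/K_*^{1/2}\Rightarrow N(0,1)$ and $K_{1*}/K_*\to 1$, whence $\Delta_*/K_*^{1/2}=(\Delta_*^2/K_{1*})^{1/2}(K_{1*}/K_*)^{1/2}\to 0$ and therefore $\widehat{T}_*/K_*^{1/2}\Rightarrow N(0,1)$. By Lemma 3, $\widehat{K}_{1*}/K_*=(\widehat{K}_{1*}/K_{1*})(K_{1*}/K_*)\to_P 1$, so Slutsky's theorem yields $\widehat{T}_*/\widehat{K}_{1*}^{1/2}=(\widehat{T}_*/K_*^{1/2})(K_*/\widehat{K}_{1*})^{1/2}\Rightarrow N(0,1)$, and the rejection probability converges to $1-\Phi(z_\alpha)=\alpha$.

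For the power, write $\mbox{power}(\Delta_*)=P(\widehat{T}_*/\widehat{K}_{1*}^{1/2}>z_\alpha)$; along any subsequence of $m$ there is a further subsequence on which $\Delta_*^2/K_{1*}\to c\in[0,\infty]$, and it suffices to show $\mbox{power}(\Delta_*)-\Phi\big(\Delta_*/K_*^{1/2}-z_\alpha(K_{1*}/K_*)^{1/2}\big)\to 0$ along it. If $c<\infty$, Theorem 5 and $K_{1*}/K_*\to 1$ give $Z_m:=(\widehat{T}_*-\Delta_*)/K_*^{1/2}\Rightarrow N(0,1)$ and $\Delta_*/K_*^{1/2}\to\sqrt{c}$; writing the rejection event as $\{Z_m>\theta_m\}$ with $\theta_m=z_\alpha(\widehat{K}_{1*}/K_*)^{1/2}-\Delta_*/K_*^{1/2}$, Lemma 3 gives $\theta_m\to_P z_\alpha-\sqrt{c}$, so $Z_m-\theta_m\Rightarrow N(0,1)-(z_\alpha-\sqrt{c})$ and $\mbox{power}(\Delta_*)\to 1-\Phi(z_\alpha-\sqrt{c})$, which equals the limit of $\Phi\big(\Delta_*/K_*^{1/2}-z_\alpha(K_{1*}/K_*)^{1/2}\big)$ by continuity of $\Phi$. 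If $c=\infty$, I would first promote this to $\Delta_*^2/K_*\to\infty$: from $K_{1*}\ge 2\Psi_{i(k_i+1)}/\{n_i(n_i-1)\}$ one gets $\Psi_{i(k_i+1)}^{1/2}/(n_i\Delta_*)\to 0$, and since $\bmu_*^T\bSig_{i*}\bmu_*\le\lambda_{\max}(\bSig_{i*})\Delta_*=\lambda_{ik_i+1}\Delta_*$ we have $K_{2*}/\Delta_*^2\le 4\sum_i\lambda_{ik_i+1}/(n_i\Delta_*)$ with each summand $=(\lambda_{ik_i+1}/\Psi_{i(k_i+1)}^{1/2})(\Psi_{i(k_i+1)}^{1/2}/(n_i\Delta_*))\to 0$ by (A-vi); with $K_{1*}/\Delta_*^2\to 0$ this yields $K_*/\Delta_*^2\to 0$. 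Then Chebyshev's inequality with $E(T_*)=\Delta_*$, $\Var(T_*)=K_*$ gives $T_*/\Delta_*\to_P 1$, so by $\widehat{T}_*-T_*=o_P(K_{1*}^{1/2})=o_P(\Delta_*)$ (Theorem 5) we get $\widehat{T}_*/\Delta_*\to_P 1$, hence with Lemma 3, $\widehat{T}_*/\widehat{K}_{1*}^{1/2}=(\widehat{T}_*/\Delta_*)(\Delta_*/K_{1*}^{1/2})(K_{1*}/\widehat{K}_{1*})^{1/2}\to_P\infty$ and $\mbox{power}(\Delta_*)\to 1$; also $\Delta_*/K_*^{1/2}\to\infty$ while $z_\alpha(K_{1*}/K_*)^{1/2}\le z_\alpha$, so $\Phi\big(\Delta_*/K_*^{1/2}-z_\alpha(K_{1*}/K_*)^{1/2}\big)\to 1$, and the difference again tends to $0$.

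The main obstacle I anticipate is the $c=\infty$ branch of the power argument, namely establishing $K_{2*}=o(\Delta_*^2)$ (equivalently $\Delta_*^2/K_*\to\infty$) from (A-vi) and the divergence $\Delta_*^2/K_{1*}\to\infty$, which is precisely what makes the normal-approximation threshold degenerate and collapses both the power and the target $\Phi$-value to $1$. Everything else is a routine assembly of Slutsky's theorem with Theorem 5, Lemma 3 and the relation $K_{1*}/K_*\to 1$; the only other delicate point, replacing the random threshold $\theta_m$ by its deterministic limit, is innocuous because once $\Delta_*^2/K_{1*}$ converges, $\theta_m$ converges in probability to a constant.
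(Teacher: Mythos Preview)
Your proposal is correct and takes essentially the same approach as the paper, whose own proof is the one-liner ``Similar to the proof of Theorem 3, by combining Theorem 5 and Lemma 3, we can conclude the result''; you have faithfully unpacked what that entails, including the subsequence split on $\Delta_*^2/K_{1*}$ and the Chebyshev argument in the unbounded case. The only cosmetic difference is that in the $c=\infty$ branch the paper's Theorem 1/Theorem 3 template would bound $\bmu_*^T\bSig_{i*}\bmu_*\le\Delta_*\,\tr(\bSig_{i*}^2)^{1/2}=\Delta_*\Psi_{i(k_i+1)}^{1/2}$ to obtain $K_{2*}=O(\Delta_*K_{1*}^{1/2})$ and hence $K_{2*}/\Delta_*^2=O(K_{1*}^{1/2}/\Delta_*)\to 0$ directly, rather than factoring through $\lambda_{ik_i+1}$ and invoking (A-vi) again, but your route is equally valid.
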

In general, $k_i$s are unknown in $\widehat{T}_*$ and $\widehat{K}_{1*}$.
See Section S2.2 in the supplementary material for estimation of $k_i$s. 
If (\ref{4.1}) is met, one may use the test (\ref{4.2}). 
However, under (\ref{4.1}), (A-vi) and $\limsup_{m\to \infty}\Delta_*^2/K_{1*}<\infty$, 
we note that $\Var(T_*)/\Var(T_I)=O(K_{1*}/K_1 )\to 0$ as $m\to \infty$, so that the power of (\ref{4.2}) must be lower than that of (\ref{5.5}). 
See Section 6 for numerical comparisons. 
We recommend to use the test (\ref{5.5}) for the SSE model in general. 
\\

\noindent {\bf 5.3. How to Check SSE Models and Estimate Parameters}
\par
\vspace{9pt}
We provide a method to distinguish between the NSSE model defined by (\ref{1.5}) and the SSE model defined by (\ref{1.7}).
We also give a method to estimate the parameters required in the test procedure (\ref{5.5}). 
We summarized the results in Section S2 of the supplementary material.
\\

\noindent {\bf 5.4. Demonstration}
\par
\vspace{9pt}
We introduce two high-dimensional data sets that have the SSE model.
We demonstrate the proposed test procedure by (\ref{5.5}) by using the microarray data sets. 
We summarized the results in Section S3 of the supplementary material. 
\\

\setcounter{chapter}{6}
\setcounter{equation}{0} 
\noindent {\bf 6. Simulations for Strongly Spiked Eigenvalue Model}
\par
\vspace{9pt}
We used computer simulations to study the performance of the test procedures by (\ref{4.2}) and (\ref{5.5}) for the SSE model. 
In general, $k_i$s are unknown for (\ref{5.5}). 
Hence, we estimated $k_i$ by $\hat{k}_{i}$, where $\hat{k}_{i}$ is given in Section S2.2 of the supplementary material. 
We set $\kappa(n_i)=(n_i^{-1} \log{n_i})^{1/2}$ in (S2.2) of the supplementary material.
We checked the performance of the test procedure by (\ref{5.5}) with $k_i=\hat{k}_i$, $i=1,2$. 
We considered a naive estimator of $T_{*}$ as $T(\widehat{\bA}_{1(k_1)},\widehat{\bA}_{2(k_2)})$ and checked the performance of the test procedure given by 
\begin{equation}
\mbox{rejecting $H_0$}\Longleftrightarrow T(\widehat{\bA}_{1(k_1)},\widehat{\bA}_{2(k_2)})/\widehat{K}_{1*}^{1/2}>z_{\alpha}.
\label{6.1}
\end{equation}
We also checked the performance of the test procedure by (\ref{3.1}) with $\bA=\bI_p$. 
We set $\alpha=0.05$, $\bmu_1=\bze$ and 
$$
\bSig_{i}=\left( \begin{array}{cc}
\bSig_{(1)} & \bO_{2, p-2} \\
\bO_{p-2, 2} & c_i\bSig_{(2)}
\end{array} \right) \ \ \mbox{with $\bSig_{(1)}=\mbox{diag}(p^{2/3},p^{1/2})$ and $\bSig_{(2)}=(0.3^{|i-j|^{1/2}})$}
$$
for $i=1,2$, 
where $\bO_{l,l'}$ is the $l\times l'$ zero matrix and $(c_1,c_2)=(1,1.5)$. 
Note that (\ref{4.1}) and (A-vi) with $k_1=k_2=2$ are met.
When considering the alternative hypothesis, we set $\bmu_2=(0,...,0,1,1,1,1)^T$ whose last $4$ elements are $1$. 
We considered three cases: \\[2mm] 
(a) $\pi_i:N_p(\bmu_i, \bSig_i)$, $p=2^s,\ n_1=3\lceil p^{1/2} \rceil$ and  
$n_2=4 \lceil p^{1/2} \rceil$ for $s=4,...,10$; \\
(b) $\bz_{ij}$s are i.i.d. as $p$-variate $t$-distribution, $t_p(\nu)$, with mean zero, covariance matrix $\bI_p$ and degrees of freedom $\nu=15$, $(n_1,n_2)=(40,60)$ and $p=50+100(s-1)$ for $s=1,...,7$; and \\
(c) $z_{itj}=(v_{itj}-5)/{10}^{1/2}$ $(t=1,...,p)$ in which $v_{itj}$s are i.i.d. as $\chi_5^2$, $p=500$, $n_1=10s$ and $n_2=1.5n_1$ for $s=2,...,8$. \\[2mm]
Note that (A-viii) is met both for (a) and (c). 
However, (A-viii) (or (A-i)) is not met for (b). 
Similar to Section 3.3, we calculated $\overline{\alpha}$ and $1-\overline{\beta}$ with 2000 replications for five test procedures: (I) from (\ref{3.1}) with $\bA=\bI_p$, (II) from (\ref{4.2}), (III) from (\ref{5.5}), (IV) from (\ref{5.5}) with $k_i=\hat{k}_i$, $i=1,2$, and (V) from (\ref{6.1}). 
Their standard deviations are less than $0.011$. 
In Fig. 2, for (a) to (c), we plotted $\overline{\alpha}$ in the left panel and $1-\overline{\beta}$ in the right panel. 
From Theorem 6, we plotted the asymptotic power, $\Phi(\Delta_*/K_*^{1/2}-z_{\alpha}(K_{1*}/K_*)^{1/2})$, for (III). 
\begin{figure}[h!]
\includegraphics[scale=0.63]{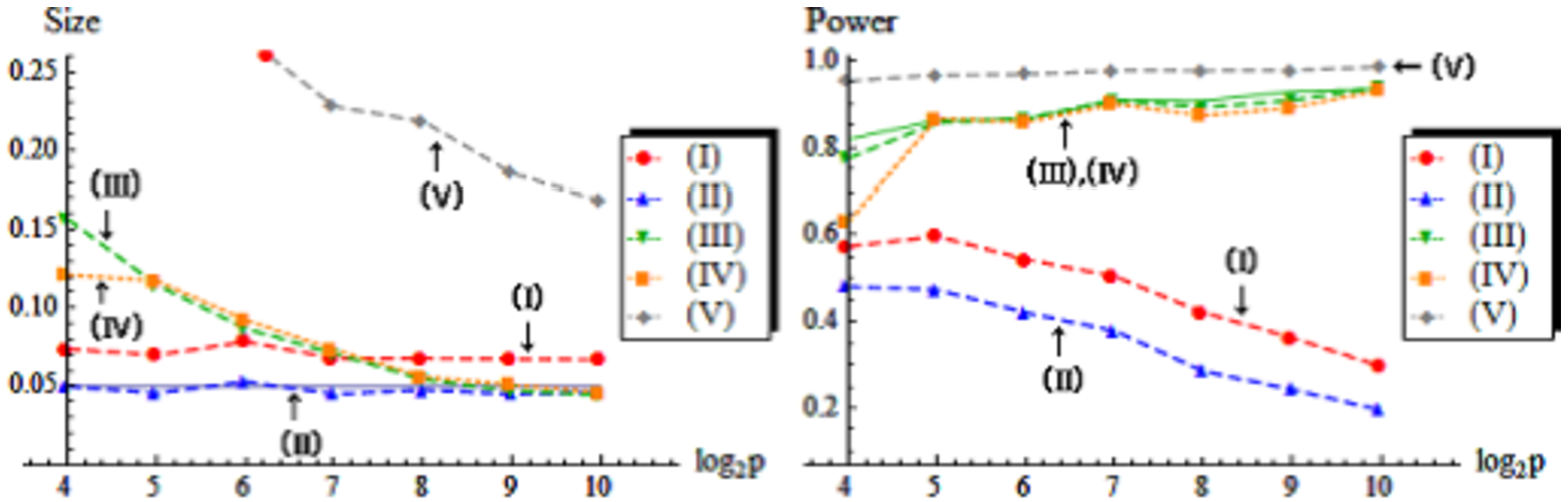} \\[-0.7mm]
(a) $\pi_i:N_p(\bmu_i, \bSig_i)$, $p=2^s,\ n_1=3 \lceil p^{1/2} \rceil$ and  
$n_2=4 \lceil p^{1/2} \rceil$ for $s=4,...,10$. \\[2mm]
\includegraphics[scale=0.63]{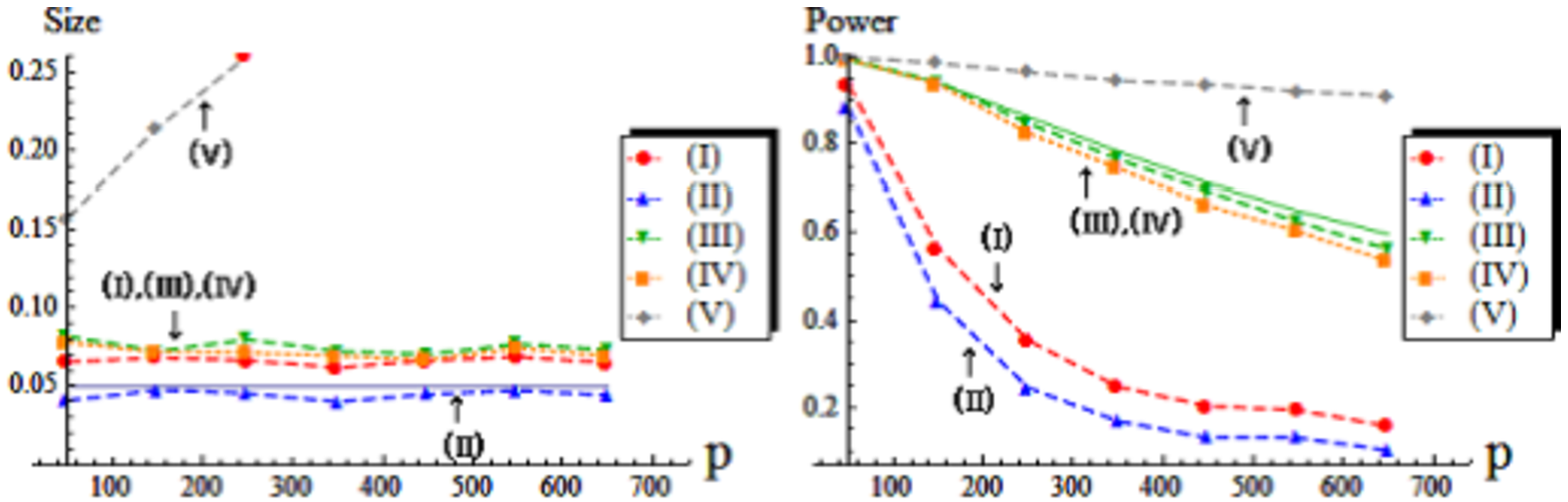}  \\[-0.7mm]
(b) $\bz_{ij}$s are i.i.d. as $t_p(15)$, $(n_1,n_2)=(40,60)$ and $p=50+100(s-1)$ for $s=1,...,7$. 
\\[2mm]
\includegraphics[scale=0.63]{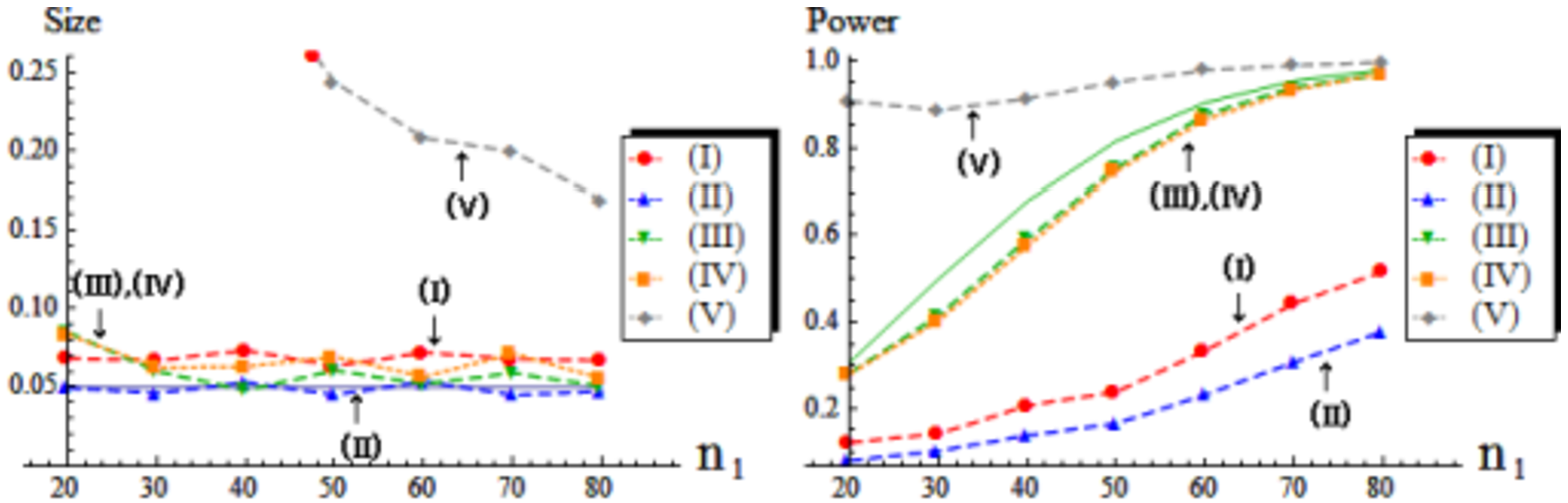}  \\[-0.7mm]
(c) $z_{irj}=(v_{itj}-5)/{10}^{1/2}$ $(t=1,...,p)$ in which $v_{itj}$s are i.i.d. as $\chi_5^2$, $p=500$, $n_1=10s$ and $n_2=1.5n_1$ for $s=2,...,8$. 
\caption{The performances of five tests: (I) from (\ref{3.1}) with $\bA=\bI_p$, (II) from (\ref{4.2}), (III) from (\ref{5.5}), (IV) from (\ref{5.5}) with $k_i=\hat{k}_i$, $i=1,2$, and (V) from (\ref{6.1}). 
For (a) to (c), the values of $\overline{\alpha}$ are denoted by the dashed lines in the left panel and the values of $1-\overline{\beta}$ are denoted by the dashed lines in the right panel.
The asymptotic power of (III) was given by $\Phi(\Delta_*/K_*^{1/2}-z_{\alpha}(K_{1*}/K_*)^{1/2})$ 
which is denoted by the solid line in the right panels. 
When $n_i$s are small or $p$ is large, $\overline{\alpha}$ for (V) was too high to describe. 
}
\end{figure}

We observed that (II) gives better performances compared to (I) regarding the size. 
The size by (I) did not become close to $\alpha$. 
This is probably because $T_I$ does not satisfy the asymptotic normality given in Theorem 2 when (\ref{1.5}) is not met. 
On the other hand, (II) (or (I)) gave quite bad performances compared to (III) and (IV) regarding the power. 
This is probably because $\Var(T_I)/\Var(T_*)\to \infty$ as $p\to \infty$ in the current setting. 
The size of (V) was much higher than $\alpha$. 
This is probably because of the bias of $T(\widehat{\bA}_{1(k_1)},\widehat{\bA}_{2(k_2)})$. 
See Section 5.1 for the details. 
We observed that (III) and (IV) give adequate performances even in the non-Gaussian cases.
The performances of (III) and (IV) became quite similar to each other in almost all cases. 
When $p$ and $n_{i}$s are not small, the plots of (IV) became close to the theoretical values. 
Hence, we recommend to use the test procedure by (\ref{5.5}) with $k_i=\hat{k}_i$, $i=1,2$ when (\ref{1.7}) holds. 

We also checked the performance of the test procedures for the MSN distribution and the multivariate skew $t$ (MST) distribution.  
See \cite{Azzalini:2003} and \cite{Gupta:2003} for the details of the MST distribution.
We gave the results in Section S4.2 of the supplementary material. 
\\

\setcounter{chapter}{9}
\setcounter{equation}{0} 
\noindent {\bf 7. Conclusion}
\par
\vspace{9pt}
By classifying eigenstructures into two classes, the SSE and NSSE models, and then selecting a suitable test procedure depending on the eigenstructure, we can quickly obtain a much more accurate result at lower computational cost.
These benefits are vital in groundbreaking research of medical diagnostics, engineering, big data analysis, etc. 
\\[1.4cm]
\begin{center}
{\Large
{\bf Supplementary Material}}
\end{center}

\setcounter{section}{0}
\setcounter{equation}{0}
\def\theequation{S\arabic{section}.\arabic{equation}}
\def\thesection{S\arabic{section}}

\def\theproposition{S\arabic{section}.\arabic{proposition}}

\def\thelemma{S\arabic{section}.\arabic{lemma}}

\section{Additional Propositions}
\setcounter{proposition}{0}
\noindent

In this section, we give two propositions and proofs of the propositions. 
\subsection{Proposition S1.1}
\begin{proposition}
Let $\Theta $ be the set of positive definite matrices of dimension $p$.
It holds that
$$
\argmax_{\bsA \in   \bTheta  }\Big\{ \frac{\Delta(\bA)}{\{K_{2}(\bA)\}^{1/2}}\Big\}
=c(\bSig_1/n_1+\bSig_2/n_2)^{-1}
$$
for any constant $c>0$.
\end{proposition}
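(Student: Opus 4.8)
Throughout, assume $\bmu_1\neq\bmu_2$ (otherwise $\Delta(\bA)=K_2(\bA)=0$ and the ratio is undefined), and set $\bdel=\bmu_1-\bmu_2$ and $\bW=\bSig_1/n_1+\bSig_2/n_2$, which is positive definite. The plan is to collapse the matrix optimisation to a one-dimensional vector problem. First I would substitute $\bmu_{\bsA}=\bA^{1/2}\bdel$ and $\bSig_{i,\bsA}=\bA^{1/2}\bSig_i\bA^{1/2}$ into the definitions to get $\Delta(\bA)=\bdel^T\bA\bdel$ and
$$
K_{2}(\bA)=4\sum_{i=1}^{2}\frac{\bdel^T\bA\bSig_i\bA\bdel}{n_i}=4\,\bdel^T\bA\bW\bA\bdel .
$$
Writing $\bu=\bA\bdel$, the objective becomes $\Delta(\bA)/\{K_2(\bA)\}^{1/2}=\bdel^T\bu/\{2(\bu^T\bW\bu)^{1/2}\}$, which depends on $\bA$ only through $\bu$; moreover $\bdel^T\bu=\bdel^T\bA\bdel>0$ since $\bA$ is positive definite and $\bdel\neq\bze$, and $\bu^T\bW\bu>0$ for the same reason.

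Next I would identify the range of $\bu=\bA\bdel$, as $\bA$ runs over the positive definite matrices, as exactly the open half-space $\mathcal{H}=\{\bu:\bdel^T\bu>0\}$. The inclusion ``$\subseteq$'' was noted above; for ``$\supseteq$'', given $\bu\in\mathcal{H}$ I would take
$$
\bA=\frac{\bu\bu^T}{\bdel^T\bu}+\gamma\Big(\bI_p-\frac{\bdel\bdel^T}{\|\bdel\|^2}\Big),\qquad \gamma>0,
$$
which is symmetric and satisfies $\bA\bdel=\bu$; decomposing an arbitrary $\bx\neq\bze$ as $\bx=t\bdel/\|\bdel\|+\by$ with $\by^T\bdel=0$ yields $\bx^T\bA\bx=(\bu^T\bx)^2/(\bdel^T\bu)+\gamma\|\by\|^2>0$, so $\bA$ is positive definite. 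Consequently
$$
\sup_{\bsA\in\bTheta}\frac{\Delta(\bA)}{\{K_2(\bA)\}^{1/2}}=\frac12\sup_{\bu\in\mathcal{H}}\frac{\bdel^T\bu}{(\bu^T\bW\bu)^{1/2}},
$$
and a matrix attains the supremum iff the associated $\bu=\bA\bdel$ does.

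Finally I would solve the vector problem by the Cauchy--Schwarz inequality in the inner product $\langle\bx,\by\rangle_{\bW}=\bx^T\bW\by$: since $\bdel^T\bu=\langle\bW^{-1}\bdel,\bu\rangle_{\bW}$,
$$
\bdel^T\bu\le(\bdel^T\bW^{-1}\bdel)^{1/2}(\bu^T\bW\bu)^{1/2},
$$
with equality exactly when $\bu$ is a positive scalar multiple of $\bW^{-1}\bdel$. Hence the supremum equals $\tfrac12(\bdel^T\bW^{-1}\bdel)^{1/2}$ and is attained precisely when $\bA\bdel\propto\bW^{-1}\bdel$. The choice $\bA=c\bW^{-1}=c(\bSig_1/n_1+\bSig_2/n_2)^{-1}$ gives $\bA\bdel=c\bW^{-1}\bdel$, meeting this condition for every $c>0$, so it is a maximiser, as asserted. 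The one genuinely non-routine step is the ``$\supseteq$'' half of the range computation: without the explicit positive-definite $\bA$ realising each $\bu\in\mathcal{H}$, Cauchy--Schwarz would deliver only an upper bound rather than the exact maximiser; everything else is algebraic bookkeeping.
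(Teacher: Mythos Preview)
Your proof is correct. Both your argument and the paper's hinge on Cauchy--Schwarz, but the parametrizations differ. The paper works in the $\bA^{1/2}$-transformed space: writing $\dot{\bmu}_{\bsA}=\bmu_{\bsA}/\|\bmu_{\bsA}\|$ and $\bSig_{\bsA\star}=\bA^{1/2}\bW\bA^{1/2}$, it applies Schwarz in the eigenbasis of $\bSig_{\bsA\star}$ to get $(\dot{\bmu}_{\bsA}^T\bSig_{\bsA\star}\dot{\bmu}_{\bsA})(\dot{\bmu}_{\bsA}^T\bSig_{\bsA\star}^{-1}\dot{\bmu}_{\bsA})\ge1$, and then uses the algebraic identity $\bmu_{\bsA}^T\bSig_{\bsA\star}^{-1}\bmu_{\bsA}=\bdel^T\bW^{-1}\bdel$, which is independent of $\bA$; equality at $\bA=c\bW^{-1}$ is then checked directly. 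Your substitution $\bu=\bA\bdel$ instead collapses the matrix problem to a vector problem from the start and applies Cauchy--Schwarz in the $\bW$-inner product, bypassing the eigendecomposition. One minor remark: the ``$\supseteq$'' half of your range argument is not actually needed for the statement as phrased---once Cauchy--Schwarz bounds $\bdel^T\bu/(\bu^T\bW\bu)^{1/2}$ by $(\bdel^T\bW^{-1}\bdel)^{1/2}$ for every feasible $\bu$ (hence for every $\bA\in\Theta$ via the easy inclusion), it suffices to verify that the specific choice $\bA=c\bW^{-1}$ attains the bound, which is immediate. Your explicit construction is a nice extra (it characterises the full image $\{\bA\bdel:\bA\in\Theta\}$), but the proposition does not require it.
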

\begin{proof}
We assume $\bA \in \bTheta$.
Let $\dot{\bmu}_{\bsA}=\bmu_{\bsA}/||\bmu_{\bsA} ||$ and 
$\bSig_{\bsA \star}=\bSig_{1,\bsA}/n_1+\bSig_{2,\bsA}/n_2$. 
Then, we have that 
$$
2\Delta(\bA)/\{K_{2}(\bA)\}^{1/2}
=||\bmu_{\bsA} ||/(\dot{\bmu}_{\bsA}^T \bSig_{\bsA \star}\dot{\bmu}_{\bsA})^{1/2}.
$$
The eigen-decomposition of $\bSig_{\bsA \star}$ is given by 
$\bSig_{\bsA \star}=\bH_{\bsA}\bLam_{\bsA}\bH_{\bsA}^T$, 
where $\bLam_{\bsA}=\mbox{diag}(\lambda_{1,\bsA},...,\lambda_{p,\bsA})$ is a diagonal matrix of eigenvalues, $\lambda_{1,\bsA}\ge \cdots \ge\lambda_{p,\bsA}>0$, and $\bH_{\bsA}=[\bh_{1,\bsA},...,\bh_{p,\bsA}]$ is an orthogonal matrix of the corresponding eigenvectors. 
There exist some constants $c_1,...,c_p$ such that $\dot{\bmu}_{\bsA}=\sum_{j=1}^pc_j\bh_{j,\bsA}$ and $\sum_{j=1}^pc_j^2=1$. 
From Schwarz's inequality, it holds that 
$
( \dot{\bmu}_{\bsA}^T\bSig_{\bsA\star} \dot{\bmu}_{\bsA})(\dot{\bmu}_{\bsA}^T\bSig_{\bsA \star}^{-1}\dot{\bmu}_{\bsA})=
(\sum_{j=1}^pc_j^2 \lambda_{j,\bsA} )(\sum_{j=1}^pc_j^2 \lambda_{j,\bsA}^{-1} )\ge 1, 
$
so that 
$$
||\bmu_{\bsA} ||/(\dot{\bmu}_{\bsA}^T \bSig_{\bsA\star}\dot{\bmu}_{\bsA})^{1/2}\le 
(||\bmu_{\bsA} ||^2 \dot{\bmu}_{\bsA}^T \bSig_{\bsA \star}^{-1} \dot{\bmu}_{\bsA})^{1/2}=
\{(\bmu_{1}-\bmu_2)^T(\bSig_1/n_1+\bSig_2/n_2)^{-1}(\bmu_{1}-\bmu_2)\}^{1/2}.
$$
Note that 
$||\bmu_{\bsA} ||/(\dot{\bmu}_{\bsA}^T \bSig_{\bsA\star}\dot{\bmu}_{\bsA})^{1/2}=\{(\bmu_{1}-\bmu_2)^T(\bSig_1/n_1+\bSig_2/n_2)^{-1}(\bmu_{1}-\bmu_2)\}^{1/2}$ 
when $\bA=c(\bSig_1/n_1+\bSig_2/n_2)^{-1}$ for any constant $c>0$.
It concludes the result.
\end{proof}
\subsection{Proposition S1.2}
\noindent

Let us write that $\bmu_{\bsA_{12}}=\bA_1^{1/2}\bmu_1-\bA_2^{1/2}\bmu_2$ and $\bSigma_{i,\bsA_i}=\bA_i^{1/2}\bSig_i\bA_i^{1/2}$, $i=1,2$. 
Let $\Delta({\bA}_1,{\bA}_2)=||\bmu_{\bsA_{12}}||^2$ and 
$K({\bA}_1,{\bA}_2)=K_{1}({\bA}_1,{\bA}_2)+K_{2}({\bA}_1,{\bA}_2)$, 
where 
$K_{1}({\bA}_1,{\bA}_2)=2 \sum_{i=1}^2\tr(\bSig_{i,\bsA_i}^2)/\{n_i(n_i-1)\}+4\tr(\bSig_{1,\bsA_i}\bSig_{2,\bsA_i})/(n_1n_2)$ 
and $ K_{2}(\bA_1,\bA_2)=4\sum_{i=1}^2 \bmu_{\bsA_{12}}^T\bSig_{i,\bsA}\bmu_{\bsA_{12}}/n_i$. 
Note that $E\{T(\bA_{1},\bA_{2})\}=\Delta({\bA}_1,{\bA}_2)$ and $\Var\{T(\bA_{1},\bA_{2})\}=K({\bA}_1,{\bA}_2)$. 
Then, we have the following result. 
\begin{proposition}
Assume (A-i) and the following conditions:
\begin{description}
  \item[(S-i)]\quad $\displaystyle \frac{\{\lambda_{\max}(\bSigma_{i,\bsA_i})\}^2}{\tr(\bSigma_{i,\bsA_i}^2)}\to 0$
  \ as $p\to \infty$ for $i=1,2$;
  \item[(S-ii)]\quad $\displaystyle  \frac{\{\Delta(\bA_1,\bA_2)\}^2}{K_{1}(\bA_1,\bA_2)}\to 0$ \ as $m\to \infty$ under $H_0$. 
\end{description}
Then, it holds that as $m\to \infty$
$$
P\Big(\frac{T(\bA_1,\bA_2)}{ \{K_1(\bA_1,\bA_2)\}^{1/2} }>z_{\alpha} \Big)=\alpha+o(1) \ \ \mbox{under $H_0$}.
$$
\end{proposition}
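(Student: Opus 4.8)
The plan is to reduce the two-matrix statistic to the one-matrix statistic $T(\bI_p)$ of Section~2 applied to linearly transformed data, and then quote Theorem~2 and Lemma~1 verbatim. Set $\by_{ij}=\bA_i^{1/2}\bx_{ij}$ for $i=1,2$ and $j=1,\ldots,n_i$. Since $\bx_{ij}^T\bA_i\bx_{ij'}=\by_{ij}^T\by_{ij'}$ and $\overline{\bx}_{1n_1}^T\bA_1^{1/2}\bA_2^{1/2}\overline{\bx}_{2n_2}=\overline{\by}_{1n_1}^T\overline{\by}_{2n_2}$, the statistic $T(\bA_1,\bA_2)$ equals exactly the statistic $T(\bI_p)$ of (\ref{1.3}) computed from $\{\by_{ij}\}$. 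Moreover, by (\ref{2.0}), $\by_{ij}=(\bA_i^{1/2}\bGamma_i)\bw_{ij}+\bA_i^{1/2}\bmu_i$, where $(\bA_i^{1/2}\bGamma_i)(\bA_i^{1/2}\bGamma_i)^T=\bSigma_{i,\bsA_i}$ and the $\bw_{ij}$ are unchanged; hence the $\{\by_{ij}\}$ obey model (\ref{2.0}) with population mean $\bA_i^{1/2}\bmu_i$ and covariance $\bSigma_{i,\bsA_i}$, and they satisfy (A-i) because (A-i) constrains the $\bw_{ij}$ alone. Matching definitions, the mean difference for the $\by$-data has squared norm $\|\bA_1^{1/2}\bmu_1-\bA_2^{1/2}\bmu_2\|^2=\Delta(\bA_1,\bA_2)$, and the associated $K_1,K_2,K$ coincide with $K_1(\bA_1,\bA_2),K_2(\bA_1,\bA_2),K(\bA_1,\bA_2)$.

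Next I would verify the hypotheses of Theorem~2 for the $\by$-data with $\bA=\bI_p$. Condition (A-ii) becomes $\{\lambda_{\max}(\bSigma_{i,\bsA_i})\}^2/\tr(\bSigma_{i,\bsA_i}^2)\to0$, which is precisely (S-i). Condition (A-iv) becomes $\limsup_{m\to\infty}\{\Delta(\bA_1,\bA_2)\}^2/K_1(\bA_1,\bA_2)<\infty$, which is implied by (S-ii) under $H_0$. Theorem~2 then yields $\{T(\bA_1,\bA_2)-\Delta(\bA_1,\bA_2)\}/\{K(\bA_1,\bA_2)\}^{1/2}\Rightarrow N(0,1)$ as $m\to\infty$, and Lemma~1 (whose hypotheses (A-ii), (A-iv) are the (S-i), (S-ii) just checked) gives $K(\bA_1,\bA_2)/K_1(\bA_1,\bA_2)=1+o(1)$.

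Finally I would assemble the pieces by Slutsky's theorem. From the decomposition
$$
\frac{T(\bA_1,\bA_2)}{\{K_1(\bA_1,\bA_2)\}^{1/2}}
=\frac{T(\bA_1,\bA_2)-\Delta(\bA_1,\bA_2)}{\{K(\bA_1,\bA_2)\}^{1/2}}\Big(\frac{K(\bA_1,\bA_2)}{K_1(\bA_1,\bA_2)}\Big)^{1/2}
+\frac{\Delta(\bA_1,\bA_2)}{\{K_1(\bA_1,\bA_2)\}^{1/2}},
$$
the first summand converges in distribution to $N(0,1)$ because its first factor does and its second factor tends to $1$, while the second summand tends to $0$ since (S-ii) gives $\{\Delta(\bA_1,\bA_2)\}^2/K_1(\bA_1,\bA_2)\to0$ under $H_0$. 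Hence $T(\bA_1,\bA_2)/\{K_1(\bA_1,\bA_2)\}^{1/2}\Rightarrow N(0,1)$, so $P\big(T(\bA_1,\bA_2)/\{K_1(\bA_1,\bA_2)\}^{1/2}>z_\alpha\big)\to P(N(0,1)>z_\alpha)=\alpha$, as claimed.

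The step I expect to require the most care is the reduction itself: one must use $\bA_i^{1/2}$ (the positive-semidefinite square root of a possibly singular $\bA_i$) consistently, confirm that the transformed array genuinely fits model (\ref{2.0}) with the stated mean and covariance, and check that each of the four quantities $\Delta,K_1,K_2,K$ for the $\by$-data matches the corresponding $(\bA_1,\bA_2)$-quantity; once this bookkeeping is complete, Theorem~2, Lemma~1 and Slutsky finish the argument with no further estimates. Should the reduction be judged too implicit, an equivalent route is to rerun the proof of Theorem~2 with $\bA$ replaced by the pair $(\bA_1,\bA_2)$, using (S-i) in place of (A-ii) and (S-ii) in place of (A-iv); the only genuinely new estimate there is $K_2(\bA_1,\bA_2)\le 4\,\Delta(\bA_1,\bA_2)\sum_i\lambda_{\max}(\bSigma_{i,\bsA_i})/n_i$, which together with (S-i) and (S-ii) gives $K_2(\bA_1,\bA_2)/K_1(\bA_1,\bA_2)\to0$.
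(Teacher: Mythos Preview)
Your proposal is correct and takes essentially the same approach as the paper, whose proof is the single sentence ``From Theorem~2 and Lemma~1, the result is obtained straightforwardly.'' Your reduction $\by_{ij}=\bA_i^{1/2}\bx_{ij}$ is precisely the device used inside the proof of Theorem~2 itself (see (\ref{A.4}) in Appendix~A), so you have simply made explicit the bookkeeping the paper leaves implicit; the final Slutsky step you write out is exactly what ``straightforwardly'' covers.
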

\begin{proof}
From Theorem 2 and Lemma 1, the result is obtained straightforwardly.
\end{proof}
Note that (S-i) is naturally met when $\bA_i=\bSig_i^{-1},\ i=1,2$, because $\bSigma_{i,\bsA_i}=\bI_p$ when $\bA_i=\bSig_i^{-1}$. 
However, (S-ii) is difficult to meet when $\bSig_1\neq \bSig_2$ and $\bA_i=\bSig_i^{-1},\ i=1,2$. 
For example, when $\bSig_1=c\bSig_2=\bI_p\ (c>1)$ and $\bmu_1=\bmu_2=(1,...,1)^T$, it follows that $\Delta(\bSig_1^{-1},\bSig_2^{-1})=(1-c^{1/2})^2p$. 
Then, (S-ii) does not hold because $K_{1}(\bSig_1^{-1},\bSig_2^{-1})=O(p/n_{\min}^2)$. 
Hence, we do not recommend to choose $\bA_i=\bSig_i^{-1},\ i=1,2$. 
In addition, it is difficult to estimate $\bSig_i^{-1}$s for high-dimension, non-sparse data. 
\section{How to Check SSE Models and Estimate Parameters}
\setcounter{proposition}{0}
\setcounter{lemma}{0}
\noindent

In this section, we provide a method to distinguish between the NSSE model defined by (1.4) and the SSE model defined by (1.6).
We also give a method to estimate the parameters required in the test procedure (5.5).
\subsection{Checking Whether (1.4) Holds or Not}
\noindent

As discussed in Section 3, we recommend to use the test by (3.1) with $\bA=\bI_p$ when (A-ii) is met, otherwise the test by (5.5). 
It is crucial to check whether (1.4) holds or not (that is, whether (1.6) holds).

Let $\hat{\eta}_i=\tilde{\lambda}_{i1}^2/W_{in_i}$ for $i=1,2$, where 
$W_{in_i}$s are defined in Section 2.2 and $\tilde{\lambda}_{ij}$s are defined by (5.2). 
Then, we have the following result. 
\begin{proposition}
Assume (A-i). 
It holds that as $m\to \infty$
\begin{align*}
&\hat{\eta}_i=o_P(1) \quad \mbox{for $i=1,2$, under (1.4)};\\
&P(\hat{\eta}_i>c)\to 1\quad \mbox{with some fixed constant $c\in(0,1)$ for some $i$ under (1.6)}.
\end{align*}
\end{proposition}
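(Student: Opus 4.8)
\noindent The plan is to show that $\hat{\eta}_i$ consistently estimates the population ratio $\eta_i:=\lambda_{i1}^2/\tr(\bSig_i^2)$, namely $\hat{\eta}_i=\eta_i+o_P(1)$ as $m\to\infty$ under (A-i). Both assertions then follow at once: under (1.4) one has $\eta_i\to0$, so $\hat{\eta}_i=o_P(1)$; under (1.6) there is an index $i_0$ with $\liminf_{p\to\infty}\eta_{i_0}=:a\in(0,1]$ (note $\eta_i\le1$ always since $\tr(\bSig_i^2)\ge\lambda_{i1}^2$), so fixing any $c\in(0,a)$ gives $P(\hat{\eta}_{i_0}>c)\ge P(\eta_{i_0}+o_P(1)>c)\to1$. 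Since $W_{in_i}$ is the statistic $W_{n_i}(\bI_p)$ of Section~2.2 evaluated on $\pi_i$, (2.3) gives $W_{in_i}=\tr(\bSig_i^2)\{1+o_P(1)\}$ under (A-i); so everything reduces to the key estimate
$$
\tilde{\lambda}_{i1}^{2}=\lambda_{i1}^{2}+o_P\!\bigl(\tr(\bSig_i^2)\bigr),\qquad i=1,2,
$$
after which $\hat{\eta}_i=\tilde{\lambda}_{i1}^{2}/W_{in_i}=\{\eta_i+o_P(1)\}/\{1+o_P(1)\}=\eta_i+o_P(1)$ because $\eta_i\le1$.

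\smallskip\noindent To prove the key estimate I would drop the population subscript and work with the dual matrix $\bS_D$ of Section~5.1. Substituting $\tr(\bS_D)=(n-1)\bar{\mu}$ into (5.2) with $j=1$ gives the identity
$$
\tilde{\lambda}_{1}=\frac{n-1}{n-2}\,\bigl(\hat{\lambda}_{1}-\bar{\mu}\bigr),\qquad \bar{\mu}:=\frac{\tr(\bS_D)}{n-1}=\frac1{n-1}\sum_{l=1}^{n-1}\hat{\lambda}_{l},
$$
so $\tilde{\lambda}_1^{2}$ is, up to a $\{1+o(1)\}$ factor, the squared deviation of the largest nonzero eigenvalue of $\bS_D$ from the mean of all $n-1$ nonzero eigenvalues. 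A moment bound under (A-i) gives $\sum_{l=1}^{n-1}(\hat{\lambda}_l-\bar{\mu})^2=\tr(\bS_n^2)-\{\tr(\bS_n)\}^2/(n-1)=\tr(\bSig^2)\{1+o_P(1)\}$, which already yields $\hat{\eta}_i\le1+o_P(1)$ and identifies $\hat{\eta}_i$ as the share of this ``eigenvalue spread'' carried by $\hat{\lambda}_1$. The substantive step is to show
$$
\hat{\lambda}_1-\bar{\mu}=\lambda_1\{1+o_P(1)\}+o_P\!\bigl(\tr(\bSig^2)^{1/2}\bigr)
$$
via the noise-reduction perturbation analysis of \citet{Yata:2012,Yata:2013b}: write $\bx_{l}-\bmu=\sum_j\lambda_j^{1/2}z_{jl}\bh_j$, split off the leading score direction(s) so that $\bS_D$ becomes a bounded-rank ``signal'' part plus a ``noise block'' built from the remaining eigenvalues (the first score vector satisfying $\|\bv_1\|^2=1+O_P(n^{-1/2})$ under (A-i)), and observe, via interlacing together with a further moment bound on the noise block, that the large, $p$-dependent noise-inflation terms carried by $\hat{\lambda}_1$ and by $\bar{\mu}$ match and cancel, leaving the displayed expression. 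Squaring and using $\lambda_1^2\le\tr(\bSig^2)$ (so that $\lambda_1\cdot o_P(\tr(\bSig^2)^{1/2})=o_P(\tr(\bSig^2))$) yields the key estimate.

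\smallskip\noindent I expect the main obstacle to be this last step in the absence of the distinctness condition (A-vi): Proposition~3 already gives $\tilde{\lambda}_1=\lambda_1\{1+o_P(1)\}$ under (A-vi) and (A-viii), but here only (A-i) is assumed, so the top eigenvalues of $\bSig_i$ may cluster and the ``signal'' block need not be rank one. The point that rescues the argument is that under (1.6) the number of eigenvalues comparable to $\lambda_{i1}$ is bounded --- if $k$ of them exceed $\varepsilon\lambda_{i1}$ then $\tr(\bSig_i^2)\ge k\varepsilon^2\lambda_{i1}^2$, so $k\le(\varepsilon^2\liminf_p\eta_i)^{-1}$ --- hence only a bounded number of leading directions need be peeled off before the residual covariance satisfies the NSSE condition, and there the deviation of the top sample eigenvalue from the corresponding mean is $o_P(\tr(\bSig_i^2)^{1/2})$, which makes the cancellation go through with the required remainder; under (1.4), $\lambda_{i1}=o(\tr(\bSig_i^2)^{1/2})$ makes even the leading term of $\tilde{\lambda}_{i1}$ negligible, so the bound is easier. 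The only genuine work is organizing this peeling uniformly and verifying the two moment bounds used above under (A-i), both of which are routine extensions of the computations in \citet{Chen:2010b} and \citet{Yata:2012,Yata:2013b}.
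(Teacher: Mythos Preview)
Your plan is correct and follows essentially the same line as the paper's proof. The paper also treats the two cases separately: under (1.4) it shows $\hat{\lambda}_1=\tr(\bSig)/(n-1)+o_P(\tr(\bSig^2)^{1/2})$ and $\tr(\bS_D)-\tr(\bSig)=o_P(\tr(\bSig^2)^{1/2})$, whence $\tilde{\lambda}_1=o_P(\tr(\bSig^2)^{1/2})$; under (1.6) it establishes $\tilde{\lambda}_1/\lambda_1=1+o_P(1)$ directly, which combined with (2.3) gives $\hat{\eta}_i=\eta_i\{1+o_P(1)\}$.

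The only notable difference is in the SSE case. You propose a ``peeling'' argument to handle possibly clustered leading eigenvalues in the absence of (A-vi). The paper makes the same observation you do---that under (1.6) there is a fixed integer $j_\star$ with $\lambda_{j_\star}/\lambda_1\to0$---but then simply verifies the fourth-moment tail condition $\sum_{j\ge j_\star}\lambda_j^4/\lambda_1^4\le \lambda_{j_\star}^2\tr(\bSig^2)/\lambda_1^4=o(1)$ and invokes Lemma~1 and Corollary~4.1 of \citet{Yata:2013b}, which deliver $\tilde{\lambda}_1/\lambda_1=1+o_P(1)$ without any distinctness assumption on the first $j_\star-1$ eigenvalues. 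So your peeling sketch is the right idea, but the execution is shorter than you anticipate: no iterated peeling is needed, just one cut at $j_\star$ and a citation. Your side computation $\sum_l(\hat{\lambda}_l-\bar{\mu})^2=\tr(\bSig^2)\{1+o_P(1)\}$ is correct but unnecessary for the argument.
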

By using Proposition S2.1, one can distinguish between (1.4) and (1.6). 
One may claim (1.4) if both $\hat{\eta}_1$ and $\hat{\eta}_2$ are sufficiently small, otherwise (1.6). 
In addition, we have the following result for $\hat{\eta}_i$.
\begin{proposition}
Assume (A-viii). 
Assume also $\lambda_{i1}^2/\tr(\bSig_i^2)=O(n_i^{-c})$ as $m\to \infty$ 
with some fixed constant $c>1/2$ for $i=1,2$. 
It holds as $m\to \infty$
$$
P\Big(\hat{\eta}_i< \kappa(n_i) \Big) \to 1 \ \ \mbox{for $i=1,2$},
$$
where $\kappa(n_i)$ is a function such that $\kappa(n_i)\to 0$ and $n_i^{1/2}\kappa(n_i)\to \infty$ as $n_i\to \infty$.   
\end{proposition}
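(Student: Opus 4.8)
The plan is to prove the quantitative bound $\hat{\eta}_i=O_P(n_i^{-c})+o_P(\kappa(n_i))$ for $i=1,2$; the conclusion then follows, since $c>1/2$ together with $n_i^{1/2}\kappa(n_i)\to\infty$ forces $n_i^{-c}/\kappa(n_i)=n_i^{1/2-c}/\{n_i^{1/2}\kappa(n_i)\}\to 0$, so that $\hat{\eta}_i/\kappa(n_i)=o_P(1)$ and hence $P(\hat{\eta}_i<\kappa(n_i))\to 1$. Since (A-viii) implies (A-i), the unbiased estimator $W_{in_i}$ of $\tr(\bSig_i^2)$ satisfies $W_{in_i}=\tr(\bSig_i^2)\{1+o_P(1)\}$ by (2.3), so it is enough to show $\tilde{\lambda}_{i1}^2/\tr(\bSig_i^2)=O_P(n_i^{-c})+o_P(\kappa(n_i))$.

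Fix $i$ and suppress the subscript. I would control $\tilde{\lambda}_1$ through the dual matrix. With $\bX-\overline{\bX}=\bH\bLam^{1/2}\bZ\bP_n$ and $\bZ=[\bz_1,\dots,\bz_n]$, one has the identity
$$\bS_D=\frac{1}{n-1}\bP_n\bZ^T\bLam\bZ\bP_n=\frac{\tr(\bSig)}{n-1}\bP_n+\frac{1}{n-1}\bP_n\bE\bP_n,\qquad \bE=\sum_{j=1}^p\lambda_j(\bz_{(j)}\bz_{(j)}^T-\bI_n),$$
a mean-zero $n\times n$ matrix $\bE$, where $\bz_{(j)}$ is the $j$th row of $\bZ$. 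Since $\bP_n$ is a rank-$(n-1)$ projection, Weyl's inequality gives $\hat{\lambda}_1=\lambda_{\max}(\bS_D)\le \tr(\bSig)/(n-1)+(n-1)^{-1}\|\bE\|$ (operator norm), while a routine variance bound under (A-viii) gives $\tr(\bS_D)=\tr(\bSig)+O_P(\{\tr(\bSig^2)/n\}^{1/2})$. Substituting into $\tilde{\lambda}_1=\{(n-1)\hat{\lambda}_1-\tr(\bS_D)\}/(n-2)$, the two $\tr(\bSig)$ terms cancel --- which is exactly the bias removal achieved by (5.2) --- and, since $\tilde{\lambda}_1\ge 0$ w.p.1,
$$0\le\tilde{\lambda}_1\le\frac{\|\bE\|+O_P(\{\tr(\bSig^2)/n\}^{1/2})}{n-2}.$$

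It remains to bound $\|\bE\|$ sharply. Write $\bE=\lambda_1(\bz_{(1)}\bz_{(1)}^T-\bI_n)+\bE'$ with $\bE'=\sum_{j\ge 2}\lambda_j(\bz_{(j)}\bz_{(j)}^T-\bI_n)$; the rank-one part has norm $\le\lambda_1(\|\bz_{(1)}\|^2+1)=O_P(n\lambda_1)$. For $\bE'$ I would bound a high even moment $E\{\tr((\bE')^{2k})\}$ by the moment method: using (A-viii) to carry out the pairings of the $z$-factors along closed walks of length $2k$, and using $\max_{j\ge 2}\lambda_j^2\le\lambda_1^2=O(n^{-c}\tr(\bSig^2))$ to control the walks on which the eigen-indices collapse, one obtains $E\{\tr((\bE')^{2k})\}=O(n^{k+1}\{\tr(\bSig^2)\}^k)+O(n^{2k}\lambda_1^{2k-2}\tr(\bSig^2))$; choosing $k$ large (depending on $c$) and using $\|\bE'\|^{2k}\le\tr((\bE')^{2k})$ then yields $\|\bE'\|=o_P(n\,\kappa(n)^{1/2}\{\tr(\bSig^2)\}^{1/2})$, and this is the step where $c>1/2$ and $n^{1/2}\kappa(n)\to\infty$ are used essentially. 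Combining the two pieces, $\tilde{\lambda}_1=O_P(\lambda_1)+o_P(\kappa(n)^{1/2}\{\tr(\bSig^2)\}^{1/2})$, hence $\tilde{\lambda}_1^2/\tr(\bSig^2)=O_P(\lambda_1^2/\tr(\bSig^2))+o_P(\kappa(n))=O_P(n^{-c})+o_P(\kappa(n))$. Reinstating the subscript $i$ and combining with the first paragraph completes the proof.

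The main obstacle is this last operator-norm estimate for the ``noise matrix'' $\bE'$: a naive Frobenius bound gives only $\|\bE'\|=O_P(n\{\tr(\bSig^2)\}^{1/2})$, which is too weak by a factor of order $n^{1/2}$, so a random-matrix-type high-moment bound is really needed, and it is there that the moment structure in (A-viii) and the polynomial spike rate $c>1/2$ enter. Alternatively, rather than redo the combinatorics, one may invoke the noise-reduction analysis of \citet{Yata:2012}, of which the displayed bound on $\tilde{\lambda}_1$ is the HDLSS counterpart; the same machinery underlies the companion Proposition S2.1, which requires only the weaker bound $\|\bE'\|=o_P(n\{\tr(\bSig^2)\}^{1/2})$.
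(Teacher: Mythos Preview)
Your argument is correct and runs on the same engine as the paper's: a high-power bound whose exponent is chosen large enough, using $c>1/2$, to beat the $n^{-1/2}$ threshold coming from $\kappa(n_i)$. The packaging differs. You bound the operator norm of the centered noise matrix $\bE$ by estimating $E\{\tr((\bE')^{2k})\}$ and letting $k$ grow with $c$. The paper instead sets $\lambda=\phi(n)\{\tr(\bSig^2)\}^{1/2}$ with $\phi(n)=\kappa(n)^{1/2}$, observes that $c>1/2$ allows one to pick an integer $t>2$ with $c(t/2-1)>t/4$ so that $\tr(\bSig^t)/\lambda^t\le\lambda_1^{t-2}/\{\phi(n)^t\tr(\bSig^2)^{t/2-1}\}=o(1)$, and then invokes Lemmas~1 and~5 of \citet{Yata:2013b} (not \citet{Yata:2012}) to obtain $\hat{\lambda}_1=\tr(\bSig)/(n-1)+o_P(\lambda)$ directly; the trace cancellation in the NR formula then yields $\tilde{\lambda}_1^2/\tr(\bSig^2)=o_P(\phi(n)^2)=o_P(\kappa(n))$ without ever peeling off the top rank-one piece. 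Your Weyl-plus-cancellation setup is fine and makes the mechanism of the NR correction transparent; the paper's route is shorter because the random-matrix work is outsourced to the cited lemmas. One caution on your moment step: (A-viii) constrains mixed moments of the $z_{jl}$ only up to order four within a single observation, and only fourth moments are assumed finite, so your schematic two-term bound on $E\{\tr((\bE')^{2k})\}$ for general $k$ needs extra justification (either the surviving walks must be shown to put at most four $z$-factors at any sample index, or a truncation argument is required). This is exactly the content the Yata--Aoshima lemmas encapsulate, so your closing remark that one may simply invoke that machinery is well taken and is in fact what the paper does.
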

From Proposition S2.2 one may claim (1.4) if $\hat{\eta}_i<\kappa(n_i)$ both for $i=1,2$, 
otherwise (1.6).
One can choose $\kappa(n_i)$ such as $(n_i^{-1} \log{n_i})^{1/2}$ or $n_i^{-c}$ with $c\in (0,1/2)$. 
In Section S3, we use $\kappa(n_i)=(n_i^{-1} \log{n_i})^{1/2}$ in actual data analyses. 
\subsection{Estimation of ${\Psi}_{i(j)}$ and $k_i$}
\noindent

Let $n_{i(1)}=\lceil  n_i/2 \rceil$ and $n_{i(2)}=n_i-n_{i(1)}$.
Let $\bX_{i1}=[\bx_{i1},...,\bx_{in_{i(1)}}]$ and $\bX_{i2}=[\bx_{in_{i(1)}+1},...,\bx_{in_i}]$ for $i=1,2$. 
We define 
$$
\bS_{iD(1)}=\{(n_{i(1)}-1)(n_{i(2)}-1)\}^{-1/2}(\bX_{i1}-\overline{\bX}_{i1})^T(\bX_{i2}-\overline{\bX}_{i2})
$$ 
for $i=1,2$, where $\overline{\bX}_{ij}=[\overline{\bx}_{in_i(j)},...,\overline{\bx}_{in_i(j)}]$ with $\overline{\bx}_{in_i(1)}=\sum_{l=1}^{n_{i(1)}}\bx_{il}/n_{i(1)}$ and $\overline{\bx}_{in_i(2)}=\sum_{l=n_{i(1)}+1}^{n_{i}}\bx_{il}/n_{i(2)}$. 
By using the cross-data-matrix (CDM) methodology by \citet{Yata:2010}, we estimate $\lambda_{ij}$ by the $j$-th singular value, $\acute{\lambda}_{ij}$, of $\bS_{iD(1)}$, where $\acute{\lambda}_{i1}\ge \cdots \ge \acute{\lambda}_{in_{i(2)}-1}\ge 0$.
\citet{Yata:2010,Yata:2013b} showed that $\acute{\lambda}_{ij}$ has several consistency properties for high-dimensional non-Gaussian data.  
\citet{Aoshima:2011} applied the CDM methodology to obtaining an unbiased estimator of $\tr(\bSig_i^2)$ by $\tr(\bS_{iD(1)}\bS_{iD(1)}^T)$, $i=1,2$.  
Note that $E\{\tr(\bS_{iD(1)}\bS_{iD(1)}^T)\}=\tr(\bSig_i^2)$. 
Based on the CDM methodology, we consider estimating ${\Psi}_{i(j)}$ as follows: 
Let $\widehat{\Psi}_{i(1)}=\tr(\bS_{iD(1)}\bS_{iD(1)}^T)$ and 
\begin{equation}
\widehat{\Psi}_{i(j)}=\tr(\bS_{iD(1)}\bS_{iD(1)}^T)-\sum_{l=1}^{j-1}\acute{\lambda}_{il}^2\quad \mbox{for $i=1,2;\ j=2,...,n_{i(2)}$}.
\label{s2.1}
\end{equation}
Note that $\widehat{\Psi}_{i(j)}\ge 0$ w.p.1 for $j=1,...,n_{i(2)}$. 
Then, we have the following result.
\begin{lemma}
Assume (A-i) and (A-vi). 
Then, it holds that ${\widehat{\Psi}_{i(j)}}/{{\Psi}_{i(j)}}=1+o_P(1)$ as $m\to \infty$ for $i=1,2;\ j=1,...,k_i+1$.
\end{lemma}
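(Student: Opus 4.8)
The plan is to reduce the claim to two ingredients: (i) the unbiasedness identity $E\{\tr(\bS_{iD(1)}\bS_{iD(1)}^T)\}=\tr(\bSig_i^2)=\Psi_{i(1)}$, together with a relative-variance bound for this CDM-based estimator of $\tr(\bSig_i^2)$; and (ii) the consistency of the individual CDM singular values $\acute\lambda_{il}$ for the spiked eigenvalues $\lambda_{il}$, $l=1,\dots,k_i$, under (A-vi). Granting these, the identity $\widehat\Psi_{i(j)}=\tr(\bS_{iD(1)}\bS_{iD(1)}^T)-\sum_{l=1}^{j-1}\acute\lambda_{il}^2$ and the decomposition $\Psi_{i(j)}=\tr(\bSig_i^2)-\sum_{l=1}^{j-1}\lambda_{il}^2$ give
$$
\widehat\Psi_{i(j)}-\Psi_{i(j)}=\big\{\tr(\bS_{iD(1)}\bS_{iD(1)}^T)-\tr(\bSig_i^2)\big\}-\sum_{l=1}^{j-1}\big(\acute\lambda_{il}^2-\lambda_{il}^2\big),
$$
and it remains to show each bracket is $o_P(\Psi_{i(j)})$ for $j\le k_i+1$.

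First I would handle the leading term. Since $n_{i(1)}$ and $n_{i(2)}$ are each of order $n_i$, the CDM estimator $\tr(\bS_{iD(1)}\bS_{iD(1)}^T)$ behaves like the estimator $W_{in_i}$ of Section 2.2 (cf. \citet{Aoshima:2011,Yata:2010}); under (A-i) one has $\Var\{\tr(\bS_{iD(1)}\bS_{iD(1)}^T)\}/\{\tr(\bSig_i^2)\}^2\to0$ as $m\to\infty$, exactly as in (\ref{2.3}). Hence $\tr(\bS_{iD(1)}\bS_{iD(1)}^T)=\tr(\bSig_i^2)\{1+o_P(1)\}=\Psi_{i(1)}\{1+o_P(1)\}$. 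Because (A-vi) forces $\liminf_{p\to\infty}\lambda_{ik_i}^2/\Psi_{i(k_i)}>0$, and since $\Psi_{i(1)}\ge\Psi_{i(j)}\ge\Psi_{i(k_i+1)}$, all the quantities $\Psi_{i(j)}$ with $j\le k_i+1$ are of the same order as $\Psi_{i(1)}$ (they differ by sums of squared spiked eigenvalues, each $\asymp\Psi_{i(1)}$, but the bottom one $\Psi_{i(k_i+1)}$ is still $\gg$ any single later eigenvalue squared and, crucially for $j=k_i+1$, is what appears in the denominator). Thus $\tr(\bS_{iD(1)}\bS_{iD(1)}^T)-\tr(\bSig_i^2)=o_P(\Psi_{i(j)})$ for $j\le k_i+1$.

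Next I would control the spiked-eigenvalue corrections. By \citet{Yata:2010,Yata:2013b}, under (A-vi) and the fourth-moment condition in (A-i) the CDM singular values satisfy $\acute\lambda_{il}/\lambda_{il}=1+o_P(1)$ for $l=1,\dots,k_i$ (this is where the distinctness and the strong-spike gap in (A-vi) are used), so $\acute\lambda_{il}^2-\lambda_{il}^2=o_P(\lambda_{il}^2)$. Since $\lambda_{il}^2\le\lambda_{i1}^2\asymp\Psi_{i(1)}\asymp\Psi_{i(j)}$ for each $l\le k_i$ and $j\le k_i+1$, summing the finitely many ($\le k_i$) terms yields $\sum_{l=1}^{j-1}(\acute\lambda_{il}^2-\lambda_{il}^2)=o_P(\Psi_{i(j)})$. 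Combining the two estimates gives $\widehat\Psi_{i(j)}-\Psi_{i(j)}=o_P(\Psi_{i(j)})$, i.e. $\widehat\Psi_{i(j)}/\Psi_{i(j)}=1+o_P(1)$, for $i=1,2$ and $j=1,\dots,k_i+1$, as claimed.

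The main obstacle I anticipate is the second ingredient: making precise the consistency rate $\acute\lambda_{il}/\lambda_{il}=1+o_P(1)$ for the CDM singular values under only (A-i) (rather than (A-viii)), and checking that the error in $\acute\lambda_{il}^2$ is genuinely negligible relative to the smallest relevant $\Psi_{i(k_i+1)}$, not merely relative to $\lambda_{il}^2$. This amounts to invoking the perturbation analysis of \citet{Yata:2013b} for the power spiked model (A-vi) and verifying its hypotheses hold here; once that citation is in place, the rest is the bookkeeping sketched above.
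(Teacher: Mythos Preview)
Your decomposition has a genuine gap. The claim that ``all the quantities $\Psi_{i(j)}$ with $j\le k_i+1$ are of the same order as $\Psi_{i(1)}$'' is false under (A-vi): take $\lambda_{i1}=p^{2/3}$, $\lambda_{ij}=O(1)$ for $j\ge2$, $k_i=1$; then $\Psi_{i(1)}\asymp p^{4/3}$ but $\Psi_{i(2)}\asymp p$, so $\Psi_{i(2)}/\Psi_{i(1)}\to0$. More generally (A-vi) gives no upper bound on the ratios among the spiked eigenvalues, so $\Psi_{i(j)}$ can be of strictly smaller order than $\Psi_{i(1)}$ for every $j\ge2$. Hence your two brackets, $\tr(\bS_{iD(1)}\bS_{iD(1)}^T)-\tr(\bSig_i^2)=o_P(\Psi_{i(1)})$ and $\sum_{l<j}(\acute\lambda_{il}^2-\lambda_{il}^2)=o_P\big(\sum_{l<j}\lambda_{il}^2\big)$, are each potentially far larger than the target $\Psi_{i(j)}$; they must cancel to a precision finer than the individual bounds, and your argument does not establish that cancellation. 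You partly anticipate this in your last paragraph for the eigenvalue term, but the same issue afflicts the trace term and is structural to the ``top-down'' subtraction.

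The paper avoids this by reversing the direction. It first proves the $j=k_i+1$ case \emph{directly}: write $\bS_{iD(1)}=\bV_{o1}+\bV_{o2}$ with $\bV_{o1}$ the rank-$k_i$ spiked part and $\bV_{o2}$ the remainder; under (A-vi) one has $\tr(\bSig_{i*}^4)/\tr(\bSig_{i*}^2)^2\le\lambda_{ik_i+1}^2/\Psi_{i(k_i+1)}\to0$, so the perturbation lemmas of \citet{Yata:2013b} give $\bzeta_{(1)}^T\bV_{o2}\bzeta_{(2)}=o_P(\Psi_{i(k_i+1)}^{1/2})$ for arbitrary unit vectors, from which $\widehat\Psi_{i(k_i+1)}/\Psi_{i(k_i+1)}=1+o_P(1)$ follows (as in (A.10) there). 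Only then, for $j\le k_i$, write $\widehat\Psi_{i(j)}=\widehat\Psi_{i(k_i+1)}+\sum_{l=j}^{k_i}\acute\lambda_{il}^2$ and $\Psi_{i(j)}=\Psi_{i(k_i+1)}+\sum_{l=j}^{k_i}\lambda_{il}^2$; now every summand on the right is at most $\Psi_{i(j)}$, so $\acute\lambda_{il}/\lambda_{il}=1+o_P(1)$ together with the established $j=k_i+1$ case yields the claim with no cancellation needed.
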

Let $\hat{\tau }_{i(j)}=\widehat{\Psi}_{i(j+1)}/\widehat{\Psi}_{i(j)}\ (=1-\acute{\lambda}_{ij}^2/\widehat{\Psi}_{i(j)})$ for $i=1,2$. 
Note that $\hat{\tau }_{i(j)}\in [0,1)$ for $\acute{\lambda}_{ij}>0$. 
Then, we have the following result. 
\begin{proposition}
Assume (A-i) and (A-vi). 
It holds for $i=1,2$ that as $m\to \infty$
\begin{align*}
&P(\hat{\tau }_{i(j)}<1-c_j)\to 1 \ \ \mbox{with some fixed constant $c_j\in(0,1)$ for $j=1,...,k_i$};\\
&
\hat{\tau }_{i(k_i+1)}=1+o_P(1).
\end{align*}
\end{proposition}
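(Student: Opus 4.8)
The plan is to separate the two ranges $1\le j\le k_i$ and $j=k_i+1$, and in each case to reduce the statistic $\hat\tau_{i(j)}=\widehat\Psi_{i(j+1)}/\widehat\Psi_{i(j)}$ to its population analogue $\tau_{i(j)}=\Psi_{i(j+1)}/\Psi_{i(j)}=1-\lambda_{ij}^2/\Psi_{i(j)}$ using Lemma S2.1.

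For $1\le j\le k_i$, I would first note that (A-vi) forces $\liminf_{p\to\infty}\lambda_{ij}^2/\Psi_{i(j)}>0$. Indeed, $\liminf_{p\to\infty}\lambda_{ik_i}^2/\Psi_{i(k_i)}>0$ gives $\Psi_{i(k_i+1)}=O(\lambda_{ik_i}^2)$, so that $\Psi_{i(j)}=\sum_{s=j}^{k_i}\lambda_{is}^2+\Psi_{i(k_i+1)}\le k_i\lambda_{ij}^2+O(\lambda_{ik_i}^2)=O(\lambda_{ij}^2)$, since $\lambda_{ij}\ge\lambda_{ik_i}$ for $j\le k_i$. Hence $\limsup_{p\to\infty}\tau_{i(j)}<1$, so we may fix $c_j\in(0,1)$ slightly below $1-\limsup_{p\to\infty}\tau_{i(j)}$. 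Because $j+1\le k_i+1$, Lemma S2.1 applies to both $\widehat\Psi_{i(j)}$ and $\widehat\Psi_{i(j+1)}$, giving $\hat\tau_{i(j)}=\tau_{i(j)}(1+o_P(1))=\tau_{i(j)}+o_P(1)$; combining this with the bound on $\tau_{i(j)}$ for all large $p$ yields $P(\hat\tau_{i(j)}<1-c_j)\to1$ as $m\to\infty$.

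For $j=k_i+1$, using $\widehat\Psi_{i(k_i+2)}=\widehat\Psi_{i(k_i+1)}-\acute\lambda_{i,k_i+1}^2\ge0$ w.p.1 I would write $\hat\tau_{i(k_i+1)}=1-\acute\lambda_{i,k_i+1}^2/\widehat\Psi_{i(k_i+1)}\in[0,1]$, so that by Lemma S2.1 it remains to show $\acute\lambda_{i,k_i+1}^2=o_P(\Psi_{i(k_i+1)})$. I would decompose $\bS_{iD(1)}=\bB_{i1}+\bB_{i2}$, where $\bB_{i1}$ keeps only the first $k_i$ eigendirections $\bh_{i1},\dots,\bh_{ik_i}$ of $\bSig_i$ (so that $\mathrm{rank}(\bB_{i1})\le k_i$) and $\bB_{i2}$ is the cross-data matrix of the residual part, whose covariance is $\bSig_{i*}$; Weyl's inequality for singular values then gives $\acute\lambda_{i,k_i+1}\le\{\lambda_{\max}(\bB_{i2}\bB_{i2}^T)\}^{1/2}$. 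A second-moment computation gives $E\{\tr(\bB_{i2}\bB_{i2}^T)\}=\tr(\bSig_{i*}^2)=\Psi_{i(k_i+1)}$, while under (A-vi) the residual population is of NSSE type, $\{\lambda_{\max}(\bSig_{i*})\}^2/\tr(\bSig_{i*}^2)=\lambda_{i,k_i+1}^2/\Psi_{i(k_i+1)}\to0$. For such a population the largest singular value of a cross-data matrix is negligible compared with its Frobenius norm; invoking the consistency properties of the CDM methodology of \citet{Yata:2010,Yata:2013b} then yields $\lambda_{\max}(\bB_{i2}\bB_{i2}^T)=o_P(\Psi_{i(k_i+1)})$, hence $\acute\lambda_{i,k_i+1}^2=o_P(\Psi_{i(k_i+1)})$ and $\hat\tau_{i(k_i+1)}=1+o_P(1)$.

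The main obstacle is this last step: the trace computation only yields the ``$O_P$'' bound $E\{\lambda_{\max}(\bB_{i2}\bB_{i2}^T)\}\le\Psi_{i(k_i+1)}$, and the sharpening to ``$o_P$'' requires showing that the effective rank of $\bB_{i2}\bB_{i2}^T$ diverges under the NSSE condition of (A-vi) — which is precisely where the fourth-moment structure of (A-i) enters. I would deduce it from the cited CDM consistency results (equivalently, from an extension of the argument already used for Lemma S2.1) rather than redo that analysis here.
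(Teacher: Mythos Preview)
Your proposal is correct and follows essentially the same route as the paper. For $j\le k_i$ the paper combines Lemma~S2.1 with the consistency $\acute\lambda_{ij}/\lambda_{ij}=1+o_P(1)$ (equivalent to your ratio argument), and for $j=k_i+1$ the paper decomposes $\bS_{iD(1)}$ into a rank-$k_i$ spiked part plus a residual $\bV_{o2}$ just as your $\bB_{i1}+\bB_{i2}$, then obtains $\acute\lambda_{i,k_i+1}=o_P(\Psi_{i(k_i+1)}^{1/2})$ from the operator-norm bound $\bzeta_{(1)}^T\bV_{o2}\bzeta_{(2)}=o_P(\Psi_{i(k_i+1)}^{1/2})$ via Lemmas~1 and~4 of \citet{Yata:2013b}---precisely the CDM-under-NSSE fact you flag as the crux.
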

From Proposition S2.3, one may choose $k_i$ as the first integer $j$ such that $1-\hat{\tau }_{i(j+1)}$ is sufficiently small. 
In addition, we have the following result for $\hat{\tau }_{i(k_i+1)}$.
\begin{proposition}
Assume (A-vi), (A-viii) and (A-ix). 
Assume also $\lambda_{ik_i+1}^2/\Psi_{i(k_i+1)}=O(n_i^{-c})$ as $m\to \infty$ 
with some fixed constant $c>1/2$ for $i=1,2$. 
It holds for $i=1,2$ that as $m\to \infty$
$$
P\Big(\hat{\tau }_{i(k_i+1)}> \{1+(k_i+1)\kappa(n_i)\}^{-1} \Big) \to 1,
$$
where $\kappa(n_i)$ is defined in Proposition S2.2.  
\end{proposition}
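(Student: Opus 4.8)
The plan is to convert the statement into a one-sided bound on the $(k_i+1)$-th cross-data-matrix singular value $\acute{\lambda}_{i,k_i+1}$ and then to exploit the defining property $n_i^{1/2}\kappa(n_i)\to\infty$ of $\kappa$. Fix $i\in\{1,2\}$ and work on the event $\{\widehat{\Psi}_{i(k_i+1)}>0\}$, which has probability tending to $1$ since $\widehat{\Psi}_{i(k_i+1)}/\Psi_{i(k_i+1)}=1+o_P(1)$ by Lemma S2.1. From the definition (S2.1) of $\widehat{\Psi}_{i(j)}$ we have $\widehat{\Psi}_{i(k_i+2)}=\widehat{\Psi}_{i(k_i+1)}-\acute{\lambda}_{i,k_i+1}^2$, so clearing denominators in $\hat{\tau}_{i(k_i+1)}=\widehat{\Psi}_{i(k_i+2)}/\widehat{\Psi}_{i(k_i+1)}$ yields
$$
\hat{\tau}_{i(k_i+1)}>\{1+(k_i+1)\kappa(n_i)\}^{-1}\ \Longleftrightarrow\ \acute{\lambda}_{i,k_i+1}^2<(k_i+1)\kappa(n_i)\,\widehat{\Psi}_{i(k_i+2)}.
$$
Hence it suffices to show that the event on the right occurs with probability tending to $1$.

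I would next pin down both sides of that inequality. By Lemma S2.1 (with $j=k_i+1$), $\widehat{\Psi}_{i(k_i+1)}=\Psi_{i(k_i+1)}\{1+o_P(1)\}$, and by Proposition S2.3, $\acute{\lambda}_{i,k_i+1}^2/\widehat{\Psi}_{i(k_i+1)}=1-\hat{\tau}_{i(k_i+1)}=o_P(1)$; therefore $\acute{\lambda}_{i,k_i+1}^2=o_P(\Psi_{i(k_i+1)})$ and $\widehat{\Psi}_{i(k_i+2)}=\widehat{\Psi}_{i(k_i+1)}-\acute{\lambda}_{i,k_i+1}^2=\Psi_{i(k_i+1)}\{1+o_P(1)\}$. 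Since $k_i$ is a fixed integer, the displayed equivalence reduces the proposition to the single estimate
$$
\acute{\lambda}_{i,k_i+1}^2/\Psi_{i(k_i+1)}=o_P(\kappa(n_i)).
$$

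This last display is the core of the argument. The idea is to use the consistency theory for the singular values of the cross-data matrix $\bS_{iD(1)}$ from \citet{Yata:2010} and \citet{Yata:2013b}. Under (A-vi) and (A-viii) the first $k_i$ singular values are consistent for the spikes $\lambda_{i1},\dots,\lambda_{ik_i}$, while $\acute{\lambda}_{i,k_i+1}$ has population counterpart $\lambda_{ik_i+1}$ in the flat part of the spectrum, $\lambda_{ik_i+1}^2/\Psi_{i(k_i+1)}\to0$. A perturbation/deflation argument compares $\acute{\lambda}_{i,k_i+1}^2$ with the largest squared singular value of the noise-only cross-data matrix built from $\bSig_{i*}=\sum_{j>k_i}\lambda_{ij}\bh_{ij}\bh_{ij}^T$; combined with a Chebyshev bound for $\tr(\bS_{iD(1)}\bS_{iD(1)}^T)$ (whose relative variance vanishes, analogously to (2.3)) and with (A-ix), which forces the spikes to leak into $\widehat{\Psi}_{i(k_i+1)}$ only at rate $o(n_i^{-1/2}\Psi_{i(k_i+1)})$, this gives a bound of the form $\acute{\lambda}_{i,k_i+1}^2=\lambda_{ik_i+1}^2\{1+o_P(1)\}+O_P(n_i^{-1/2}\Psi_{i(k_i+1)})$. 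Using the hypothesis $\lambda_{ik_i+1}^2/\Psi_{i(k_i+1)}=O(n_i^{-c})$ with $c>1/2$ then yields $\acute{\lambda}_{i,k_i+1}^2/\Psi_{i(k_i+1)}=O_P(n_i^{-1/2})$. Since $n_i^{1/2}\kappa(n_i)\to\infty$ (Proposition S2.2), $n_i^{-1/2}=o(\kappa(n_i))$, so $O_P(n_i^{-1/2})=o_P(\kappa(n_i))$, which is the required estimate; thus the right-hand event in the displayed equivalence has probability tending to $1$ for each $i=1,2$, proving the proposition.

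I expect the main obstacle to be the bound $\acute{\lambda}_{i,k_i+1}^2=\lambda_{ik_i+1}^2\{1+o_P(1)\}+O_P(n_i^{-1/2}\Psi_{i(k_i+1)})$: one must show that, once the $k_i$ genuine spikes are peeled off, the next singular value of $\bS_{iD(1)}$ fluctuates only at the $n_i^{-1/2}$ scale relative to $\Psi_{i(k_i+1)}$, which requires moment estimates for $\tr(\bS_{iD(1)}\bS_{iD(1)}^T)$ and for $\sum_{l\le k_i}\acute{\lambda}_{il}^2$ together with an interlacing bound on $\acute{\lambda}_{i,k_i+1}^2$; the fourth-moment structure in (A-viii) and the moment bounds underlying (2.3) are what make these estimates work. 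Everything else is routine bookkeeping.
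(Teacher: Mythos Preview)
Your reduction is correct: on $\{\widehat{\Psi}_{i(k_i+1)}>0\}$ the inequality is equivalent to $\acute{\lambda}_{i,k_i+1}^2<(k_i+1)\kappa(n_i)\widehat{\Psi}_{i(k_i+2)}$, and via Lemma~S2.1 and Proposition~S2.3 everything boils down to $\acute{\lambda}_{i,k_i+1}^2/\Psi_{i(k_i+1)}=o_P(\kappa(n_i))$. This is exactly what the paper establishes as well.

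The gap is in how you justify that last estimate. The decomposition $\acute{\lambda}_{i,k_i+1}^2=\lambda_{ik_i+1}^2\{1+o_P(1)\}+O_P(n_i^{-1/2}\Psi_{i(k_i+1)})$ is not right: in the flat part of the spectrum the CDM singular value $\acute{\lambda}_{i,k_i+1}$ is \emph{not} consistent for $\lambda_{ik_i+1}$, so there is no $\lambda_{ik_i+1}^2\{1+o_P(1)\}$ term. More seriously, a Chebyshev bound on $\tr(\bS_{iD(1)}\bS_{iD(1)}^T)$ only controls the \emph{sum} of squared singular values and says nothing about the largest one; interlacing alone does not bridge that gap either. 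So the tools you list cannot deliver $O_P(n_i^{-1/2}\Psi_{i(k_i+1)})$.

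The paper's route is different in exactly this step. It fixes an auxiliary scale $\lambda_*=\phi(n_i)\Psi_{i(k_i+1)}^{1/2}$ with $\phi(n_i)\to0$ and $n_i^{1/4}\phi(n_i)\to\infty$, and uses the bilinear-form bound $\bzeta_{(1)}^T\bV_{o2}\bzeta_{(2)}=o_P(\lambda_*)$ for the noise block $\bV_{o2}$ (from Lemmas~1 and~4 of Yata and Aoshima, 2013b). That bound requires $\tr(\bSig_{i*}^t)/\lambda_*^t\to0$ for some integer $t>2$, and it is precisely here that the hypothesis $c>1/2$ is used: it guarantees such a $t$ exists. This operator-norm-type control, combined with moment computations for the cross terms $\tr(\bV_{o1}\bV_{o2}^T)$ under (A-viii) and (A-ix), yields $\widehat{\Psi}_{i(k_i+1)}/\Psi_{i(k_i+1)}=1+o_P(\phi(n_i)^2)$ and similarly for $\widehat{\Psi}_{i(k_i+2)}$, hence $\acute{\lambda}_{i,k_i+1}^2/\Psi_{i(k_i+1)}=o_P(\phi(n_i)^2)$. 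Choosing $\phi(n_i)^2=\kappa(n_i)$ (admissible since $n_i^{1/2}\kappa(n_i)\to\infty$) finishes the argument. Note that the paper does \emph{not} obtain the sharper $O_P(n_i^{-1/2})$ you claim; it obtains $o_P(\phi(n_i)^2)$ for every such $\phi$, which is enough. In your write-up, $c>1/2$ is only used to absorb $\lambda_{ik_i+1}^2$; in the actual proof it is what makes the bilinear-form bound work at the required scale.
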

From Propositions S2.3 and S2.4, if one can assume the conditions in Proposition S2.4, 
one may consider $k_i$ as the first integer $j\ (=\hat{k}_{oi},\ \mbox{say})$ such that 
\begin{equation}
\hat{\tau }_{i(j+1)}\{1+(j+1)\kappa(n_i)\}>1 \quad (j\ge 0). \label{s2.2}
\end{equation}
Then, it holds that $P(\hat{k}_{oi}=k_i)\to 1$ as $m\to \infty$. 
Note that $\widehat{\Psi}_{i(n_{i(2)})}=0$ from the fact that rank$(\bS_{iD(1)})\le n_{i(2)}-1$. 
Thus one may choose $k_i$ as $\hat{k}_{i}=\min\{\hat{k}_{oi},n_{i(2)}-2\}$ in actual data analyses. 
For $\kappa(n_i)=(n_i^{-1} \log{n_i})^{1/2}$ in (\ref{s2.2}), 
the test procedure by (5.5) with $k_i=\hat{k}_{i}$, $i=1,2$, gave preferable performances throughout our simulations in Sections 6 and S4.2. 
If $\hat{k}_i=0$ (that is, (\ref{s2.2}) holds when $j=0$), one may consider the test with $\bA_{i(k_i)}=\bI_p$. 
In addition, if $\hat{k}_i=0$ for $i=1,2$, we recommend to use the test by (3.1) with $\bA=\bI_p$. 
\section{Demonstration}
\noindent

In this section, we introduce two high-dimensional data sets that have the SSE model.
We demonstrate the proposed test procedure by (5.5) by using the microarray data sets. 
We set $\alpha=0.05$. 

We first analyzed leukemia data with $7129\ (=p)$ genes consisting of $\pi_1:$ acute lymphoblastic leukemia ($n_1=47$ samples) and $\pi_2:$ acute myeloid leukemia ($n_2=25$ samples) given by \citet{Golub:1999}. 
We transformed each sample by $\bx_{ij}-(\overline{\bx}_{1n_1}+\overline{\bx}_{2n_2})/2$ for all $i,j$, so that $\bmu_1=\bmu_2=0$ under $H_0:\ \bmu_1=\bmu_2$. 
Then, (A-vii) and (A-x) hold under $H_0$. 
We calculated that $\hat{\eta}_1=0.697$ and $\hat{\eta}_2=0.602$. 
Since $\hat{\eta}_i$s are larger than $(n_1^{-1} \log{n_1})^{1/2}=0.286$ or $(n_2^{-1} \log{n_2})^{1/2}=0.359$, we concluded from Proposition S2.2 that (1.6) holds for $i=1,2$. 
We used the test procedure by (5.5). 
We set $\kappa(n_i)=(n_i^{-1} \log{n_i})^{1/2}$ in (\ref{s2.2}). Let $\tilde{\tau }_{i(j)}=\hat{\tau }_{i(j)}\{1+j\kappa(n_i)\}$ for all $i,j$. 
We calculated that $(\tilde{\tau }_{1(1)},\tilde{\tau }_{1(2)},\tilde{\tau }_{1(3)})=(0.407, 0.993, 1.302)$ 
and $(\tilde{\tau }_{2(1)},\tilde{\tau }_{2(2)},\tilde{\tau }_{2(3)},\tilde{\tau }_{2(4)})=(0.579, 0.7, 0.902,1.307)$, so that $\hat{k}_{1}=2$ and $\hat{k}_{1}=3$.
Thus, we chose $k_1=2$ and $k_2=3$.
We calculated that $\widehat{T}_*/\widehat{K}_{1*}^{1/2}=46.866$. 
By using (5.5), we rejected $H_0$ with size $0.05$ according to the arguments in Section 5.2. 

Next, we analyzed prostate cancer data with $12625\ (=p)$ genes consisting of $\pi_1:$ normal prostate ($n_1=50$ samples) and $\pi_2:$ prostate tumor ($n_2=52$ samples) given by \citet{Singh:2002}. 
We transformed each sample as before. 
We calculated that $(\hat{\eta}_1,\hat{\eta}_2)=(1.01,1.009)$ and $(\hat{k}_{1},\hat{k}_{2})=(4,3)$ from (\ref{s2.2}) with $\kappa(n_i)=(n_i^{-1} \log{n_i})^{1/2}$. 
Hence, we used the test procedure by (5.5) with $k_1=4$ and $k_2=3$.
Then, we calculated that $\widehat{T}_*/\widehat{K}_{1*}^{1/2}=27.497$. 
Hence, we rejected $H_0$ by using (5.5). 
In addition, we considered two cases: (a) $\pi_1:$ the first 25 samples ($n_1=25$) and $\pi_2:$ the last 25 samples ($n_2=25$) from the normal prostate; and 
(b) $\pi_1:$ the first 26 samples ($n_1=26$) and $\pi_2:$ the last 26 samples ($n_2=26$) from the prostate tumor. 
Note that $H_0$ is true for (a) and (b). 
We applied the test procedure by (5.5) to the cases. 
Then, we accepted $H_0$ both for (a) and (b). 
We also applied the test procedures by (3.1) with $\bA=\bI_p$ and (4.2) to the cases. 
Then, $H_0$ was rejected by them both for (a) and (b).

\section{Additional Simulations}
\setcounter{equation}{0}
\noindent

In this section, we give additional simulations for Sections 3.3 and 6. 
\subsection{Simulations for NSSE Model}
\noindent

In this section, we give additional simulations for Section 3.3 under the NSSE model. 

We set $\alpha=0.05$, $p=2^{s},\ s=4,...,10$, $n_1=\lceil p^{1/2} \rceil$, $n_2=2n_1$ and $\bmu_1=\bze$. 
When considering the alternative hypothesis, we set $\bmu_2=(1,...,1, 0,...,0,-1,...,-1)^T$ whose first $5$ elements are $1$ and last $5$ elements are $-1$. 
We generated $\breve{\bx}_{ij}$, $j=1,2,...,\ (i=1,2)$ independently from a multivariate 
skew normal (MSN) distribution, SN$_p(\bOme, \bal)$, with correlation matrix $\bOme=(0.3^{|i-j|^{1/2}})$ and shape parameter vector $\bal$. 
Note that $E(\breve{\bx}_{ij})=(2/\pi)^{1/2}\bOme\bal/(1+\bal^T\bOme\bal)^{1/2}\ (=\breve{\bmu}$, say) and 
Var$(\breve{\bx}_{ij})=\bOme-\breve{\bmu}\breve{\bmu}^T\ (=\breve{\bSig}$, say). 
We set $\bx_{ij}=c_i^{1/2}(\breve{\bx}_{ij}-\breve{\bmu})+\bmu_i$ for all $i,j$, where $(c_1,c_2)=(1,1.5)$. 
Note that $\bSig_1=\breve{\bSig}$ and $\bSig_2=1.5\breve{\bSig}$. 
We considered three cases: (a) $\bal=\bone_p$; (b) $\bal=4\bone_p$; and (c) $\bal=16\bone_p$, 
where $\bone_p=(1,...,1)^T$. 
See \cite{Azzalini:1996} and \cite{Azzalini:1999} for the details of the MSN distribution.
Note that (1.4) is met.
Also, note that (A-i) is met. 
See Remark S4.1.
Similar to Section 3.3, we calculated $\overline{\alpha}$ and $1-\overline{\beta}$ with 2000 replications for 
the test procedures given by (3.1) with (I) $\bA=\bI_p$, (II) $\bA=\bA_{\star}$, (III) $\bA=\bA_{\star(d)}$ and (IV) $\bA=\widehat{\bA}_{\star(d)}$. 
Note that (A-iv) is met for (I) to (III).
In Fig. S4.1, for (a) to (c), we plotted $\overline{\alpha}$ in the left panel and $1-\overline{\beta}$ in the right panel.
We also plotted the asymptotic power, $\Phi(\Delta(\bA)/\{K(\bA)\}^{1/2}-z_{\alpha}\{K_1(\bA)/K(\bA)\}^{1/2})$, for (I) to (III) by using Theorem 3. 
\begin{figure}[h!]
\includegraphics[scale=0.48]{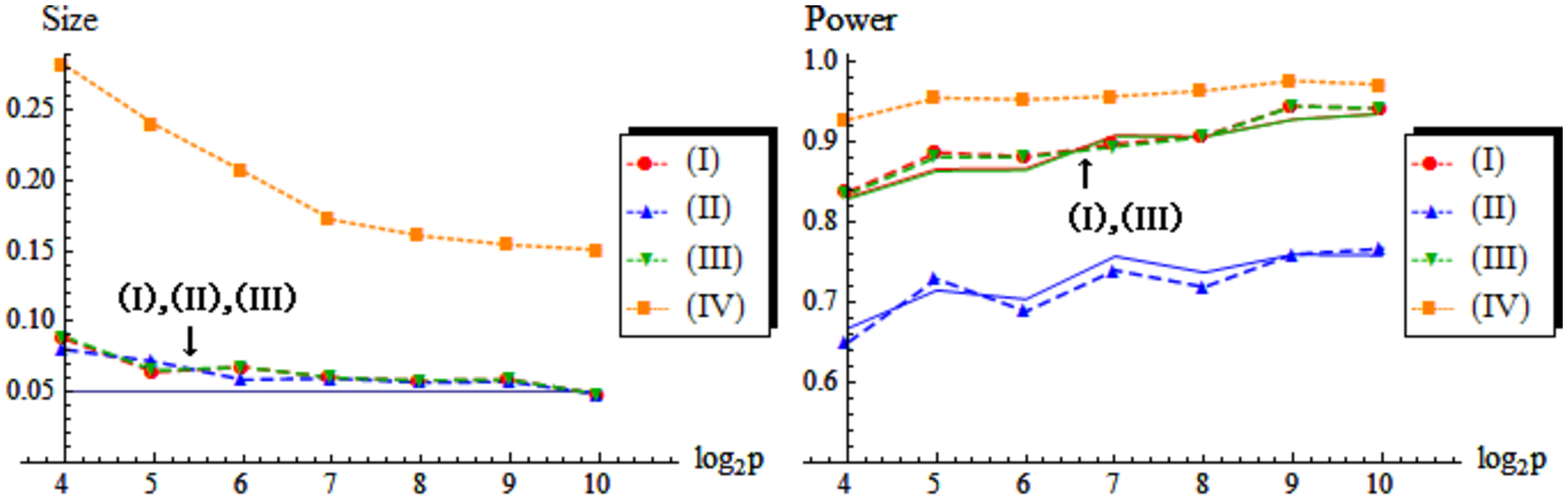} \\[-0.5mm]
(a) SN$_p(\bOme, \bal)$ with $\bal=\bone_p$.
 \\[4mm]
\includegraphics[scale=0.48]{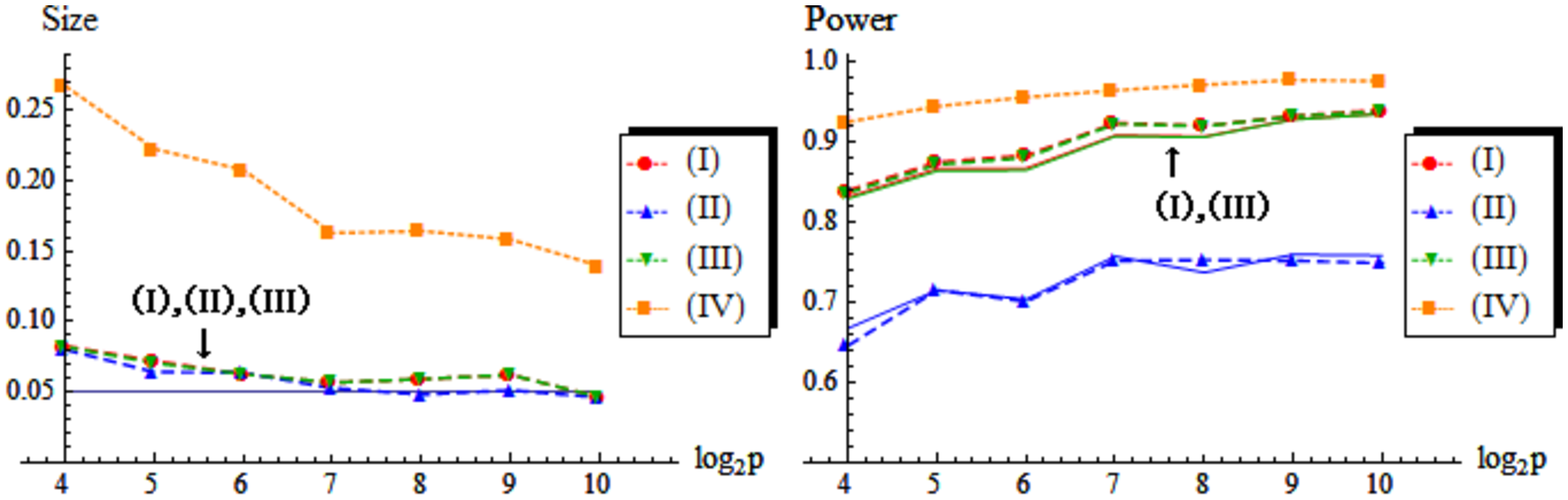}  \\[-0.5mm]
(b) SN$_p(\bOme, \bal)$ with $\bal=4\bone_p$.
\\[4mm]
\includegraphics[scale=0.48]{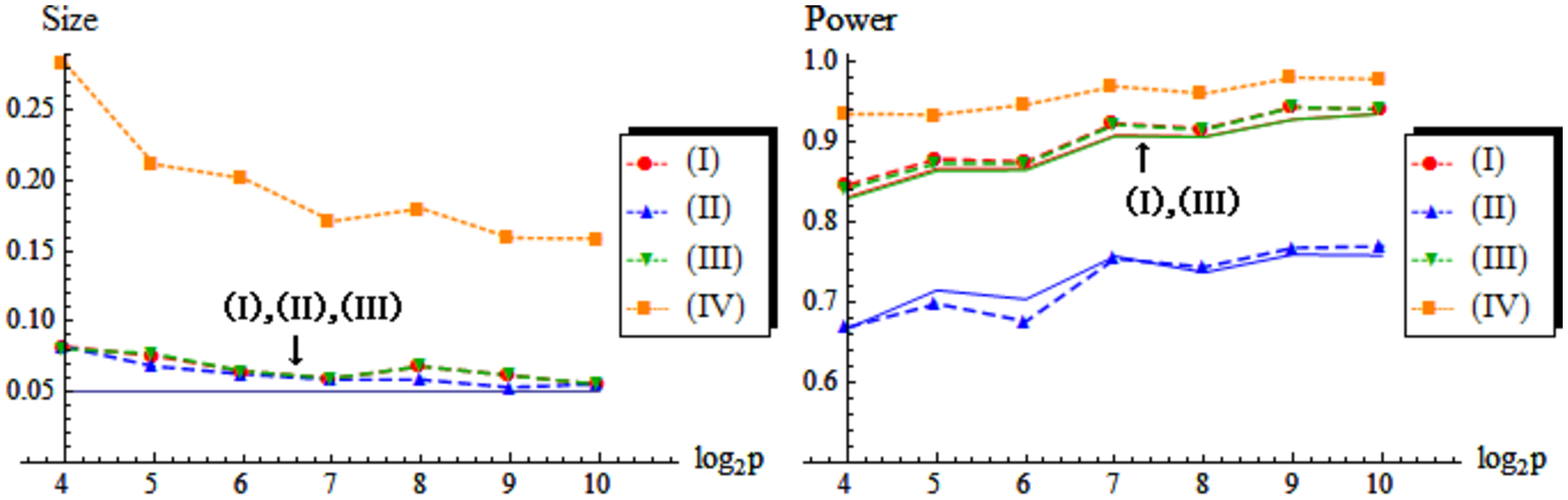}  \\[-0.5mm]
(c) SN$_p(\bOme, \bal)$ with $\bal=16\bone_p$.
\\[4mm]
{\small 
Figure S4.1: Test procedures by (3.1) when (I) $\bA=\bI_p$, (II) $\bA=\bA_{\star}$, (III) $\bA=\bA_{\star(d)}$ and (IV) $\bA=\widehat{\bA}_{\star(d)}$ for $p=2^{s},\ s=4,...,10$, $n_1=\lceil p^{1/2} \rceil$ and $n_2=2n_1$. 
For (a) to (c), the values of $\overline{\alpha}$ are denoted by the dashed lines in the left panel and the values of $1-\overline{\beta}$ are denoted by the dashed lines in the right panel.
The asymptotic powers were given by $\Phi(\Delta(\bA)/\{K(\bA)\}^{1/2}-z_{\alpha}\{K_1(\bA)/K(\bA)\}^{1/2})$ for (I) to (III) which are denoted by the solid lines in the right panels. 
}
\end{figure}

We observed that the plots become close to the theoretical value even for the skewed distributions. 
The tests with (I) and (III) gave similar performances for (a) to (c). 
This is probably because $\sigma_{i(j)}\to c_i$ as $p\to \infty$ for all $i,j$ in those settings.
Similar to Fig. 1, the test with (I) gave better performances compared to (II) for (a) to (c). 
See Sections 3.2 and 3.3 for the details.
\\[3mm]
\noindent
{\bf Remark S4.1.} \ Let $\bb_{1}= \bOme^{1/2}\bal/||\bOme^{1/2}\bal||$ and $\bb_{2},...,\bb_{p}$ be $p$-dimensional vectors such that
$||\bb_{s}||=1$, $\bb_{1}^T\bb_{s}=0$ for $s=2,...,p$, and $\sum_{s=1}^p\bb_s\bb_s^T=\bI_p$. 
Then, from Propositions 3 and 6 in \cite{Azzalini:1999}, $\bb_{1}^T\bOme^{-1/2}\breve{\bx}_{ij},...,\bb_{p}^T\bOme^{-1/2}\breve{\bx}_{ij}$ are independent. 
Hence, (A-i) is met from the fact that $\bx_{ij}-\bmu_i=c_i^{1/2}\sum_{s=1}^p \bOme^{1/2}\bb_{s}\{\bb_{s}^T\bOme^{-1/2}(\breve{\bx}_{ij}-\breve{\bmu})\}$. 
\subsection{Simulations for SSE Model}
\noindent

In this section, we give additional simulations for Section 6 under the SSE model. 

We set $\alpha=0.05$, $\bmu_1=\bze$ and 
\begin{equation}
\bSig_{i}=\left( \begin{array}{cc}
\bSig_{(1)} & \bO_{2, p-2} \\
\bO_{p-2, 2} & \bSig_{i(2)}
\end{array} \right) \quad \mbox{with $\bSig_{(1)}=\mbox{diag}(p^{2/3},p^{1/2})$ }
\label{sig1}
\end{equation}
for $i=1,2$. 
When considering the alternative hypothesis, we set $\bmu_2=(0,...,0,1,1,1,1)^T$ whose last $4$ elements are $1$. 
We set $\kappa(n_i)=(n_i^{-1} \log{n_i})^{1/2}$ in (\ref{s2.2}). 
We checked the performance of five tests: (I) from (3.1) with $\bA=\bI_p$, (II) from (4.2), (III) from (5.5), (IV) from (5.5) with $k_i=\hat{k}_i$, $i=1,2$, and (V) from (6.1). 
Let us write that $\bx_{ij}=(x_{i1(j)},...,x_{ip(j)})^T$,  $\bmu_i=(\mu_{i1},...,\mu_{ip})^T$,
$\bx_{ij(2)}=(x_{i3(j)},...,x_{ip(j)})^T$ and $\bmu_{i(2)}=(\mu_{i3},...,\mu_{ip})^T$ for all $i,j$. 
We supposed that $(x_{i1(j)},x_{i2(j)})^T$s are i.i.d. as $N_2(\bze,\bSig_{(1)})$.

First, we checked the performance of the test procedures for the MSN distribution. 
We set $p=2^s,\ n_1=3 \lceil p^{1/2} \rceil$ and  $n_2=4 \lceil p^{1/2} \rceil$ for $s=4,...,10$.
We generated $\breve{\bx}_{ij(2)}$, $j=1,2,...,\ (i=1,2)$ independently from 
SN$_{p-2}(\bOme_i, \bal)$ with $\bOme_1=(0.3^{|i-j|^{1/2}})$ and $\bOme_2=(0.5^{|i-j|^{1/2}})$, 
where $(x_{i1(j)},x_{i2(j)})^T$ and $\breve{\bx}_{ij(2)}$ are independent for each $j$.
We considered two cases: (a) $\bal=4\bone_{p-2}$; and (b) $\bal=16\bone_{p-2}$. 
Similar to Section S4.1, 
we set $\bx_{ij(2)}=\breve{\bx}_{ij(2)}-\breve{\bmu}_i+\bmu_{i(2)}$ for all $i,j$, where 
$\breve{\bmu}_i=E(\breve{\bx}_{ij(2)})=(2/\pi)^{1/2}\bOme_i\bal/(1+\bal^T\bOme_i\bal)^{1/2}$, $i=1,2$. 
Then, we had $\bSig_{i(2)}=\bOme_i-\breve{\bmu}_i\breve{\bmu}_i^T$, $i=1,2,$ in (\ref{sig1}). 
Note that (4.1) and (A-vi) with $k_1=k_2=2$ are met. 
Similar to Remark S4.1, we note that (A-i) is met. 
However, (A-viii) is not met. 
Similar to Section 6, we calculated $\overline{\alpha}$ and $1-\overline{\beta}$ with 2000 replications for 
the five test procedures. 
In Fig. S4.2, for (a) and (b), we plotted $\overline{\alpha}$ in the left panel and $1-\overline{\beta}$ in the right panel. 
We observed the performances similar to those in Fig. 2 (a). 
\begin{figure}[h!]
\includegraphics[scale=0.63]{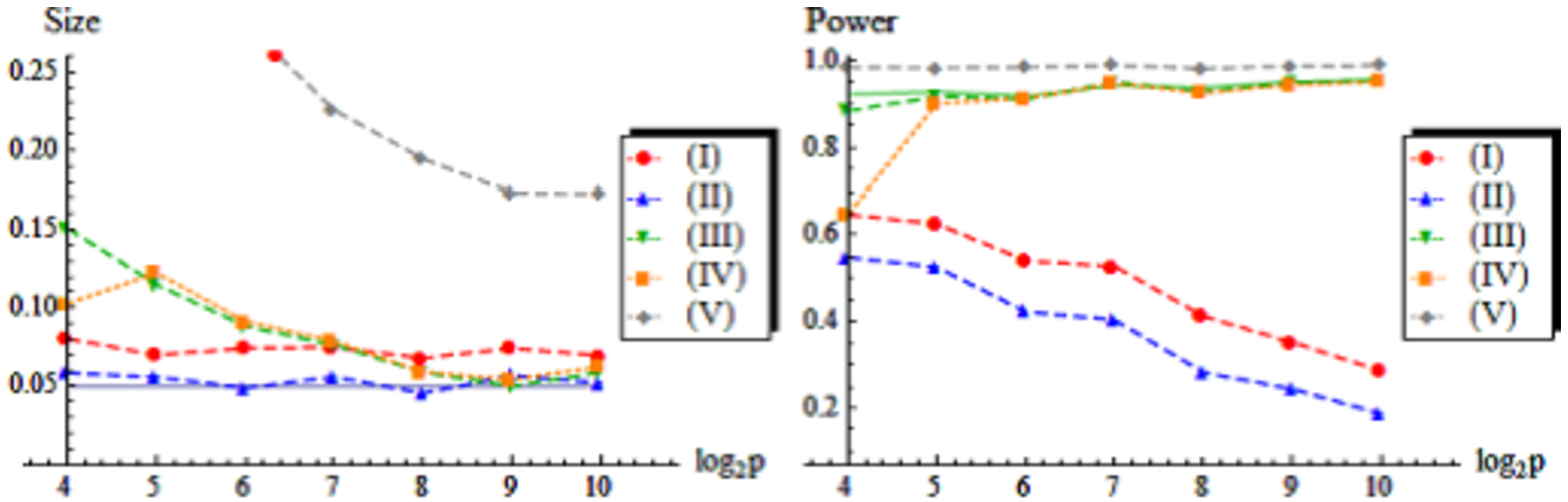} \\[-0.5mm]
(a) SN$_{p-2}(\bOme_i, \bal)$ with $\bal=4\bone_{p-2}$.
 \\[4mm]
\includegraphics[scale=0.63]{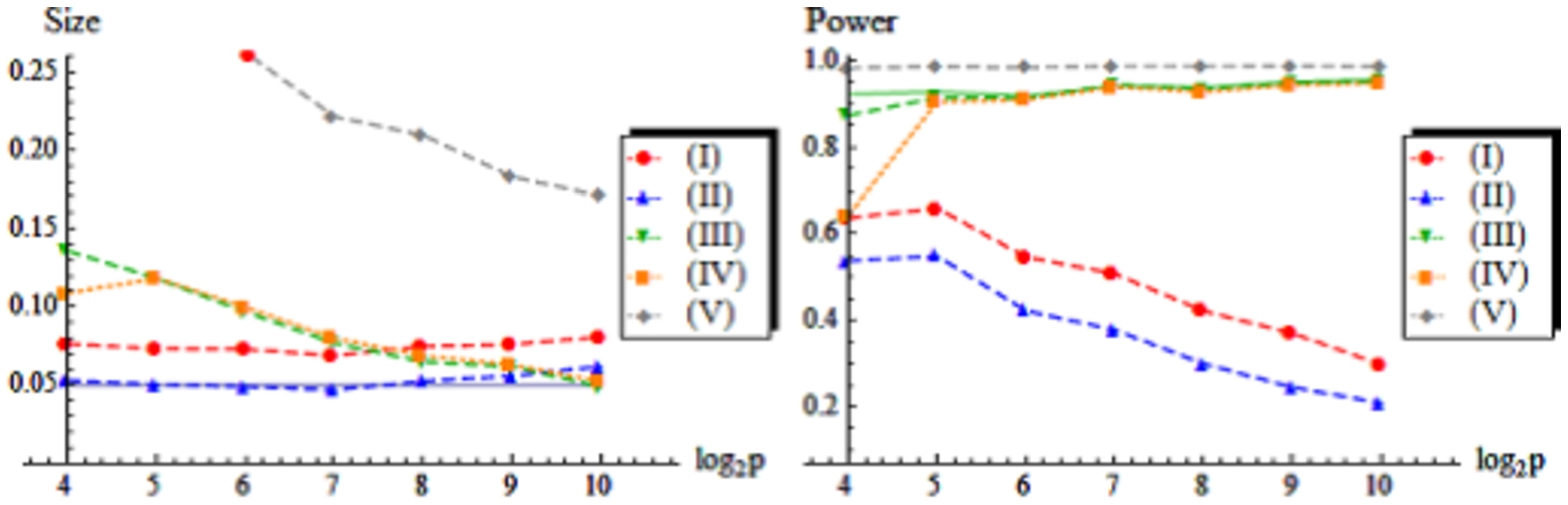}  \\[-0.5mm]
(b) SN$_{p-2}(\bOme_i, \bal)$ with $\bal=16\bone_{p-2}$.
\\[4mm]
{\small 
Figure S4.2: When $p=2^s,\ n_1=3 \lceil p^{1/2} \rceil$ and  $n_2=4 \lceil p^{1/2} \rceil$ for $s=4,...,10$, 
the performances of five tests: (I) from (3.1) with $\bA=\bI_p$, (II) from (4.2), (III) from (5.5), (IV) from (5.5) with $k_i=\hat{k}_i$, $i=1,2$, and (V) from (6.1). 
For (a) and (b), the values of $\overline{\alpha}$ are denoted by the dashed lines in the left panel and the values of $1-\overline{\beta}$ are denoted by the dashed lines in the right panel.
The asymptotic power of (III) was given by $\Phi(\Delta_*/K_*^{1/2}-z_{\alpha}(K_{1*}/K_*)^{1/2})$ which is denoted by the solid line in the right panels. 
When $p$ is small, $\overline{\alpha}$ for (V) was too high to describe in the left panels.
}
\end{figure}

Next, we checked the performance of the test procedures for the multivariate skew $t$ (MST) distribution. 
See \cite{Azzalini:2003} and \cite{Gupta:2003} for the details of the MST distribution.
We considered two cases: (i) $(n_1,n_2)=(40,60)$ and $p=50+100(s-1)$ for $s=1,...,7$; 
and (ii) $p=500$, $n_1=10s$ and $n_2=1.5n_1$ for $s=2,...,8$. 
We generated $\check{\bx}_{ij(2)}$, $j=1,2,...,\ (i=1,2)$ independently from a MST distribution, 
ST$_{p-2}(\bOme_i,\bal,\nu)$, with correlation matrix $\bOme_i$, shape parameter vector $\bal$ and 
degrees of freedom $\nu$, 
where $(x_{i1(j)},x_{i2(j)})^T$ and $\check{\bx}_{ij(2)}$ are independent for each $j$. 
We set $\bOme_1=(0.3^{|i-j|^{1/2}})$, $\bOme_2=(0.5^{|i-j|^{1/2}})$ and $\bal=10\bone_{p-2}$. 
We considered two cases: (a) $\nu=10$ and (b) $\nu=20$. 
Note that $E(\check{\bx}_{ij(2)})=(\nu/\pi)^{1/2} \{\Gamma(\nu/2-1/2)/\Gamma(\nu/2)\}\bOme_i\bal/(1+\bal^T\bOme_i\bal)^{1/2}\ (=\check{\bmu}_i$, say) and 
Var$(\check{\bx}_{ij(2)})=\nu \bOme_i/(\nu-2)-\check{\bmu}_i\check{\bmu}_i^T\ (=\check{\bSig}_i$, say), 
where $\Gamma(\cdot)$ denotes the gamma function.
We set $\bx_{ij(2)}=\check{\bx}_{ij(2)}-\check{\bmu}_i+\bmu_{i(2)}$ for all $i,j$. 
Then, we had $\bSig_{i(2)}=\check{\bSig}_i$, $i=1,2,$ in (\ref{sig1}). 
Note that (4.1) and (A-vi) with $k_1=k_2=2$ are met. 
However, (A-i) and (A-viii) are not met. 
Similar to Fig. S4.2, 
we plotted $\overline{\alpha}$ in the left panel and $1-\overline{\beta}$ in the right panel for (i) in Fig. S4.3 and for (ii) in Fig. S4.4. 
We observed the performances similar to those in Fig. 2 (b) and (c). 
\begin{figure}[h!]
\includegraphics[scale=0.63]{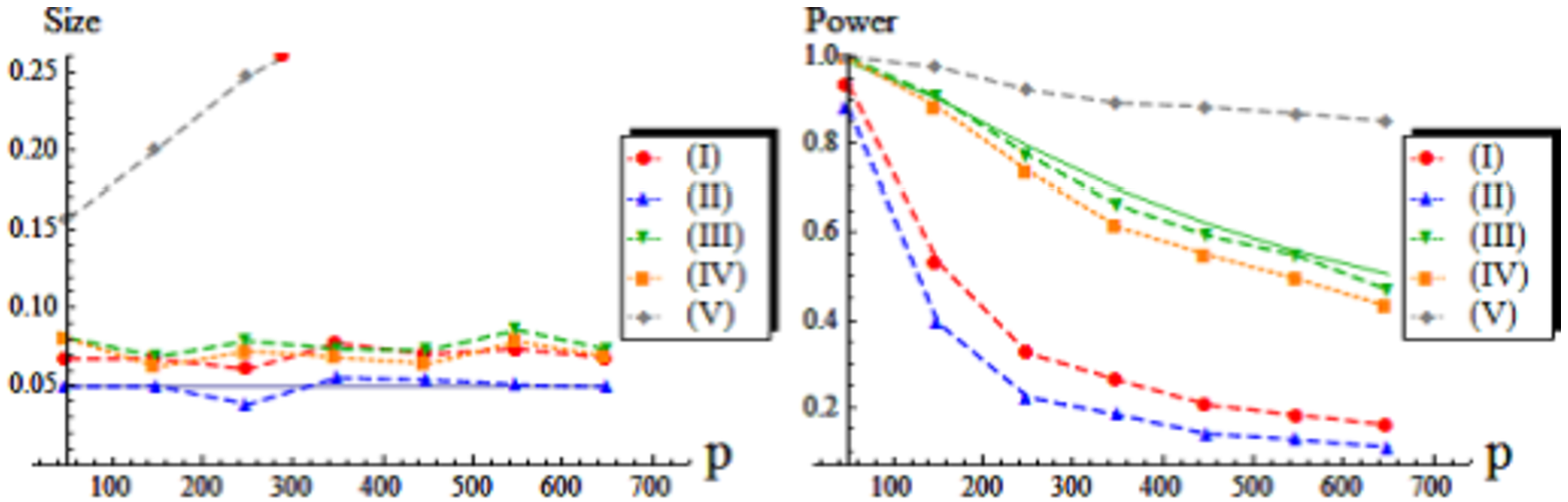} \\[-0.5mm] 
(a) ST$_{p-2}(\bOme_i,\bal,\nu)$ with $\nu=10$.
 \\[4mm]
\includegraphics[scale=0.63]{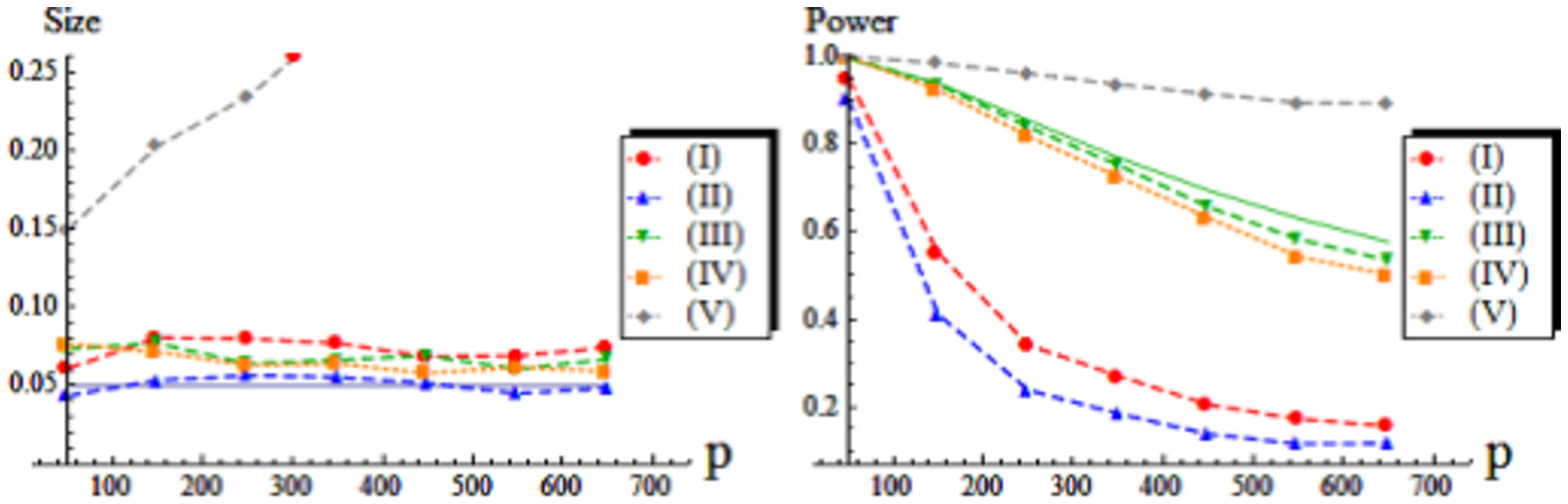}  \\[-0.5mm]
(b) ST$_{p-2}(\bOme_i,\bal,\nu)$ with $\nu=20$.
\\[4mm]
{\small 
Figure S4.3: When (i) $(n_1,n_2)=(40,60)$ and $p=50+100(s-1)$ for $s=1,...,7$, the performances of five tests: (I) from (3.1) with $\bA=\bI_p$, (II) from (4.2), (III) from (5.5), (IV) from (5.5) with $k_i=\hat{k}_i$, $i=1,2$, and (V) from (6.1). 
}
\end{figure}
\begin{figure}[h!]
\includegraphics[scale=0.63]{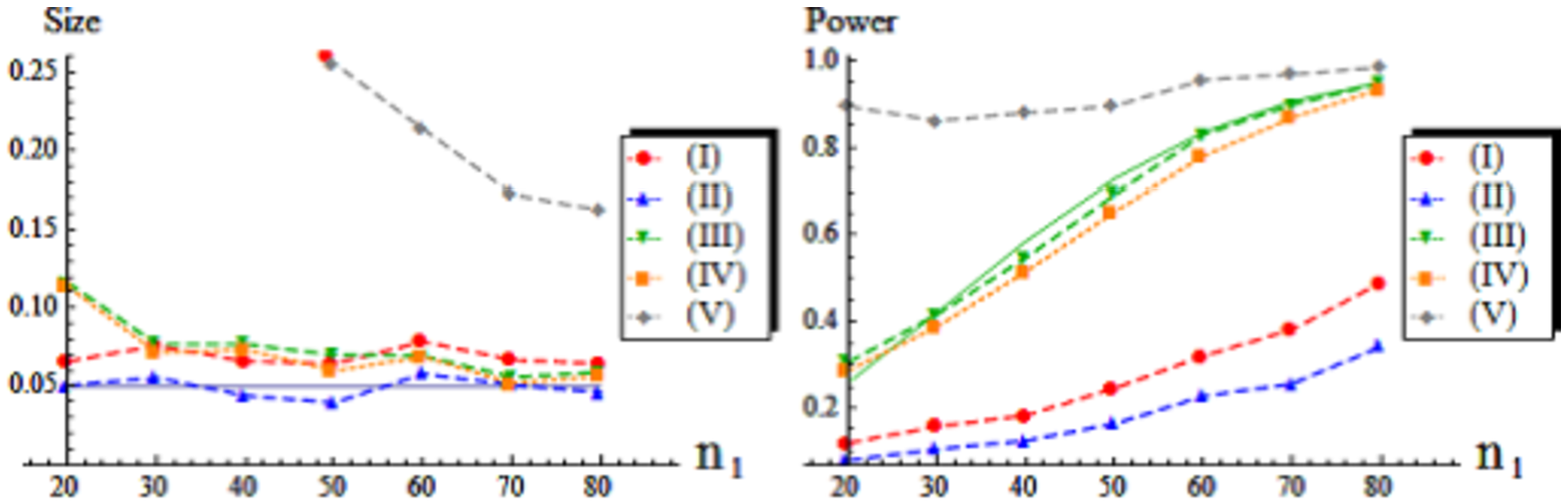} \\[-0.5mm] 
(a) ST$_{p-2}(\bOme_i,\bal,\nu)$ with $\nu=10$.
 \\[4mm]
\includegraphics[scale=0.63]{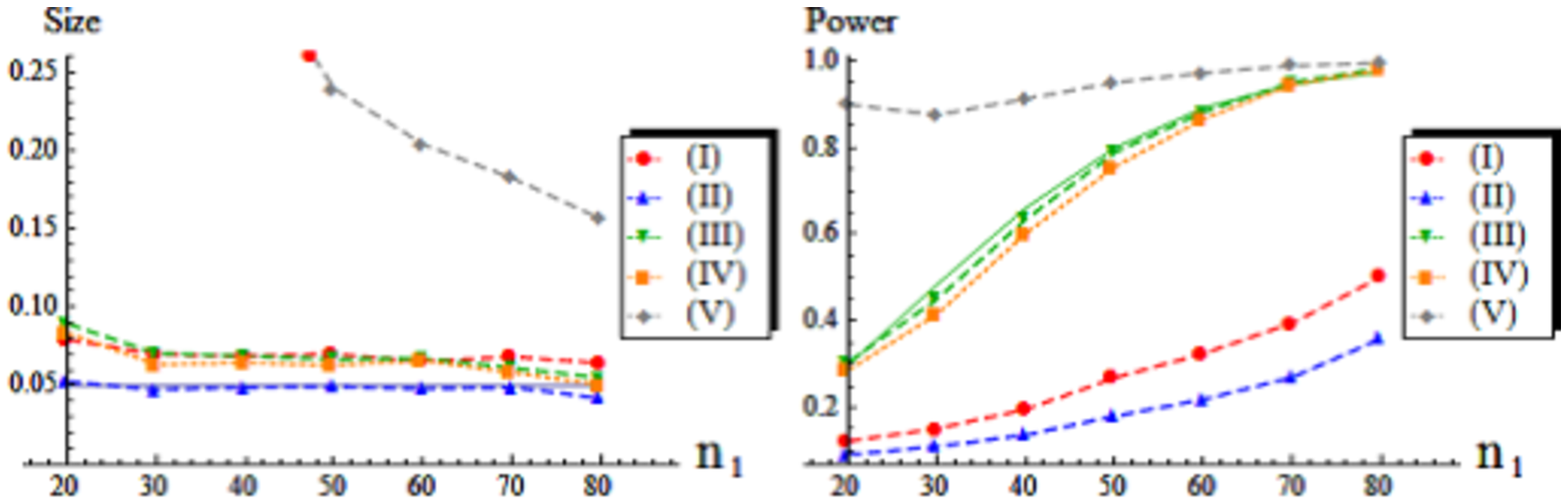}  \\[-0.5mm]
(b) ST$_{p-2}(\bOme_i,\bal,\nu)$ with $\nu=20$.
\\[4mm]
{\small 
Figure S4.4: When (ii) $p=500$, $n_1=10s$ and $n_2=1.5n_1$ for $s=2,...,8$, the performances of five tests: (I) from (3.1) with $\bA=\bI_p$, (II) from (4.2), (III) from (5.5), (IV) from (5.5) with $k_i=\hat{k}_i$, $i=1,2$, and (V) from (6.1).  
}
\end{figure}

Throughout, the test procedure by (5.5) with $k_i=\hat{k}_i$, $i=1,2$, gave adequate performances for high-dimensional 
cases even for the skewed and heavy tailed distributions.

\section{Appendix A}
\setcounter{equation}{0}
\noindent

In this appendix, we give proofs of the theoretical results in Sections 2 and 3. 

We simply write $T=T(\bA)$, $\Delta=\Delta(\bA)$, $K=K(\bA)$, $K_1=K_1(\bA)$, $\widehat{K}_1=\widehat{K}_1(\bA)$ and $K_2=K_2(\bA)$. 
\begin{proof}[Proof of Theorem 1]
We note that for $i=1,2$
\begin{equation}
\bmu_{\bsA}^T\bSig_{i,\bsA}\bmu_{\bsA}\le \Delta \lambda_{\max}(\bSig_{i,\bsA})\le \Delta \tr(\bSig_{i,\bsA}^2)^{1/2}.
\label{A.1}
\end{equation}
Hence, 
from the fact that $\tr(\bSig_{i,\bsA}^2)/n_i^2\le K_1$ for $i=1,2$, 
it holds that $K_2=O(\Delta K_1^{1/2})$, so that 
\begin{equation}
\Var(T/\Delta) = (K_1+K_2)/\Delta^2=K_1/\Delta^2
+O(K_1^{1/2}/\Delta).
\label{A.2}
\end{equation}
Thus, under (A-iii), from Chebyshev's inequality, we can claim the result. 
\end{proof}
\begin{proof}[Proof of Theorem 2]
We first consider the case when (A-iv) is met. 
From (\ref{A.1}), under (A-ii), it holds that 
$\bmu_{\bsA}^T\bSig_{i,\bsA}\bmu_{\bsA}/n_i=o(\Delta\tr(\bSig_{i,\bsA}^2)^{1/2}/n_i)=o(\Delta K_1^{1/2})$
as $m\to \infty$, so that 
\begin{equation}
K_2/K_1=O\{ K_2/(\Delta K_1^{1/2})\}\to 0
\label{A.3}
\end{equation}
under (A-ii) and (A-iv). 
Let ${\bx}_{ij,\bsA}=\bA^{1/2}\bx_{ij}\ (j=1,...,n_i)$, $\bmu_{i,\bsA}=\bA^{1/2}\bmu_i$ and $\bGamma_{i,\bsA}=\bA^{1/2}\bGamma_i$ for $i=1,2$. 
We write that 
\begin{equation}
{\bx}_{ij,\bsA}=\bGamma_{i,\bsA}\bw_{ij}+\bmu_{i,\bsA}\quad \mbox{for all $i,j$.}
\label{A.4}
\end{equation} 
Note that $\Var({\bx}_{ij,\bsA})=\bSig_{i,\bsA}$ for $i=1,2$. 
Then, from (\ref{A.3}), by using Theorem 5 given in \citet{Aoshima:2015}, we can obtain the result when (A-iv) is met. 

Next, we consider the case when (A-v) is met. 
Let $\bmu_{12}=\bmu_1-\bmu_2$.
Under (A-v), it holds that 
\begin{equation}
T-\Delta=2\bmu_{12}^T \bA(\overline{\bx}_{1n_1}-\overline{\bx}_{2n_2}-\bmu_{12})+o_P(K_2^{1/2})
\label{A.5}
\end{equation}
from the fact that 
$
\Var\{(\overline{\bx}_{1n_1}-\overline{\bx}_{2n_2}-\bmu_{12})^T\bA(\overline{\bx}_{1n_1}-\overline{\bx}_{2n_2}-\bmu_{12})-\tr(\bS_{1n_1}\bA)/n_1-\tr(\bS_{2n_2}\bA)/n_2\}=K_1.
$
Let $\omega_j=2\bmu_{12}^T \bA(\bx_{1j}-\bmu_1)/n_1$ for $j=1,...,n_1$, and $\omega_{j+n_1}=-2\bmu_{12}^T \bA(\bx_{2j}-\bmu_2)/n_2$ for $j=1,...,n_2$. 
Note that $\sum_{j=1}^{n_1+n_2}\omega_j=2\bmu_{12}^T \bA(\overline{\bx}_{1n_1}-\overline{\bx}_{2n_2}-\bmu_{12})$ and $\Var(\sum_{j=1}^{n_1+n_2}\omega_j)=K_2$. 
Note that $E(w_j^4)=O\{(\bmu_{\bsA}^T \bSig_{1,\bsA}\bmu_{\bsA})^2/n_1^4\}$ for $j=1,...,n_1$, and $E(w_j^4)=O\{(\bmu_{\bsA}^T \bSig_{2,\bsA}\bmu_{\bsA})^2/n_2^4\}$ for $j=n_1+1,...,n_1+n_2$, under (A-i).
Then, for Lyapunov's condition, it holds that as $n_{\min}\to \infty$
$$
\frac{\sum_{j=1}^{n_1+n_2}E(w_j^4)}{K_2^2}=
\frac{O\{(\bmu_{\bsA}^T \bSig_{1,\bsA}\bmu_{\bsA})^2/n_1^3+(\bmu_{\bsA}^T \bSig_{2,\bsA}\bmu_{\bsA})^2/n_2^3\} }{K_2^2}
=O(n_{\min}^{-1})\to 0.
$$
Hence, by using Lyapunov's central limit theorem, we have that $\sum_{j=1}^{n_1+n_2}\omega_j/K_2^{1/2} \Rightarrow N(0,1)$.
In view of (\ref{A.5}) and $K_2/K=1+o(1)$ as $m\to \infty$ under (A-v), we can obtain the result when (A-v) is met.
\end{proof}
\begin{proof}[Proof of Proposition 1]
From (\ref{A.1}) and the fact that $\tr(\bSig_{i,\bsA}^2)/n_i^2\le K_1,\ i=1,2$, it holds that $K_1/K_2 \ge K_1^{1/2}/(8\Delta)$.
Thus, (A-v) implies (A-iii). 
It concludes the result. 
\end{proof}
\begin{proof}[Proof of Lemma 1]
From (\ref{A.3}), the result is obtained straightforwardly. 
\end{proof}
\begin{proof}[Proofs of Lemma 2 and Corollary 1]
From (2.3), (\ref{A.4}) and the equation (23) given in \citet{Aoshima:2015}, we have that $\widehat{K}_1/K_1=1+o_P(1)$ as $m\to \infty$ under (A-i). 
It concludes the result of Lemma 2.
By using Lemmas 1 and 2, it holds that $\widehat{K}_1/K=1+o_P(1)$ under (A-i), so that the result of Corollary 1 is obtained from Theorem 2. 
\end{proof}
\begin{proof}[Proofs of Theorem 3 and Corollary 2]
First, we consider Corollary 2. 
From Theorem 1, under (A-i) and (A-iii), we have that as $m\to \infty$ 
$$
P( {T}/{\widehat{K}_1^{1/2}}>z_{\alpha})
=P( {T}/{\Delta}>z_{\alpha}{\widehat{K}_1^{1/2}}/{\Delta})=P\{1+o_P(1)>o_P(1)\}
\to 1
$$
from the fact that $\widehat{K}_1^{1/2}/\Delta={K}_1^{1/2}\{1+o_P(1)\}/\Delta=o_P(1)$ under (A-i) and (A-iii). 
It concludes the result of Corollary 2 when (A-iii) is met.
From Theorem 2, Lemmas 1 and 2, under (A-i), (A-ii) and (A-iv), we have that
\begin{align}
P( {T}/{\widehat{K}_1^{1/2}} >z_{\alpha})
&=P\{ (T-\Delta)/K^{1/2} > (z_{\alpha}{K}_1^{1/2}-\Delta)/K^{1/2}+o_P(1)\}\label{A.6}  \\
&=\Phi\{ (\Delta-z_{\alpha}K_1^{1/2})/K^{1/2}\}+o(1) =\Phi({\Delta}/{K_1^{1/2}}-z_{\alpha})+o(1) \notag. 
\end{align}
It concludes the result of Corollary 2 when (A-ii) and (A-iv) are met.
We note that $K/K_2\to 1$ as $m\to \infty$ under (A-v). 
Then, by combining (\ref{A.6}) and Theorem 2, we can conclude the result of Corollary 2 when (A-v) is met.

Next, we consider Theorem 3. 
By combining (\ref{A.6}) and Theorem 2, we can conclude the results about size and power in Theorem 3 when (A-iv) is met. 
From (\ref{A.2}) we note that $K/\Delta^2\to 0$ under (A-iii). 
It holds that $\Phi\{(\Delta-z_{\alpha}K_1^{1/2})/K^{1/2} \}\to 1$ under (A-iii), so that from Corollary 2 we obtain the result about power when (A-iii) is met.
Hence, by considering a convergent subsequence of $\Delta/K_1^{1/2}$, we can conclude the result about power in Theorem 3. 
\end{proof}

\section{Appendix B}
\setcounter{equation}{0}
\noindent

In this appendix, we give proofs of the theoretical results in Sections 4 and 5. Also, we give two lemmas and proofs of the lemmas.

Let $\bar{z}_{ij}=\sum_{l=1}^{n_i}z_{ijl}/n_i$ and $v_{i(j)}=\sum_{l=1}^{n_i}(z_{ijl}-\bar{z}_{ij})^2/(n_i-1)$ for all $i,j$. 
Let $\bu_{ij}=(z_{ij1},...,z_{ijn_i})^T/(n_i-1)^{1/2}$, ${\bu}_{oij}=\bP_{n_i}{\bu}_{ij}=(z_{ij1}-\bar{z}_{ij},...,z_{ijn_i}-\bar{z}_{ij})^T/(n_i-1)^{1/2}$ and $\dot{\bu}_{ij}={||\bu_{ij}||}^{-1}{{\bu}_{ij}}$ for all $i,j$. 
Let $\bzeta_i$ be an arbitrary unit random $n_i$-dimensional vector for $i=1,2$. 
Let $\by_{ij}=\sum_{s=1}^{k_i}\lambda_{is}^{1/2}\bh_{is}z_{isj}$ and $\bv_{ij}=\sum_{s=k_i+1}^{p}\lambda_{is}^{1/2}\bh_{is}z_{isj}$ for all $i,j$. 
Note that $\bx_{ij}=\by_{ij}+\bv_{ij}+\bmu_i$ for all $i,j$. 
Let $\psi_{ij}=\tr(\bSig_i^2)/\lambda_{ij}+n_i \bmu_i^T\bSig_i\bmu_i/\lambda_{ij}$ for $i=1,2;\ j=1,...,k_i$.
Let $h_{st}=\bh_{1s}^T\bh_{2t}$ for all $s,t$. 
We also let $\bM_i=\bmu_i\bone_{n_i}^T$ for $i=1,2$. 
\begin{proof}[Proof of Theorem 4]
We assume $\bmu_1=\bmu_2=\bze$ and $\bh_{11}^T\bh_{21}\ge 0$ without loss of generality. 
Let $\bH_{i1}=\bh_{i1}\bh_{i1}^T$, $\bH_{i2}=\bI_p-\bH_{i1}$, $\bSig_{i1}=\lambda_{i1}\bH_{i1}$ and $\bSig_{i2}=\sum_{j=2}^p\lambda_{ij} \bh_{ij}\bh_{ij}^T$ for $i=1,2$. 
Note that $\bSig_i=\bSig_{i1}+\bSig_{i2}$ for $i=1,2$. 
We write that 
$$
T_I=T(\bH_{11},\bH_{21})+T(\bH_{12},\bH_{22})
-2\overline{\bx}_{1n_1}^T(\bH_{11}\bH_{22}+\bH_{12}\bH_{21})\overline{\bx}_{2n_2}.
$$
We have that $\Var\{T(\bH_{11},\bH_{21})\}=K_1(\bH_{11},\bH_{21})=2 \sum_{i=1}^2\lambda_{i1}^2/\{n_i(n_i-1)\}+4\lambda_{11}\lambda_{21}(\bh_{11}^T\bh_{21})^2$
$/(n_1n_2)$ and 
$\Var\{T(\bH_{12},\bH_{22})\}=K_1(\bH_{12},\bH_{22})=2 \sum_{i=1}^2\tr(\bSig_{i2}^2)/\{n_i(n_i-1)\}+4\tr(\bSig_{12}\bSig_{22})$
$/(n_1n_2)$, 
where $K_1(\cdot\, ,\cdot)$ is defined in Section S1.2.
Let $\psi=({\lambda_{11}}/{n_1}+{\lambda_{21}}/{n_2})$. 
Then, under (4.1) it holds that as $m\to \infty$
$$
K_1(\bH_{11},\bH_{21})=2\psi^2\{1+o(1)\}\quad \mbox{and}\quad K_1(\bH_{12},\bH_{22})=o(\psi^2)
$$
because $\tr(\bSig_{12}\bSig_{22})\le\{\tr(\bSig_{12}^2)\tr(\bSig_{22}^2)\}^{1/2}=(\Psi_{1(2)}\Psi_{2(2)})^{1/2}$. 
Also, under (4.1) it follows that 
$$
\Var\{\overline{\bx}_{1n_1}^T(\bH_{11}\bH_{22}+\bH_{12}\bH_{21})\overline{\bx}_{2n_2}\}
=\frac{\tr(\bSig_{11}\bSig_{22})+\tr(\bSig_{12}\bSig_{21})}{n_1n_2}=o(\psi^2)
$$
because $\tr(\bSig_{11}\bSig_{22})\le \lambda_{11}\tr(\bSig_{22}^2)^{1/2}$ and $\tr(\bSig_{12}\bSig_{21})\le \lambda_{21}\tr(\bSig_{12}^2)^{1/2}$. 
Hence, under (4.1) we have that $K_{1(I)}=2\psi^2\{1+o(1)\}$ and 
\begin{align*}
T_I&=\sum_{i=1}^2\lambda_{i1}(\bar{z}_{i1}^2-v_{i(1)}/n_i)-2(\lambda_{11}\lambda_{21})^{1/2}\bar{z}_{11}\bar{z}_{21}(\bh_{11}^T\bh_{21})
+o_P(\psi)\notag \\
&=( \lambda_{11}^{1/2}\bar{z}_{11}-\lambda_{21}^{1/2}\bar{z}_{21})^2-\psi+o_P(\psi)
\end{align*}
from the fact that $v_{i(1)}=1+o_P(1)$, $i=1,2$. 
By noting that $E(z_{i1l}^4)$'s are bounded, for Lyapunov's condition, it holds that $\sum_{i=1}^2\sum_{l=1}^{n_i}(\lambda_{i1}^{1/2}z_{i1l}/n_i)^4=o(\psi^2)$. 
Hence, by using Lyapunov's central limit theorem, we have that $\psi^{-1/2}( \lambda_{11}^{1/2}\bar{z}_{11}-\lambda_{21}^{1/2}\bar{z}_{21})\Rightarrow N(0,1)$. 
Thus, from $\psi^{-1}T_I=\psi^{-1}( \lambda_{11}^{1/2}\bar{z}_{11}-\lambda_{21}^{1/2}\bar{z}_{21})^2-1+o_P(1)$ and $K_{1(I)}=2\psi^2\{1+o(1)\}$ under (4.1), we have that
$
T_I/(K_{1(I)}/2)^{1/2}+1 \Rightarrow  \chi_1^2.
$
From Lemma 2, it concludes the result. 
\end{proof}
\begin{proof}[Proof of Corollary 3]
From Theorem 2, the result is obtained straightforwardly.
\end{proof}

Throughout the proofs of Propositions 2 to 5, Lemmas B.1, B.2, 3 and Theorem 5, we assume (A-vi) and (A-viii). 
Throughout the proofs of Propositions 2 to 5 and Lemma B.1, we omit the subscript with regard to the population.
\begin{proof}[Proof of Proposition 2]
Let us write that $\bU_{1}=\sum_{s=1}^{k}\lambda_{s}\bu_{os}\bu_{os}^T$ and $\bU_{2}=\sum_{s=k+1}^p\lambda_{s}\bu_{s}\bu_{s}^T$. 
Note that $\bS_{D}=\bU_{1}+\bP_{n}\bU_{2}\bP_{n}$. 
Also, note that $\bP_{n}\hat{\bu}_{j}=\hat{\bu}_{j}$ and $\hat{\lambda}_{j}=\hat{\bu}_{j}^T\bS_{D} \hat{\bu}_{j}=\hat{\bu}_{j}^T(\bU_{1}+\bU_{2})\hat{\bu}_{j}$ when $\hat{\lambda}_{j}>0$. 
From Lemma 5 in \cite{Yata:2013b} we can claim that as $m_0 \to \infty$
\begin{equation}
\hat{\lambda}_{j}/\lambda_{j}-\delta_j=(\hat{\bu}_{j}^T\bU_{1}\hat{\bu}_{j})/\lambda_{j}+o_P(1)
\ \ \mbox{for $j=1,...,k$}. \notag
\end{equation}
Also, similar to the proofs of Lemmas 3 and 4 in \cite{Yata:2012}, we have that 
$\bu_j^T (\bU_{2}- \delta \bI_n)\bu_{j'}=O_P(\Psi_{(k+1)}^{1/2}/n)$ and 
$\bu_j^T (\bU_{2}-\delta \bI_n)\bzeta=O_P(\Psi_{(k+1)}^{1/2}/n^{1/2})$ for $j,j'=1,...,k$, 
where $\delta=\sum_{s=k+1}^p\lambda_s/(n-1)$.
Then, by noting that $\bu_{oj}^T\bu_{oj'}=O_P(n^{-1/2})\ (j\neq j')$ and $||\bu_{oj}||^2=||\bu_{j}||^2+O_P(n^{-1})=1+O_P(n^{-1/2})$ as $n\to \infty$, we can claim that 
\begin{align}
&\hat{\lambda}_{j}/\lambda_{j}=||\bu_{j}||^2+\delta_j+
O_P(n^{-1})=1+\delta_j+O_P(n^{-1/2})\notag \\
&\mbox{and} \quad \hat{\bu}_{j}^T\dot{\bu}_{j}=1+O_P(n^{-1})
\quad \mbox{for $j=1,...,k$;}
 \label{B.1} \\
&\hat{\bu}_{j'}^T{\bu}_{j}= O_P(n^{-1/2}\lambda_{j'}/\lambda_{j})
\quad \mbox{for $j<j'\le k$} \label{B.2}
\end{align}
in a way similar to the proof of Lemma 5 in \cite{Yata:2012} and the proof of Lemma 9 in \cite{Yata:2013b}.
By noting that $(\bX-\overline{\bX}) \hat{\bu}_{j}=(\bX-\bM)\hat{\bu}_{j}$ when $\hat{\lambda}_{j}>0$, we write that 
$$
({\bh}_{j}^T\hat{\bh}_{j})^2=
\{{\bh}_{j}^T(\bX-\overline{\bX}) \hat{\bu}_{j}\}^2/\{(n-1)\hat{\lambda}_{j}\}
=||\bu_{j}||^2(\hat{\bu}_{j}^T\dot{\bu}_{j})^2(\lambda_{j}/\hat{\lambda}_{j})
$$
when $\hat{\lambda}_{j}>0$. 
Thus, from (\ref{B.1}) we can conclude the results. 
\end{proof}
\begin{proof}[Proof of Proposition 3]
We can claim that as $m_0 \to \infty$
$$
\{\lambda_j (n-1-j)\}^{-1}\Big(\tr(\bS_D)-\sum_{l=1}^j\hat{\lambda}_l\Big)-\delta_j=O_P(n^{-1})\ \ \mbox{for $j=1,...,k$}
$$
in a way similar to the proof of Lemma 11 in \cite{Yata:2013b}. 
Then, it follows from (\ref{B.1}) that 
\begin{equation}
\tilde{\lambda}_{j}/\lambda_{j}=||\bu_{j}||^2+O_P(n^{-1})=1+O_P(n^{-1/2})
\quad \mbox{for $j=1,...,k$}. \label{B.3}
\end{equation}
Note that $({\bh}_{j}^T\tilde{\bh}_{j})^2=||\bu_{j}||^2(\hat{\bu}_{j}^T\dot{\bu}_{j})^2(\lambda_{j}/\tilde{\lambda}_{j})$.
Then, from (\ref{B.1}) and (\ref{B.3}) we can conclude the results.
\end{proof}
\begin{proof}[Proofs of Propositions 4 and 5]
First, we consider Proposition 4.
From (\ref{B.1}) there exists a unit random vector $\bep_{j}=(\varepsilon_{j1},...,\varepsilon_{jn})^T$ such that $\dot{\bu}_{j}^T\bep_{j}=0$ and 
\begin{align}
\hat{\bu}_{j}=\{1+O_P(n^{-1})\}\dot{\bu}_{j}+\bep_{j}\times O_P(n^{-1/2}) \ \ 
\mbox{for $j=1,...,k$} \label{B.4}
\end{align}
as $m_0 \to \infty$.
By noting that $\dot{\bu}_{j}={\bu}_{j}\{1+o_P(1)\}$ and ${\bu}_{j}^T{\bu}_{j'}=O_p(n^{-1/2})$ $(j\neq j')$ as $n\to \infty$, it follows from (\ref{B.4}) that
\begin{align}
\hat{\bu}_{j'}^T{\bu}_{j}= O_P(n^{-1/2})
\ \ \mbox{for $j'<j \le k$}.
\label{B.5}
\end{align}
Then, from (\ref{B.1}) to (\ref{B.3}) and (\ref{B.5}) it holds that for $j=1,...,k\ (l=1,...,n)$
\begin{align}
\frac{\tilde{\bh}_{j}^T\by_{l}}{{\lambda}_{j}^{1/2}}=
\frac{\hat{\bu}_{j}^T(\bX-\bM)^T\by_{l}}{\{(n-1)\tilde{\lambda}_{j}{\lambda}_{j} \}^{1/2}}
=\sum_{s=1}^{k}\frac{\lambda_{s} z_{sl}
\hat{\bu}_{j}^T{\bu}_{s}}{(\tilde{\lambda}_{j}\lambda_j)^{1/2}}=
z_{jl}+O_P(n^{-1/2}) \label{B.6}
\end{align}
because $z_{sl}=O_P(1)$ for $s=1,...,k.$ 
Let us write that 
$$
{\bu}_{j(l)}=(z_{j1},...,z_{jl-1},0,z_{jl+1},...,z_{jn})^T/(n-1)^{1/2}\ \ \mbox{for all $j,l$}. 
$$ 
We have that 
$E\{ (\sum_{s=k+1}^{p}\lambda_{s} z_{sl}{\bu}_{j(l)}^T{\bu}_{s(l)}/{\lambda}_{j})^2\}=O\{\Psi_{(k+1)}/(n{\lambda}_{j}^2)\}=O(n^{-1})$ 
and $E(||\sum_{s=k+1}^{p}$
$\lambda_{s} z_{sl}{\bu}_{s(l)}/{\lambda}_{j}||^2)=O(\Psi_{(k+1)}/{\lambda}_{j}^2)=O(1)$ for $j=1,...,k$. 
It follows that
\begin{align}
\sum_{s=k+1}^{p} \frac{\lambda_{s} z_{sl}{\bu}_{j(l)}^T{\bu}_{s(l)}}{\lambda_{j}}=O_P(n^{-1/2})
\ \ \mbox{and} \ \ \bzeta^T\sum_{s=k+1}^{p} \frac{\lambda_{s} z_{sl}{\bu}_{s(l)}}{{\lambda}_{j}}=O_P(1)
 \label{B.7}
\end{align}
from the fact that $|\bzeta^T\sum_{s=k+1}^{p}\lambda_{s} z_{sl}{\bu}_{s(l)}/{\lambda}_{j}|\le ||\bzeta||\cdot ||\sum_{s=k+1}^{p}\lambda_{s} z_{sl}{\bu}_{s(l)}/{\lambda}_{j} ||$ and Markov's inequality. 
Let $d_n=(n-1)/(n-2)$.
Here, from (\ref{B.4}) we write that for $j=1,...,k$
\begin{align}
d_n\hat{\bu}_{j(l)}=\{1+O_P(n^{-1})\}\bu_{j(l)}/||\bu_j||+\bep_{j(l)}\times O_P(n^{-1/2})+(n-2)^{-1}\hat{u}_{jl}\bone_{n(l)},
 \label{B.8}
\end{align}
where $\bep_{j(l)}=(\varepsilon_{j1},...,\varepsilon_{jl-1},0,\varepsilon_{jl+1},...,\varepsilon_{jn})^T$.
Note that $|| (n-2)^{-1}\hat{u}_{jl}\bone_{n(l)}||=O_P(n^{-1/2})$ since $|\hat{u}_{jl}|\le 1$. 
Then, it follows from (\ref{B.3}), (\ref{B.7}) and (\ref{B.8}) that for $j=1,...,k$
\begin{align}
\frac{\tilde{\bh}_{jl}^T\bv_{l}}{\lambda_j^{1/2}}=d_n
\frac{\hat{\bu}_{j(l)}^T(\bX-\bM)^T\bv_{l}}{\{(n-1)\tilde{\lambda}_{j}{\lambda}_{j} \}^{1/2}}
=d_n
\sum_{s=k+1}^{p}\frac{\lambda_{s} z_{sl}\hat{\bu}_{j(l)}^T{\bu}_{s(l)}}{(\tilde{\lambda}_{j}\lambda_j)^{1/2}}=O_P(n^{-1/2}).
\label{B.9}
\end{align}
We note that $\Var(\sum_{s=k+1}^{p}\lambda_{s} z_{sl}^2/\lambda_{j})=O(\Psi_{(k+1)}/\lambda_{j}^2)$, so that $(n-1)^{-1/2}$
$\sum_{s=k+1}^{p}\lambda_{s} z_{sl}^2/\lambda_{j}=(n-1)^{1/2}\delta_j+O_P(n^{-1/2})$ for $j=1,...,k$, 
because $E(\sum_{s=k+1}^{p}\lambda_{s} z_{sl}^2/\lambda_{j})=(n-1)\delta_j$. 
Then, it follows from (\ref{B.3}) and (\ref{B.7}) that for $j=1,...,k$
\begin{align}
\frac{(d_n\tilde{\bh}_{j}-\tilde{\bh}_{jl})^T\bv_{l}}{\lambda_j^{1/2}}&=d_n
\frac{(\hat{\bu}_{j}-\hat{\bu}_{j(l)})^T(\bX-\bM)^T\bv_{l}}{\{(n-1)\tilde{\lambda}_{j}{\lambda}_{j}\}^{1/2}} \notag \\
&=
d_n\hat{u}_{jl}\sum_{s=k+1}^{p}\frac{ \lambda_{s} z_{sl}^2
}{\{(n-1) \tilde{\lambda}_{j}\lambda_j)\}^{1/2}}
-\frac{\hat{u}_{jl}\bone_{n(l)}^T}{n-2} \sum_{s=k+1}^{p}\frac{\lambda_{s} z_{sl} {\bu}_{s(l)}}{(\tilde{\lambda}_{j}\lambda_j)^{1/2}}
\notag \\
&=d_n\hat{u}_{jl}(n-1)^{1/2}\delta_j\{1+o_P(1)\}+O_P(n^{-1/2}).
\label{B.10}
\end{align}
By combining (\ref{B.6}) and (\ref{B.9}) with (\ref{B.10}), we can conclude the result of $\tilde{\bh}_{j}$ in Proposition 4. 
As for $\hat{\bh}_{j}$, by noting that 
$\hat{\bh}_{j}=(\tilde{\lambda}_{j}/\hat{\lambda}_{j})^{1/2}\tilde{\bh}_{j}$, 
$||\bu_{j}||^2=1+O_P(n^{-1/2})$, (\ref{B.1}) and (\ref{B.3}), we can conclude the result. 

Next, we consider Proposition 5. 
From (\ref{B.3}) we have that for $j=1,...,k$
\begin{align}
\frac{(d_n\tilde{\bh}_{j}-\tilde{\bh}_{jl})^T\by_{l}}{\lambda_j^{1/2}}
&=d_n\hat{u}_{jl}\sum_{s=1}^{k}\frac{ \lambda_{s} z_{sl}^2
}{\{(n-1) \tilde{\lambda}_{j}\lambda_j)\}^{1/2}}
-\frac{\hat{u}_{jl}\bone_{n(l)}^T}{n-2} \sum_{s=1}^{k}\frac{\lambda_{s} z_{sl} {\bu}_{s(l)}}{(\tilde{\lambda}_{j}\lambda_j)^{1/2}} \notag \\
&
=
d_n\hat{u}_{jl}\times O_P\{(n^{1/2}\lambda_{j})^{-1} \lambda_{1}\}
 \label{B.11}
\end{align}
from the fact that $\bone_{n(l)}^T {\bu}_{s(l)}=O_P(1)$ and $z_{sl}=O_P(1)$, $s=1,...,k$. 
Then, by combining (\ref{B.6}) and (\ref{B.9}) with (\ref{B.11}), we can conclude the result. 
\end{proof}
\noindent
{\it {\bf Lemma B.1.} \ 
Assume (A-vi) and (A-viii). 
It holds for $j=1,...,k$ that as $m_0 \to \infty$}
$$
\sum_{l=1}^{n}\frac{\tilde{x}_{jl}-{x}_{jl}}{n}=O_P(\psi_{j}^{1/2}/n)\ \ 
\mbox{and}\ \ \sum_{l=1}^{n}\frac{(\tilde{x}_{jl}-{x}_{jl})^2}{n}=O_P(\psi_{j}/n).
$$
\begin{proof}
First, we consider the first result. 
Let $\bbbeta_{sj(l)}=\lambda_{s}z_{sl}{\bu}_{s(l)}/\lambda_{j}^{1/2}$, $\bxi_{sj(l)}=\lambda_{s}^{1/2} \mu_{(s)} {\bu}_{s(l)}/\lambda_{j}^{1/2}$ and $\bome_{sj(l)}=\bbbeta_{sj(l)}+\bxi_{sj(l)}$ for all $j,l,s$, where ${\bu}_{s(l)}$ is given in the proofs of Propositions 4 and 5. 
Then, we write that when $\hat{\lambda}_j>0$,
\begin{align}
\tilde{x}_{jl}=
d_n\frac{\hat{\bu}_{j(l)}^T(\bX-\bM)^T\bx_l}{\{(n-1)\tilde{\lambda}_{j} \}^{1/2}}
=d_n\frac{{\lambda}_{j}^{1/2} }{\tilde{\lambda}_{j}^{1/2}}\hat{\bu}_{j(l)}^T\sum_{s=1}^{p}\bome_{sj(l)},
\label{B.12}
\end{align}
where $d_n=(n-1)/(n-2)$.
Let $\be_{1}=(1,0,...,0)^T$,..., $\be_{n}=(0,...,0,1)^T$ be the standard basis vectors of dimension $n$. 
In view of (\ref{B.3}) and (\ref{B.8}), by noting that $||{\bu}_{j}||^2=1+O_P(n^{-1/2})$, $|| (n-2)^{-1}\hat{u}_{jl}\bone_{n(l)}||=O_P(n^{-1/2})$ as $n\to \infty$ and $\hat{\bu}_{j}-\hat{\bu}_{j(l)}=\hat{u}_{jl}\be_{l}-(n-1)^{-1}\hat{u}_{jl}\bone_{n(l)}$ for $l=1,...,n$, we have that as $m_0 \to \infty$
\begin{align}
d_n\hat{\bu}_{j(l)}({\lambda}_{j}/\tilde{\lambda}_{j})^{1/2}
=&{\bu}_{j(l)}/||{\bu}_{j}||^2+(n-2)^{-1}\hat{u}_{jl}\bone_{n(l)}+
(\bep_{j}-\varepsilon_{jl}\be_l )\times O_P(n^{-1/2}) \notag \\
&+\bzeta_{jl}\times O_P(n^{-1}) \quad
\mbox{for all $l$ and $j=1,...,k$}, \label{B.13}
\end{align}
where $\bep_{j}$ and $\varepsilon_{jl}$ are given in the proofs of Propositions 4 and 5 and $\bzeta_{jl}$ is a random unit vector depending on $j$ and $l$. 
Note that $O_P(n^{-1/2})$ and $O_P(n^{-1})$ in (\ref{B.13}) do not depend on $l$. 
In view of (A-viii), we have that for $j=1,...,k$
\begin{align}
E\Big\{\Big(\sum_{l=1}^n\bu_{j(l)}^T\sum_{s=1(\neq j)}^{p}\bbbeta_{sj(l)}\Big)^2 \Big\}=&\sum_{l\neq l'}^n\sum_{s,s'(\neq j)}^{p}\frac{\lambda_{s}\lambda_{s'}E(z_{jl}z_{sl}z_{s'l}z_{jl'}z_{sl'}z_{s'l'})}{(n-1)^2\lambda_{j}}\notag\\ 
&+O\{\tr(\bSig^2)/\lambda_{j}\}=O\{\tr(\bSig^2)/\lambda_{j}\}. \label{B.14}
\end{align}
On the other hand, we have that for $j=1,...,k$
\begin{align}
&E\Big\{\Big(\sum_{l=1}^n\bu_{j(l)}^T\sum_{s=1(\neq j)}^{p}\bxi_{sj(l)}\Big)^2 \Big\}=O\Big(n\sum_{s=1(\neq j)}^{p} \frac{ \lambda_{s} \mu_{(s)}^2 }{\lambda_{j}}\Big)
=O(n\bmu^T\bSig \bmu/\lambda_{j}).\label{B.15}
\end{align}
Then, by using Markov's inequality, it follows from (\ref{B.14}) and (\ref{B.15}) that
\begin{align}
\sum_{l=1}^n\bu_{j(l)}^T\sum_{s=1(\neq j)}^{p}{\bome_{sj(l)}}/{||{\bu}_{j}||^2}=O_P(\psi_j^{1/2}).
\label{B.16}
\end{align}
Also, we have that 
$E\{\sum_{l=1}^n (\bone_{n(l)}^T\sum_{s=1}^{p}\bome_{sj(l)})^2\}=O(n\psi_j)$, 
$E(\| \sum_{l=1}^n\sum_{s=1}^{p}$
$\bome_{sj(l)}\|^2)=O(n\psi_j)$ and 
$E(\sum_{l=1}^n \|\sum_{s=1}^{p}\bome_{sj(l)}\|^2)=O(n\psi_j)$ 
for $j=1,...,k$. 
Thus, it holds that
\begin{align}
&\Big|\sum_{l=1}^n \hat{u}_{jl}\bone_{n(l)}^T\sum_{s=1}^{p}\bome_{sj(l)}\Big| \le 
\Big(\sum_{l=1}^n\hat{u}_{jl}^2\Big)^{1/2}\Big\{ \sum_{l=1}^n\Big(\bone_{n(l)}^T\sum_{s=1}^{p}\bome_{sj(l)}\Big)^2 \Big\}^{1/2}
        =O_P(n^{1/2}\psi_j^{1/2}),\notag \\
&\Big|\bep_{j}^T\sum_{l=1}^n \sum_{s=1}^{p}\bome_{sj(l)}\Big| \le ||\bep_{j}|| \cdot 
\Big\|\sum_{l=1}^n\sum_{s=1}^{p}\bome_{sj(l)}\Big\|=O_P(n^{1/2}\psi_j^{1/2})\ \ \mbox{and} \notag \\
&\Big|\sum_{l=1}^n\bzeta_{jl}^T \sum_{s=1}^{p}\bome_{sj(l)}\Big|\le \Big(\sum_{l=1}^n ||\bzeta_{jl}||^2\Big)^{1/2}
\Big(\sum_{l=1}^n \Big\| \sum_{s=1}^{p}\bome_{sj(l)}\Big\|^2\Big)^{1/2}=O_P(n \psi_j^{1/2})
\label{B.17}
\end{align}
by using Markov's inequality and Schwarz's inequality.
Then, by noting that $\be_l^T\bome_{sj(l)}=0$ for all $l,s$, we have from (\ref{B.12}), (\ref{B.13}), (\ref{B.16}) and (\ref{B.17}) that for $j=1,...,k$
\begin{align}
\sum_{l=1}^n\frac{\tilde{x}_{jl}-x_{jl}}{n}&=\sum_{l=1}^n\frac{
x_{jl}}{n} \Big(
\frac{||{\bu}_{j(l)}||^2-||{\bu}_{j}||^2}{||{\bu}_{j}||^2}\Big)+
O_P(n^{-1}\psi_j^{1/2})
\notag \\
&=-\sum_{l=1}^n\frac{
 x_{jl}z_{jl}^2}{n(n-1)||{\bu}_{j}||^2 }+
O_P(n^{-1}\psi_j^{1/2})=O_P(n^{-1}\psi_j^{1/2})
\label{B.18}
\end{align}
because it holds that $|\sum_{l=1}^nx_{jl}z_{jl}^2|\le (\sum_{l=1}^nx_{jl}^2  \sum_{l'=1}^nz_{jl'}^4)^{1/2}$, 
$E(\sum_{l=1}^nx_{jl}^2)=n(\lambda_j+\mu_{(j)}^2)$, 
$E(\sum_{l=1}^nz_{jl}^4)=O(n)$, $\lambda_j\le \tr(\bSig^2)/\lambda_j$ and $\mu_{(j)}^2\le \bmu^T\bSig\bmu/\lambda_j$. 
Thus, we can conclude the first result.

Next, we consider the second result. 
From (\ref{B.12}) and (\ref{B.13}) we have that 
\begin{align}
\sum_{l=1}^n\frac{(\tilde{x}_{jl}-x_{jl})^2}{n}=&O_P\Big(\sum_{l=1}^n\frac{x_{jl}^2z_{jl}^4}{n^3}\Big)
+O_P\Big\{\sum_{l=1}^n \Big(\bu_{j(l)}^T\sum_{s=1(\neq j)}^{p}\bome_{sj(l)}\Big)^2/n  \Big\} \notag \\
&+O_P\Big\{\sum_{l=1}^n\hat{u}_{jl}^2 \Big(\bone_{n(l)}^T\sum_{s=1}^{p}\bome_{sj(l)}\Big)^2 /n^3 \Big\} \notag \\
&+O_P\Big\{\bep_{j}^T \sum_{l=1}^n\Big(\sum_{s=1}^{p}\bome_{sj(l)}\Big)\Big(\sum_{s=1}^{p}\bome_{sj(l)}\Big)^T\bep_{j}/n^2 \Big\} \notag \\
&+O_P\Big\{\sum_{l=1}^n \bzeta_{jl}^T\Big(\sum_{s=1}^{p}\bome_{sj(l)}\Big)\Big(\sum_{s=1}^{p}\bome_{sj(l)}\Big)^T\bzeta_{jl}/n^3 \Big\}. \label{B.19}
\end{align}
By using Markov's inequality, for any $\tau >0$, it holds that $P(\sum_{l=1}^nx_{jl}^2 \ge \tau n \psi_j)=O(\tau^{-1})$ and $\sum_{l=1}^n P(z_{jl}^4 \ge \tau n )=O(\tau^{-1})$ for $j=1,...,k$, so that 
\begin{equation}
\sum_{l=1}^nx_{jl}^2z_{jl}^4=O_P\big(n \psi_j \max_{l=1,...,n}z_{jl}^4\big)=O_P(n^2 \psi_j).
\label{B.20}
\end{equation}
We have that for $j=1,...,k$
\begin{align}
\sum_{l=1}^n\Big(\bu_{j(l)}^T\sum_{s=1(\neq j)}^{p}\bome_{sj(l)}\Big)^2&=O_P(\psi_j), \notag \\
\sum_{l=1}^n\hat{u}_{jl}^2 \Big(\bone_{n(l)}^T\sum_{s=1}^{p}\bome_{sj(l)}\Big)^2&\le \sum_{l=1}^n\Big(\bone_{n(l)}^T\sum_{s=1}^{p}\bome_{sj(l)}\Big)^2=O_P(n\psi_j), \notag
\\
\bep_{j}^T \sum_{l=1}^n\Big(\sum_{s=1}^{p}\bome_{sj(l)}\Big)\Big(\sum_{s=1}^{p}\bome_{sj(l)}\Big)^T\bep_{j}
&\le \sum_{l=1}^n \Big\| \sum_{s=1}^{p}\bome_{sj(l)} \Big\|^2=O_P(n\psi_j)\ \ \mbox{and} \notag \\
\sum_{l=1}^n \bzeta_{jl}^T\Big(\sum_{s=1}^{p}\bome_{sj(l)}\Big)\Big(\sum_{s=1}^{p}\bome_{sj(l)}\Big)^T\bzeta_{jl}
&\le \sum_{l=1}^n \Big\| \sum_{s=1}^{p}\bome_{sj(l)} \Big\|^2=O_P(n \psi_j) \label{B.21}
\end{align}
because it holds that $E\{\sum_{l=1}^n(\bu_{j(l)}^T\sum_{s=1(\neq j)}^{p}\bome_{sj(l)})^2\}=O(\psi_j)$, $E\{\sum_{l=1}^n(\bone_{n(l)}^T\sum_{s=1}^{p}\bome_{sj(l)})^2\}=O(n\psi_j)$,
$E(\sum_{l=1}^n \| \sum_{s=1}^{p}\bome_{sj(l)}\|^2)=O(n\psi_j)$ and $\hat{u}_{jl}^2\le 1$ for all $l$. 
Then, by combining (\ref{B.20}) and (\ref{B.21}) with (\ref{B.19}), we can conclude the second result. 
\end{proof}
\noindent
{\it {\bf Lemma B.2.} \ 
Assume (A-vi) and (A-viii). 
It holds that as $m \to \infty$ 
\begin{align*}
&\tilde{\bh}_{1j}^T\bh_{2j'}=h_{jj'}+O_P(n_1^{-1/2})\ \ \mbox{and} \ \ 
{\bh}_{1j}^T\tilde{\bh}_{2j'}=h_{jj'}+O_P(n_2^{-1/2}); \\
&\tilde{\bh}_{1j}^T\tilde{\bh}_{2j'}-h_{jj'}=\tilde{\bh}_{1j}^T\bh_{2j'}-h_{jj'}+
{\bh}_{1j}^T\tilde{\bh}_{2j'}-h_{jj'}+O_P\{(n_1 n_2)^{-1/2}\}
\end{align*}
for $j=1,...,k_1$ and $j'=1,...,k_2$.
}
\begin{proof}
First, we consider the first result. 
We note that $(\bX_i-\overline{\bX}_i) \hat{\bu}_{ij}=(\bX_i-\bM_i)\hat{\bu}_{ij}$ when $\hat{\lambda}_{ij}>0$. 
From (\ref{B.1}) to (\ref{B.5}) we have that as $m\to \infty$ 
\begin{align}
\tilde{\bh}_{1j}^T\bh_{2j'}&=\frac{\hat{\bu}_{1j}^T(\bX_1-\bM_1)^T\bh_{2j'}}{\{(n_i-1)\tilde{\lambda}_{1j}\}^{1/2}}=
\frac{\hat{\bu}_{1j}^T\sum_{s=1}^p\lambda_{1s}^{1/2}h_{sj'} \bu_{1s}}{\tilde{\lambda}_{1j}^{1/2}}\notag \\
&=h_{jj'}+
\frac{\hat{\bu}_{1j}^T\sum_{s=k_1+1}^p\lambda_{1s}^{1/2}h_{sj'} \bu_{1s}}{{\lambda}_{1j}^{1/2}\{1+o_P(1)\} }+O_P(n_1^{-1/2})
\label{B.22}
\end{align}
for $j=1,...,k_1$ and $j'=1,...,k_2$. 
It holds that for $j=1,...,k_1$ and $j'=1,...,k_2$
\begin{align}
&E\Big\{\Big(\bu_{1j}^T\sum_{s=k_1+1}^p\lambda_{1s}^{1/2}h_{sj'} \bu_{1s}\Big)^2\Big\} 
=O\Big(\sum_{s=k_1+1}^p \lambda_{1s}h_{sj'}^2/n_1 \Big)=O(\lambda_{1k_1+1}/n_1); \notag \\
&E\Big(\Big\|\sum_{s=k_1+1}^p\lambda_{1s}^{1/2}h_{sj'} \bu_{1s}\Big\|^2\Big)=O\Big(\sum_{s=k_1+1}^p \lambda_{1s}h_{sj'}^2\Big)=O(\lambda_{1k_1+1}) \notag 
\end{align}
because $\sum_{s=k_1+1}^ph_{sj'}^2 \le 1$. 
Then, by using Markov's inequality, it follows from (\ref{B.4}) that for $j=1,...,k_1$ and $j'=1,...,k_2$
\begin{align}
\frac{\hat{\bu}_{1j}^T\sum_{s=k_1+1}^p\lambda_{1s}^{1/2}h_{sj'} \bu_{1s}}{\lambda_{1j}^{1/2}}
=O_P\{n_1^{-1/2}(\lambda_{1k_1+1}/\lambda_{1j})^{1/2}\}.
\label{B.23}
\end{align}
Thus, by combining (\ref{B.22}) with (\ref{B.23}), we can conclude the result for $\tilde{\bh}_{1j}^T{\bh}_{2j'}$. 
As for ${\bh}_{1j}^T\tilde{\bh}_{2j'}$, we obtain the result similarly. 

Next, we consider the second result. 
From (\ref{B.2}), (\ref{B.5}) and (\ref{B.23}) we have that for $j\neq l=1,...,k_1$ and $j' \neq  l'=1,...,k_2$
\begin{align}
\frac{\hat{\bu}_{1j}^T(\sum_{s=k_1+1}^p\lambda_{1s}^{1/2}\lambda_{2l'}^{1/2}h_{sl'} \bu_{1s}  \bu_{2l'}^T)\hat{\bu}_{2j'}
}{\lambda_{1j}^{1/2}\lambda_{2j'}^{1/2}}&=O_P\Big(\frac{\hat{\bu}_{1j}^T\sum_{s=k_1+1}^p\lambda_{1s}^{1/2} h_{sl'} \bu_{1s} 
}{n_2^{1/2} \lambda_{1j}^{1/2}} \Big) \notag \\
&=O_P[\{\lambda_{1k_1+1}/(n_1n_2 \lambda_{1j})\}^{1/2}] \ \ \mbox{and} \notag \\
\frac{\hat{\bu}_{1j}^T(\lambda_{1l}^{1/2} \bu_{1l}\sum_{s'=k_2+1}^p \lambda_{2s'}^{1/2}h_{ls'}  \bu_{2s'}^T)\hat{\bu}_{2j'}
}{\lambda_{1j}^{1/2}\lambda_{2j'}^{1/2}}&=O_P[\{\lambda_{2k_2+1}/(n_1n_2 \lambda_{2j'})\}^{1/2}].
\label{B.24}
\end{align}
From (\ref{B.1}), (\ref{B.3}) and (\ref{B.23}) we have that for $j=1,...,k_1$ and $j'=1,...,k_2$
\begin{align}
&\frac{\hat{\bu}_{1j}^T(\sum_{s=k_1+1}^p\lambda_{1s}^{1/2}\lambda_{2j'}^{1/2}h_{sj'} \bu_{1s}  \bu_{2j'}^T)\hat{\bu}_{2j'}
}{\tilde{\lambda}_{1j}^{1/2}\tilde{\lambda}_{2j'}^{1/2}}=\frac{\hat{\bu}_{1j}^T\sum_{s=k_1+1}^p\lambda_{1s}^{1/2}h_{sj'} \bu_{1s} 
}{\tilde{\lambda}_{1j}^{1/2}}\{1+O_P(n_2^{-1})\} \notag \\
&=
\frac{\hat{\bu}_{1j}^T\sum_{s=k_1+1}^p\lambda_{1s}^{1/2}h_{sj'} \bu_{1s} 
}{\tilde{\lambda}_{1j}^{1/2}}+O_P\{(n_1n_2)^{-1/2}\}
\quad\mbox{and} \notag \\
&\frac{\hat{\bu}_{1j}^T(\lambda_{1j}^{1/2} \bu_{1j}\sum_{s'=k_2+1}^p \lambda_{2s'}^{1/2}h_{js'}  \bu_{2s'}^T)\hat{\bu}_{2j'}
}{\tilde{\lambda}_{1j}^{1/2}\tilde{\lambda}_{2j'}^{1/2}}=\frac{\sum_{s'=k_2+1}^p\lambda_{2s'}^{1/2}h_{js'} \bu_{2s'}^T \hat{\bu}_{2j'}
}{\tilde{\lambda}_{2j}^{1/2}}+O_P\{(n_1n_2)^{-1/2}\}. \label{B.25}
\end{align}
It holds that for $j=1,...,k_1$ and $j'=1,...,k_2$
\begin{align}
&E\Big\{ \Big(\sum_{s=k_1+1}^p\sum_{s'=k_2+1}^p\lambda_{1s}^{1/2}\lambda_{2s'}^{1/2} h_{ss'} {\bu}_{1j}^T \bu_{1s}\bu_{2s'}^T{\bu}_{2j'}\Big)^2\Big\}=O\Big(\frac{\tr(\bSig_{1*}\bSig_{2*})}{n_1n_2} \Big);
 \notag \\
&E\Big( \Big\|\sum_{s=k_1+1}^p\sum_{s'=k_2+1}^p\lambda_{1s}^{1/2}\lambda_{2s'}^{1/2} h_{ss'} {\bu}_{1j}^T \bu_{1s}\bu_{2s'} \Big\|^2 \Big) =O\{\tr(\bSig_{1*}\bSig_{2*})/n_1\}; \notag \\ 
&E\Big( \Big\|\sum_{s=k_1+1}^p\sum_{s'=k_2+1}^p\lambda_{1s}^{1/2}\lambda_{2s'}^{1/2} h_{ss'} \bu_{1s}\bu_{2s'}^T{\bu}_{2j'}\Big\|^2 \Big) =O\{\tr(\bSig_{1*}\bSig_{2*})/n_2\}; \quad\mbox{and}\notag \\
&E\Big( \Big\|\sum_{s=k_1+1}^p\sum_{s'=k_2+1}^p\lambda_{1s}^{1/2}\lambda_{2s'}^{1/2} h_{ss'} \bu_{1s}\bu_{2s'}^T\Big\|_F^2 \Big) =O\{\tr(\bSig_{1*}\bSig_{2*})\}, \notag  
\end{align}
where  $||\cdot ||_F$ is the Frobenius norm. 
Then, by noting that $\tr(\bSig_{1*}\bSig_{2*})\le \{\tr(\bSig_{1*}^2)\tr(\bSig_{2*}^2)\}^{1/2}$ and 
$|\bzeta_1^T (\sum_{s=k_1+1}^p\sum_{s'=k_2+1}^p$
$\lambda_{1s}^{1/2}\lambda_{2s'}^{1/2} h_{ss'} \bu_{1s}\bu_{2s'}^T) \bzeta_2|\le || \sum_{s=k_1+1}^p\sum_{s'=k_2+1}^p\lambda_{1s}^{1/2}\lambda_{2s'}^{1/2} h_{ss'} \bu_{1s}\bu_{2s'}^T||_F$, 
it follows from (\ref{B.4}) that for $j=1,...,k_1$ and $j'=1,...,k_2$ 
\begin{align}
\frac{\hat{\bu}_{1j}^T(\sum_{s=k_1+1}^p\sum_{s'=k_2+1}^p\lambda_{1s}^{1/2}\lambda_{2s'}^{1/2} h_{ss'} \bu_{1s}\bu_{2s'}^T)\hat{\bu}_{2j'} }{{\lambda}_{1j}^{1/2}{\lambda}_{2j'}^{1/2}}
=O_P\Big\{ \Big(\frac{\tr(\bSig_{1*}\bSig_{2*})}{n_1n_2\lambda_{1j}\lambda_{2j'} } \Big)^{1/2}\Big\}
=O_P\{(n_1n_2)^{-1/2}\}.
\label{B.26}
\end{align}
Then, from (\ref{B.1}) to (\ref{B.5}), (\ref{B.24}), (\ref{B.25}) and (\ref{B.26}) we have that for $j=1,...,k_1$ and $j'=1,...,k_2$
\begin{align}
\tilde{\bh}_{1j}^T\tilde{\bh}_{2j'}=&
\frac{\hat{\bu}_{1j}^T(\sum_{s,s'}^p\lambda_{1s}^{1/2}\lambda_{2s'}^{1/2} h_{ss'} \bu_{1s}\bu_{2s'}^T)\hat{\bu}_{2j'} }{\tilde{\lambda}_{1j}^{1/2}\tilde{\lambda}_{2j'}^{1/2}} \notag \\ 
=& \frac{\hat{\bu}_{1j}^T(\sum_{s=1}^{k_1}\sum_{s'=1}^{k_2} \lambda_{1s}^{1/2}\lambda_{2s'}^{1/2} h_{ss'} \bu_{1s}\bu_{2s'}^T)\hat{\bu}_{2j'} }{\tilde{\lambda}_{1j}^{1/2}\tilde{\lambda}_{2j'}^{1/2}}+O_P\{(n_1n_2)^{-1/2}\} \notag \\ 
&+\frac{\hat{\bu}_{1j}^T\sum_{s=k_1+1}^p\lambda_{1s}^{1/2}h_{sj'} \bu_{1s}
}{\tilde{\lambda}_{1j}^{1/2}}
+\frac{\sum_{s'=k_2+1}^p\lambda_{2s'}^{1/2} h_{js'} \bu_{2s'}^T\hat{\bu}_{2j'}
}{\tilde{\lambda}_{2j'}^{1/2}} \notag \\
=&h_{jj'}\Big(\frac{\lambda_{1j}^{1/2}\lambda_{2j'}^{1/2}   \hat{\bu}_{1j}^T \bu_{1j} \bu_{2j'}^T\hat{\bu}_{2j'} }{\tilde{\lambda}_{1j}^{1/2}\tilde{\lambda}_{2j'}^{1/2}}
-\frac{\lambda_{1j}^{1/2}   \hat{\bu}_{1j}^T \bu_{1j} }{\tilde{\lambda}_{1j}^{1/2}}
-\frac{\lambda_{2j'}^{1/2}   \hat{\bu}_{2j'}^T \bu_{2j'} }{\tilde{\lambda}_{2j'}^{1/2}} 
\Big) \notag \\
&+\tilde{\bh}_{1j}^T{\bh}_{2j'}+{\bh}_{1j}^T\tilde{\bh}_{2j'}+O_P\{(n_1n_2)^{-1/2}\} \notag \\
=&h_{jj'}\Big(\frac{\lambda_{1j}^{1/2}   \hat{\bu}_{1j}^T \bu_{1j} }{\tilde{\lambda}_{1j}^{1/2}}-1\Big)\Big(
\frac{\lambda_{2j'}^{1/2}   \hat{\bu}_{2j'}^T \bu_{2j'} }{\tilde{\lambda}_{2j'}^{1/2}}-1\Big) +\tilde{\bh}_{1j}^T{\bh}_{2j'}+{\bh}_{1j}^T\tilde{\bh}_{2j'}-h_{jj'}+O_P\{(n_1n_2)^{-1/2}\} \notag \\
=&\tilde{\bh}_{1j}^T{\bh}_{2j'}+{\bh}_{1j}^T\tilde{\bh}_{2j'}-h_{jj'}+O_P\{(n_1n_2)^{-1/2}\}
\notag
\end{align}
from the facts that 
$\tilde{\bh}_{1j}^T\bh_{2j'}=\hat{\bu}_{1j}^T\sum_{s=1}^p\lambda_{1s}^{1/2}h_{sj'} \bu_{1s}/\tilde{\lambda}_{1j}^{1/2}$ 
and 
${\bh}_{1j}^T \tilde{\bh}_{2j'}=\sum_{s'=1}^p\lambda_{2s'}^{1/2} h_{js'} \bu_{2s'}^T\hat{\bu}_{2j'}/\tilde{\lambda}_{2j'}^{1/2}$.
It concludes the second result. 
\end{proof}
\begin{proof}[Proof of Theorem 5]
We assume (A-ix) and (A-x).
Let $\bar{x}_{ij}=\sum_{l=1}^{n_i}x_{ijl}/n_i$ for $i=1,2;\ j=1,...,k_i$. 
For $i=1,2$ and $j=1,...,k_i$, we have that as $m\to \infty$
\begin{equation}
\bar{x}_{ij}-\mu_{i(j)}=O_P(\lambda_{ij}^{1/2}/n_i^{1/2}) \ \ 
\mbox{and}\ \ \sum_{l=1}^n\frac{(x_{ijl}-\mu_{i(j)})^2}{n_i}=O_P(\lambda_{ij})
\label{B.27}
\end{equation}
from the facts that $\Var(\sum_{l=1}^n x_{ijl}/n_i)=O(\lambda_{ij}/n_i)$ and $P(\sum_{l=1}^n(x_{ijl}-\mu_{i(j)})^2 \ge \tau n_i \lambda_{ij})=O(\tau^{-1})$ for any $\tau >0$. 
Let $\bar{x}_{ij\star}=\sum_{l=1}^{n_i}(\tilde{x}_{ijl}-x_{ijl})/n_i$ for $i=1,2;\ j=1,...,k_i$. 
Note that for $i=1,2;\ j=1,...,k_i$
$$
\frac{\psi_{ij}}{n_i^2K_{1*}^{1/2}}=O\Big(\frac{\lambda_{i1}^2}{n_i\tr(\bSig_{i*}^2)}+\frac{\bmu_{i*}^T\bSig_{i*}\bmu_{i*}+\sum_{s=1}^{k_i}\lambda_{is}\mu_{i(s)}^2}{\tr(\bSig_{i*}^2)}\Big)+o(1)\to 0
$$
from the facts that $\tr(\bSig_i^2)\le k_i \lambda_{i1}^2+\tr(\bSig_{i*}^2)$, 
$\tr(\bSig_{i*}^2)^{1/2}/(n_iK_{1*}^{1/2})=O(1)$, 
$\bmu_{i}^T\bSig_i\bmu_{i}=\bmu_{i*}^T\bSig_{i*}\bmu_{i*}+\sum_{s=1}^{k_i}\lambda_{is}\mu_{i(s)}^2$ 
and $\mu_{i(s)}^2=O(\lambda_{is}/n_i)$ for $s=1,...,k_i$.
Also, note that $\psi_{ij}^{1/2}\lambda_{ij}^{1/2}/(n_i^{3/2} K_{1*}^{1/2})\to 0$ for $i=1,2;\ j=1,...,k_i$. 
Then, with the help of Lemma B.1, we have that for $i=1,2;\ j=1,...,k_i$
\begin{align}
&2\sum_{l<l'}^{n_i}\frac{\tilde{x}_{ijl}\tilde{x}_{ijl'}-x_{ijl}x_{ijl'}}{n_i^2} =\sum_{l,l'}^{n_i}\frac{\tilde{x}_{ijl}\tilde{x}_{ijl'}-x_{ijl}x_{ijl'}}{n_i^2}
-\sum_{l=1}^{n_i}\frac{\tilde{x}_{ijl}^2-x_{ijl}^2}{n_i^2} \notag \\
&=\bar{x}_{ij\star} \sum_{l=1}^{n_i}\frac{\tilde{x}_{ijl}+x_{ijl}}{n_i}-\sum_{l=1}^{n_i}\frac{(\tilde{x}_{ijl}+x_{ijl})(\tilde{x}_{ijl}-x_{ijl})}{n_i^2}
\notag \\
&=\sum_{l=1}^{n_i} \frac{(\tilde{x}_{ijl}-x_{ijl})+2(x_{ijl}-\mu_{i(j)})+2\mu_{i(j)}}{n_i} \Big( \bar{x}_{ij\star}-\frac{\tilde{x}_{ijl}-x_{ijl}}{n_i}\Big)\notag \\
&=
O_P\{(\psi_{ij}^{1/2}/n_i)(\psi_{ij}^{1/2}/n_i+\lambda_{ij}^{1/2}/n_i^{1/2}+ \mu_{i(j)})\}=o_P(K_{1*}^{1/2})
\label{B.28}
\end{align}
from the fact that $\sum_{l=1}^{n_i}|(x_{ijl}-\mu_{i(j)})(\tilde{x}_{ijl}-x_{ijl})| \le \{\sum_{l=1}^{n_i}(x_{ijl}-\mu_{i(j)})^2\}^{1/2} \{\sum_{l=1}^{n_i}$
$(\tilde{x}_{ijl}-x_{ijl})^2\}^{1/2}$. 
From Lemma B.2 it holds that for $j=1,...,k_1$ and $j'=1,...,k_2$
\begin{align}
&\tilde{\bh}_{1j}^T\tilde{\bh}_{2j'}=h_{jj'}+O_P(n_{\min}^{-1/2}),\ \ \tilde{\bh}_{1j}^T(\tilde{\bh}_{2j'}-\bh_{2j'})=O_P(n_2^{-1/2}),\notag \\
&\tilde{\bh}_{2j'}^T(\tilde{\bh}_{1j}-\bh_{1j})=O_P(n_1^{-1/2})\ \ \mbox{and} \ \ (\tilde{\bh}_{1j}-{\bh}_{1j})^T(\tilde{\bh}_{2j'}-\bh_{2j'})=O_P\{(n_1n_2)^{-1/2}\}. \label{B.29}
\end{align}
Then, it follows from Lemma B.1, (\ref{B.27}) and (\ref{B.29}) that for $j=1,...,k_1$ and $j'=1,...,k_2$ 
\begin{align}
&\frac{\sum_{l=1}^{n_1}
(\tilde{x}_{1jl}\tilde{\bh}_{1j}-x_{1jl}\bh_{1j})^T\sum_{l'=1}^{n_2}
(\tilde{x}_{2j'l'}\tilde{\bh}_{2j'}-x_{2j'l'}\bh_{2j'})}{n_1n_2} \notag \\
&=\{\bar{x}_{1j\star}\tilde{\bh}_{1j}+\bar{x}_{1j}(\tilde{\bh}_{1j}-\bh_{1j})\}^T
\{\bar{x}_{2j'\star}\tilde{\bh}_{2j}+\bar{x}_{2j'}(\tilde{\bh}_{2j'}-\bh_{2j'})\}\notag \\
&=O_P\{(\psi_{1j}^{1/2}\psi_{2j'}^{1/2}/(n_1n_2)\}+O_P\{(\psi_{1j}^{1/2}/n_1)(\lambda_{2j'}^{1/2}/n_{2}^{1/2}+ \mu_{2(j')})/n_{2}^{1/2}\}
\notag \\ 
&\quad +O_P\{(\psi_{2j'}^{1/2}/n_2)(\lambda_{1j}^{1/2}/n_{1}^{1/2}+ \mu_{1(j)})/n_{1}^{1/2}\} \notag \\
&\quad +O_P\{(\lambda_{1j}^{1/2}/n_{1}^{1/2}+\mu_{1(j)})(\lambda_{2j'}^{1/2}/n_{2}^{1/2}+\mu_{2(j')})/(n_{1}n_2)^{1/2}\}\notag \\
&=o_P(K_{1*}^{1/2})
\label{B.30}
\end{align}
from the fact that $\lambda_{ij}/(n_{i}^2 K_{1*}^{1/2})=O\{\lambda_{ij}/(n_i\tr(\bSig_{i*}^2)^{1/2})\}=o(1)$ for $i=1,2;\ j=1,...,k_i$.
Note that $\bmu_{i*}=\bA_{i(k_i)}\bmu_i=\sum_{s=k_{i}+1}^p\mu_{i(s)}\bh_{is}$ for $i=1,2$. 
We write that when $\hat{\lambda}_{1j}>0$,
\begin{align}
\tilde{\bh}_{1j}^T\Big(\sum_{l=1}^{n_2}\frac{\bv_{2l}}{n_2}+\bmu_{2*} \Big)&=\frac{\hat{\bu}_{1j}^T(\bX_1-\bM_1)(\sum_{l=1}^{n_2}\bv_{2l}/n_2+\bmu_{2*}) }{(n_1-1)^{1/2}\tilde{\lambda}_{1j}^{1/2} }\notag \\
&=\hat{\bu}_{1j}^T\sum_{s=1}^{p}\sum_{s'=k_2+1}^{p}
\frac{ \lambda_{1s}^{1/2} h_{ss'} \bu_{1s}(\lambda_{2s'}^{1/2}\bar{z}_{2s'}+\mu_{2(s')}) }{\tilde{\lambda}_{1j}^{1/2}}. \label{B.31}
\end{align}
It holds that 
\begin{align}
&E\Big\{\Big( \bu_{1j}^T \sum_{s=k_1+1}^{p}\sum_{s'=k_2+1}^{p} \lambda_{1s}^{1/2} h_{ss'} \bu_{1s}(\lambda_{2s'}^{1/2}\bar{z}_{2s'}+\mu_{2(s')})\Big)^2
\Big\} \notag \\
&=O\Big\{(\sum_{s= k_1+1}^p\lambda_{1s}\bh_{1s}^T(\bSig_{2*}/n_2+\bmu_{2*}\bmu_{2*}^T) \bh_{1s} \Big)/n_1\Big\}\notag \\
&=O\{\tr(\bSig_{1*}\bSig_{2*})/(n_1n_2)+ \bmu_{2*}^T \bSig_{1*}\bmu_{2*}/n_1\} \ \ \mbox{for $j=1,...,k_1$}; \notag \\
&E\Big(\Big\|\sum_{s=k_1+1}^{p}\sum_{s'=k_2+1}^{p} \lambda_{1s}^{1/2} h_{ss'} \bu_{1s} (\lambda_{2s'}^{1/2}\bar{z}_{2s'}+\mu_{2(s')} )\Big\|^2 \notag
\Big)\\
&=O\{\tr(\bSig_{1*}\bSig_{2*})/n_2+\bmu_{2*}^T \bSig_{1*}\bmu_{2*}\}; \quad\mbox{and}\notag \\
&E\Big\{ \Big(\sum_{s'=k_2+1}^{p} h_{js'} (\lambda_{2s'}^{1/2}\bar{z}_{2s'}+\mu_{2(s')})\Big)^2\Big\}\notag \\
&=O(\bh_{1j}^T(\bSig_{2*}/n_2+\bmu_{2*}\bmu_{2*}^T) \bh_{1j} )=O\{\lambda_{2k_2+1}/n_2+(\bh_{1j}^T\bmu_{2*})^2\}\ \ \mbox{for $j=1,...,k_1$}.
\notag
\end{align}
In view of (\ref{B.1}) to (\ref{B.5}) and (\ref{B.31}), by using Markov's inequality, we have that for $j=1,...,k_1$
\begin{align}
\tilde{\bh}_{1j}^T\Big(\sum_{l=1}^{n_2}\frac{\bv_{2l}}{n_2}+\bmu_{2*}\Big)=&\sum_{s'=k_2+1}^{p}
 h_{js'} (\lambda_{2s'}^{1/2}\bar{z}_{2s'}+\mu_{2(s')} ) \notag \\
&+
O_P[\{\tr(\bSig_{1*}\bSig_{2*})/(\lambda_{1j} n_1n_2)\}^{1/2}+\lambda_{2k_2+1}^{1/2}/(n_1n_2)^{1/2}] \notag \\
&+
O_P\Big[\Big\{\bmu_{2*}^T \bSig_{1*}\bmu_{2*}/(\lambda_{1j} n_1)+\sum_{j'=1}^{k_1}(\bh_{1j'}^T\bmu_{2*})^2/n_1\Big\}^{1/2}\Big]. 
\label{B.32}
\end{align}
Note that ${\bh}_{1j}^T(\sum_{l=1}^{n_2}\bv_{2l}/n_2+\bmu_{2*})=\sum_{s'=k_2+1}^{p} h_{js'} (\lambda_{2s'}^{1/2}\bar{z}_{2s'}+\mu_{2(s')})$ and $\sum_{s'=k_2+1}^{p} h_{js'} (\lambda_{2s'}^{1/2}\bar{z}_{2s'}+\mu_{2(s')})=O_P(\lambda_{2k_2+1}^{1/2}/n_2^{1/2}+\bh_{1j}^T\bmu_{2*})$.
Also, note that $\lambda_{ik_i+1}=o\{\tr(\bSig_{i*}^2)^{1/2}\}$ for $i=1,2$. 
Then, it follows from Lemma B.1, (\ref{B.27}) and (\ref{B.32}) that for $j=1,...,k_1$
\begin{equation}
\{\bar{x}_{1j\star}\tilde{\bh}_{1j}+\bar{x}_{1j}(\tilde{\bh}_{1j}-\bh_{1j})\}^T\Big(\sum_{l=1}^{n_2}\frac{\bv_{2l}}{n_2}+\bmu_{2*}\Big)=o_P(K_{1*}^{1/2}). \label{B.33}
\end{equation}
Similarly, it follows that for $j=1,...,k_2$
\begin{equation}
\Big(\sum_{l=1}^{n_1}\frac{\bv_{1l}}{n_1}+\bmu_{1*} \Big)^T \{\bar{x}_{2j\star}\tilde{\bh}_{2j}+\bar{x}_{2j}(\tilde{\bh}_{2j}-\bh_{2j})\}=o_P(K_{1*}^{1/2}).
\label{B.34}
\end{equation}
In view of (\ref{B.30}), (\ref{B.33}) and (\ref{B.34}), we have that 
\begin{align}
&\frac{\sum_{l=1}^{n_1}
( \bx_{1l}-\sum_{j=1}^{k_1}\tilde{x}_{1jl}\tilde{\bh}_{1j})^T\sum_{l'=1}^{n_2}
( \bx_{2l'}-\sum_{j'=1}^{k_2}\tilde{x}_{2j'l'}\tilde{\bh}_{2j'})}{n_1n_2}\notag \\
&=
\frac{\sum_{l=1}^{n_1}
(\bv_{1l}+\bmu_{1*} )^T\sum_{l'=1}^{n_2}
(\bv_{2l'}+\bmu_{2*})}{n_1n_2}+o_P(K_{1*}^{1/2}). \label{B.35}
\end{align}
Then, by combining (\ref{B.28}) with (\ref{B.35}), we have that
$
\widehat{T}_*-T_{*}=o_P(K_{1*}^{1/2}),
$
so that from Corollary 3,
$(\widehat{T}_*-\Delta_*)/K_{*}^{1/2}\Rightarrow  N(0,1)$ under $ \limsup_{m\to \infty}\Delta_*^2/K_{1*}<\infty$. 
It concludes the results.  
\end{proof}
\begin{proof}[Proof of Lemma 3]
We assume (A-ix).
Let $\bS_{i(yy)}=(n_i-1)^{-1}\sum_{j=1}^{n_i}(\by_{ij}-\overline{\by}_i)(\by_{ij}-\overline{\by}_i)^T$, 
$\bS_{i(yv)}=(n_i-1)^{-1}\sum_{j=1}^{n_i}(\by_{ij}-\overline{\by}_i)(\bv_{ij}-\overline{\bv}_i)^T$, 
$\bS_{i(vy)}=\bS_{i(yv)}^T$
and $\bS_{i(vv)}=(n_i-1)^{-1}\sum_{j=1}^{n_i}(\bv_{ij}-\overline{\bv}_i)(\bv_{ij}-\overline{\bv}_i)^T$ for $i=1,2$, 
where $\overline{\by}_i=\sum_{j=1}^{n_i}\by_{ij}/n_i$ and $\overline{\bv}_i=\sum_{j=1}^{n_i}\bv_{ij}/n_i$. 
Note that $\bS_{in_i}=\bS_{i(yy)}+\bS_{i(yv)}+\bS_{i(vy)}+\bS_{i(vv)}$ for $i=1,2$. 
Also, note that $\bS_{i(yy)}=\sum_{j=1}^{k_i}\lambda_{ij}||\bu_{oij} ||^2\bh_{ij}\bh_{ij}^T+\sum_{j\neq j'}^{k_i}\lambda_{ij}^{1/2}\lambda_{ij'}^{1/2}\bu_{oij}^T\bu_{oij'}\bh_{ij}\bh_{ij'}^T$ for $i=1,2$. 
We write that for $i=1,2$
$$
\bS_{in_i}\sum_{j=1}^{k_i}\hat{\bh}_{ij}\hat{\bh}_{ij}^T=\sum_{j=1}^{k_i} \hat{\lambda}_{ij} \hat{\bh}_{ij}\hat{\bh}_{ij}^T
=\sum_{j=1}^{k_i} \tilde{\lambda}_{ij} \tilde{\bh}_{ij}\tilde{\bh}_{ij}^T\ \ (=\widehat{\bS}_{i(yy)},\ \mbox{say}). 
$$
Then, by noting that $||\bu_{oij} ||^2=||\bu_{ij} ||^2+O_P(n_i^{-1})$ and $\bu_{oij}^T\bu_{oij'}=O_P(n_i^{-1/2})$ $(j\neq j')$ as $n_i\to \infty$, it follows from (\ref{B.3}) that as $m\to \infty$
\begin{align}
\bS_{i(yy)}-\widehat{\bS}_{i(yy)}
=&\sum_{j=1}^{k_i}\tilde{\lambda}_{ij}(\bh_{ij}\bh_{ij}^T-\tilde{\bh}_{ij}\tilde{\bh}_{ij}^T)
+\sum_{j\neq j'}^{k_i}\lambda_{ij}^{1/2}\lambda_{ij'}^{1/2} \bu_{oij}^T\bu_{oij'}\bh_{ij}\bh_{ij'}^T+O_P(n_i^{-1})\sum_{j=1}^{k_i}\lambda_{ij}\bh_{ij}\bh_{ij}^T \notag \\ 
=& \sum_{j=1}^{k_i}\tilde{\lambda}_{ij}\{(\bh_{ij}-\tilde{\bh}_{ij})\bh_{ij}^T-\tilde{\bh}_{ij}(\tilde{\bh}_{ij}-{\bh}_{ij})^T\}\notag \\
&+O_P(n_i^{-1})\sum_{j=1}^{k_i}\lambda_{ij}\bh_{ij}\bh_{ij}^T +
O_P(n_i^{-1/2})\sum_{j\neq j'}^{k_i}\lambda_{ij}^{1/2}\lambda_{ij'}^{1/2}\bh_{ij}\bh_{ij'}^T  \label{B.36}
\end{align}
for $i=1,2$.
From Lemma B.2, (\ref{B.29}) and (\ref{B.36}) we have that 
$$
\tr\{(\bS_{1(yy)}-\widehat{\bS}_{1(yy)})(\bS_{2(yy)}-\widehat{\bS}_{2(yy)})\}=O_P\{\lambda_{11}\lambda_{21}(n_1n_2)^{-1/2}\}, 
$$
so that 
\begin{equation}
\tr\{(\bS_{1(yy)}-\widehat{\bS}_{1(yy)})(\bS_{2(yy)}-\widehat{\bS}_{2(yy)})\}/(n_1n_2)=o_P(K_{1*})
 \label{B.37}
\end{equation}
from the facts that $\lambda_{11}\lambda_{21}(n_1n_2)^{-3/2}\le \lambda_{11}^2/n_1^3+\lambda_{21}^2/n_1^3$ and 
$\lambda_{i1}=o(n_i^{1/2}\tr(\bSig_{i*}^2)^{1/2})$.
Note that $\bS_{i(yv)}=\sum_{j=1}^{k_i}\sum_{s=k_i+1}^{p} \lambda_{ij}^{1/2}\lambda_{is}^{1/2}\bu_{oij}^T\bu_{ois}\bh_{ij}\bh_{is}^T$ for $i=1,2$. 
Here, we write that when $\hat{\lambda}_{1j}>0$, 
\begin{align}
\tilde{\bh}_{1j}^T\bS_{2(yv)}{\bh}_{1j}
&= \sum_{j'=1}^{k_2} \lambda_{2j'}^{1/2} \frac{\hat{\bu}_{1j}^T(\bX_1-\bM_1)^T(  \sum_{s'=k_2+1}^{p} \lambda_{2s'}^{1/2}\bu_{o2j'}^T\bu_{o2s'} \bh_{2j'}\bh_{2s'}^T)}{(n_1-1)^{1/2}\tilde{\lambda}_{1j}^{1/2} } {\bh}_{1j} \notag \\
&=\sum_{j'=1}^{k_2} \lambda_{2j'}^{1/2} \hat{\bu}_{1j}^T\sum_{s=1}^{p}\sum_{s'=k_2+1}^{p}
\frac{ \lambda_{1s}^{1/2} h_{sj'}  h_{js'} \bu_{1s}\lambda_{2s'}^{1/2}\bu_{o2j'}^T\bu_{o2s'} }{\tilde{\lambda}_{1j}^{1/2}}. \label{B.38} 
\end{align}
It holds that for $j'=1,...,k_2$
\begin{align}
& E\Big\{ \Big( {\bu}_{1j}^T\sum_{s=k_1+1}^{p}\sum_{s'=k_2+1}^{p}
 \lambda_{1s}^{1/2}  h_{sj'}  h_{js'} \bu_{1s}\lambda_{2s'}^{1/2}\bu_{o2j'}^T\bu_{o2s'} \Big)^2\Big\} \notag\\
&=O\Big( \frac{ \bh_{1j}^T\bSig_{2*} \bh_{1j}\bh_{2j'}^T\bSig_{1*} \bh_{2j'}}{n_1n_2} \Big)=
O\Big( \frac{ \lambda_{1k_1+1}\lambda_{2k_2+1}  }{n_1n_2} \Big) \ \ \mbox{for $j=1,...,k_1$}, \notag \\
& E\Big( \Big\| \sum_{s=k_1+1}^{p}\sum_{s'=k_2+1}^{p}
\lambda_{1s}^{1/2} h_{sj'}  h_{js'} \bu_{1s}\lambda_{2s'}^{1/2}\bu_{o2j'}^T\bu_{o2s'} \Big\|^2 \Big)=
O\Big( \frac{ \lambda_{1k_1+1}\lambda_{2k_2+1}}{n_2} \Big) \notag \\
\mbox{and} \quad
&E\Big\{ \Big( \sum_{s'=k_2+1}^{p}h_{sj'}  h_{js'}  \lambda_{2s'}^{1/2}\bu_{o2j'}^T\bu_{o2s'} \Big)^2\Big\} =O(\lambda_{2k_2+1}/n_2)\ \ \mbox{for $j,s=1,...,k_1$}. \label{B.39}
\end{align}
In view of (\ref{B.1}) to (\ref{B.5}) and (\ref{B.38}), by using Markov's inequality, we have that for $j=1,...,k_1$ 
\begin{align}
\lambda_{1j} \tilde{\bh}_{1j}^T\bS_{2(yv)}{\bh}_{1j}
=\lambda_{1j} \sum_{j'=1}^{k_2}\lambda_{2j'}^{1/2} \sum_{s'=k_2+1}^{p} h_{jj'}h_{js'} \lambda_{2s'}^{1/2}\bu_{o2j'}^T\bu_{o2s'} +o_P(n_1n_2 K_{1*})
\label{B.40}
\end{align}
because $\lambda_{ik_i+1}=o\{\tr(\bSig_{i*}^2)^{1/2}\}$ for $i=1,2$. 
Similarly, it follows that
\begin{align}
\lambda_{1j} {\bh}_{1j}^T\bS_{2(yv)}\tilde{\bh}_{1j}
=\lambda_{1j} \sum_{j'=1}^{k_2}\lambda_{2j'}^{1/2} \sum_{s'=k_2+1}^{p} h_{jj'}h_{js'} \lambda_{2s'}^{1/2}\bu_{o2j'}^T\bu_{o2s'} +o_P(n_1n_2 K_{1*}).
\label{B.41}
\end{align}
We write that 
\begin{align}
\tilde{\bh}_{1j}^T\bS_{2(yv)}\tilde{\bh}_{1j} 
=\sum_{j'=1}^{k_2} \lambda_{2j'}^{1/2} \hat{\bu}_{1j}^T\sum_{s,t}^{p} \sum_{s'=k_2+1}^{p}
\frac{ \lambda_{1s}^{1/2}\lambda_{1t}^{1/2}  h_{sj'}  h_{ts'} \bu_{1s}\lambda_{2s'}^{1/2}\bu_{o2j'}^T\bu_{o2s'}\bu_{1t}^T }{\tilde{\lambda}_{1j}} \hat{\bu}_{1j}
 \label{B.42}.
\end{align}
It holds that for $j'=1,...,k_2$
\begin{align}
& E\Big\{ \Big\| 
\sum_{s,t\ge k_1+1}^{p} \sum_{s'=k_2+1}^{p}
 \lambda_{1s}^{1/2}\lambda_{1t}^{1/2}  h_{sj'}  h_{ts'}  \bu_{1s}\lambda_{2s'}^{1/2}\bu_{o2j'}^T\bu_{o2s'}\bu_{1t}^T \Big\|_F^2\Big\} \notag\\
&=O\{\tr(\bSig_{1*}\bSig_{2*})\bh_{2j'}^T\bSig_{1*}\bh_{2j'}/n_2 +\bh_{2j'}^T\bSig_{1*}\bSig_{2*}\bSig_{1*}\bh_{2j'}/(n_1n_2)\} \notag \\
&=O\{\tr(\bSig_{1*}\bSig_{2*})\lambda_{1k_1+1}/n_2+ \lambda_{1k_1+1}^2\lambda_{2k_2+1}/(n_1n_2)  \}.
 \label{B.43}
\end{align}
Then, in a way similar to (\ref{B.26}), by combing (\ref{B.39}) and (\ref{B.43}) with (\ref{B.42}), we have that for $j=1,...,k_1$ 
\begin{align}
\lambda_{1j} \tilde{\bh}_{1j}^T\bS_{2(yv)}\tilde{\bh}_{1j}
=\lambda_{1j} \sum_{j'=1}^{k_2}\lambda_{2j'}^{1/2} \sum_{s'=k_2+1}^{p} h_{jj'}h_{js'} \lambda_{2s'}^{1/2}\bu_{o2j'}^T\bu_{o2s'} +o_P(n_1n_2 K_{1*}).
\label{B.44}
\end{align}
Also, from (\ref{B.39}) we have that for $j,j'=1,...,k_1$ 
\begin{align}
\lambda_{1j}^{1/2}\lambda_{1j'}^{1/2} {\bh}_{1j}^T\bS_{2(yv)}{\bh}_{1j'}=
\lambda_{1j}^{1/2}\lambda_{1j'}^{1/2}
 \sum_{s=1}^{k_2}\lambda_{2s}^{1/2} \sum_{s'=k_2+1}^{p} h_{js}h_{j's'} \lambda_{2s'}^{1/2}\bu_{o2s}^T\bu_{o2s'}
=o_P(n_1^{3/2}n_2 K_{1*}).
\label{B.45}
\end{align}
Then, it follows from (\ref{B.36}), (\ref{B.40}), (\ref{B.41}), (\ref{B.44}) and (\ref{B.45}) that 
\begin{align}
{\tr\{ (\bS_{1(yy)}-\widehat{\bS}_{1(yy)})\bS_{2(yv)}\}}
={\tr\{ (\bS_{1(yy)}-\widehat{\bS}_{1(yy)})\bS_{2(vy)}\}}=o_P(n_1n_2K_{1*}).
\label{B.46}
\end{align}
Similarly, it follows that
$
{\tr\{ (\bS_{2(yy)}-\widehat{\bS}_{2(yy)})\bS_{1(yv)}\}}
={\tr\{ (\bS_{2(yy)}-\widehat{\bS}_{2(yy)})\bS_{1(vy)}\}}=o_P(n_1n_2K_{1*}).
$
Note that $
\bS_{i(vv)}=
\sum_{s, s'\ge k_i+1}^{p}\lambda_{is}^{1/2}\lambda_{is'}^{1/2}\bu_{ois}^T\bu_{ois'}\bh_{is}\bh_{is'}^T
$
for $i=1,2$.
Then, in a way similar to $\bS_{i(yv)}$, we can claim that for $i=1,2\ (j\neq i)$
\begin{align}
{\tr\{ (\bS_{i(yy)}-\widehat{\bS}_{i(yy)})\bS_{j(vv)}\}}=o_P(n_1n_2K_{1*}).
\label{B.47}
\end{align}
Then, by combining (\ref{B.46}) and (\ref{B.47}) with (\ref{B.37}), we have that 
\begin{align}
{\tr(\bS_{1n_1}\widehat{\bA}_{1(k_1)}\bS_{2n_2}\widehat{\bA}_{2(k_2)})}={\tr\{(\bS_{1n_1}-\bS_{1(yy)})(\bS_{2n_1}-\bS_{2(yy)})\}}+o_P(n_1n_2K_{1*}).
\label{B.48}
\end{align}
Let $\bSig_{i \star}=\sum_{j=1}^{k_i}\lambda_{ij}\bh_{ij}\bh_{ij}^T$ for $i=1,2$. 
We can evaluate that 
\begin{align*}
E[\{\tr(\bS_{1(yv)}\bS_{2(yv)})\}^2]=&O\Big(\frac{\tr(\bSig_{1\star}\bSig_{2 *})\tr(\bSig_{1 *}\bSig_{2 \star})}{n_1n_2}\Big) =O\Big(\frac{ \lambda_{11}\lambda_{21}\tr(\bSig_{1*}^2)^{1/2}\tr(\bSig_{2*}^2)^{1/2} }{n_1n_2}\Big); \\
E[\{\tr(\bS_{1(yv)}\bS_{2(vy)})\}^2]=&O\Big(\frac{\tr(\bSig_{1\star}\bSig_{2 \star})\tr(\bSig_{1 *}\bSig_{2 *})}{n_1n_2}\Big) =O\Big(\frac{ \lambda_{11}\lambda_{21}\tr(\bSig_{1*}^2)^{1/2}\tr(\bSig_{2*}^2)^{1/2} }{n_1n_2}\Big); \\
\mbox{and}\quad
E[\{\tr(\bS_{i(yv)}\bS_{j(vv)})\}^2]=&O\Big(\frac{\tr(\bSig_{i\star}\bSig_{j *}\bSig_{i*}\bSig_{j *})}{n_i}\Big)+O\Big(\frac{\tr(\bSig_{i\star}\bSig_{j *})\tr(\bSig_{1 *}\bSig_{2 *})}{n_1n_2} \Big) \\
=&o\Big(\frac{ \lambda_{i1} \tr(\bSig_{i*}^2)^{1/2} \tr(\bSig_{j*}^2)  }{n_i}\Big)
\end{align*}
for $i=1,2\ (j\neq i)$. 
Then, we have that 
\begin{align}
{\tr\{(\bS_{1n_1}-\bS_{1(yy)})(\bS_{2n_1}-\bS_{2(yy)})\}-\tr(\bS_{1(vv)}\bS_{2(vv)})}=o_P(n_1n_2K_{1*}).
\label{B.49}
\end{align}
With the help of (23) in \cite{Aoshima:2015}, we claim that $\tr(\bS_{1(vv)}\bS_{2(vv)})/\tr(\bSig_{1*}\bSig_{2*})=1+o_P(1)$. 
Hence, from (\ref{B.48}) and (\ref{B.49}) we have that 
$$
{\tr(\bS_{1n_1}\widehat{\bA}_{1(k_1)}\bS_{2n_2}\widehat{\bA}_{1(k_2)})}/(n_1n_2)=
{\tr(\bSig_{1*}\bSig_{2*})}/(n_1n_2)+o_P(K_{1*}). 
$$ 
By using Lemma S2.1, we can conclude the result. 
\end{proof}
\begin{proof}[Proof of Theorem 6]
Similar to the proof of Theorem 3, by combining Theorem 5 and Lemma 3, we can conclude the result.
\end{proof}
\section{Appendix C}
\setcounter{equation}{0}
\noindent

In this appendix, we give proofs of the theoretical results in Section S2.

\begin{proof}[Proofs of Propositions S2.1 and S2.2]
We omit the subscript with regard to the population for the sake of simplicity.
First, we consider Proposition S2.1. 
By using Lemmas 1 and 5 in \cite{Yata:2013b}, under (A-i) and (1.4), we can obtain
$\bzeta^T\{\sum_{s=1}^{p}\lambda_{s}\bu_{s}\bu_{s}^T-\tr(\bSig)\bI_n/(n-1)\}\bzeta=o_P\{\tr(\bSig^2)^{1/2}\}$ 
as $m_0 \to \infty$, 
so that $\hat{\lambda}_{1}=\tr(\bSig)/(n-1)+o_P\{\tr(\bSig^2)^{1/2}\}$ because 
$\hat{\lambda}_{1}-\tr(\bSig)/(n-1)=\hat{\bu}_{1}^T\{\sum_{s=1}^{p}\lambda_{s}\bu_{s}\bu_{s}^T-\bI_n \tr(\bSig)/(n-1)\}\hat{\bu}_{1}$. 
Then, 
by noting $\tr(\bS_{D})-\tr(\bSig)=o_P\{\tr(\bSig^2)^{1/2}\}$ under (A-i), 
we can claim $\tilde{\lambda}_{1}/\tr(\bSig^2)^{1/2}=o_P(1)$ under (A-i) and (1.4). 
Thus, from (2.3) we conclude the first result of Proposition S2.1.
Under (1,6) there exists a fixed integer $j_{\star}$ such that $\lambda_{j_{\star}}/\lambda_1\to 0$. 
Note that $(\sum_{i=j_{\star}}^p\lambda_i^4 )/\lambda_1^4\le \lambda_{j_{\star}}^2 \tr(\bSig^2)/\lambda_1^4=o(1)$. 
Then, by using Lemma 1 and Corollary 4.1 in \cite{Yata:2013b}, we can claim that $\tilde{\lambda}_{1}/{\lambda}_{1}=1+o_P(1)$ under (A-i) and (1.6). 
It concludes the results of Proposition S2.1. 

Next, we consider Proposition S2.2. 
Let $\phi(n)$ be any function such that $\phi(n)\to 0$ and $n^{1/4}\phi(n)\to \infty$ as $n\to \infty$.
Let $\lambda=\phi(n)\tr(\bSig^2)^{1/2}$. 
Assume that $\lambda_{1}^2/\tr(\bSig^2)=O(n^{-c})$ as $m_0\to \infty$ with some fixed constant $c>1/2$. 
Then, there is at least one positive integer $t \ (> 2)$ satisfying $c(t/2-1)>t/4$, so that 
$\tr(\bSig^{t})/\lambda^{t} \le \lambda_{1}^{t-2}/(\phi(n)^{t}\tr(\bSig^2)^{t/2-1})=o(1)$. 
Then, by using Lemmas 1 and 5 in \cite{Yata:2013b}, we can obtain 
$$
\hat{\lambda}_1/\lambda=\hat{\bu}_{1}^T \bS_{D}\hat{\bu}_{1}/\lambda=\tr(\bSig)/\{(n-1)\lambda\}+o_P(1)
$$
under (A-viii). 
Then, 
by noting that $\tr(\bS_{D})-\tr(\bSig)=o_P\{\tr(\bSig^2)^{1/2}\}$ under (A-viii), 
we can claim that $\tilde{\lambda}_1^2/\tr(\bSig^2)=o_P[\{\phi(n)\}^2]$ under (A-viii). 
Thus from (2.3) we conclude the result of Proposition S2.2. 
\end{proof}
\begin{proof}[Proofs of Lemma S2.1, Propositions S2.3 and S2.4]
We assume (A-i) and (A-vi).
We omit the subscript with regard to the population for the sake of simplicity. 
Let $\bV_{1}=\sum_{j=1}^k\lambda_j \bu_{j(1)}\bu_{j(2)}^T$ and $\bV_{2}=\sum_{j=k+1}^p\lambda_j \bu_{j(1)}\bu_{j(2)}^T$, 
where $\bu_{j(1)}=(z_{j1},....,z_{jn_{(1)}})^T/(n_{(1)}-1)^{1/2}$ and 
$\bu_{j(2)}=(z_{jn_{(1)}+1},....,z_{jn})^T/(n_{(2)}-1)^{1/2}$. 
Let $\bV_{o1}=\bP_{n_{(1)}}\bV_{1}\bP_{n_{(2)}}$ and $\bV_{o2}=\bP_{n_{(1)}}\bV_{2}\bP_{n_{(2)}}$.
Note that $\bS_{D(1)}=\bP_{n_{(1)}}(\bV_{1}+\bV_{2})\bP_{n_{(2)}}=\bV_{o1}+\bV_{o2}$.
Let us write the singular value decomposition of $\bS_{D(1)}$ as $\bS_{D(1)}=\sum_{j=1}^{n_{(2)}-1}\acute{\lambda}_{j}\acute{\bu}_{j(1)}\acute{\bu}_{j(2)}^T $, where $\acute{\bu}_{j(1)}$ (or $\acute{\bu}_{j(2)}$) denotes a unit left- (or right-) singular vector 
corresponding to $\acute{\lambda}_{j}$. 
First, we consider Lemma S2.1. 
By using Lemma 1 and Corollary 5.1 in \cite{Yata:2013b}, we can claim for $j=1,...,k$ that as $m_0\to \infty$
\begin{equation}
\acute{\lambda}_{j}/\lambda_{j}=1+o_P(1). \label{B.50}
\end{equation}
By noting that $\tr(\bSig_*^4)/\tr(\bSig_*^2)^2 \le \lambda_{k+1}^2/\Psi_{(k+1)}=o(1)$ under (A-vi) and by using Lemmas 1 and 4 in \cite{Yata:2013b}, we can claim that
$
\bzeta_{(1)}^T\bP_{n_{(1)}}\bV_{2}\bP_{n_{(2)}}\bzeta_{(2)}/\Psi_{(k+1)}^{1/2}=o_P(1),
$
where $\bzeta_{(i)}$ is an arbitrary unit random $n_{(i)}$-dimensional vector for $i=1,2$.
Hence, we have that
\begin{equation}
\bzeta_{(1)}^T \bS_{D(1)}\bzeta_{(2)}/\Psi_{(k+1)}^{1/2}=\bzeta_{(1)}^T \bV_{o1}\bzeta_{(2)}/\Psi_{(k+1)}^{1/2}+o_P(1).
 \label{B.51}
\end{equation}
Then, in a way similar to (A.10) in \cite{Yata:2013b}, we have that $\hat{\Psi}_{(k+1)}/\Psi_{(k+1)}=1+o_P(1)$. 
In view of (\ref{B.50}), we can claim that $\hat{\Psi}_{(j)}/\Psi_{(j)}=1+o_P(1)$ for $j=1,...,k$. 
It concludes the result of Lemma S2.1. 

Next, we consider Proposition S2.3. 
By noting (\ref{B.51}) and rank$(\bV_{o1})\le k$, we have that $\acute{\lambda}_{j}/\Psi_{(k+1)}^{1/2}=o_P(1)$ for $j>k$. 
Then, by combining Lemma S2.1 with (\ref{B.50}), we can conclude the result of Proposition S2.3.

Finally, we consider Proposition S2.4.
We assume (A-viii) and (A-ix). 
Let $\lambda_*=\phi(n)\Psi_{(k+1)}^{1/2}$, where $\phi(n)$ is defined in the proofs of Propositions S2.1 and S2.2. 
Assume that $\lambda_{k+1}^2/\Psi_{(k+1)}=O(n^{-c})$ as $m_0\to \infty$ with some fixed constant $c>1/2$. 
Then, there is at least one positive integer $t \ (> 2)$ satisfying $c(t/2-1)>t/4$, so that 
$\tr(\bSig_*^{t})/\lambda_*^{t} \le \lambda_{k+1}^{t-2}/(\phi(n)^{t}\Psi_{(k+1)}^{t/2-1})=o(1)$.
Hence, similar to (\ref{B.51}), we have that 
\begin{equation}
\bzeta_{(1)}^T \bS_{D(1)}\bzeta_{(2)}/\lambda_*=\bzeta_{(1)}^T \bV_{o1}\bzeta_{(2)}/\lambda_*+o_P(1). 
 \label{B.52}
\end{equation}
Let $\acute{\bV}_{o1}= \bV_{o1}-\sum_{j=1}^{k}\acute{\lambda}_j \acute{\bu}_{j(1)} \acute{\bu}_{j(2)}^T$. 
From (\ref{B.52}), it holds that 
$\bzeta_{(1)}^T \acute{\bV}_{o1} \bzeta_{(2)}/\lambda_*=o_P(1)$, so that 
all the singular values of $\acute{\bV}_{1}/\lambda_*$ are of the order $o_P(1)$. 
Then, from the fact that rank$(\acute{\bV}_{o1})\le 2k$, 
it holds that 
\begin{equation}
\tr( \acute{\bV}_{o1} \acute{\bV}_{o1}^T )/\Psi_{(k+1)}=k\times o_P[\{\phi(n)\}^2].
 \label{B.53}
\end{equation}
Here, in view of (A-viii), we have that $\Var(\bu_{j(1)}^T\bV_{o2} \bu_{j(2)} )=O(\Psi_{(k+1)}/n^2)$ for $j=1,...,k$, so that 
$\bu_{j(1)}^T\bV_{o2} \bu_{j(2)} =O_P(\Psi_{(k+1)}^{1/2}/n )$ for $j=1,...,k$. 
In view of (A-ix), it holds that 
\begin{equation}
\tr( \bV_{o1}{\bV}_{o2}^T )/\Psi_{(k+1)}=\tr( \bV_{1}{\bV}_{o2}^T )/\Psi_{(k+1)}=k\times o_P(n^{-1/2}).
 \label{B.54}
\end{equation}
On the other hand, we have that $E(||\bu_{j(1)}^T\bV_{o2}||^2)=O(\Psi_{(k+1)}/n)$ and $E(||\bu_{j(2)}^T\bV_{o2}^T||^2)=O(\Psi_{(k+1)}/n)$ for $j=1,...,k$, so that $\bu_{j(1)}^T\bV_{o2}\bzeta_{(2)} =O_P(\Psi_{(k+1)}^{1/2}/n^{1/2})$ and 
$\bzeta_{(1)}^T\bV_{o2}\bu_{j(2)}=O_P(\Psi_{(k+1)}^{1/2}/n^{1/2})$ for $j=1,...,k$. 
Then, in a way similar to the proof of Lemma 12 in \cite{Yata:2013b}, we have that 
$\acute{\bu}_{j(l)} =||\bu_{j(l)}||^{-1} \bu_{j(l)}\{1+O_P(n^{-1/2})\}+\bep_{jl}\times O_P(n^{-1/2})$ with 
some unit random vector $\bep_{jl}$ for $j=1,...,k;\ l=1,2$. 
Hence, from (\ref{B.50}) and $\bzeta_{(1)}^T\bV_{o2}\bzeta_{(2)}=o_P(\Psi_{(k+1)}^{1/2})$,
we have that $\tr(\sum_{j=1}^{k}\acute{\lambda}_j\acute{\bu}_{j(1)} \acute{\bu}_{j(2)}^T{\bV}_{o2}^T )/\Psi_{(k+1)}= k\times o_P(n^{-1/2})$. 
Hence, from (\ref{B.54}), it holds that 
\begin{equation}
\tr( \acute{\bV}_{o1} {\bV}_{o2}^T )/\Psi_{(k+1)}=k\times o_P(n^{-1/2}).
 \label{B.55}
\end{equation}
Note that $E\{\tr(\bV_{o2}\bV_{o2}^T)\}=\Psi_{(k+1)}$ and $\Var\{\tr(\bV_{o2}\bV_{o2}^T)/\Psi_{(k+1)}\}=O(n^{-1})$. 
Then, by noting that $\widehat{\Psi}_{(k+1)}=\tr\{(\acute{\bV}_{o1}+{\bV}_{o2})(\acute{\bV}_{o1}+{\bV}_{o2})^T\}$, 
from (\ref{B.53}) and (\ref{B.55}), 
we obtain that 
\begin{equation}
\widehat{\Psi}_{(k+1)}/\Psi_{(k+1)}=
\tr(\bV_{o2}\bV_{o2}^T)/\Psi_{(k+1)}+k\times o_P[\{\phi(n)\}^2]=1+k\times o_P[\{\phi(n)\}^2]. \label{B.56}
\end{equation}
Similarly, by noting that $\acute{\lambda}_{k+1}/\lambda_*=o_P(1)$ from (\ref{B.52}), 
we can claim that 
\begin{equation}
\widehat{\Psi}_{(k+2)}/\Psi_{(k+1)}=\{1+o(n^{-1/2})\}\widehat{\Psi}_{(k+2)}/\Psi_{(k+2)}=1+(k+1)\times o_P[\{\phi(n)\}^2]. \label{B.57}
\end{equation}
By combining (\ref{B.56}) and (\ref{B.57}), we can conclude the result of Proposition S2.4.
\end{proof}

\vskip 14pt
\noindent {\large\bf Acknowledgements}

Research of the first author was partially supported by Grants-in-Aid for Scientific Research (A) and 
Challenging Exploratory Research, Japan Society for the Promotion of Science (JSPS), under Contract Numbers 15H01678 and 26540010. 
Research of the second author was partially supported by Grant-in-Aid for Young Scientists (B), JSPS, under Contract Number 26800078.
\par


\bibhang=1.7pc
\bibsep=2pt
\fontsize{9}{14pt plus.8pt minus .6pt}\selectfont
\renewcommand\bibname{\large \bf References}

\vskip .65cm
\noindent
Institute of Mathematics, University of Tsukuba, Ibaraki 305-8571, Japan. 
\vskip 2pt
\noindent
E-mail: aoshima@math.tsukuba.ac.jp
\vskip 2pt
\noindent
Institute of Mathematics, University of Tsukuba, Ibaraki 305-8571, Japan. 
\vskip 2pt
\noindent
E-mail: yata@math.tsukuba.ac.jp
\end{document}